\documentclass[10pt,a4paper,oneside,final]{article}
\usepackage{amsmath}
\usepackage{amssymb}
\usepackage{amsthm}
\usepackage{color}
\usepackage{fullpage}
\usepackage{hyperref}
\usepackage[capitalise]{cleveref}
\usepackage[pdftex]{graphicx}
\usepackage[UKenglish]{babel}

% % % numbers sets
\newcommand{\R}{{\mathbb{R}}}

\newcommand{\Z}{{\mathbb{Z}}}
\newcommand{\N}{{\mathbb{N}}}
% % % various math
\newcommand{\de}{{\mathrm{d}}}
\DeclareMathOperator{\dv}{div}

\newcommand{\dist}{{\mathrm{dist}}}
\newcommand{\id}{\mathrm{id}}
\newcommand{\setchar}[1]{\mathbf{1}_{#1}}

\newcommand{\grid}[1]{\mathrm{Grid}_{#1}}
% \newcommand{\bigtimes}{\mathop{\scalebox{2}{$\times$}}}
% % % measure theory
\newcommand{\lebesgue}{\mathcal{L}}

\newcommand{\hd}{\mathcal{H}}
\newcommand{\hdone}{\mathcal{H}^1}

\newcommand{\rca}{\mathrm{rca}}
\newcommand{\fbm}{{\mathrm{fbm}}}
\newcommand{\prob}{{\mathcal{P}}}
\newcommand{\pushforward}[2]{{{#1}_{\#}#2}}
\newcommand{\restr}{{\mbox{\LARGE$\llcorner$}}}
\newcommand{\spt}{{\mathrm{spt}}}
\newcommand{\diffForms}[1]{{\mathcal{D}^{#1}}}
\newcommand{\currents}[1]{{\mathcal{D}_{#1}}}
\newcommand{\mass}[1]{M(#1)}
\newcommand{\flatNorm}[1]{|#1|^\flat}
\newcommand{\flatChains}[1]{\mathbf F_{#1}}
% % % weak convergences

\newcommand{\weakstarto}{\stackrel{*}{\rightharpoonup}}
% % % Wasserstein
\newcommand{\Wd}[1]{\mathrm{W}_{#1}}
\newcommand{\Wdone}{\Wd{1}}
% % % function spaces
\newcommand{\AC}{\mathrm{AC}}
\newcommand{\Lip}{\mathrm{Lip}}

\newcommand{\cont}{{\mathrm{C}}}
\newcommand{\contbdd}{\cont^0}
\newcommand{\contsmooth}{\cont_{c}^{\infty}}
% % % branched transport (fluxes)
\newcommand{\flux}{{\mathcal{F}}}

% % % branched transport (patterns)
\newcommand{\reSpace}{\Gamma}
\newcommand{\reMeasure}{P_{\reSpace}}

\newcommand{\TPM}{\textrm{TPM}}

% % % energy names -- the curly ones
% \newcommand{\brTptEn}{{\mathcal{M}}}
% \newcommand{\urbPlEn}{{\mathcal{E}}}
\newcommand{\JEn}{{\mathcal{J}}}
% % % energy names -- the branched trasport ones
% \newcommand{\XiaEn}{M_{\mathrm{F}}}
% \newcommand{\MMSEn}{M_{\mathrm{P}}}
% \newcommand{\TPMEn}{M_{\mathrm{TPM}}}
% % % energy names -- the urban planning ones
% \newcommand{\urbPlXia}{E_{\mathrm{F}}}
% \newcommand{\urbPlMMS}{E_{\mathrm{P}}}
%{E_{\mathrm{TPM}}}

% % % energy names -- the general ones
\newcommand{\JEnXia}[1][\tau]{#1_{\mathrm{F}}}%{J_{\mathrm{F}}^{#1}}

\newcommand{\JEnMMS}[1][\tau]{#1_{\mathrm{P}}}%{J_{\mathrm{P}}^{#1}}

\newcommand{\dtau}{d_\tau}
% \newcommand{\fluxcost}[1]{\mathcal{M}_{#1}}
% % %
% % % various letters
\newcommand{\Bcal}{\mathcal{B}}

% % % notation in the proof of 2d upper bound

% % % lower bound notation

\newcommand{\transportPath}{mass flux}
\newcommand{\transportPaths}{mass fluxes}
\newcommand{\TransportPath}{Mass flux}

% \numberwithin{equation}{subsection}
\numberwithin{equation}{section}

\theoremstyle{plain}
\newtheorem{theorem}{Theorem}[section]
\newtheorem{lemma}[theorem]{Lemma}
\newtheorem{proposition}[theorem]{Proposition}
\newtheorem{corollary}[theorem]{Corollary}
\newtheorem{problem}[theorem]{Problem}

\theoremstyle{definition}
\newtheorem{definition}[theorem]{Definition}

\theoremstyle{remark}
\newtheorem{remark}[theorem]{Remark}
\newtheorem{example}[theorem]{Example}

\newcommand{\keywords}[1]{\noindent\textbf{Keywords:}\enspace#1}
\newcommand{\subjclass}[1]{\bigskip\noindent\emph{2010 MSC:}\enspace#1}

% for reviews
\usepackage{enumitem}
\usepackage{marginnote}
\newcommand{\notinclude}[1]{}

% \graphicspath{{./figures/}}

% \usepackage[pdftex]{hyperref}
% \hypersetup{
%   pdfstartview=FitH,
%   pdfpagemode=UseNone,
%   pdfauthor={A. Brancolini, B. Wirth},
%   pdftitle={Optimal micropatterns in branched transport},
%   pdfsubject={Optimal micropatterns in branched transport},
%   pdfkeywords={micropatterns, branched transport}
% }

% \hoffset = -50pt
% \marginparwidth = 150pt

% \pagestyle{myheadings}
% \markboth{General subadditive densities for the branched transport problem}{General subadditive densities for the branched transport problem}

\begin{document}

\title{General transport problems with branched minimizers as functionals of $1$-currents with prescribed boundary}
\author{Alessio Brancolini\footnote{Institute for Numerical and Applied Mathematics, University of M\"unster, Einsteinstra\ss{}e 62, D-48149 M\"unster, Germany}\ \footnote{Email address: \texttt{alessio.brancolini@uni-muenster.de} (corresponding author)} \and Benedikt Wirth\footnotemark[1]\ \footnote{Email address: \texttt{benedikt.wirth@uni-muenster.de}}}
\maketitle

\begin{abstract}
%Among the multiple existing models for transportation networks the most prominent one is branched transport, introduced simultaneously by Xia and by Maddalena, Morel, and Solimini,
A prominent model for transportation networks is branched transport,
which seeks the optimal transportation scheme to move material from a given initial to a given final distribution.
The cost of the scheme encodes a higher transport efficiency the more mass is moved together, which automatically leads to optimal transportation networks with a hierarchical branching structure.
The two major existing model formulations use either \transportPaths{} (vector-valued measures, Eulerian formulation) or patterns (probabilities on the space of particle paths, Lagrangian formulation).

In the branched transport problem the transportation cost is a fractional power of the transported mass.
In this paper we instead analyse the much more general class of transport problems in which the transportation cost is merely a nonnegative increasing and subadditive function
(in a certain sense this is the broadest possible generalization of branched transport).
In particular, we address the problem of the equivalence of the above-mentioned formulations in this wider context.
However, the newly-introduced class of transportation costs lacks strict concavity which complicates the analysis considerably.
New ideas are required, in particular, it turns out convenient to state the problem via $1$-currents.
%The potent measure geometric tools help to gain a more intuitive understanding of the models and reduce the effort in comparing different model formulations.

Our analysis also includes the well-posedness, some network properties, as well as a metrization and a length space property of the model cost,
which were previously only known for branched transport.
Some already existing arguments in that field are given a more concise and simpler form.
% 
% We generalize work by Xia and by Maddalena, Morel, and Solimini on the branched transport models for ramified transportation networks.
% In particular we generalize their models, in which the transportation cost is a fractional power of the transported mass, to allow for general concave transportation costs, and we repeat the analysis of those models and their properties.
% The analysis includes the well-posedness, a metrization and a length space property in the space of probability measures, as well as the equivalence between the different model formulations.
% Compared to the arguments for the original branched transport, we require several technical changes in order not to rely on its special form of the transportation cost and to allow also for transportation costs without strict concavity.
% In addition, the arguments for some of the model properties now have a simpler and shorter\notinclude{streamlined} form.

\bigskip\keywords{optimal transport, optimal networks, branched transport, irrigation, urban planning, Wasserstein distance, geometric measure theory, currents}

\subjclass{49Q20, 49Q10, 90B10}
\end{abstract}

% \newpage
% \tableofcontents

\section{Introduction}
The classical theory for cost-efficient transportation of an amount of material from a given initial to a given final mass distribution is the theory of optimal transport.
Proposed by Monge in the end of the 18\textsuperscript{th} century, a great impulse was given by Kantorovich in the mid 20\textsuperscript{th} century.
The optimal transport problem assigns the minimum possible transport cost to each pair of initial and final mass distributions.
Here, the movement of an amount of mass from a position $A$ to a position $B$ contributes a transportation cost which is proportional to the transported mass and which may depend in a rather general way on $A$ and $B$.

In the setting of classical optimal transport, all mass particles move independently from each other.
If, however, the transportation cost is only subadditive in the transported mass, which models an efficiency gain if mass is transported in bulk,
then material particles start interacting, and an optimal transportation scheme will move the particles along paths that together form ramified network structures.
This can be used to model for instance biological networks such as vascular systems in plants and animals.

If the transportation cost (per transport unit distance) is a fractional power of the transported mass $w$,
\begin{equation*}
\tau(w)=w^\alpha\quad\text{for some }\alpha\in(0,1),
\end{equation*}
we obtain the \emph{branched transport models} by Xia \cite{Xia-Optimal-Paths} and by Maddalena, Morel, and Solimini \cite{Maddalena-Morel-Solimini-Irrigation-Patterns}.
The model by Maddalena, Morel, and Solimini employs a Lagrangian formulation based on \emph{irrigation patterns} $\chi$ that describe the position of each mass particle $p$ at time $t$ by $\chi(p,t)$.
The model by Xia on the other hand uses a Eulerian formulation in which only the flux of particles is described, discarding its dependence on the time variable $t$.
The equivalence between both model formulations was shown in \cite{Maddalena-Solimini-Transport-Distances,Maddalena-Solimini-Synchronic};
a comprehensive reference is the monograph \cite{BeCaMo09}.

The motivation for generalizing the choice $\tau(w)=w^\alpha$ comes from work by the current authors \cite{BrWi15-equivalent} (and the subsequent studies \cite{BrRoWi16,BrWi15-micropatterns}) where it is shown that the \emph{urban planning model} (introduced in \cite{Brancolini-Buttazzo}) can be formulated in the same setting as branched transport, just using a different (but no longer strictly concave) transportation cost.

In this work we generalize both branched transport and urban planning models and their analysis by replacing their transportation costs by the most general class of transportation cost functions as described below.
Furthermore, the lack of strict subadditivity forces us to provide a model description in terms of $1$-currents, which in the special case of branched transport had already been conceived by Xia \cite{Xia-Interior-Regularity}.
The potent measure geometric tools help to gain a more intuitive understanding of the models and greatly reduce the effort in comparing different model formulations.
Of course, this comes at the cost of introducing the measure geometric machinery, building on classical as well as more recent results by White \cite{Wh99b}, Smirnov \cite{Sm93}, \v{S}ilhav\'y \cite{Si07} or Colombo et al.\ \cite{CoRoMa17}.

Our analysis also serves as a preparation for a further study in which we will introduce yet another model formulation quite different from the ones considered here and very much akin to the original formulation of urban planning.

\subsection{General transportation costs}
We will be concerned with transportation costs of the following form.
\begin{definition}[Transportation cost]\label{def:transportation_cost}
The \emph{transportation cost} is a function $\tau : [0,\infty) \to [0,\infty)$ such that
\begin{enumerate}
 \item $\tau(0) = 0$ and $\tau(w)>0$ for $w>0$,
 \item $\tau$ is non-decreasing,
 \item $\tau$ is subadditive,
 \item $\tau$ is lower semi-continuous.
\end{enumerate}
\end{definition}

The transportation cost $\tau(w)$ has the interpretation of the cost per transport unit distance for transporting an amount of mass $w$.
Note that any non-decreasing concave function $\tau\neq0$ with $\tau(0)=0$ can be chosen as a transportation cost.
As mentioned above, the generalization of the \notinclude{branched transport }choice $\tau(w)=w^\alpha$ to more general subadditive costs is motivated by \cite{BrWi15-equivalent},
which is concerned with a different model for transport networks, the urban planning model, and which provides a model formulation analogous to branched transport, just with a different transportation cost.
The most well-known choices for $\tau$ are summarized in the following example, and with this article we advocate the use of more general $\tau$ that may be tailored to applications.

\begin{example}[Transportation cost]
Classical optimal transport, branched transport, urban planning, and a variant of the Steiner problem can be retrieved using
\begin{enumerate}
\item the \emph{Wasserstein cost} $\tau(w)=aw$ for some $a>0$,
\item the \emph{branched transport cost} $\tau(w)=w^\alpha$ for some $\alpha\in(0,1)$,
\item the \emph{urban planning cost} $\tau(w)=\min\{aw,w+\varepsilon\}$ for some $a>1$, $\varepsilon>0$, or
\item the \emph{discrete cost} $\tau(w)=1$ if $w>0$ and $\tau(0)=0$.
\end{enumerate}
\end{example}

Note that the properties required in \cref{def:transportation_cost} are dictated by first principles:
Transporting mass has a positive cost, where the cost increases with increasing mass.
Furthermore, the transport cost only jumps when the maximum capacity of a transportation means is reached (for instance, if a lorry is fully loaded), which implies lower semi-continuity.
In addition, the transportation cost is subadditive,
since there is always the option of splitting the mass into several parts and transporting those separately.
A direct consequence is that the average transportation cost per particle is bounded below as follows.

\begin{lemma}[\protect{\cite[Thm.\,5 and its proof]{La62}}]\label{thm:averageCost}
Let $\tau$ be a transportation cost and define $$\lambda^\tau(m)=\inf\left\{\tfrac{\tau(w)}w\,:\,w\in\left(\tfrac m2,m\right]\right\}>0$$ for $m>0$.
Then $\tau(w)\geq\lambda^\tau(m)w$ for all $w\in[0,m]$.
\end{lemma}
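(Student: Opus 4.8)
The plan is to split the desired inequality into three ranges of $w$: the trivial point $w=0$, the ``large'' range $w\in(m/2,m]$ where the estimate is immediate from the definition of $\lambda^\tau(m)$ as an infimum, and the ``small'' range $w\in(0,m/2]$, which is reduced to the large range by means of subadditivity. Before that I would dispose of the strict positivity $\lambda^\tau(m)>0$, which is where lower semi-continuity enters.

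For the positivity, restrict $\tau$ to the compact interval $[m/2,m]$. Being lower semi-continuous, $\tau$ attains its minimum $\mu$ on this interval at some point $w_0\in[m/2,m]$, and $\mu=\tau(w_0)>0$ because $w_0\geq m/2>0$ and $\tau$ is strictly positive on $(0,\infty)$ by \cref{def:transportation_cost}\,(1). Hence for every $w\in(m/2,m]$ one has $\tau(w)/w\geq\mu/w\geq\mu/m$, so that $\lambda^\tau(m)\geq\mu/m>0$. The same computation also settles the large range: by definition of $\lambda^\tau(m)$, $\tau(w)/w\geq\lambda^\tau(m)$ for all $w\in(m/2,m]$, i.e.\ $\tau(w)\geq\lambda^\tau(m)\,w$, and for $w=0$ both sides vanish.

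It remains to treat $w\in(0,m/2]$. Here I would pick $n\in\N$ with $nw\in(m/2,m]$; such an $n$ exists because the half-open interval $(\tfrac{m}{2w},\tfrac{m}{w}]$ has length $\tfrac{m}{2w}\geq1$ and therefore contains an integer (concretely $n=\lfloor m/w\rfloor\geq1$ works, as $m/w\geq2$). Applying subadditivity $n-1$ times gives $\tau(nw)=\tau(w+\dots+w)\leq n\,\tau(w)$, while $nw\in(m/2,m]$ and the previously handled range yield $\tau(nw)\geq\lambda^\tau(m)\,nw$. Combining the two, $\tau(w)\geq\tfrac1n\tau(nw)\geq\lambda^\tau(m)\,w$, which completes the proof. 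The only step demanding any care is this last covering argument for small $w$ — namely the observation that a half-open interval of length at least $1$ always contains a positive integer — but this is elementary; no genuine obstacle is expected.
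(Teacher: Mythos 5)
Your argument is correct, and it is the standard proof of this fact about subadditive functions: the paper itself gives no proof, deferring entirely to the cited reference [La62], and your reduction of small $w$ to the range $(\tfrac m2,m]$ via $\tau(w)\geq\tfrac1n\tau(nw)$ with $n=\lfloor m/w\rfloor$ is exactly the expected route. The only remark worth making is that lower semi-continuity is not actually needed for the positivity of $\lambda^\tau(m)$: monotonicity already gives $\tau(w)\geq\tau(m/2)>0$ for all $w\in(\tfrac m2,m]$, hence $\lambda^\tau(m)\geq\tau(m/2)/m>0$, so your appeal to attainment of the minimum, while valid, is a slightly heavier tool than required.
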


In principle, the condition of lower semi-continuity may be dropped, however,
in the \transportPath{}-based model formulation this would lead to the same model as taking the lower semi-continuous envelope of $\tau$,
while in the pattern-based formulation this would lead to non-existence of optimal networks.

An important feature of our generalization is that now also non-concave and non-strictly subadditive transportation costs are allowed, which will complicate the analysis in parts but covers cases of interest such as urban planning.
As such, this work makes use of arguments from \cite{Xia-Optimal-Paths,Maddalena-Morel-Solimini-Irrigation-Patterns,Bernot-Caselles-Morel-Traffic-Plans,Maddalena-Solimini-Transport-Distances,Maddalena-Solimini-Synchronic,BrWi15-equivalent},
but all is transferred into this more general setting and in several places streamlined.
In particular, any reference to the specific form of the transportation cost (which is exploited in all of the above works) is eliminated.

\subsection{Summary of main results, techniques, and perspectives}

Given two probability measures $\mu_+$ and $\mu_-$ on $\R^n$, denoting the material source and the sink,
the Eulerian model formulation will describe the mass transport from $\mu_+$ to $\mu_-$ via a vector-valued measure $\flux$ (a \transportPath{}) with $\dv\flux=\mu_+-\mu_-$, which represents the material flux through each point (cf.\,\cref{def:mass_fluxes}).
%The cost function of such a \transportPath{} will again be denoted $\JEnXia$ (as is the custom in geometric measure theory).
If $\flux$ can be represented as a weighted directed graph $G$, whose edge weight $w(\vec e)$ represents the mass flux through edge $\vec e$,
then its cost functional will be defined as (cf.\,\cref{def:costXia})
\begin{equation*}
 \JEnXia(G) = \sum_{\text{edges }\vec e\text{ of }G}\tau(w(\vec e))|\vec e|\,.
\end{equation*}
Otherwise, $\flux$ will be approximated in an appropriate sense by sequences $G_k$ of graphs transporting $\mu_+^k$ to $\mu_-^k$, and its cost will be defined via relaxation as
\begin{equation*}
 \JEnXia(\flux) = \inf\left\{\liminf_{k \to \infty} \JEnXia(G_k) \ : \ (\mu_+^k,\mu_-^k,\flux_{G_k}) \weakstarto (\mu_+,\mu_-,\flux)\right\}\,.
\end{equation*}
The Lagrangian model on the other hand will describe the mass transport via an irrigation pattern $\chi:[0,1]\times[0,1]\to\R^n$ with $\chi(p,t)$ being the position of mass particle $p$ at time $t$ (cf.\,\cref{def:reference_space}).
Its cost will essentially be defined as
\begin{displaymath}
 \JEnMMS(\chi) = \int_{[0,1]\times[0,1]} \frac{\tau(m_\chi(\chi(p,t)))}{m_\chi(\chi(p,t))}|\dot\chi(p,t)|\,\de t\de p\,,
\end{displaymath}
where integration is with respect to the Lebesgue measure and $m_\chi(x)$ denotes the total amount of all mass particles $p$ travelling through $x$ (cf.\,\cref{def:patternCost}).

We are interested in the following optimization problems.
\begin{problem}[Flux and pattern optimization problem]
Given $\mu_+$ and $\mu_-$, the problems of finding an associated optimal \transportPath{} and irrigation pattern are
\begin{equation}\label{eqn:optProblems}
\begin{aligned}
&\min\{\JEnXia(\flux)\ :\ \flux\text{ is a \transportPath{} from }\mu_+\text{ to }\mu_-\}\,,\\
&\min\{\JEnMMS(\chi)\ :\ \chi\text{ is an irrigation pattern moving }\mu_+\text{ to }\mu_-\}\,.
\end{aligned}
\end{equation}
\end{problem}

The main results of the paper are the following.

\paragraph{Equivalence of formulations.}

In \cref{thm:existenceFluxAdmissible} and \cref{thm:modelEquivalence} we prove that under appropriate growth condition on the transportation cost $\tau$ (cf.\,\cref{def:admissible}) both problems in \eqref{eqn:optProblems} are well-posed, that is, they admit minimizers.
Moreover, both problems in \eqref{eqn:optProblems} have the same minimum (cf.\,\cref{thm:modelEquivalence}) and a minimizer of one of them induces a minimizer of the other.
Indeed, optimal \transportPath{} $\flux$ and irrigation pattern $\chi$ are related via
\begin{equation*}
\int_{\R^n}\varphi\cdot\de\flux=\int_{[0,1]\times[0,1]}\varphi(\chi(p,t))\cdot\dot\chi(p,t)\,\de t\de p \quad\text{for all appropriate test functions }\varphi.
\end{equation*}

\paragraph{Structure of the optimal \transportPath{} $\flux$.}

We study the structure of the optimal \transportPath{} $\flux$. We prove that it can be decomposed into $\flux=\theta\hdone\restr S+\flux^\perp$ for a countably $1$-rectifiable set $S\subset\R^n$, $\theta:S\to\R^n$, and a diffuse part $\flux^\perp$ (cf.\,\cref{thm:GilbertFlux}) such that the cost turns into
\begin{equation*}
\JEnXia(\flux)
=\int_{S}\tau(|\theta(x)|)\,\de\hdone(x)+\tau'(0)|\flux^\perp|(\R^n)\,.
\end{equation*}
The novelty in this formula is to introduce the diffuse part, which takes into account that a transport cost may have finite initial slope so that small amounts of mass may be moved more conveniently in a ``Monge--Kantorovich'' way (typically close to the the initial or the final measure).
Moreover, the optimal irrigation pattern $\chi$ has $m_\chi>0$ only on a countably $1$-rectifiable set $S\subset\R^n$ and its cost is (cf.\,\cref{thm:GilbertPatterns})
\begin{equation*}
\JEnMMS(\chi)
=\int_{S}\tau(m_{\chi}(x))\,\de\hdone(x)+\tau'(0)\int_{[0,1]}\hdone(\chi(p,[0,1])\setminus S)\,\de p\,.
\end{equation*}

\paragraph{Length space property.}

We also consider the minimum value $\dtau(\mu_+,\mu_-)$ of \eqref{eqn:optProblems}, which induces a metric on the space of probability measures that metrizes weak-$*$ convergence (cf.\,\cref{thm:metrization}).
Endowed with this metric, the space of probability measures is a length space, i.e. the metric $\dtau$ is induced by shortest paths (cf.\,\cref{thm:lengthSpaceProperty}).

\paragraph{Other properties of the network.}

We show that it is in general no longer true (unlike in branched transport) that optimal transportation networks have a tree-like structure; however, we prove several weaker properties of optimal transportation networks, for example the absence of loops.

\paragraph{1-current setting.}

In several places the key idea is to recast the Eulerian formulation as an optimization problem on $1$-currents with prescribed boundary, as done by Xia in \cite{Xia-Interior-Regularity} to study the interior regularity of optimal transport paths.

\subsection{Notation}
Throughout the article, we will use the following notation.
\begin{itemize}
 \item $\lebesgue^m$ denotes the $m$-dimensional \emph{Lebesgue measure}.

 \item $\hd^m$ denotes the $m$-dimensional \emph{Hausdorff measure}.

 \item $\fbm(\R^n)$ denotes the set of \emph{nonnegative finite Borel measures} on $\R^n$. Notice that these measures are countably additive and also regular by \cite[Thm.\,2.18]{Ru87}.
 The total variation measure of $\mu\in\fbm(\R^n)$ is defined as $|\mu|(A)=\sup\{\sum_{i=1}^\infty|\mu(A_i)|\ :\ A_i\subset A\text{ measurable and disjoint }\}$.
 The total variation norm of $\mu$ then is $|\mu|(\R^n)$.

 \item $\rca(\R^n;\R^n)$ denotes the set of \emph{$\R^n$-valued regular countably additive measures} on $\R^n$.
 The total variation measure of $\flux\in\rca(\R^n;\R^n)$ and its total variation norm are $|\flux|$ and $|\flux|(\R^n)$, respectively.

 \item \emph{Weak-$*$ convergence} on $\fbm(\R^n)$ or $\rca(\R^n;\R^n)$ is indicated by $\weakstarto$.

 \item The \emph{support} $\spt\mu$ of a measure $\mu$ in $\fbm(\R^n)$ or $\rca(\R^n;\R^n)$ is the smallest closed set $B$ with $|\mu|(\R^n\setminus B)=0$.

 \item The \emph{restriction} of a measure $\mu$ in $\fbm(\R^n)$ or $\rca(\R^n;\R^n)$ to a measurable set $B$ is the measure defined by $\mu\restr B(A)=\mu(A\cap B)$ for all measurable sets $A$.

 \item The \emph{pushforward} of a measure $\mu$ on $X$ under a measurable map $T:X\to Y$ is the measure defined by $\pushforward T\mu(A)=\mu(T^{-1}(A))$ for all measurable sets $A$.

 \item The \emph{Dirac mass} in $x\in\R^n$ is the measure $\delta_x(A)=1$ if $x\in A$ and $\delta_x(A)=0$ else.
 
 \item The \emph{Wasserstein-$1$-metric} $\Wdone(\mu_+,\mu_-)$ between two measures $\mu_+,\mu_-\in\fbm(\R^n)$ of equal mass is defined as $\Wdone(\mu_+,\mu_-)=\sup\{\int_{\R^n}f\,\de(\mu_+-\mu_-)\ :\ f\text{ Lipschitz with constant }\leq1\}$.
 It metrizes weak-$*$ convergence on the space of nonnegative finite Borel measures with equal mass.
 
 \item $I=[0,1]$ denotes the \emph{unit interval}.

 \item $\AC(I;\R^n)$ denotes the set of \emph{absolutely continuous functions} $f:I\to\R^n$.

 \item $\Lip(I;\R^n)$ denotes the set of \emph{Lipschitz functions} $f:I\to\R^n$.

 \item The \emph{characteristic function} of a set $A$ is defined as $\setchar{A}(x)=1$ if $x\in A$ and $\setchar A(x)=0$ else.

 \item $\Lambda^m(V)$ and $\Lambda_m(V)$ denote the vector spaces of \emph{$m$-vectors} and \emph{$m$-covectors} (sometimes called alternating $m$-linear forms) in the vector space $V$, respectively (using the notation from \cite[1.3-4]{Fe69}).
 In detail, $\Lambda^m(V)$ is the quotient space of the $m$-fold tensor product of $V$ with respect to the identification $x\otimes x=0$ for all $x\in V$, and $\Lambda_m(V)$ is its dual.
 If $V$ is equipped with an inner product, then an inner product of two $m$-vectors $v_1\wedge\ldots\wedge v_m,w_1\wedge\ldots\wedge w_m\in\Lambda^m(V)$ is defined by $(v_1\wedge\ldots\wedge v_m,w_1\wedge\ldots\wedge w_m)_{\Lambda^m(V)}=\det((v_i\cdot w_j)_{ij}$,
 which induces a norm $\|\cdot\|$ on $\Lambda^m(V)$.
 The corresponding operator norm on $\Lambda_m(V)$ is also denoted $\|\cdot\|$.

 \item $\cont^m$ (with varying domain and range specifications) denotes the vector space of \emph{$m$ times boundedly and continuously differentiable functions} with norm $\|\cdot\|_{\cont^m}$ being the supremum over the domain of all absolute derivatives up to order $m$.
 For instance, $\cont^0$ denotes the space of bounded continuous functions.
 
 \item $\cont_c$ and $\contsmooth$ denote the set of \emph{continuous} and \emph{smooth functions with compact support}, respectively.

\end{itemize}

The article is organized as follows.
\Cref{sec:Xia} introduces the Eulerian model formulation and examines properties of optimal transportation networks.
Furthermore, the metrization and the length space property are shown in that section.
\Cref{sec:MMS} then introduces the Lagrangian model formulation via irrigation patterns,
and the equivalence between Eulerian and Lagrangian formulation is proved in \cref{sec:equivalence}.

\section{Eulerian model for transportation networks}\label{sec:Xia}

We start by recapitulating the model formulation due to Xia \cite{Xia-Optimal-Paths}
and subsequently analyse its properties in our more general setting.
In large but not all parts we can follow the original arguments by Xia.

\subsection{Model definition}

Here, transportation networks are described with the help of graphs.
First only transport between discrete mass distributions is considered, and then general transportation problems are obtained via a relaxation technique.

\begin{definition}[\TransportPath{}]\label{def:mass_fluxes}
\begin{enumerate}
\item A \emph{discrete finite mass} shall be a nonnegative measure of the form $\mu = \sum_{i = 1}^k a_i\delta_{x_i}$ with $k\in\N$, $a_i>0$, $x_i\in \R^n$.

\item Let $\mu_+,\mu_-$ be discrete finite masses with $\mu_+(\R^n)=\mu_-(\R^n)$.
A \emph{discrete \transportPath{}} between $\mu_+$ and $\mu_-$ is a weighted directed graph $G$ with vertices $V(G)$, straight edges $E(G)$, and edge weight function $w : E(G) \to [0,\infty)$ such that the following \emph{mass preservation conditions} hold,
\begin{equation}\label{eqn:massPreservation}
\mu_+(\{v\}) + \sum_{\substack{e \in E(G)\\e^- = v}} w(e) = \mu_-(\{v\}) + \sum_{\substack{e \in E(G)\\e^+ = v}} w(e)
\qquad\text{for all }v\in V(G)\cup\spt\mu_+\cup\spt\mu_-,
\end{equation}
where $e^+$ and $e^-$ denote the initial (source) and final (sink) point of edge $e$.

\item The \emph{flux} associated with a discrete \transportPath{} $G$ is given by
\begin{equation}\label{eqn:graphFlux}
 \flux_G = \sum_{e \in E(G)} w(e)\mu_e\,,
\end{equation}
where every edge $e\in E(G)$ with direction $\hat e=\frac{e^--e^+}{|e^--e^+|}$ was identified with the vector measure $\mu_e = (\hdone\restr e)\, \hat e$.
\Cref{eqn:massPreservation} is equivalent to $\dv\flux_G = \mu_+ - \mu_-$ (in the distributional sense).

\item A vector measure $\flux\in\rca(\R^n;\R^n)$ is a \emph{\transportPath{}} between two nonnegative measures $\mu_+$ and $\mu_-$ (also known as \emph{transport path}), if there exist sequences of discrete finite masses $\mu_+^k$, $\mu_-^k$ with $\mu_+^k \weakstarto \mu_+$, $\mu_-^k \weakstarto \mu_-$
and a sequence of fluxes $\flux_{G_k}$ with $\flux_{G_k} \weakstarto \flux$, $\dv \flux_{G_k} = \mu_+^k - \mu_-^k$.
A sequence $(\mu_+^k,\mu_-^k,\flux_{G_k})$ satisfying these properties is called \emph{approximating graph sequence}, and we write $(\mu_+^k,\mu_-^k,\flux_{G_k}) \weakstarto (\mu_+,\mu_-,\flux)$.
Note that $\dv\flux=\mu_+-\mu_-$ follows by continuity with respect to weak-$*$ convergence.
\end{enumerate}
\end{definition}

In the above, $\mu_+$ has the interpretation of the initial material distribution or mass source, while $\mu_-$ represents the final distribution or sink.
The edge weight $w(e)$ indicates the amount of mass flowing along edge $e$
so that \eqref{eqn:massPreservation} expresses mass conservation on the way from initial to final distribution.
Indeed, \eqref{eqn:massPreservation} implies that the Dirac locations of $\mu_+$ and $\mu_-$ form vertices as well and that at every vertex $v$ the total mass influx equals the total outflux.
Thus, a \transportPath{} essentially encodes how the mass moves from $\mu_+$ to $\mu_-$, and it can be associated with a cost.

\begin{definition}[Cost functional]\label{def:costXia}
\begin{enumerate}
\item Given a transportation cost $\tau$, the \emph{cost function} of a discrete \transportPath{} $G$ between $\mu_+$ and $\mu_-$ is
\begin{displaymath}
 \JEnXia(G) = \sum_{e \in E(G)} \tau(w(e))\,l(e)\,,
\end{displaymath}
where $l(e)$ is the length of edge $e$.

\item The \emph{cost function} of a \transportPath{} $\flux$ between $\mu_+$ and $\mu_-$ is defined as
\begin{equation}\label{eq:functional_XiaEn}
 \JEnXia(\flux) = \inf\left\{\liminf_{k \to \infty} \JEnXia(G_k) \ : \ (\mu_+^k,\mu_-^k,\flux_{G_k}) \weakstarto (\mu_+,\mu_-,\flux)\right\}\,.
\end{equation}
We furthermore abbreviate
\begin{equation*}
\JEn^{\tau,\mu_+,\mu_-}[\flux]
=\begin{cases}
\JEnXia(\flux)&\text{if }\dv\flux=\mu_+-\mu_-,\\
\infty&\text{else.}
\end{cases}
\end{equation*}

\item Given $\mu_+,\mu_- \in \fbm(\R^n)$, the \emph{transport problem} is to find the solution $\flux$ of
\begin{equation*}
\dtau(\mu_+,\mu_-)
= \min_{\flux\in\rca(\R^n;\R^n)}\JEn^{\tau,\mu_+,\mu_-}[\flux]\,,
%= \min\{\JEnXia(\flux) \ : \ \flux \text{ \transportPath{} between } \mu_+\text{ and }\mu_-\}\,.
\end{equation*}
where $\dtau$ is called \emph{cost distance}.
\end{enumerate}
\end{definition}

We close this section with two lemmas showing that we may always restrict ourselves to probability measures with support on $[-1,1]^n$.
Therefore, throughout the article and without loss of generality we will assume any source or sink to lie in
\begin{equation*}
\prob=\{\mu\in\fbm(\R^n)\,:\,\mu(\R^n)=1,\,\spt\mu\subset[-1,1]^n\}\,.
\end{equation*}

\begin{lemma}[Mass rescaling]\label{lem:initial_and_final_measures_can_be_rescaled}
Let $\mu_+,\mu_-\in\fbm(\R^n)$ with $\mu_+(\R^n)=\mu_-(\R^n)=m$, and let $\tau$ be a transportation cost and $\flux\in\rca(\R^n;\R^n)$ a \transportPath{}.
We have
\begin{gather*}
\JEn^{\tau,\mu_+,\mu_-}[\flux]=\JEn^{\overline\tau,\overline\mu_+,\overline\mu_-}[\overline\flux]\quad\text{for}\\
\overline\mu_\pm=\mu_\pm/m\,,\;
\overline\tau(w)=\tau(mw)\,,\;
\overline\flux=\flux/m\,.
\end{gather*}
In particular, $\overline\tau$ is a valid transportation cost.
Furthermore, in \eqref{eq:functional_XiaEn} we may restrict to approximating graph sequences with $\mu_\pm^k(\R^n)=\mu_\pm(\R^n)$.
\end{lemma}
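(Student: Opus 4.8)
The plan is to push everything down to the level of discrete transport paths, where the statement becomes an elementary substitution, and then lift the conclusions back through the relaxation \eqref{eq:functional_XiaEn}. First I would record that $\overline\tau$ is a valid transportation cost: since $w\mapsto mw$ is a continuous, increasing bijection of $[0,\infty)$ fixing $0$, the four conditions of \cref{def:transportation_cost} transfer verbatim. Indeed $\overline\tau(0)=\tau(0)=0$ and $\overline\tau(w)=\tau(mw)>0$ for $w>0$; $\overline\tau$ is non-decreasing as a composition of non-decreasing maps; subadditivity follows from $\overline\tau(w_1+w_2)=\tau(mw_1+mw_2)\le\tau(mw_1)+\tau(mw_2)=\overline\tau(w_1)+\overline\tau(w_2)$; and lower semi-continuity is preserved under precomposition with a continuous map.

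The heart of the identity is the map $G\mapsto\overline G$ that leaves the underlying graph untouched and divides every edge weight by $m$, so $\overline w(e)=w(e)/m$. Dividing the mass preservation condition \eqref{eqn:massPreservation} by $m$ shows that $G$ is a discrete transport path from $\mu_+$ to $\mu_-$ if and only if $\overline G$ is one from $\mu_+/m$ to $\mu_-/m$ (the inverse map multiplies weights by $m$), while \eqref{eqn:graphFlux} gives $\flux_{\overline G}=\tfrac1m\flux_G$. Substituting $\overline w(e)=w(e)/m$ into the edge sum defining the discrete cost and using $\overline\tau(w)=\tau(mw)$ produces the asserted relation between $\JEnXia(G)$ and $\JEnXia[\overline\tau](\overline G)$ term by term. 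Because multiplication by the fixed scalar $1/m$ is a homeomorphism for weak-$*$ convergence and the total masses $\mu_\pm(\R^n)=m$ are fixed, the triple $(\mu_+^k,\mu_-^k,\flux_{G_k})$ converges weak-$*$ to $(\mu_+,\mu_-,\flux)$ exactly when $(\mu_+^k/m,\mu_-^k/m,\flux_{\overline G_k})$ converges weak-$*$ to $(\mu_+/m,\mu_-/m,\flux/m)$; hence $G_k\mapsto\overline G_k$ is a bijection between the competitor sequences in \eqref{eq:functional_XiaEn} on the two sides, and corresponding sequences carry corresponding discrete costs. Taking the infimum of the $\liminf$ then upgrades the edgewise relation to the claimed identity of the relaxed functionals; the same bijection shows $\flux$ is a transport path from $\mu_+$ to $\mu_-$ precisely when $\flux/m$ is one from $\mu_+/m$ to $\mu_-/m$, so the identity holds trivially, with both sides $+\infty$, when it is not.

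For the last assertion, take an arbitrary approximating graph sequence $(\mu_+^k,\mu_-^k,\flux_{G_k})\weakstarto(\mu_+,\mu_-,\flux)$, for which $m_k:=\mu_+^k(\R^n)=\mu_-^k(\R^n)\to\mu_+(\R^n)=m$ (we may assume this, e.g.\ after the routine truncation of the $\mu_\pm^k$ to a fixed compact set carrying $\spt\mu_\pm$, which does not raise the cost). Put $M_k:=\max\{m_k,m\}\ge m_k$ and alter $G_k$ in two cost-controlled steps. First, add the weight-free stationary atom $(M_k-m_k)\delta_0$ to both $\mu_+^k$ and $\mu_-^k$: this changes neither $\flux_{G_k}$, nor its divergence, nor $\JEnXia(G_k)$, but raises the total mass to $M_k$. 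Second, multiply all edge weights — and both measures — by $m/M_k\le1$: this lowers the total mass to exactly $m$, scales the flux by $m/M_k\to1$, and does not increase the cost because $\tau$ is non-decreasing. The modified graphs $\tilde G_k$ then have $\tilde\mu_\pm^k(\R^n)=m$, satisfy $(\tilde\mu_+^k,\tilde\mu_-^k,\flux_{\tilde G_k})\weakstarto(\mu_+,\mu_-,\flux)$ since $m/M_k\to1$ and $M_k-m_k\to0$, and obey $\JEnXia(\tilde G_k)\le\JEnXia(G_k)$, so $\liminf_k\JEnXia(\tilde G_k)\le\liminf_k\JEnXia(G_k)$. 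Consequently the infimum in \eqref{eq:functional_XiaEn} is unchanged when one restricts to sequences with $\mu_\pm^k(\R^n)=\mu_\pm(\R^n)$, the reverse inequality being obvious.

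The one genuinely delicate point is this last construction. As $\tau$ is only lower semi-continuous, rescaling all edge weights \emph{upward} — the naive remedy when $m_k<m$ — can make the cost jump (think of the discrete cost, or of any $\tau$ discontinuous from the right at $0$), so I cannot afford a rescaling factor larger than $1$. Padding first with a cost-free stationary atom is exactly what turns the single rescaling into a contraction $m/M_k\le1$, under which monotonicity of $\tau$ keeps the cost from increasing. Everything else — the four-point check on $\overline\tau$ and the weak-$*$ bicontinuity of scalar multiplication — is routine.
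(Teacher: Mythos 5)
Your proposal is correct and follows essentially the same route as the paper: the scaling identity is reduced to the weight-rescaling bijection $G\mapsto G/m$ on discrete transport paths together with the weak-$*$ bicontinuity of scalar multiplication, and the mass-normalization of approximating sequences is handled by padding with a stationary atom at the origin and contracting by a factor $\le 1$ so that monotonicity of $\tau$ controls the cost. Your uniform pad-then-contract with $M_k=\max\{m_k,m\}$ is just a repackaging of the paper's case distinction ($\lambda_k\le 1$ versus $\lambda_k>1$), and your explicit caution against upward rescaling correctly identifies why the construction is set up this way.
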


\begin{proof}
That $\overline\tau$ represents a valid transportation cost is straightforward to check.
Likewise, it is easy to see that there is a one-to-one relation between approximating graph sequences $(\mu_+^k,\mu_-^k,\flux_{G_k})\weakstarto(\mu_+,\mu_-,\flux)$
and approximating graph sequences $(\overline\mu_+^k,\overline\mu_-^k,\flux_{\overline G_k})\weakstarto(\overline\mu_+,\overline\mu_-,\overline\flux)$
via $\overline\mu_\pm^k=\mu_\pm^k/m$ and $\overline G_k=G_k/m$ (the latter means that all edge weights are divided by $m$).
Furthermore, $\JEnXia(G_k)=\JEnXia[\overline\tau](\overline G_k)$, which together with the above directly implies the first statement.

As for the last statement, consider an approximating graph sequence $(\mu_+^k,\mu_-^k,\flux_{G_k})\weakstarto(\mu_+,\mu_-,\flux)$ and set $\lambda_k=\mu_\pm(\R^n)/\mu_\pm^k(\R^n)$.
Due to the continuity of $\mu_\pm^k(\R^n)$ with respect to weak-$*$ convergence we have $\lambda_k\to1$ as $k\to\infty$.
Now it is straightforward to see that another valid approximating graph sequence is obtained as
$(\tilde\mu_+^k,\tilde\mu_-^k,\tilde G_k)=(\lambda_k\mu_+^k,\lambda_k\mu_-^k,\lambda_k\flux_{G_k})$ if $\lambda_k\leq1$ and $(\tilde\mu_+^k,\tilde\mu_-^k,\tilde G_k)=(\mu_+^k+(1-\lambda_k)\delta_0,\mu_-^k+(1-\lambda_k)\delta_0,\flux_{G_k})$ else.
However, $\liminf_{k \to \infty} \JEnXia(\tilde G_k)\leq\liminf_{k \to \infty} \JEnXia(G_k)$.% in \eqref{eq:functional_XiaEn}
\end{proof}

\begin{lemma}[Domain rescaling]\label{thm:domainRescaling}
Let $\mu_+,\mu_-\in\fbm(\R^n)$ with $\spt\mu_+,\spt\mu_-\subset[-s,s]^n$, and let $\tau$ be a transportation cost and $\flux\in\rca(\R^n;\R^n)$ a \transportPath{}.
We have
\begin{gather*}
\JEn^{\tau,\mu_+,\mu_-}[\flux]=s\JEn^{\tau,\overline\mu_+,\overline\mu_-}[\overline\flux]\quad\text{for}\quad
\overline\mu_\pm=\pushforward{(\tfrac1s\id)}{\mu_\pm}\,,\;
\overline\flux=\pushforward{(\tfrac1s\id)}{\flux}\,.
\end{gather*}
Furthermore, in \eqref{eq:functional_XiaEn} we may restrict to approximating graph sequences with $\spt\mu_\pm^k,V(G_k)\subset[-s,s]^n$.
\end{lemma}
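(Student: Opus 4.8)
The plan is to imitate the proof of \cref{lem:initial_and_final_measures_can_be_rescaled}, using the homothety $T:=\tfrac1s\id$ of $\R^n$ in place of the scalar mass rescaling, and to obtain the last assertion by projecting an arbitrary competing graph sequence onto the cube $[-s,s]^n$. For the scaling identity, note that $T$ is a linear homeomorphism, hence induces a bijection on discrete finite masses (preserving total mass, and carrying those supported in $[-s,s]^n$ to those supported in $[-1,1]^n$) and a bijection on discrete \transportPaths{}: to $G$ one associates $\overline G$ with vertex set $T(V(G))$, straight edges $T(e)$ ($T$ being affine), and unchanged weights. The mass preservation condition \eqref{eqn:massPreservation} is purely combinatorial in the weights together with the atomic values $\mu_\pm(\{v\})=\pushforward T{\mu_\pm}(\{Tv\})$, so it is preserved; thus $\overline G$ is a discrete \transportPath{} between $\overline\mu_+=\pushforward T{\mu_+}$ and $\overline\mu_-=\pushforward T{\mu_-}$, and $\flux_{\overline G}$ is related to $\flux_G$ exactly as $\overline\flux$ to $\flux$ in the statement, since $T$ fixes every edge direction and divides every edge length by $s$. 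As $\tau$ and the weights are untouched while each length is divided by $s$, one gets $\JEnXia(G)=s\,\JEnXia(\overline G)$. Just as in \cref{lem:initial_and_final_measures_can_be_rescaled} this yields a bijection between approximating graph sequences $(\mu_+^k,\mu_-^k,\flux_{G_k})\weakstarto(\mu_+,\mu_-,\flux)$ and $(\overline\mu_+^k,\overline\mu_-^k,\flux_{\overline G_k})\weakstarto(\overline\mu_+,\overline\mu_-,\overline\flux)$ --- weak-$*$ convergence is preserved because pushforward under the homeomorphism $T$ preserves weak-$*$ convergence, and the constraints $\dv\flux_{G_k}=\mu_+^k-\mu_-^k$ are carried into $\dv\flux_{\overline G_k}=\overline\mu_+^k-\overline\mu_-^k$ by the compatibility of pushforward with $\dv$ under the rescaling. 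Taking $\liminf$ and then the infimum over such sequences in \eqref{eq:functional_XiaEn} gives $\JEnXia(\flux)=s\,\JEnXia(\overline\flux)$, hence the identity for $\JEn^{\tau,\mu_+,\mu_-}$ (both sides being $+\infty$ together when $\dv\flux\neq\mu_+-\mu_-$).

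For the last assertion, take any approximating graph sequence $(\mu_+^k,\mu_-^k,\flux_{G_k})\weakstarto(\mu_+,\mu_-,\flux)$ and let $\pi:\R^n\to[-s,s]^n$ be the coordinate-wise clamping to $[-s,s]$, i.e.\ the nearest-point projection onto the cube. It is $1$-Lipschitz, monotone in each coordinate, equal to the identity on $[-s,s]^n$ (which contains $\spt\mu_\pm$ and, as we may assume, $\spt\flux$), and maps every segment to a polygonal path with finitely many straight pieces, all lying in $[-s,s]^n$. Replace $\mu_\pm^k$ by $\pushforward\pi{\mu_\pm^k}$ --- supported in the cube, and still $\weakstarto\mu_\pm$ since $\pi$ is continuous and fixes $\spt\mu_\pm$ --- and replace each edge $e$ of $G_k$ by the (non-degenerate straight pieces of the) polygonal path $\pi(e)$ with unchanged weight, inserting new vertices at the bends; call the result $G_k'$, so that $V(G_k')\subset[-s,s]^n$. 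Because $\pi$ is monotone in each coordinate, $\flux_{G_k'}$ is precisely the pushforward $\pushforward\pi{\flux_{G_k}}$ (in the sense of currents), so $\dv\flux_{G_k'}=\pushforward\pi{\mu_+^k}-\pushforward\pi{\mu_-^k}$ by compatibility of pushforward with the boundary and $G_k'$ is an admissible discrete \transportPath{} between the projected masses; likewise $\flux_{G_k'}=\pushforward\pi{\flux_{G_k}}\weakstarto\pushforward\pi{\flux}=\flux$. Finally, $1$-Lipschitzness of $\pi$ gives $\mathrm{length}(\pi(e))\le|e|$, hence $\JEnXia(G_k')\le\JEnXia(G_k)$ and $\liminf_k\JEnXia(G_k')\le\liminf_k\JEnXia(G_k)$; thus the restricted class already realises the infimum in \eqref{eq:functional_XiaEn}.

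I expect the only genuinely delicate point to be the measure-geometric bookkeeping in the projection step: one must not leave the class of \emph{discrete} \transportPaths{} (finitely many straight edges, exact mass preservation at each vertex). Subdividing each projected edge at its bends, rather than straightening it, is what keeps $\flux_{G_k'}$ equal to the honest current pushforward of $\flux_{G_k}$, and this is exactly what makes both the mass preservation identity $\dv\flux_{G_k'}=\pushforward\pi{(\mu_+^k-\mu_-^k)}$ and the convergence $\flux_{G_k'}\weakstarto\flux$ immediate; the $1$-Lipschitz property of $\pi$ then delivers the cost bound with no further work. In the first part the only thing worth verifying with care is that the correspondence between the fluxes, and between the divergence constraints, is precisely the rescaling written in the statement.
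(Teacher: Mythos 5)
Your proof is correct and follows essentially the same route as the paper's: the scaling identity via the pushforward under $\tfrac1s\id$ (weights unchanged, every edge length divided by $s$), and the restriction to the cube via the $1$-Lipschitz nearest-point projection onto $[-s,s]^n$. The only (harmless) difference is in the projection step: the paper simply projects the vertices and keeps straight edges between their images, whereas you project each edge pointwise and subdivide at the bends --- your variant has the small advantage that the modified flux is literally the Lipschitz pushforward of $\flux_{G_k}$, which makes the mass-preservation identity and the convergence of the modified sequence more transparent.
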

\begin{proof}
Again there is a one-to-one relation between approximating graph sequences $(\mu_+^k,\mu_-^k,\flux_{G_k})\weakstarto(\mu_+,\mu_-,\flux)$
and approximating graph sequences $(\overline\mu_+^k,\overline\mu_-^k,\flux_{\overline G_k})\weakstarto(\overline\mu_+,\overline\mu_-,\overline\flux)$
via $\overline\mu_\pm^k=\pushforward{(\tfrac1s\id)}{\mu_\pm^k}$ and $\overline G_k=\pushforward{(\tfrac1s\id)}{G_k}$ (the latter means that all vertex coordinates and edges are rescaled by $\frac1s$).
Furthermore, $\JEnXia(G_k)=s\JEnXia(\overline G_k)$, which implies the first statement.

As for the last statement, it is straightforward to see that for any approximating graph sequence $(\mu_+^k,\mu_-^k,G_k)\weakstarto(\mu_+,\mu_-,\flux)$
we may project the Dirac locations of $\mu_\pm^k$ and the vertices of $G_k$ orthogonally onto $[-s,s]$, resulting in a modified approximating graph sequence with non-greater cost.
Indeed, the edge lengths (and thus also the cost functional) are at most decreased, and $\mu_\pm^k\weakstarto\mu_\pm$ still holds after the modification.
\end{proof}

\subsection{Existence of minimizers and their properties}

We will see that under certain growth conditions there will always be an optimal \transportPath{} between any two measures $\mu_+,\mu_-\in\fbm(\R^n)$ with bounded support.
To this end we first show that optimal discrete \transportPaths{} never have cycles.

\begin{lemma}[Acyclicity of discrete \transportPaths{}]\label{thm:no_cycles_lemma}
For any discrete \transportPath{} $G$ there exists an acyclic discrete \transportPath{} $G_\lambda$ with same initial and final measure and $\JEnXia(G_\lambda)\leq\JEnXia(G)$.
\end{lemma}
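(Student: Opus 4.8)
The plan is to remove cycles one at a time by pushing mass around each cycle until at least one edge weight drops to zero, always arguing that the cost does not increase. Concretely, suppose $G$ contains a cycle, i.e.\ a closed walk $e_1, e_2, \dots, e_r$ (edges, traversed in either direction) whose underlying undirected edges are distinct. Assign to each $e_j$ a sign $\sigma_j \in \{+1,-1\}$ according to whether the cycle traverses $e_j$ in the direction of $\hat e_j$ or against it. For a parameter $t$ in a suitable interval around $0$, define a modified weight function $w_t(e_j) = w(e_j) - \sigma_j t$ on the cycle edges and $w_t(e) = w(e)$ otherwise; the new graph $G_t$ has the same vertices and edges (with the same orientations) as $G$. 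Because the signed increments around the cycle telescope, the mass preservation conditions \eqref{eqn:massPreservation} still hold for every vertex, so $G_t$ is a discrete \transportPath{} between the same $\mu_+$ and $\mu_-$, provided all $w_t(e_j) \ge 0$.

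The map $t \mapsto \JEnXia(G_t) = \sum_j \tau(w(e_j) - \sigma_j t)\, l(e_j) + (\text{constant})$ is, as a sum of compositions of the subadditive (hence, by \cref{thm:averageCost}-type reasoning, controlled) function $\tau$ with affine maps, not necessarily convex; but I do not need convexity. Let $t_+ = \min\{ w(e_j)/\sigma_j : \sigma_j = +1\}$ and $t_- = -\min\{ w(e_j)/(-\sigma_j) : \sigma_j = -1\}$ be the endpoints of the interval of admissible $t$ (with the obvious convention if one sign is absent). At each of $t = t_+$ and $t = t_-$, at least one cycle edge attains weight zero. The key point is that $\JEnXia(G_{t_+}) \le \JEnXia(G)$ \emph{or} $\JEnXia(G_{t_-}) \le \JEnXia(G)$: indeed, for a non-decreasing subadditive $\tau$ the one-dimensional function $g(t) = \JEnXia(G_t)$ satisfies $g(0) \ge \min\{g(t_+), g(t_-)\}$. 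This last inequality is what I would isolate as a small sublemma; it follows because subadditivity of $\tau$ gives, for $s$ between two admissible endpoints, $\tau(w - \sigma t)$-type terms whose sum is minimized at an extreme of the interval — more carefully, splitting the ``push'' $t$ as a convex combination $t = \theta t_+ + (1-\theta) t_-$ and using that along each edge $\tau(w(e_j) - \sigma_j(\theta t_+ + (1-\theta) t_-)) \le$ a corresponding combination is false in general, so instead I would argue by the standard fact that a function of the form $t \mapsto \sum_j c_j \tau(a_j + b_j t)$ with $\tau$ subadditive and non-decreasing attains its minimum over an interval at an endpoint once one observes it suffices to treat $\tau$ non-decreasing concave after replacing $\tau$ by its lower semicontinuous (concave-like) structure — cleanest is: since $\tau$ is subadditive and $\tau(0)=0$, for any $a \ge b \ge 0$ and $c \ge 0$ one has $\tau(a) - \tau(a-c) \le \tau(c) \le \tau(b+c) - \tau(b)$ is again not quite it, so the honest route is to note $g$ need not be concave but we only need $g(0) \ge \min\{g(t_+),g(t_-)\}$, and that reduces to: \emph{on any interval, $t\mapsto \tau(\alpha + \beta t)$ lies above the chord only at the interior} — which can fail. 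Hence I expect this comparison to require genuinely using that $\tau$ is \emph{non-decreasing} to control the sign-$+1$ edges and sign-$-1$ edges separately, decreasing one group while the other stays bounded, rather than a clean concavity argument.

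Granting the one-step reduction, I then finish by induction on the number of (undirected) edges, or equivalently on the first Betti number of the underlying graph: pick the cycle, move to whichever of $G_{t_+}, G_{t_-}$ has cost $\le \JEnXia(G)$, delete the now-zero-weight edge(s) (deleting zero-weight edges changes neither the flux nor the cost nor \eqref{eqn:massPreservation}), and observe the resulting graph $G'$ has strictly fewer edges, the same $\mu_+,\mu_-$, and $\JEnXia(G') \le \JEnXia(G)$. After finitely many steps we reach an acyclic $G_\lambda$ with $\JEnXia(G_\lambda) \le \JEnXia(G)$ and the same initial and final measure, as claimed.

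\medskip
\noindent\textbf{Anticipated main obstacle.} The real content is the sublemma ``$g(0) \ge \min\{g(t_+), g(t_-)\}$'' for $g(t) = \sum_j \tau(w(e_j) - \sigma_j t)\, l(e_j)$. For classical branched transport ($\tau(w) = w^\alpha$) this is immediate from strict concavity of each summand; in the present generality $\tau$ is only non-decreasing, subadditive, and lower semicontinuous, so each summand $t \mapsto \tau(w(e_j) - \sigma_j t)$ is merely ``subadditive-like'' along the line and can be neither concave nor even continuous from both sides. I would handle this by separating the edges traversed with the cycle's orientation from those traversed against it: as $t$ increases from $0$ to $t_+$, the ``$+$'' edges lose weight (their $\tau$-contributions do not increase, since $\tau$ is non-decreasing) while the ``$-$'' edges gain weight; subadditivity bounds the \emph{gain} $\tau(w + t) - \tau(w) \le \tau(t)$ uniformly, and a matching lower bound on the total \emph{loss} over the ``$+$'' edges — coming from the fact that at $t = t_+$ some ``$+$'' edge hits $0$ and contributes its full $\tau(w(e_j))$ worth of decrease — is what must be balanced. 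Making that bookkeeping precise, possibly by also considering the symmetric push toward $t_-$ and taking the better of the two, is the step I expect to occupy most of the proof.
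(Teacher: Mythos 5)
There is a genuine gap, and it stems from using the wrong notion of ``cycle''. In this paper a cycle (the thing that must be removed for acyclicity) is a loop of edges with \emph{consistent direction}, i.e.\ a directed cycle; it is not an arbitrary closed walk in the underlying undirected graph. For a directed cycle all of your signs $\sigma_j$ equal $+1$, only the endpoint $t_+=\min_j w(e_j)$ is relevant, every weight on the cycle decreases as $t$ increases, the mass preservation conditions \eqref{eqn:massPreservation} are preserved, and $\JEnXia(G_{t_+})\leq\JEnXia(G)$ follows from monotonicity of $\tau$ alone — no concavity, no subadditivity, no balancing of gains against losses. Restricted to directed cycles, your construction is exactly the paper's proof, and the induction you describe (delete the zero-weight edge, repeat) finishes the argument.

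By allowing edges traversed against their orientation you are instead attacking the strictly stronger tree-structure statement, and the ``sublemma'' $g(0)\geq\min\{g(t_+),g(t_-)\}$ that you correctly identify as the crux is \emph{false} for general transportation costs. The paper proves the tree-structure result only for concave $\tau$ (\cref{thm:treeStructure}, using a supergradient inequality $\tau(w\pm\lambda)\leq\tau(w)\pm\tau'(w)\lambda$ that has no analogue under mere subadditivity), and \cref{rem:notree} exhibits a non-concave admissible $\tau$ together with a non-tree graph whose cost is strictly smaller than that of every tree with the same boundary data: pushing mass around that undirected loop to either extreme strictly increases the cost. So the obstacle you flag at the end is not a bookkeeping issue to be overcome but a genuine counterexample; the proof closes only once you note that acyclicity here concerns consistently oriented loops, for which your $t_-$ endpoint does not exist and monotonicity of $\tau$ suffices.
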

\begin{proof}
Suppose that there is a single cycle $L\subset E(G)$, that is, $L=\{e_1,\ldots,e_l\}$ for a sequence $e_1,\ldots,e_l$ of edges with positive weight such that $e_k^-=e_{k+1}^+$, $k=1,\ldots,l-1$, and $e_l^-=e_1^+$\notinclude{ loop of edges with consistent direction and positive weight}.
For $\lambda=\min\{w(e) : e \in L\}$ consider the graph $G_\lambda$ whose edge weights are given by
\begin{displaymath}
 w_\lambda(e) = \begin{cases}
                 w(e) & e \notin L,\\
                 w(e)-\lambda & e \in L.
                \end{cases}
\end{displaymath}
Note that the initial and final measure of $G_\lambda$ are the same as of $G$ and that $G_\lambda$ no longer contains a cycle, since one edge in $L$ has weight $0$ and can thus be removed.
By the monotonicity of $\tau$ we have $\tau(w-\lambda)\leq\tau(w)$ so that
\begin{equation*}
 \JEnXia(G_\lambda)-\JEnXia(G)
 = \sum_{e \in L} [\tau(w(e)-\lambda)-\tau(w(e))]l(e)
 \leq0\,.
\end{equation*}
In case of multiple cycles we just repeat this procedure until all cycles are removed.
\end{proof}

For completeness, let us at this point also prove a stronger property of optimal discrete \transportPaths{} in case of concave transportation costs $\tau$, namely their tree structure.
By tree we shall here understand an acyclic directed graph such that from any vertex $x$ to any other vertex $y$ there exists at most one path $\{e_1,\ldots,e_l\}\subset E(G)$ with $e_1^+=x$, $e_l^-=y$, and $e_k^-=e_{k+1}^+$ for $k=1,\ldots,l-1$\notinclude{consistent with the edge orientations}.
Note that with this convention a tree may be composed of multiple disjoint trees.
\notinclude{Note that the trees do not have to be trunk trees!}

\begin{lemma}[Tree structure of discrete \transportPaths{}]\label{thm:treeStructure}
For any discrete \transportPath{} $G$ and concave transportation cost $\tau$ there exists a tree $G_\lambda$ with same initial and final measure and $\JEnXia(G_\lambda)\leq\JEnXia(G)$.
\end{lemma}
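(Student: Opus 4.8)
The plan is to show that, at no increase of cost, one can successively push mass around cycles until the underlying \emph{undirected} graph of $G$ carries no cycle at all; such a graph is a forest, and a forest is a tree in the sense of the statement, because between any two vertices there is then at most one undirected path and hence at most one directed path.

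First I would invoke \cref{thm:no_cycles_lemma} to assume, without loss of generality and without increasing the cost, that $G$ is acyclic, i.e.\ has no directed cycle. Suppose its underlying undirected graph still contains a cycle; choose a simple one, $C=(e_1,\dots,e_k)$, fix a sense of traversal, and put $\sigma_i=+1$ if $e_i$ points along the traversal and $\sigma_i=-1$ otherwise. Since $G$ is acyclic, the $\sigma_i$ cannot all be equal (otherwise $C$, suitably traversed, would be a directed cycle), so both values occur. For $t\in\R$ let $G_t$ be the graph with the same vertices and edges as $G$ and weights $w_t(e)=w(e)$ for $e\notin C$ and $w_t(e_i)=w(e_i)+t\sigma_i$.

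The key step is to verify that $G_t$ still satisfies the mass preservation conditions \eqref{eqn:massPreservation} with the same $\mu_+,\mu_-$ whenever all weights $w_t(e_i)$ are nonnegative. At a vertex off $C$ nothing changes; at a vertex $v$ on $C$ exactly two cycle edges, say $e_i$ and $e_{i+1}$, meet at $v$, and a short case distinction on whether each of them points into or out of $v$ shows that the induced changes $t\sigma_i$ and $t\sigma_{i+1}$ contribute with opposite net sign to the mass balance at $v$ and cancel; hence $\dv\flux_{G_t}=\mu_+-\mu_-$. Next, $\JEnXia(G_t)=\sum_{i=1}^k\tau(w(e_i)+t\sigma_i)\,l(e_i)+\text{const}$ is a concave function of $t$, each summand being the concave map $\tau$ precomposed with an affine map and scaled by $l(e_i)>0$. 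Because both signs occur among the $\sigma_i$, the set $\{t:w_t\geq0\}$ is a bounded interval $[t_-,t_+]\ni0$, and at each of its endpoints at least one edge of $C$ has weight zero. By concavity $\min\{\JEnXia(G_{t_-}),\JEnXia(G_{t_+})\}\leq\JEnXia(G_0)=\JEnXia(G)$; picking that endpoint and deleting an edge of weight zero yields a discrete \transportPath{} with the same initial and final measure, with cost $\leq\JEnXia(G)$, still acyclic, and with strictly fewer edges. Iterating finitely often removes all undirected cycles and produces the desired tree $G_\lambda$.

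I expect the only point requiring real care to be the sign bookkeeping in the mass-preservation check, together with the easy but essential observation that acyclicity forces both signs $\sigma_i$ to appear — this is exactly what makes the admissible interval $[t_-,t_+]$ bounded and thereby guarantees that at least one edge is eliminated at every step, so that the procedure terminates after finitely many iterations. The concavity estimate and the edge count are routine.
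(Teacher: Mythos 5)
Your proof is correct and follows essentially the same strategy as the paper: cancel flow around an undirected cycle, use concavity of $\tau$ to show the cost does not increase, and drive one edge weight to zero so the cycle disappears after finitely many iterations. The only difference is cosmetic — the paper fixes the cancellation direction a priori via a supergradient inequality and shifts by the fixed amount $\lambda=\min_{e\in L_-}w(e)$, whereas you minimize the concave one-parameter cost $t\mapsto\JEnXia(G_t)$ over the admissible interval and take the better endpoint; both devices produce the same modified graph, and your preliminary reduction to the directed-acyclic case correctly guarantees that this interval is bounded.
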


\begin{proof}
Suppose that there is a subset $L\subset E(G)$ that forms a loop (not necessarily with consistent edge orientation, that is, two neighbouring edges may also meet at both their end points or their starting points), and choose an orientation.
Let $L_+\subset L$ be the subset of edges with same orientation and $L_-=L\setminus L_+$.
Assume that the loop orientation was chosen so that $\sum_{e \in L_+}\tau'(w(e))l(e) \leq \sum_{e \in L_-}\tau'(w(e))l(e)$ (else reverse the orientation),
where $\tau'$ shall denote an element of the supergradient of $\tau$.
Next, for $\lambda=\min\{w(e) : e \in L_-\}$ consider the graph $G_\lambda$ whose multiplicity is given by
\begin{displaymath}
 w_\lambda(e) = \begin{cases}
                 w(e) & e \notin L,\\
                 w(e)+\lambda & e \in L_+,\\
                 w(e)-\lambda & e \in L_-.
                \end{cases}
\end{displaymath}
Note that the initial and final measure of $G_\lambda$ are the same as of $G$ and that $G_\lambda$ no longer contains the loop, since one edge in $L_-$ has weight $0$ and can thus be removed.
By the concavity of $\tau$ we have $\tau(w\pm\lambda)\leq\tau(w)\pm\tau'(w)\lambda$ so that
\begin{multline*}
 \JEnXia(G_\lambda)-\JEnXia(G)
 = \sum_{e \in L_+} [\tau(w(e)+\lambda)-\tau(w(e))]l(e) + \sum_{e \in L_-} [\tau(w(e)-\lambda)-\tau(w(e))]l(e)\\
 \leq\lambda\left(\sum_{e \in L_+}\tau'(w(e))l(e) - \sum_{e \in L_-}\tau'(w(e))l(e)\right)\leq0\,.
\end{multline*}
In case of multiple loops we just repeat this procedure until all loops are removed so that the resulting graph has a tree structure.
\end{proof}

\begin{remark}[Strict concavity]
If $\tau$ is strictly concave, the same proof shows that every optimal discrete \transportPath{} must have a tree structure.
\end{remark}

% \begin{remark}
% This property is weaker than the single path property. See also \cite[Proposition 3.4.1]{BrWi15-equivalent}.
% \end{remark}

\begin{remark}[Necessity of concavity]\label{rem:notree}
If $\tau$ is not concave, \cref{thm:treeStructure} is false, and optimal discrete \transportPaths{} may not have a tree structure.
Indeed, for $\delta>0,l>2$ and $a\in(0,1)$ let
\begin{equation*}
\mu_+=a\delta_{(0,0)}+(1-a)\delta_{(0,1)}\,,\qquad
\mu_-=a\delta_{(l,0)}+(1-a)\delta_{(l,1)}\,,
\end{equation*}
as well as $\tau(w)=\lceil\frac w\delta\rceil$
as illustrated in \cref{fig:nontree}.
Note that we choose $\delta$ and $a$ such that $\tau(a)=\tau(1-a)-\delta$ and there is $\varepsilon<\delta$ with $\tau(a+\varepsilon)=\tau(a)=\tau(1-a-\varepsilon)$ (\cref{fig:nontree} right).
In that case, only two tree topologies are possible, displayed in \cref{fig:nontree} left (note that in principle a third possible tree topology exists which---ignoring edge directions---looks like $G_2$ rotated by $\frac\pi2$, however, its central edge would necessarily have zero weight so that this topology would be equivalent to $G_1$).
The first one has cost $\JEnXia(G_1)=l\tau(a)+l\tau(1-a)$, while the second one has larger cost $\JEnXia(G_2)>\JEnXia(G_1)$ if $\delta$ is small enough due to its longer edges and $\tau(1)\geq\tau(a)+\tau(1-a)-\delta$.
However, the nontree discrete \transportPath{} $G_3$ has the strictly smaller cost
$\JEnXia(G_3)=2\delta+l\tau(a+\varepsilon)+l\tau(1-a-\varepsilon)=2\delta+l\tau(a)+l\tau(1-a)-l\delta<\JEnXia(G_1)$.
\end{remark}

\begin{figure}
\centering
\setlength\unitlength\linewidth
\begin{picture}(1,.15)
\put(0,0){\includegraphics[width=\unitlength]{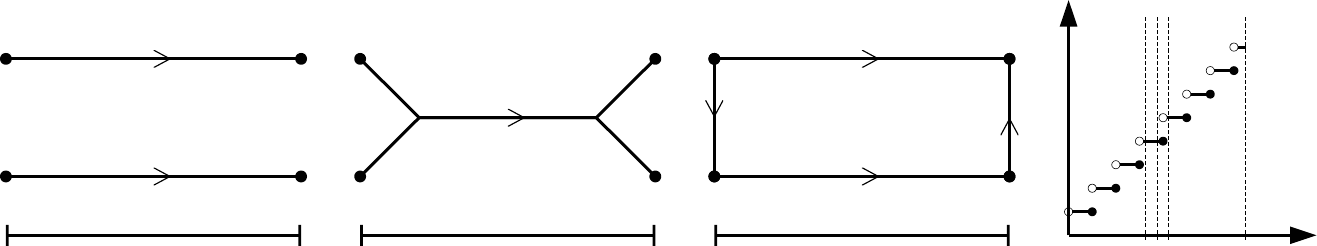}}
\put(.12,-.01){\small$l$}
\put(.38,-.01){\small$l$}
\put(.65,-.01){\small$l$}
\put(.8,.02){\small$\delta$}
\put(.12,.06){\small$a$}
\put(.1,.12){\small$1-a$}
\put(.39,.08){\small$1$}
\put(.64,.06){\small$a+\varepsilon$}
\put(.61,.12){\small$1-a-\varepsilon$}
\put(.55,.09){\small$\varepsilon$}
\put(.75,.09){\small$\varepsilon$}
\put(.77,.18){\small$\tau(w)$}
\put(.99,-.01){\small$w$}
\put(.94,-.01){\small$1$}
\put(.865,-.01){\small$a$}
\put(.88,-.01){\small$1\!\!-\!\!a$}
\put(0,.15){\small$G_1$}
\put(.27,.15){\small$G_2$}
\put(.54,.15){\small$G_3$}
\end{picture}
\caption{Illustration of the counterexample from \cref{rem:notree}. The non-tree graph has the smallest cost.}
\label{fig:nontree}
\end{figure}

As a consequence of the above, the mass flux through each edge of an optimal discrete \transportPath{} can be bounded above.

\begin{lemma}[Maximal mass flux]\label{thm:bounded_maximal_flux}
Let $G$ be an acyclic discrete \transportPath{} between $\mu_+$ and $\mu_-$. Then $w(e)\leq\mu_+(\R^n)$ for all $e\in E(G)$.
\end{lemma}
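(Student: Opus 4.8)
The plan is to run a standard max-flow style cut argument: acyclicity lets me isolate, for a given edge $e_0$, a set $A$ of vertices such that $e_0$ is one of the edges leaving $A$ and such that \emph{no} edge enters $A$ from outside; summing the mass-preservation identity over $A$ then shows that the total flow out of $A$ equals the source mass inside $A$ minus the sink mass inside $A$, which is at most $\mu_+(\R^n)$.

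First I would fix an edge $e_0\in E(G)$ (recall that $G$ has finitely many vertices and edges) and define $A\subset V(G)$ to be the set of all vertices $v$ from which $e_0^+$ is reachable along a directed path consistent with the edge orientations, where the empty path is admitted so that $e_0^+\in A$. Acyclicity gives $e_0^-\notin A$: a directed path from $e_0^-$ to $e_0^+$ together with the edge $e_0$ (running from $e_0^+$ to $e_0^-$) would close up to a cycle. Moreover, $A$ contains the tail of every edge whose head lies in $A$: if $e^-\in A$, then prepending $e$ to a path from $e^-$ to $e_0^+$ shows $e_0^+$ is reachable from $e^+$, so $e^+\in A$. Equivalently, no edge of $G$ points from $V(G)\setminus A$ into $A$.

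Next I would sum the mass-preservation identity \eqref{eqn:massPreservation} over all $v\in A$. On the left the term $\sum_{v\in A}\sum_{e^-=v}w(e)$ collects exactly the edges whose head lies in $A$; by the previous step every such edge also has its tail in $A$. On the right the term $\sum_{v\in A}\sum_{e^+=v}w(e)$ collects the edges whose tail lies in $A$, and these split into the ones with head in $A$ and the ones with head outside $A$. The edges with both endpoints in $A$ occur on both sides and cancel, leaving
\begin{equation*}
\sum_{v\in A}\mu_+(\{v\})=\sum_{v\in A}\mu_-(\{v\})+\sum_{\substack{e\in E(G)\\ e^+\in A,\ e^-\notin A}}w(e)\,.
\end{equation*}
Since $e_0^+\in A$ and $e_0^-\notin A$, the edge $e_0$ is one of the terms in the final sum, and all edge weights as well as $\mu_-$ are nonnegative, so
\begin{equation*}
w(e_0)\le\sum_{\substack{e\in E(G)\\ e^+\in A,\ e^-\notin A}}w(e)\le\sum_{v\in A}\mu_+(\{v\})\le\mu_+(\R^n)\,,
\end{equation*}
which is the assertion.

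The argument is elementary; the only point that needs care is the combinatorial bookkeeping of the cut, in particular the observation that acyclicity rules out any edge entering $A$, after which the estimate is immediate. I do not expect a genuine obstacle here.
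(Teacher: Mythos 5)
Your proof is correct. The cut is well defined because $G$ is finite, acyclicity is used exactly where it is needed (to show $e_0^-\notin A$), the closure of $A$ under predecessors makes the cancellation in the summed conservation law clean, and the final chain of inequalities only uses nonnegativity of the weights and of $\mu_-$. The paper argues differently in execution: it performs a forward sweep, starting from the set $E_0$ of edges leaving influx-free source vertices and inductively replacing all incoming edges of a suitable vertex by its outgoing edges, claiming that every edge eventually appears in some frontier $E_i$ and that the total weight of each frontier is bounded by the source mass. That argument bounds a whole family of cuts at once (and in fact yields the slightly stronger statement that the total flux across each frontier is at most $\mu_+(\R^n)$), but it leaves the existence of the next vertex to process and the coverage of all edges as unproved inductive claims, both of which again rest on acyclicity via a topological order. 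Your single-cut version, taking for each edge the ancestor set of its tail, isolates exactly the combinatorial fact needed and is arguably the more self-contained of the two; your sharper local bound $\sum_{e^+\in A,\,e^-\notin A}w(e)\le\mu_+(A)$ is a nice byproduct.
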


\begin{proof}
Define the set $E_0\subset E(G)$ of edges $e$ emanating from a vertex $x\in\spt\mu_+$ without influx, $e^+=x$.
$E_0$ is nonempty since otherwise for every source point $x\in\spt\mu_+$ one could find some source point $y(x)\in\spt\mu_+$ from where mass flows to $x$.
Since $\spt\mu_+$ is finite, the sequence $x,y(x),y(y(x)),\ldots$ must contain at least one vertex multiple times so that the graph would have a cycle.
Now inductively define $E_i$, $i=1,2,\ldots$, as follows.
Given $E_i$, we seek a vertex $v\in V(G)$ such that all incoming edges to $v$ are in $E_i$.
All those edges we replace by the outgoing edges of $v$ to obtain $E_{i+1}$.
It is straightforward to show by induction that each edge lies in at least one $E_i$
and that the total flux through the edges is bounded by $\sum_{e\in E_i}w(e)\leq1$ for all $i$.
\end{proof}

Now we are in a position to show that either the transport cost between given $\mu_+,\mu_-\in\fbm(\R^n)$ is infinite, or a minimizer exists.

\begin{theorem}[Existence]\label{thm:existence_of_minimizers_fluxes}
Given $\mu_+,\mu_-\in\fbm(\R^n)$ with bounded support, the minimization problem
\begin{displaymath}
 \min_\flux \JEn^{\tau,\mu_+,\mu_-}[\flux]
\end{displaymath}
either has a solution, or $\JEn^{\tau,\mu_+,\mu_-}$ is infinite.
\end{theorem}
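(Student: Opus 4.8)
The plan is a standard direct-method argument: take a minimizing sequence of transport paths, extract a weak-$*$ convergent subsequence, and show that the cost functional $\JEn^{\tau,\mu_+,\mu_-}$ is weak-$*$ lower semicontinuous along it. The only structural issue is ensuring compactness, i.e.\ a uniform mass bound on the competitors, and this is where \cref{thm:bounded_maximal_flux} (together with the acyclicity reduction of \cref{thm:no_cycles_lemma}) enters.

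First I would dispose of the trivial case: if $\JEn^{\tau,\mu_+,\mu_-}$ is not identically $+\infty$, then (using \cref{lem:initial_and_final_measures_can_be_rescaled} and \cref{thm:domainRescaling}) we may assume $\mu_+,\mu_-\in\prob$, in particular both have mass $1$ and support in $[-1,1]^n$, and pick a minimizing sequence $(\flux_j)_j$ with $\JEnXia(\flux_j)\to\inf=:I<\infty$. By the definition \eqref{eq:functional_XiaEn} of the relaxed cost, for each $j$ choose an approximating graph sequence realising $\JEnXia(\flux_j)$ up to $1/j$; a diagonal argument then produces a single sequence of discrete transport paths $G_k$ with $(\mu_+^k,\mu_-^k,\flux_{G_k})\weakstarto(\mu_+^k$-limits$)$ and $\JEnXia(G_k)\to I$, where by \cref{thm:domainRescaling} we may take $\spt\mu_\pm^k$ and $V(G_k)$ inside $[-1,1]^n$ and by \cref{lem:initial_and_final_measures_can_be_rescaled} we may take $\mu_\pm^k(\R^n)=1$. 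By \cref{thm:no_cycles_lemma} we may further assume each $G_k$ is acyclic without increasing its cost.

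Next comes the mass bound. By \cref{thm:bounded_maximal_flux}, every edge of the acyclic $G_k$ carries weight $w(e)\leq\mu_+^k(\R^n)=1$. By \cref{thm:averageCost} with $m=1$ we then have $\tau(w(e))\geq\lambda^\tau(1)\,w(e)$ for every edge, hence
\begin{equation*}
|\flux_{G_k}|(\R^n)=\sum_{e\in E(G_k)}w(e)\,l(e)\leq\frac1{\lambda^\tau(1)}\sum_{e\in E(G_k)}\tau(w(e))\,l(e)=\frac{\JEnXia(G_k)}{\lambda^\tau(1)}\,,
\end{equation*}
which is uniformly bounded since $\JEnXia(G_k)\to I<\infty$. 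Because all the measures $\flux_{G_k}$ are supported in the fixed compact set $[-1,1]^n$, the Banach--Alaoglu theorem yields a subsequence with $\flux_{G_k}\weakstarto\flux$ for some $\flux\in\rca(\R^n;\R^n)$ supported in $[-1,1]^n$; likewise $\mu_\pm^k\weakstarto\mu_\pm$ already holds. Thus $(\mu_+^k,\mu_-^k,\flux_{G_k})\weakstarto(\mu_+,\mu_-,\flux)$ is an approximating graph sequence, so $\flux$ is a transport path from $\mu_+$ to $\mu_-$, and directly from the definition \eqref{eq:functional_XiaEn} of the relaxed functional,
\begin{equation*}
\JEnXia(\flux)\leq\liminf_{k\to\infty}\JEnXia(G_k)=I=\inf_{\flux'}\JEn^{\tau,\mu_+,\mu_-}[\flux']\,,
\end{equation*}
so $\flux$ is a minimizer.

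The main obstacle is the uniform mass bound: without the capacity estimate $w(e)\le\mu_+(\R^n)$ on acyclic graphs and the linear lower bound $\tau(w)\ge\lambda^\tau(1)w$ from \cref{thm:averageCost}, a minimizing sequence of measures need not be tight in total variation and weak-$*$ compactness fails. Everything else is soft: the lower semicontinuity is built into the relaxed definition of $\JEnXia$, and confining supports to $[-1,1]^n$ via the rescaling lemmas is what converts a bounded family into a weak-$*$ precompact one. A minor bookkeeping point is the diagonal extraction turning the double sequence (minimizing sequence of fluxes, each approximated by graphs) into one graph sequence; this is routine given that $\weakstarto$ on the relevant ball is metrizable.
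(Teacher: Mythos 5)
Your proposal is correct and follows essentially the same route as the paper's proof: select near-optimal approximating graphs for a minimizing sequence of fluxes, make them acyclic via \cref{thm:no_cycles_lemma}, obtain the uniform total-variation bound from \cref{thm:bounded_maximal_flux} combined with \cref{thm:averageCost}, confine supports via the rescaling lemmas, and conclude by weak-$*$ compactness together with the lower semicontinuity that is built into the relaxed definition \eqref{eq:functional_XiaEn}. The paper's version is just a slightly more compact bookkeeping of your diagonal selection (choosing one graph per flux within $2^{-i}$ in cost and Wasserstein distance), so there is nothing to add.
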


\begin{proof}
By \cref{lem:initial_and_final_measures_can_be_rescaled} and \cref{thm:domainRescaling} we may assume $\mu_+,\mu_-\in\prob$.
Let $\flux_i\in\rca(\R^n;\R^n)$, $i=1,2,\ldots$, be a minimizing sequence with $\JEn^{\tau,\mu_+,\mu_-}[\flux_i]\to\inf_\flux \JEn^{\tau,\mu_+,\mu_-}[\flux]$, and assume the infimum cost to be finite (else there is nothing to show).
By \cref{def:mass_fluxes} there exists a triple of measures and a discrete \transportPath{} $(\mu_+^i,\mu_-^i,G_i)$ such that
\begin{equation*}
 \JEnXia(G_i) \leq \JEnXia(\flux_i) + 2^{-i}\,,\qquad
 \Wdone(\mu_+^i,\mu_+) + \Wdone(\mu_-^i,\mu_-) \leq 2^{-i}\,,\qquad
 \dv\flux_{G_i}=\mu_+^i-\mu_-^i\,.
\end{equation*}
Thanks to \cref{thm:no_cycles_lemma} we can modify the $G_i$ to become acyclic without violating the above properties.
Due to \cref{thm:bounded_maximal_flux} we know $w(e)\leq\mu_+(\R^n)=1$ for every edge $e \in E(G_i)$
so that by \cref{thm:averageCost} we have
$
\tau(w(e))\geq\lambda^\tau(1) w(e).
$
Therefore we obtain that
\begin{equation*}
|\flux_{G_i}|(\R^n)=\sum_{e\in E(G_i)}w(e)l(e)\leq\tfrac1{\lambda^\tau(1)}\sum_{e\in E(G_i)}\tau(w(e))l(e)=\tfrac1{\lambda^\tau(1)}\JEnXia(G_i)
\end{equation*}
is uniformly bounded.
Furthermore, by \cref{thm:domainRescaling} we may assume the $G_i$ or $\flux_{G_i}$ to lie inside $[-1,1]^n$.
Thus, we can extract a \mbox{weakly-*} converging subsequence (still indexed by $i$ for simplicity) so that we have $(\mu_+^i,\mu_-^i,\flux_{G_i})\weakstarto(\mu_+,\mu_-,\flux)$ for some $\flux\in\rca(\R^n;\R^n)$ with $\dv\flux=\mu_+-\mu_-$ and
\begin{equation*}
\JEn^{\tau,\mu_+,\mu_-}[\flux]=\JEnXia(\flux)\leq\liminf_{i\to\infty}\JEnXia(G_i)=\inf_{\tilde\flux}\JEn^{\tau,\mu_+,\mu_-}[\tilde\flux]\,.\qedhere
\end{equation*}
\end{proof}

Under certain growth conditions on $\tau$ (depending on the space dimension $n$) one can always guarantee the existence of a finite cost \transportPath{} and thus existence of minimizers.
We will call the corresponding transportation costs admissible.

\begin{definition}[Admissible transportation costs]\label{def:admissible}
A transportation cost $\tau$ is called \emph{admissible}, if it is bounded above by a concave function $\beta:[0,\infty)\to[0,\infty)$ with $\int_0^1\frac{\beta(w)}{w^{2-1/n}}\,\de w<\infty$.
\end{definition}

\begin{remark}[Invariance under mass rescaling]
The definition of admissibility is invariant under the transformation $\tau\mapsto\overline\tau$ from \cref{lem:initial_and_final_measures_can_be_rescaled}
and thus independent of the total mass of sources and sinks.
\end{remark}

\begin{remark}[Continuity]
Obviously, admissible transportation costs are continuous in 0 and thus automatically continuous everywhere by \cite[Thm.\,16.2.1]{Ku09}.
\end{remark}

\begin{example}[Admissible transportation costs]
\begin{enumerate}
\item The Wasserstein cost and the urban planning cost are admissible.
\item The branched transport cost $\tau(w)=w^\alpha$ is admissible for $\alpha>1-\frac1n$.
\item The transportation cost $\tau(w)=\frac{w^{1-1/n}}{|\log w|^\gamma}$ with $\gamma>1$ is admissible.
\end{enumerate}
\end{example}

For proving existence of finite cost \transportPaths{} we will need to express the admissibility in a different, less compact form.

\begin{lemma}[Admissible transportation costs]\label{thm:admTau}
A transportation cost $\tau$ is admissible if and only if it is bounded above by a concave function $\beta:[0,\infty)\to[0,\infty)$ with
\begin{equation*}
S^\beta(n) = \sum_{k = 1}^{\infty} S^\beta(n,k) < \infty\,,\,where\,
S^\beta(n,k) = 2^{(n-1)k}\beta\left(2^{-nk}\right)\,.
\end{equation*}
\end{lemma}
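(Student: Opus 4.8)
The plan is to reduce the lemma to a single clean statement about concave functions, namely that for \emph{every} concave $\beta:[0,\infty)\to[0,\infty)$,
\[
\int_0^1\frac{\beta(w)}{w^{2-1/n}}\,\de w<\infty
\quad\Longleftrightarrow\quad
S^\beta(n)=\sum_{k=1}^{\infty}2^{(n-1)k}\beta\bigl(2^{-nk}\bigr)<\infty\,.
\]
This suffices: both \cref{def:admissible} and the present lemma ask for a concave majorant $\beta\geq\tau$, differing only in the side condition imposed on $\beta$, so once the two side conditions are shown equivalent \emph{for the same} $\beta$, the two notions of admissibility coincide. The one preliminary fact I would record is that a nonnegative concave function on $[0,\infty)$ is automatically non-decreasing: if $\beta(a)>\beta(b)$ for some $a<b$, then concavity forces $\beta$ to stay below the affine function through $(a,\beta(a))$ and $(b,\beta(b))$ on all of $[b,\infty)$, which eventually becomes negative — a contradiction. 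This monotonicity is exactly what lets one sandwich $\beta(w)$ by its values at dyadic-scale endpoints.

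Next I would decompose $(0,1]=\bigcup_{k\ge 0}\bigl(2^{-n(k+1)},2^{-nk}\bigr]$. On the $k$-th interval the weight $w^{2-1/n}$ lies between $2^{-n(k+1)(2-1/n)}$ and $2^{-nk(2-1/n)}$ and the interval has length $2^{-nk}(1-2^{-n})$; a direct estimate (or the explicit primitive $\frac{n}{1-n}w^{(1-n)/n}$ of $w^{-(2-1/n)}$, valid for $n\ge 2$) yields a constant $c_n>0$ depending only on $n$ with
\[
c_n^{-1}\,2^{(n-1)k}\ \le\ \int_{2^{-n(k+1)}}^{2^{-nk}}\frac{\de w}{w^{2-1/n}}\ \le\ c_n\,2^{(n-1)k}\qquad\text{for all }k\ge 0\,.
\]
Bounding $\beta(w)$ from above by $\beta(2^{-nk})$ and from below by $\beta(2^{-n(k+1)})$ on this interval (using monotonicity of $\beta$), the $k$-th contribution to the integral is therefore sandwiched between $c_n^{-1}2^{(n-1)k}\beta(2^{-n(k+1)})$ and $c_n 2^{(n-1)k}\beta(2^{-nk})=c_n\,S^\beta(n,k)$. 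Since $2^{(n-1)k}\beta(2^{-n(k+1)})=2^{-(n-1)}S^\beta(n,k+1)$, the lower bound equals $c_n^{-1}2^{-(n-1)}S^\beta(n,k+1)$.

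Finally I would sum over $k\ge 0$, noting $S^\beta(n,0)=\beta(1)<\infty$, to obtain
\[
c_n^{-1}2^{-(n-1)}\,S^\beta(n)\ \le\ \int_0^1\frac{\beta(w)}{w^{2-1/n}}\,\de w\ \le\ c_n\bigl(\beta(1)+S^\beta(n)\bigr)\,,
\]
which gives the displayed equivalence and hence the lemma. The only points requiring care are the index shift relating $\beta(2^{-n(k+1)})$ to $S^\beta(n,k+1)$ and the separate, harmless bookkeeping of the $k=0$ term $\beta(1)$; the monotonicity of nonnegative concave functions recorded above is what makes the scale-by-scale sandwiching legitimate, and it is the main (mild) obstacle. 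The case $n=1$ is covered verbatim, with $c_1=\log 2$ and the factor $2^{(n-1)k}$ identically equal to $1$.
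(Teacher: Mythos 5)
Your proof is correct and follows essentially the same route as the paper: both arguments reduce the statement to showing that, for a fixed concave (hence non-decreasing) majorant $\beta$, the integral condition and the summability of $S^\beta(n)$ are equivalent, via a comparison of the integral with the dyadic-scale sum. The only cosmetic difference is that you decompose $\int_0^1\beta(w)w^{1/n-2}\,\de w$ directly over the intervals $(2^{-n(k+1)},2^{-nk}]$, whereas the paper first compares $S^\beta(n)$ with $\int_0^\infty2^{(n-1)x}\beta(2^{-nx})\,\de x$ and then substitutes $w=2^{-nx}$; your explicit justification that a nonnegative concave function on $[0,\infty)$ is non-decreasing is a welcome small improvement over the paper's terse remark.
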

\begin{proof}

We need to show that $S^\beta(n)<\infty$ is equivalent to $\int_0^1\frac{\beta(w)}{w^{2-1/n}}\,\de w<\infty$ for any concave function $\beta:[0,\infty)\to[0,\infty)$.
To this end we first note that such a function $\beta$ must be non-decreasing (as so must be any nonnegative concave function on the positive halfline)
and use this to show that $S^\beta(n)<\infty$ is equivalent to $\int_0^\infty2^{(n-1)x}\beta\left(2^{-nx}\right)\,\de x<\infty$.
Indeed, this follows from

\begin{multline*}
2^{1-n}S^\beta(n)
\leq2^{1-n}\sum_{k=1}^{\infty}\int_{k-1}^{k}2^{(n-1)(x+1)}\beta\left(2^{-nx}\right)\,\de x
=\int_0^\infty2^{(n-1)x}\beta\left(2^{-nx}\right)\,\de x\\
=2^{n-1}\sum_{k=0}^{\infty}\int_{k}^{k+1}2^{(n-1)(x-1)}\beta\left(2^{-nx}\right)\,\de x
\leq2^{n-1}S^\beta(n)+2^0\beta(2^0)\,.
\end{multline*}
Now the change of variables $w=2^{-nx}$ yields % with $x=-\frac1n\log_2(\frac wm)$; $dw/dx=-2^{-nx}nm\log 2$
\begin{equation*}
\int_0^\infty2^{(n-1)x}\beta\left(2^{-nx}\right)\,\de x
%=\int_1^0\tfrac{\beta(w)}{w^{2-1/n}}\,\de w 2^{(n-1)x}1^{2-1/n}2^{-2nx+x}/(-2^{-nx}n\log 2)
=-\frac{1}{n\log 2}\int_1^0\frac{\beta(w)}{w^{2-1/n}}\,\de w
%=\frac{1}{n\log 2}S^\beta(n)\,.\qedhere
\end{equation*}
so that $S^\beta(n)<\infty$ if and only if the right-hand side is finite, as desired.
\end{proof}

% \begin{lemma}
% there is a concave fcn. $\tilde\tau$ s.t. $\tau\sim\tilde\tau$
% \end{lemma}

We will prove existence of finite cost networks by construction using the following components.
\begin{definition}[$n$-adic \transportPaths{}]\label{def:nadicGraph}
For a given measure $\mu\in\fbm(\R^n)$ we define the following.
\begin{enumerate}
\item An \emph{elementary $n$-adic \transportPath{}} for $\mu$ of scale $s$, centred at $x$, is defined as $G_{\mu,s,x}$ with
\begin{align*}
V(G_{\mu,s,x})&=\textstyle\{x\}\cup\{x+v\,:\,v\in\{-s,s\}^n\}\,,\qquad\\
E(G_{\mu,s,x})&=\textstyle\{[x,x+v]\,:\,v\in\{-s,s\}^n\}\,,\qquad\\
w([x,x+v])&=\mu(x+v+(-s,s]^n)\quad\text{for every }v\in\{-s,s\}^n\,,
\end{align*}
where $[a,b]$ denotes the straight edge from $a\in\R^n$ to $b\in\R^n$.
\item A \emph{$n$-adic \transportPath{}} for $\mu$ of $k$ levels and scale $s$, centred at $x$, is defined inductively as
\begin{equation*}
G_{\mu,s,x}^1=G_{\mu,s,x}\,,\qquad
G_{\mu,s,x}^{k}=G_{\mu,s,x}\cup\bigcup_{v\in\{-s,s\}^n}G_{\mu,s/2,x+v}^{k-1}\,,
\end{equation*}
where the union of graphs is obtained by taking the union of all vertices and all wheighted directed edges.
We will write $G_\mu^k=G_{\mu,1,0}^{k}$.
The \emph{$k$\textsuperscript{th} level} of $G_\mu^{k}$ is defined as the graph
\begin{equation*}
F_\mu^{k}=G_\mu^{k}\setminus G_\mu^{k-1}
\text{ if }k>1\text{ and }F_{\mu}^{k}=G_{\mu}^{1}\text{ else}\,.
\end{equation*}
\item Let $L(G_{\mu}^{k})=\{2^{1-k}v\,:\,v\in\{-2^{k}+1,-2^{k}+3,\ldots,2^k-3,2^k-1\}^n\}$ denote the leaves of $G_{\mu}^{k}$.
The \emph{$k$-level approximation} of $\mu$ is defined as
\begin{equation*}
P^k(\mu)=\sum_{v\in L(G_{\mu}^{k})}\mu(v+(-2^{1-k},2^{1-k}]^n)\delta_{v}\,.
\end{equation*}
\end{enumerate}
\end{definition}

An illustration of the \transportPaths{} in two dimensions is provided in \cref{fig:nadicGraph}.
It is straightforward to see that $F_{\mu}^{k}$ is a discrete \transportPath{} between $P^{k-1}(\mu)$ and $P^k(\mu)$.
Likewise, the $n$-adic \transportPath{} $G_{\mu}^k$ is a discrete \transportPath{} between $P^0(\mu)=\mu((-2,2]^n)\delta_0$ and $P^k(\mu)$.

\begin{figure}
\centering
\setlength\unitlength{.4\linewidth}
\begin{picture}(1,.4)
\put(0,0){\includegraphics[width=\unitlength]{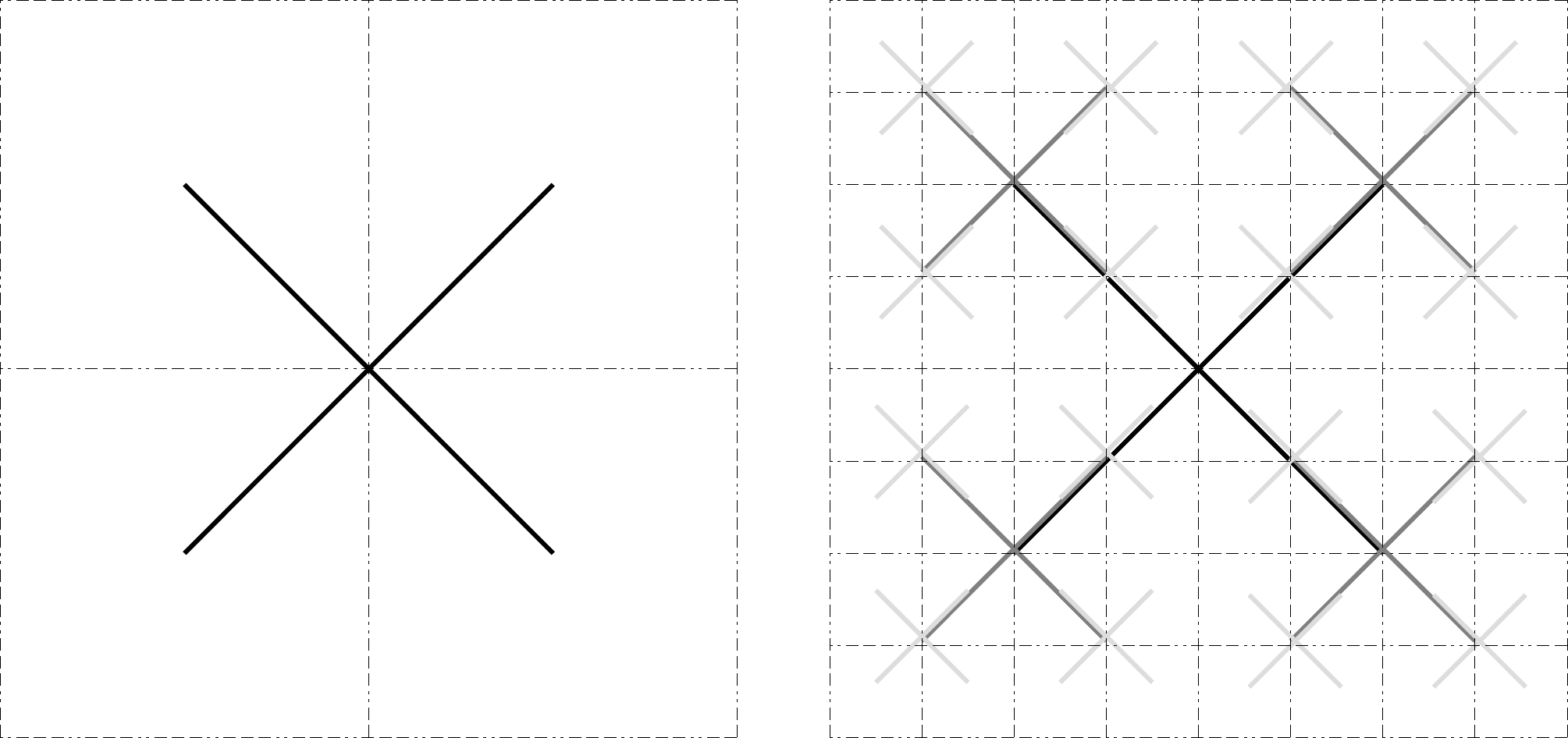}}
\put(.1,.01){$2s$}
\put(.22,.25){$x$}
\put(.75,.25){$x$}
\end{picture}
\caption{Sketch of the elementary dyadic \transportPath{} $G_{\mu,s,x}$ (left) and of the dyadic graph $G_{\mu}^3$ (right).
The flux through the shortest (lowest level) edges equals the mass of $\mu$ inside the little square around each edge.}
\label{fig:nadicGraph}
\end{figure}

\begin{remark}[Convergence of $k$-level approximation]\label{rem:measApprox}
If $\mu$ has support inside $(-2,2]^n$, then $P^k(\mu)\weakstarto\mu$ as $k\to\infty$.
Indeed, we have
\begin{equation*}
\Wdone(\mu,P^k(\mu))\leq\mu(\R^n)2^{1-k}\sqrt n\,,
\end{equation*}
since every mass particle has to travel at most $2^{1-k}\sqrt n$.
The convergence now follows from the fact that $\Wdone$ metrizes weak-$*$ convergence.
\end{remark}

\begin{proposition}[Cost of $n$-adic \transportPath{}]\label{thm:nadicGraphCost}
If $\tau$ is an admissible transportation cost with upper bound $\beta$ and $\mu\in\prob$, then $\JEnXia(F_{\mu}^k)\leq2\sqrt nS^\beta(n,k)$ and $\JEnXia(G_{\mu}^k)\leq2\sqrt nS^\beta(n)$.
\end{proposition}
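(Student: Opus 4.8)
The plan is to reduce everything to a single estimate on the $k$\textsuperscript{th} level $F_\mu^k$. First I would note that, directly from the recursion in \cref{def:nadicGraph}, $G_\mu^{k-1}$ is a subgraph of $G_\mu^k$, so the edge sets $E(F_\mu^1),\dots,E(F_\mu^k)$ are pairwise disjoint with union $E(G_\mu^k)$, and an edge carries the same weight whether it is viewed inside $G_\mu^k$ or inside the level $F_\mu^j$ to which it belongs. Since $\JEnXia$ is a sum over edges, this gives $\JEnXia(G_\mu^k)=\sum_{j=1}^k\JEnXia(F_\mu^j)$; hence, once the bound $\JEnXia(F_\mu^k)\le 2\sqrt n\,S^\beta(n,k)$ is established, the second assertion follows from
\[
\JEnXia(G_\mu^k)=\sum_{j=1}^k\JEnXia(F_\mu^j)\le 2\sqrt n\sum_{j=1}^k S^\beta(n,j)\le 2\sqrt n\sum_{j=1}^\infty S^\beta(n,j)=2\sqrt n\,S^\beta(n).
\]

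Next I would unwind the recursion $k-1$ times to describe $F_\mu^k$ explicitly: it is a disjoint union of $2^{(k-1)n}$ elementary $n$-adic \transportPaths{} of scale $2^{1-k}$, hence has exactly $2^{nk}$ edges, one terminating at each of the $2^{nk}$ leaves $\ell\in L(G_\mu^k)$. Each such edge is the diagonal of a cube of half-side $2^{1-k}$, so it has length $2^{1-k}\sqrt n$, and by the weight prescription in \cref{def:nadicGraph} the edge ending at $\ell$ carries mass flux $w_\ell=\mu\bigl(\ell+(-2^{1-k},2^{1-k}]^n\bigr)$. The half-open cubes $\ell+(-2^{1-k},2^{1-k}]^n$, $\ell\in L(G_\mu^k)$, partition $(-2,2]^n$, and $\spt\mu\subset[-1,1]^n\subset(-2,2]^n$ because $\mu\in\prob$; therefore $\sum_{\ell\in L(G_\mu^k)}w_\ell=\mu(\R^n)=1$.

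With this description the bound on $F_\mu^k$ is immediate. Using $\tau\le\beta$ and then Jensen's inequality for the concave function $\beta$, applied to the $2^{nk}$ nonnegative numbers $w_\ell$ with average $2^{nk}\cdot\bigl(2^{nk}\bigr)^{-1}$, er, with average $2^{-nk}=\bigl(\sum_\ell w_\ell\bigr)/2^{nk}$, I would estimate
\[
\JEnXia(F_\mu^k)=2^{1-k}\sqrt n\sum_{\ell\in L(G_\mu^k)}\tau(w_\ell)\le 2^{1-k}\sqrt n\sum_{\ell\in L(G_\mu^k)}\beta(w_\ell)\le 2^{1-k}\sqrt n\cdot 2^{nk}\beta\bigl(2^{-nk}\bigr)=2\sqrt n\,2^{(n-1)k}\beta(2^{-nk})=2\sqrt n\,S^\beta(n,k).
\]

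The only part requiring any care is the middle step: reading off from the inductive definition that $F_\mu^k$ really consists of exactly $2^{nk}$ edges, all of length $2^{1-k}\sqrt n$, with weights equal to the $\mu$-masses of the $n$-adic cubes around the leaves — equivalently, that $F_\mu^k$ is the discrete \transportPath{} between $P^{k-1}(\mu)$ and $P^k(\mu)$, as already noted after \cref{def:nadicGraph}. Everything after that is pointwise domination $\tau\le\beta$ followed by one application of Jensen's inequality; no property of $\tau$ beyond being dominated by a concave $\beta$ is used, and the admissibility hypothesis enters only to guarantee $S^\beta(n)<\infty$ via \cref{thm:admTau}, so that the second bound is non-vacuous.
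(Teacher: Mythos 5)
Your proposal is correct and follows essentially the same route as the paper: the splitting $\JEnXia(G_\mu^k)=\sum_{j=1}^k\JEnXia(F_\mu^j)$, the count of $2^{nk}$ edges of length $2^{1-k}\sqrt n$ at level $k$ with weights summing to $\mu(\R^n)=1$, the pointwise bound $\tau\le\beta$, and Jensen's inequality for the concave $\beta$. The extra care you take in unwinding the recursion and identifying the leaf cubes as a partition of $(-2,2]^n$ is exactly what the paper leaves implicit.
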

\begin{proof}
The cost of $\JEnXia(F_{\mu}^k)$ can be calculated as
\begin{equation*}
\JEnXia(F_{\mu}^k)
=\sum_{e\in E(F_{\mu}^k)}l(e)\tau(w(e))
=\sqrt n2^{1-k}\sum_{e\in E(F_{\mu}^k)}\tau(w(e))
\leq2\sqrt n2^{-k}\sum_{e\in E(F_{\mu}^k)}\beta(w(e))\,.
\end{equation*}
Since $\beta$ is concave and $\sum_{e\in E(F_{\mu}^k)}w(e)=1$ we have
\begin{equation*}
\sum_{e\in E(F_{\mu}^k)}\beta(w(e))
=2^{nk}\sum_{e\in E(F_{\mu}^k)}2^{-nk}\beta(w(e))
\leq2^{nk}\beta\left(\sum_{e\in E(F_{\mu}^k)}2^{-nk}w(e)\right)
\leq2^{nk}\beta(2^{-nk})
\end{equation*}
by Jensen's inequality, where $2^{nk}$ is the number of edges in $E(F_{\mu}^k)$.
Thus we obtain
\begin{align*}
\JEnXia(F_{\mu}^k)
&\leq2\sqrt n2^{(n-1)k}\beta(2^{-nk})
=2\sqrt nS^\beta(n,k)\quad\text{and}\\
\JEnXia(G_{\mu}^k)
&=\sum_{j=1}^k\JEnXia(F_{\mu}^j)
\leq2\sqrt nS^\beta(n)\,.\qedhere
\end{align*}
\end{proof}

We are now in a position to prove the existence of a \transportPath{} with finite cost.

\begin{corollary}[Existence of finite cost \transportPaths{}]\label{thm:finiteCostGraph}
If $\tau$ is an admissible transportation cost and $\mu_+,\mu_-\in\fbm(\R^n)$ with $\mu_+(\R^n)=\mu_-(\R^n)$ and bounded support,
then there exists a \transportPath{} $\flux$ with $\JEn^{\tau,\mu_+,\mu_-}[\flux]<\infty$.
\end{corollary}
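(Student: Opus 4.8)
The plan is to produce an explicit finite-cost \transportPath{} by splicing together two $n$-adic \transportPath{}s at a common Dirac mass in the origin. First I would apply \cref{lem:initial_and_final_measures_can_be_rescaled} and \cref{thm:domainRescaling} to reduce, without loss of generality, to the case $\mu_+,\mu_-\in\prob$; then $\spt\mu_\pm\subset[-1,1]^n\subset(-2,2]^n$, which is precisely the regime in which the $n$-adic construction of \cref{def:nadicGraph} and the cost estimate of \cref{thm:nadicGraphCost} are available. Since $\tau$ is admissible, \cref{thm:admTau} furnishes a concave upper bound $\beta\geq\tau$ with $S^\beta(n)<\infty$, which will be the source of the finiteness.

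Next, for every level $k\in\N$ I would form the $n$-adic \transportPath{}s $G_{\mu_+}^k$ and $G_{\mu_-}^k$. Because $\spt\mu_\pm\subset(-2,2]^n$ and $\mu_\pm(\R^n)=1$ we have $P^0(\mu_\pm)=\delta_0$, so $G_{\mu_+}^k$ is a discrete \transportPath{} from $\delta_0$ to $P^k(\mu_+)$ and $G_{\mu_-}^k$ one from $\delta_0$ to $P^k(\mu_-)$. I would reverse all edge orientations of $G_{\mu_+}^k$, turning it into a \transportPath{} from $P^k(\mu_+)$ to $\delta_0$, and glue it to $G_{\mu_-}^k$ along the shared vertex $0$; the unit influx at $0$ from the reversed graph exactly cancels the unit outflux of $G_{\mu_-}^k$, so the result $H_k$ is a genuine discrete \transportPath{} from $P^k(\mu_+)$ to $P^k(\mu_-)$. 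Reversing edges does not change the cost and the two edge families are disjoint as directed edges, so $\JEnXia(H_k)=\JEnXia(G_{\mu_+}^k)+\JEnXia(G_{\mu_-}^k)\leq4\sqrt n\,S^\beta(n)<\infty$ by \cref{thm:nadicGraphCost}.

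Finally I would pass to the limit $k\to\infty$. Each edge weight of $H_k$ is the mass of $\mu_+$ or $\mu_-$ in a sub-cube, hence at most $1$, so \cref{thm:averageCost} gives $\tau(w(e))\geq\lambda^\tau(1)w(e)$ and therefore $|\flux_{H_k}|(\R^n)=\sum_{e}w(e)l(e)\leq\JEnXia(H_k)/\lambda^\tau(1)$ is bounded uniformly in $k$; moreover all $\flux_{H_k}$ are supported in a fixed bounded set (say $[-2,2]^n$). I would then extract a weakly-$*$ convergent subsequence $\flux_{H_k}\weakstarto\flux$ and note $P^k(\mu_\pm)\weakstarto\mu_\pm$ by \cref{rem:measApprox}, so that $(P^k(\mu_+),P^k(\mu_-),\flux_{H_k})$ is an approximating graph sequence for $(\mu_+,\mu_-,\flux)$. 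Hence $\flux$ is a \transportPath{} between $\mu_+$ and $\mu_-$ and, by \eqref{eq:functional_XiaEn}, $\JEn^{\tau,\mu_+,\mu_-}[\flux]=\JEnXia(\flux)\leq\liminf_{k\to\infty}\JEnXia(H_k)\leq4\sqrt n\,S^\beta(n)<\infty$.

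I do not expect a serious obstacle: the needed ingredients (\cref{thm:nadicGraphCost}, \cref{thm:admTau}, \cref{thm:averageCost}, \cref{rem:measApprox}) are already in place and the reductions are routine. The only point that needs a moment's care is verifying that reversing one $n$-adic graph and gluing it to the other at the origin preserves the mass-conservation condition \eqref{eqn:massPreservation} at every vertex — at the leaves, at the centre $0$, and at the interior grid vertices; once that bookkeeping is done, the compactness argument is entirely standard.
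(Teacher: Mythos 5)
Your proposal is correct and follows essentially the same route as the paper: reduce to $\prob$, form $-G_{\mu_+}^k\cup G_{\mu_-}^k$ glued at the origin, bound the cost by $4\sqrt n\,S^\beta(n)$ via \cref{thm:nadicGraphCost}, and extract a weak-$*$ limit. The only (immaterial) difference is that the paper bounds $|\flux_{G^k}|(\R^n)$ by directly summing $l(e)w(e)$ over the levels rather than invoking \cref{thm:averageCost}.
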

\begin{proof}
By \cref{lem:initial_and_final_measures_can_be_rescaled} and \cref{thm:domainRescaling} we may assume $\mu_+,\mu_-\in\prob$.
Consider the sequence of graphs given by
\begin{equation*}
G^k=-G_{\mu_+}^k\cup G_{\mu_-}^k\,,\quad k=1,2,\ldots,
\end{equation*}
where $-G$ for a graph $G$ shall be the same graph with reversed edges.
Obviously, $G^k$ is a discrete \transportPath{} between the $k$-level approximations $\mu_+^k=P^k(\mu_+)$ and $\mu_-^k=P^k(\mu_-)$ of $\mu_+$ and $\mu_-$.
The total variation of $\flux_{G^k}$ is uniformly bounded,
\begin{equation*}
|\flux_{G^k}|(\R^n)
\leq\sum_{j=1}^k\sum_{e\in E(F_{\mu}^j)}l(e)w(e)
=\sum_{j=1}^k\sqrt n2^{1-j}
\leq2\sqrt n\,,
\end{equation*}
and we have $\spt\flux_{G^k}\subset[-2,2]^n$
so that a subsequence of $\flux_{G^k}$ (for simplicity still indexed by $k$) converges to some $\flux\in\rca(\R^n;\R^n)$.
Thus we have $(\mu_+^k,\mu_-^k,\flux_{G^k})\weakstarto(\mu_+,\mu_-,\flux)$ (by \cref{rem:measApprox})
and $\JEnXia(G^k)\leq\JEnXia(-G_{\mu_+}^k)+\JEnXia(G_{\mu_+}^k)\leq4\sqrt nS^\beta(n)$ (by \cref{thm:nadicGraphCost}),
which by \cref{def:costXia} implies the desired result.
\end{proof}

\begin{corollary}[Existence]\label{thm:existenceFluxAdmissible}
Given $\mu_+,\mu_-\in\fbm(\R^n)$ with $\mu_+(\R^n)=\mu_-(\R^n)$ and bounded support and an admissible $\tau$, the minimization problem
$\min_\flux \JEn^{\tau,\mu_+,\mu_-}[\flux]$
has a solution.
\end{corollary}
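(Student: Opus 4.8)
The plan is to combine the two facts already established in this subsection: the dichotomy of \cref{thm:existence_of_minimizers_fluxes} and the existence of a finite-cost competitor from \cref{thm:finiteCostGraph}. In other words, the corollary is essentially the conjunction ``(minimizer exists) or ($\JEn$ infinite)'' together with ``($\JEn$ not infinite)''.

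First I would apply \cref{thm:finiteCostGraph}: since $\mu_+,\mu_-\in\fbm(\R^n)$ have equal mass $\mu_+(\R^n)=\mu_-(\R^n)$ and bounded support and $\tau$ is admissible, there is a \transportPath{} $\flux_0$ with $\JEn^{\tau,\mu_+,\mu_-}[\flux_0]<\infty$. Hence
\begin{equation*}
\inf_\flux\JEn^{\tau,\mu_+,\mu_-}[\flux]\le\JEn^{\tau,\mu_+,\mu_-}[\flux_0]<\infty\,,
\end{equation*}
so the functional $\JEn^{\tau,\mu_+,\mu_-}$ is not identically $\infty$.

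Next I would invoke \cref{thm:existence_of_minimizers_fluxes}, which is applicable because $\mu_+,\mu_-$ have bounded support: it asserts that $\min_\flux\JEn^{\tau,\mu_+,\mu_-}[\flux]$ either is attained or $\JEn^{\tau,\mu_+,\mu_-}$ is infinite. The second alternative was ruled out in the previous step, so the minimum is attained, which is exactly the assertion.

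There is essentially no obstacle; the only thing to verify is that the hypotheses of the two cited results match those of the corollary — equal mass of $\mu_\pm$ and admissibility of $\tau$ for \cref{thm:finiteCostGraph}, bounded support for \cref{thm:existence_of_minimizers_fluxes} — all of which are part of the statement. All the substantive work (the acyclicity reduction and flux bound underlying the compactness argument in \cref{thm:existence_of_minimizers_fluxes}, and the explicit $n$-adic construction with its cost estimate $S^\beta(n)<\infty$ behind \cref{thm:finiteCostGraph}) has already been done.
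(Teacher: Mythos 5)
Your proof is correct and is precisely the argument the paper intends (the corollary is stated without proof, as an immediate combination of \cref{thm:existence_of_minimizers_fluxes} and \cref{thm:finiteCostGraph}). The hypothesis matching you note is indeed all that needs checking.
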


\subsection{Cost distance metrizes weak-$*$ convergence}

In this section we generalize \cite[Theorem 4.2]{Xia-Optimal-Paths} and show
that the cost distance $\dtau$ metrizes the weak-$*$ topology on the space of probability measures with uniformly bounded support.
First we show that up to a constant factor, $\dtau$ can be bounded below by the Wasserstein distance.

% \begin{lemma}
% Let us assume that we are in the hypothesis of Lemma \ref{lem:existence_of_a_finite_cost_flux} and of Lemma \ref{lem:approximation_lemma}.
% Then, discrete finite probability measures are dense w.r.t. $\dtau$.
% Moreover, $\dtau(\mu,A_k(\mu)) \to 0$.
% \end{lemma}
% 
% \begin{proof}
% In the cited hypothesis the series is converging. As a result, its tail goes to zero and Lemma \ref{lem:approximation_lemma} gives the result.
% \end{proof}

\begin{lemma}[Lower Wasserstein bound]\label{lem:D}
For some constant $\lambda > 0$ we have $\dtau(\mu_+,\mu_-)\geq\lambda\Wdone(\mu_+,\mu_-)$ for all probability measures $\mu_+,\mu_-$.
\end{lemma}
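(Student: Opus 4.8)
The plan is to prove the estimate first for discrete \transportPaths{} and then lift it to the relaxed functional; the constant will be $\lambda=\lambda^\tau(1)$ from \cref{thm:averageCost}. By \cref{thm:domainRescaling}, and since the Wasserstein distance $\Wdone$ scales linearly under dilations of $\R^n$, it suffices to treat $\mu_+,\mu_-\in\prob$. If $\dtau(\mu_+,\mu_-)=\infty$ there is nothing to show, so fix a \transportPath{} $\flux$ with $\dv\flux=\mu_+-\mu_-$ and $\JEnXia(\flux)<\infty$.

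The core step is: \emph{every discrete \transportPath{} $G$ between probability measures $\mu_+^G,\mu_-^G$ satisfies $\Wdone(\mu_+^G,\mu_-^G)\le\tfrac1{\lambda^\tau(1)}\JEnXia(G)$.} By \cref{thm:no_cycles_lemma} we may replace $G$ by an acyclic \transportPath{} with the same boundary measures and no larger cost, whence \cref{thm:bounded_maximal_flux} gives $w(e)\le\mu_+^G(\R^n)=1$ for each edge and \cref{thm:averageCost} yields $\tau(w(e))\ge\lambda^\tau(1)\,w(e)$; summing against $l(e)$ gives $\JEnXia(G)\ge\lambda^\tau(1)\sum_{e}w(e)l(e)=\lambda^\tau(1)\,|\flux_G|(\R^n)$. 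On the other hand, for any Lipschitz $f$ with constant at most $1$, the mass-preservation identity \eqref{eqn:massPreservation} telescopes to
\begin{equation*}
\int_{\R^n}f\,\de(\mu_+^G-\mu_-^G)=\sum_{e\in E(G)}w(e)\bigl(f(e^+)-f(e^-)\bigr)\le\sum_{e\in E(G)}w(e)\,l(e)=|\flux_G|(\R^n)\,,
\end{equation*}
and taking the supremum over such $f$ gives $\Wdone(\mu_+^G,\mu_-^G)\le|\flux_G|(\R^n)$, which chains with the previous bound.

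To finish, fix $\varepsilon>0$ and choose an approximating graph sequence $(\mu_+^k,\mu_-^k,\flux_{G_k})\weakstarto(\mu_+,\mu_-,\flux)$ with $\liminf_{k\to\infty}\JEnXia(G_k)\le\JEnXia(\flux)+\varepsilon$; by the last statements of \cref{lem:initial_and_final_measures_can_be_rescaled} and \cref{thm:domainRescaling} we may take $\mu_\pm^k\in\prob$. Applying the core step to each $G_k$ gives $\Wdone(\mu_+^k,\mu_-^k)\le\tfrac1{\lambda^\tau(1)}\JEnXia(G_k)$. Since $\mu_\pm^k\weakstarto\mu_\pm$ with all measures of unit mass and $\Wdone$ metrizes weak-$*$ convergence on measures of equal mass, the triangle inequality forces $\Wdone(\mu_+^k,\mu_-^k)\to\Wdone(\mu_+,\mu_-)$; passing to $\liminf$ in $k$ yields $\Wdone(\mu_+,\mu_-)\le\tfrac1{\lambda^\tau(1)}(\JEnXia(\flux)+\varepsilon)$. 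Letting $\varepsilon\to0$ and then infimizing over all competitors $\flux$ gives $\dtau(\mu_+,\mu_-)\ge\lambda^\tau(1)\,\Wdone(\mu_+,\mu_-)$.

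I expect the genuinely delicate point to be the final limit passage: one must know that the approximating measures $\mu_\pm^k$ live in a class (unit mass, uniformly bounded support) on which $\Wdone$ metrizes weak-$*$ convergence, which is exactly what the restriction parts of the two rescaling lemmas provide, and one must be sure that acyclifying the $G_k$ — needed to get the edgewise bound $w(e)\le1$ — leaves $\mu_\pm^k$ untouched. The telescoping identity and the scaling computations are routine.
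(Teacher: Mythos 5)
Your proof is correct and follows essentially the same route as the paper: reduce to acyclic discrete \transportPaths{} with unit total mass, use \cref{thm:bounded_maximal_flux} and \cref{thm:averageCost} to get $\JEnXia(G)\geq\lambda^\tau(1)|\flux_G|(\R^n)$, and then compare with the Wasserstein distance. The only (harmless) difference is that the paper concludes by invoking the identity $d_{\tilde\tau}(\mu_+,\mu_-)=\lambda^\tau(1)\Wdone(\mu_+,\mu_-)$ for the linear cost $\tilde\tau(w)=\lambda^\tau(1)w$, whereas you prove the needed inequality $\Wdone(\mu_+^G,\mu_-^G)\leq|\flux_G|(\R^n)$ directly via Kantorovich--Rubinstein duality and the telescoping of \eqref{eqn:massPreservation}, and you also spell out the limit passage that the paper leaves implicit.
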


\begin{proof}
By \cref{def:costXia} and \cref{lem:initial_and_final_measures_can_be_rescaled} it is sufficient to show $\JEnXia(G)\geq\lambda\Wdone(\mu_+,\mu_-)$ for any discrete \transportPath{} between discrete probability measures $\mu_+$ and $\mu_-$.
By \cref{thm:bounded_maximal_flux} we can suppose $w(e)\leq1$ for each edge $e$ of $G$.
Due to \cref{thm:averageCost} we have $\tau(w)\geq\lambda^\tau(1) w$ for all $w\in[0,1]$.
Thus, denoting the Wassertein cost by $\tilde\tau(w)=\lambda^\tau(1) w$, we have
$\JEnXia(G)\geq\JEnXia[\tilde\tau](G)\geq d_{\tilde\tau}(\mu_+,\mu_-)=\lambda^\tau(1)\Wdone(\mu_+,\mu_-)$.
\end{proof}

\notinclude{\begin{corollary}
The $\dtau$ convergence implies the weak convergence.
\end{corollary}

\begin{proof}
The $\dtau$ convergence implies the convergence w.r.t. the Wasserstein distance and the latter the weak convergence (see \cite{Villani-Topics-Optimal-Transport}).
\end{proof}
}%\notinclude

We now prove that $\dtau$ is a distance.
The proof involves constructions using the following components (compare \cref{fig:discreteGraphs}).
In contrast to \cite{Xia-Optimal-Paths}, our construction involves smoothing the measures, thereby avoiding the need of adjusting the underlying grid of discrete measure approximations.

\begin{figure}
\setlength\unitlength\linewidth
\begin{picture}(1,.22)
\put(.016,-.029){\includegraphics[width=.234\unitlength,trim=100 0 100 0,clip]{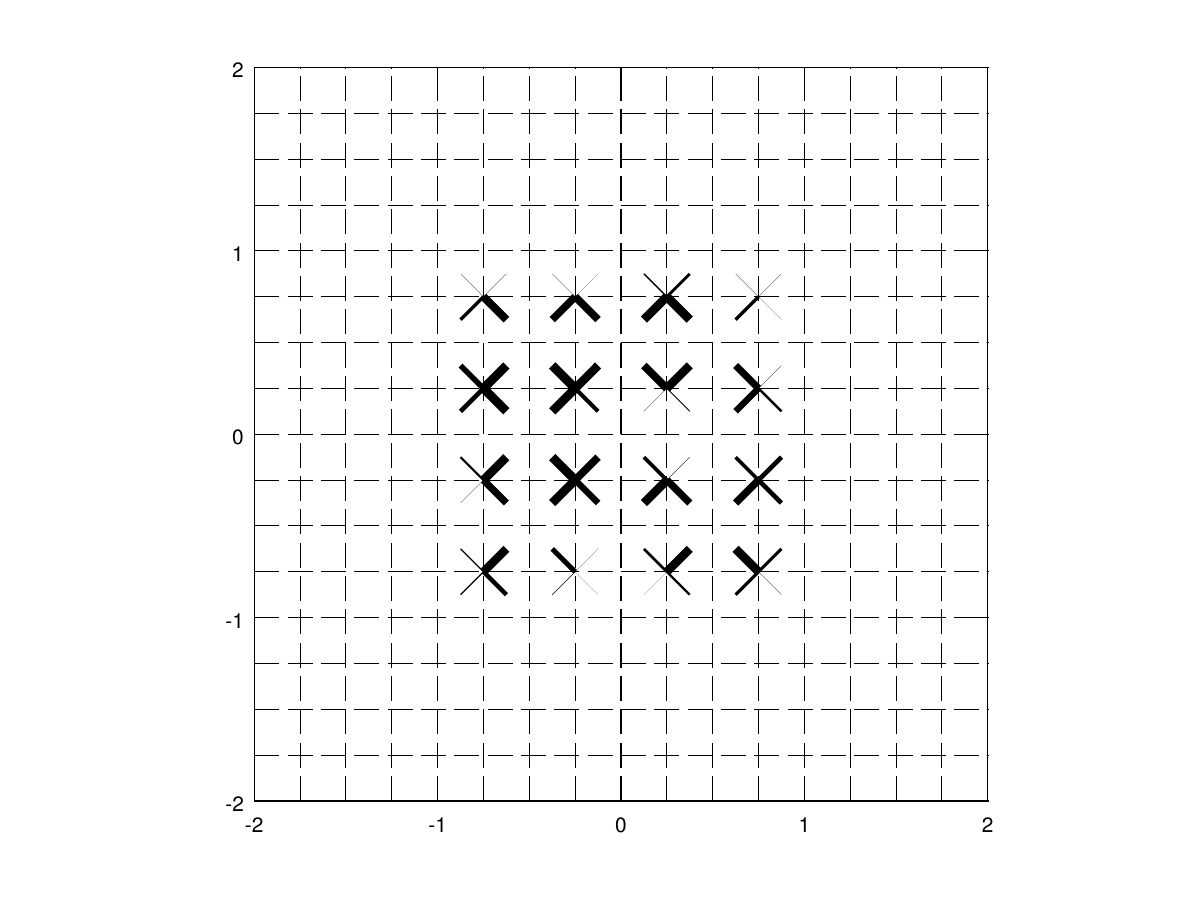}}
\put(.266,-.029){\includegraphics[width=.234\unitlength,trim=100 0 100 0,clip]{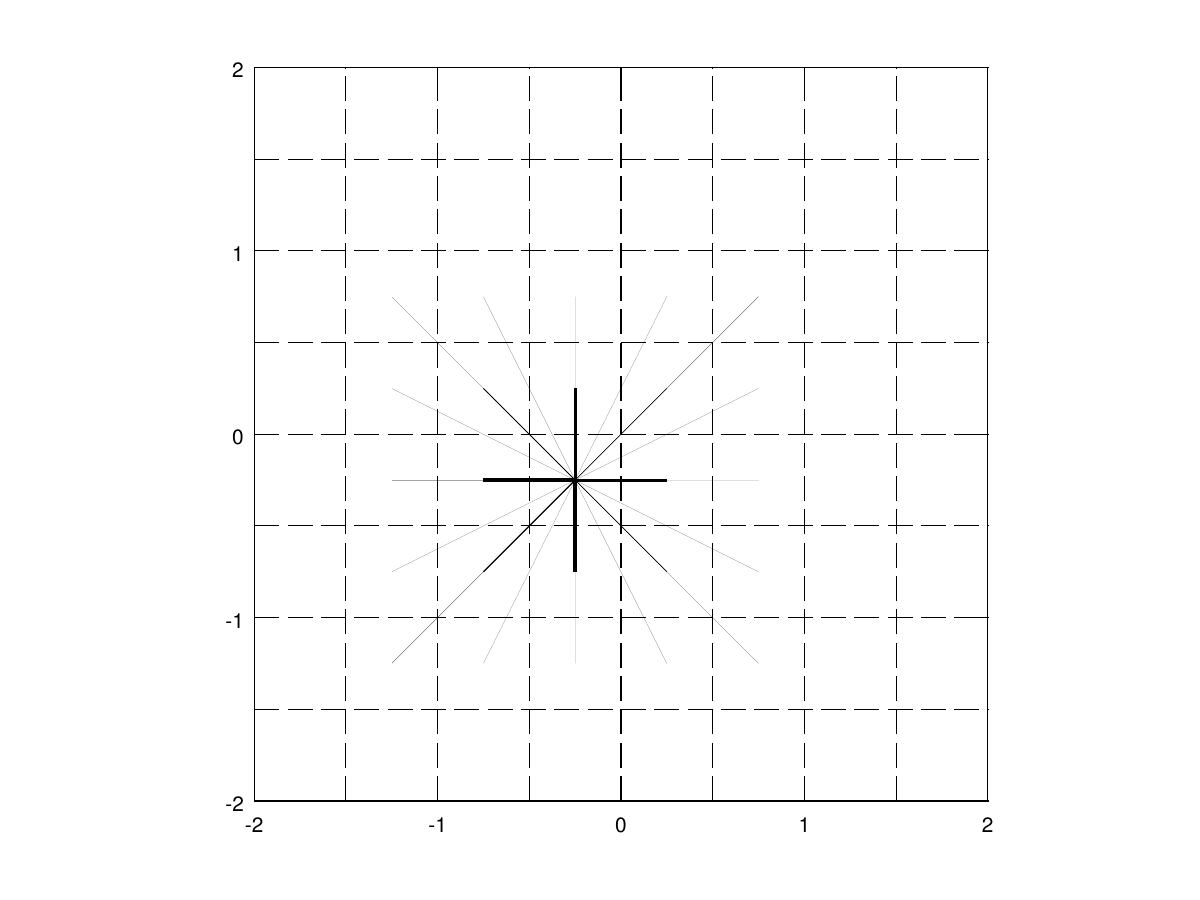}}
\put(.516,-.029){\includegraphics[width=.234\unitlength,trim=100 0 100 0,clip]{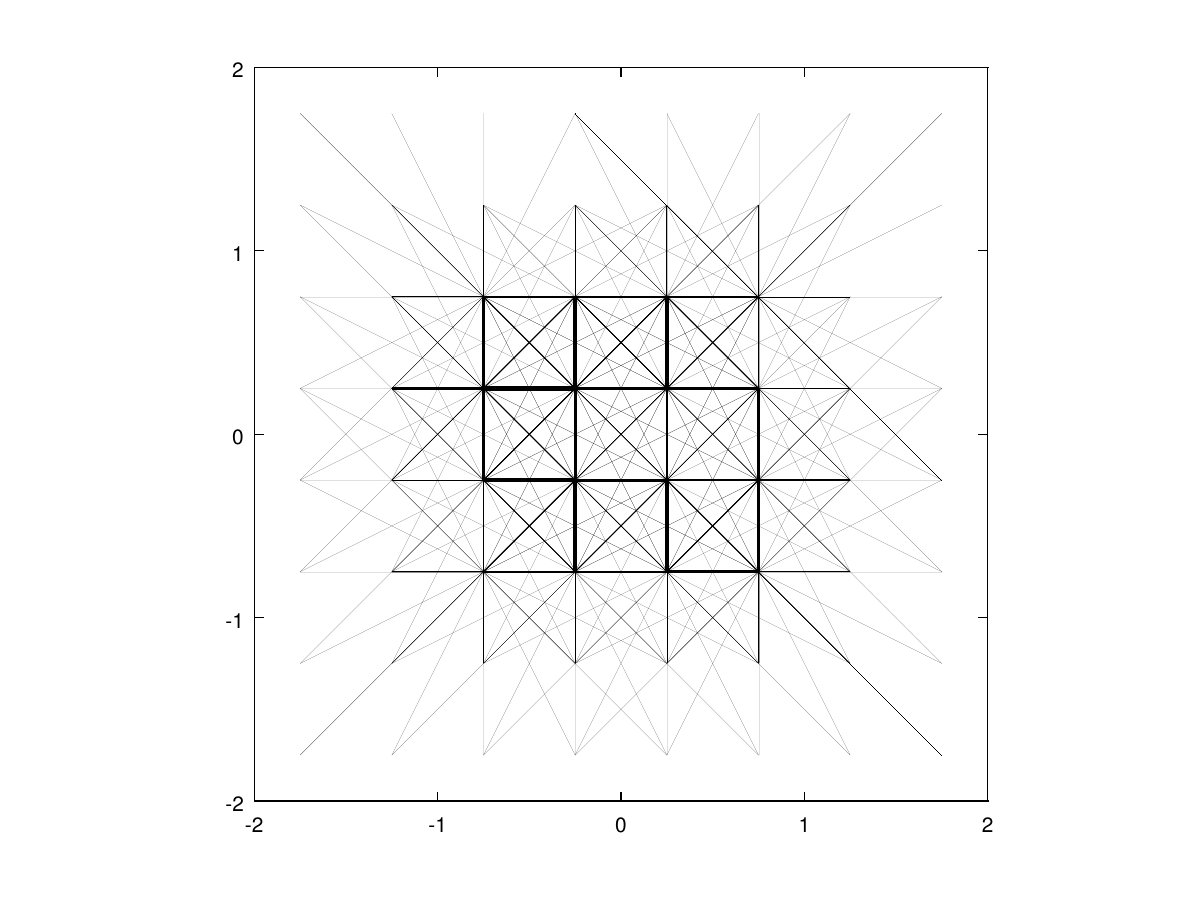}}
\put(.766,-.029){\includegraphics[width=.234\unitlength,trim=100 0 100 0,clip]{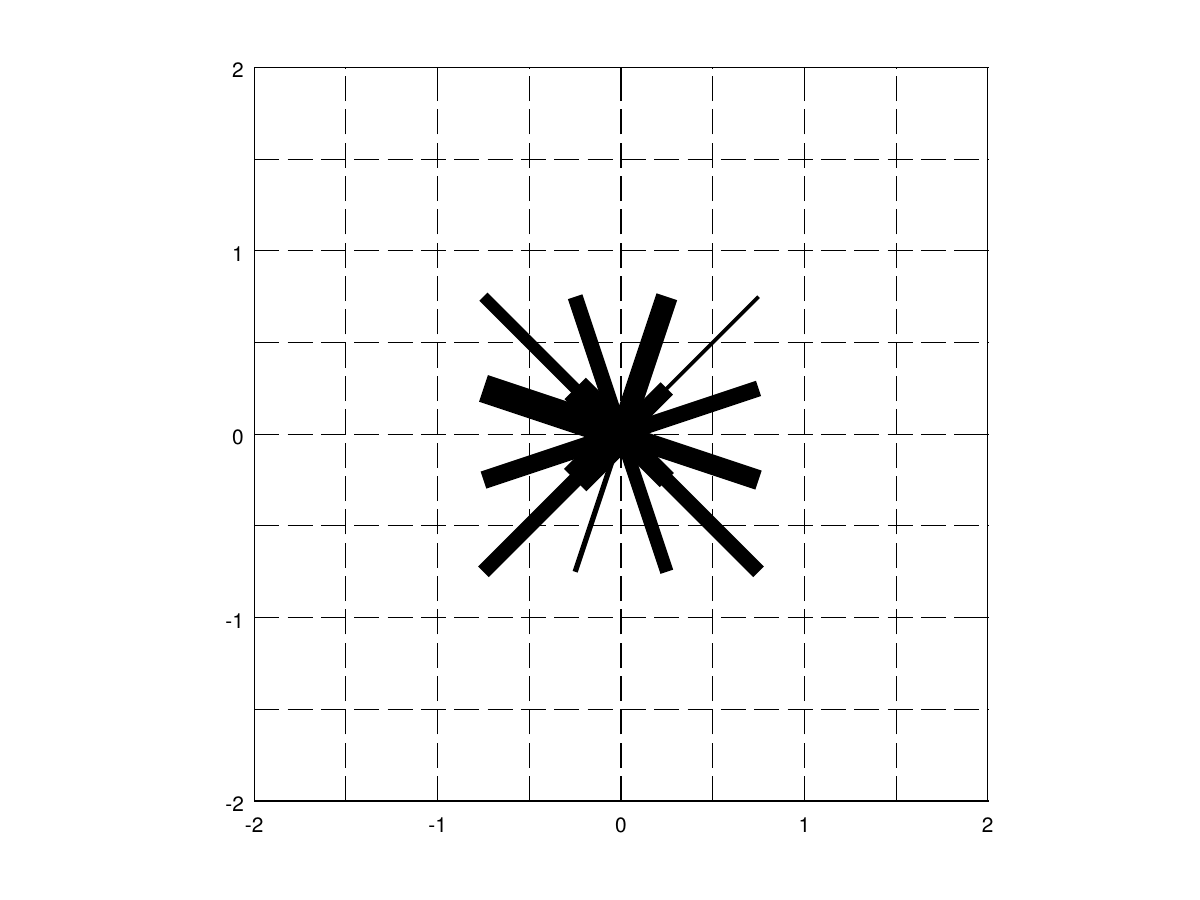}}
\put(0.03,0){\includegraphics[width=.22\unitlength]{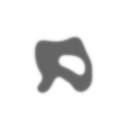}}
\put(0.28,0){\includegraphics[width=.22\unitlength]{discreteGraphsMu}}
\put(0.53,0){\includegraphics[width=.22\unitlength]{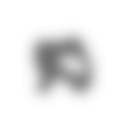}}
\put(0.78,0){\includegraphics[width=.22\unitlength]{discreteGraphsMu}}
%\put(0.03,0){\color{red}\line(1,0){.22}}
\put(.035,.19){\colorbox{white}{$G_\mu^{3,4}$}}
\put(.285,.19){\colorbox{white}{part of $G_\mu^{3;\delta}$}}
\put(.535,.19){\colorbox{white}{$G_\mu^{3;\delta}$}}
\put(.785,.19){\colorbox{white}{$G_{P^3(\mu),\delta_0}$}}
\end{picture}
\caption{Illustration of the discrete \transportPaths{} from \cref{thm:discreteTransportPaths}.
The linethickness is proportional to the transported mass, the dashed lines just indicate the underlying grids.
All figures show the measure $\mu$ in the background except for the third, which shows $K_\delta*\mu$.}
\label{fig:discreteGraphs}
\end{figure}

\begin{proposition}[Some discrete \transportPaths{}]\label{thm:discreteTransportPaths}
Let $K_\delta$ be a convolution kernel with compact support in $B_{\frac{\delta}{3}}(0)$ (the ball of radius $\frac\delta3$ around the origin), $\delta<1$, let $\mu,\nu\in\fbm(\R^n)$ be probability measures with support in $(-2,2)^n$, and let $\tau$ be an admissible transportation cost with upper bound $\beta$.
\begin{itemize}
\item There exists a discrete \transportPath{} $G_\mu^{k,m}$, $k<m$, between $P^k(\mu)$ and $P^m(\mu)$ with cost $$\JEnXia(G_\mu^{k,m})\leq2\sqrt n\sum_{j=k+1}^mS^\beta(n,j)\,.$$
\item There exists a discrete \transportPath{} $G_\mu^{k;\delta}$ between $P^k(\mu)$ and $P^k(K_\delta*\mu)$ with cost $$\JEnXia(G_\mu^{k;\delta})\leq2^{(n+1)l_\delta+1}S^\beta(n,k+l_\delta)\,,\qquad l_\delta=\lceil\log_2(2^{k-1}\delta+2\sqrt n)/n\rceil\,.$$
%\item If $\mu$ is a finite discrete mass, there is a discrete \transportPath{} $G_\mu^k$ between $\mu$ and $P^k(\mu)$ with cost $$\JEnXia(G_\mu^k)\leq\sqrt n2^{1-k}\tau(1)\,.$$
\item If $\mu,\nu$ are finite discrete masses, there is a discrete \transportPath{} $G_{\mu,\nu}$ between them with cost $$\JEnXia\leq2\sqrt n\sum_{x\in\spt\mu\cup\spt\nu}\tau(|\mu-\nu|(\{x\}))\,.$$
\end{itemize}
\end{proposition}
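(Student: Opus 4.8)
The plan is to establish the four claims by four explicit constructions; the first three are building blocks, and all share the same philosophy: assemble the discrete \transportPath{} out of short straight edges, so that edge lengths are controlled explicitly and, since no edge can carry more than the total transported mass, monotonicity of $\tau$ bounds each factor $\tau(w(e))$ by $\tau(1)$. Concatenating discrete \transportPaths{} by taking the union of their vertices and weighted directed edges preserves the mass-preservation conditions as long as the intermediate measures coincide, so each leg may be treated separately.

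For the first claim I would simply concatenate dyadic levels: $G_\mu^{k,m}=\bigcup_{j=k+1}^mF_\mu^j$. Each $F_\mu^j$ is a discrete \transportPath{} between $P^{j-1}(\mu)$ and $P^j(\mu)$ by \cref{def:nadicGraph}, so the union is a discrete \transportPath{} between $P^k(\mu)$ and $P^m(\mu)$, and \cref{thm:nadicGraphCost} gives $\JEnXia(G_\mu^{k,m})=\sum_{j=k+1}^m\JEnXia(F_\mu^j)\le2\sqrt n\sum_{j=k+1}^mS^\beta(n,j)$. For the third claim, writing $\mu=\sum_ia_i\delta_{x_i}$, the measure $P^k(\mu)$ is exactly $\mu$ with each atom $x_i$ snapped to the grid point $v(x_i)$ of its cell and equal targets merged; I would take $G_\mu^k$ to be one straight edge $[x_i,v(x_i)]$ of weight $a_i$ per atom. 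Each edge has length at most $\sqrt n\,2^{1-k}$ (half a cell diagonal) and weight at most $1$, which yields $\JEnXia(G_\mu^k)\le\sqrt n\,2^{1-k}\tau(1)$.

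The genuinely delicate step is the second claim. The natural construction uses the coupling induced by the convolution: a mass element of $\mu$ at $x$, displaced by a vector $y$ distributed according to $K_\delta$, travels from the grid point $v(x)$ of its original cell to the grid point $v(x+y)$ of the cell containing $x+y$; realizing this plan by one straight edge per occupied pair of grid points produces a discrete \transportPath{} between $P^k(\mu)$ and $P^k(K_\delta*\mu)$. Since $|x-v(x)|,|x+y-v(x+y)|\le\sqrt n\,2^{1-k}$ and $|y|\le\tfrac\delta3<\delta$, every edge has length at most $\delta+2^{2-k}\sqrt n$. The obstacle --- the heart of the proposition --- is to bound $\sum_e\tau(w(e))\,l(e)$ by $\tau(1)$ times this length: since $\tau$ is only subadditive and need not be linearly bounded below near $0$, applying $\tau(w(e))\le\tau(1)$ edge by edge introduces a factor equal to the number of active edges, which is far too large. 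The remedy is to route the inter-cell flow in a consolidated way, exploiting that $\spt K_\delta$ is tiny so that mass only ever migrates into nearby cells and the excess and deficit at each grid point can be bundled into a few high-weight edges, so that effectively at most a unit of mass is ever simultaneously in transit. This is the step I expect to require the most care, and it is where the single factor $\tau(1)$ in $(\delta+2^{2-k}\sqrt n)\tau(1)$ really originates.

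For the fourth claim I would first set aside the common mass: with $\sigma(\{z\})=\min\{\mu(\{z\}),\nu(\{z\})\}$ at each atom location, that mass stays fixed (zero-length self-edges), and it remains to transport $(\mu-\nu)^+$ to $(\mu-\nu)^-$, both of total mass $\tfrac12|\mu-\nu|(\R^n)$. I would route this through a single collection point $p$ in the ambient cube, collecting $(\mu-\nu)^+$ into $p$ and then redistributing from $p$ to $(\mu-\nu)^-$, merging so that the edges incident to $p$ carry the full mass $\tfrac12|\mu-\nu|(\R^n)$, hence contribute a factor at most $\tau(|\mu-\nu|(\R^n))$ by monotonicity, while every edge has length at most $2\sqrt n$. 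As in the second claim, the point requiring care is to merge the spread-out atoms so that the total cost does not accumulate a sum of $\tau$-values but only the single $\tau(|\mu-\nu|(\R^n))$; granting this one obtains $\JEnXia(G_{\mu,\nu})\le2\sqrt n\,\tau(|\mu-\nu|(\R^n))$.
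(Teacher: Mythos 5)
Your constructions for the first bullet (concatenating the levels $F_\mu^j$) and the third bullet (one straight edge per atom) coincide with the paper's, and your diagnosis of where the difficulty lies in the second and fourth bullets is exactly right --- but the proposal does not close that gap, and the gap is genuine. In each of the last three bullets the construction produces one edge per atom (or per occupied pair of grid points), of length at most $L$ and with weights $w_1,\dots,w_N$ summing to the transported mass $W$; the claimed bound $L\,\tau(W)$ then requires $\sum_i\tau(w_i)\le\tau(\sum_iw_i)$, i.e.\ \emph{super}additivity, which a subadditive $\tau$ does not satisfy: for $\tau(w)=w^\alpha$ and $N$ equal weights one gets $N^{1-\alpha}\tau(W)$, unbounded in $N$. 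Your proposed remedy for the second and fourth bullets --- consolidating the flow into a few high-weight edges --- cannot repair this when the atoms are genuinely spread out, because each atom must still be attached to the rest of the graph by a terminal edge whose weight equals that atom's own small mass, and no rerouting of the interior changes those terminal weights. Note that the same problem silently affects your third bullet, where you did not flag it: two atoms of mass $\tfrac12$ near opposite corners of one grid cell cost $2\sqrt n2^{1-k}\tau(\tfrac12)>\sqrt n2^{1-k}\tau(1)$ for any strictly subadditive $\tau$, and no alternative graph does better there, so the stated bound is not merely unproved by this construction.

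For what it is worth, the paper's own proof uses precisely the naive constructions you describe and simply asserts $\sum_e\tau(w(e))\le\tau(1)$ (respectively $\sum_v\tau(|\mu(\{v\})-\nu(\{v\})|)\le\tau(|\mu-\nu|(\R^n))$) without justification, so your suspicion has located a real weak point rather than a trick you are missing. What the subsequent applications (\cref{thm:UpperWassersteinBound} and the triangle inequality in \cref{thm:metrization}) actually need is only that these costs vanish in the relevant limit, and that can be salvaged in two ways: for fixed $k$ the grid underlying $P^k$ is finite, so $\sum_e\tau(w(e))\le N_k\,\tau(\max_ew(e))\to0$ as the transported total variation tends to $0$, by continuity of an admissible $\tau$ at $0$; alternatively, collecting the discrepancy hierarchically through an $n$-adic graph as in \cref{def:nadicGraph} bounds the cost by $C\sqrt n\sum_{j\ge1}2^{(n-1)j}\beta(\varepsilon2^{-nj})$ with $\varepsilon=|\mu-\nu|(\R^n)$, which tends to $0$ with $\varepsilon$ by dominated convergence against $S^\beta(n)$. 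Either substitute would make the argument complete; the single-$\tau$ bounds as stated should not be relied upon.
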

\begin{proof}
\begin{enumerate}
\item The graph $G_\mu^{k,m}$ can obviously be chosen as $G_\mu^{k,m}=F_{\mu}^{k+1}\cup\ldots\cup F_{\mu}^{m}$ with cost $$\JEnXia(G_\mu^{k,m})=\sum_{j=k+1}^m\JEnXia(F_{\mu}^{j})=2\sqrt n\sum_{j=k+1}^mS^\beta(n,j)\,.$$
\item Any mass particle of $\mu$ is moved at most by $\delta$ during the convolution. Furthermore, the projection $P^k$ moves each particle at most by $2^{1-k}\sqrt n$.
Thus the maximum distance of the same mass particle in $P^k(\mu)$ and $P^k(K_\delta*\mu)$ is $\delta+2\cdot2^{1-k}\sqrt n$.
Thus as $G_\mu^{k;\delta}$ we may choose a subset of the fully connected graph on the grid underlying the projection $P^k$, where each mass particle travels along the edge from its location in $P^k(\mu)$ to its location in $P^k(K_\delta*\mu)$.
The number of grid points underlying the projection $P^k$ is $2^{nk}$, and since each edge in $G_\mu^{k;\delta}$ is no longer than $\delta+2^{2-k}\sqrt n$,
any grid point is connected to at most $[2(\delta+2^{2-k}\sqrt n)/2^{2-k}]^n$ other grid points, or to no more than $2^{nl_\delta}$ other grid points.
Thus, letting $w(e)$ be just the total amount of particles travelling along $e$ and using Jensen's inequality, the cost can be estimated as
\begin{multline*}
\JEnXia(G_\mu^{k;\delta})
=\sum_{e\in E(G_\mu^{k;\delta})}l(e)\tau(w(e))
\leq(\delta+2^{2-k}\sqrt n)\sum_{e\in E(G_\mu^{k;\delta})}\beta(w(e))\\
\leq(\delta+2^{2-k}\sqrt n)2^{n(k+l_\delta)}\sum_{e\in E(G_\mu^{k;\delta})}\tfrac{\beta(w(e))}{2^{n(k+l_\delta)}}
\leq(\delta+2^{2-k}\sqrt n)2^{n(k+l_\delta)}\beta\Big(\sum_{e\in E(G_\mu^{k;\delta})}\tfrac{w(e)}{2^{n(k+l_\delta)}}\Big)\\
=(2^k\delta+4\sqrt n)2^{l_\delta}S^\beta(n,k+l_\delta)
\leq2^{(n+1)l_\delta+1}S^\beta(n,k+l_\delta)\,.
\end{multline*}
% \item $G_\mu^k$ simply connects each Dirac mass with is position after the projection $P^k$.
% Since each mass is moved by at most $\sqrt n2^{1-k}$, $\JEnXia(G_\mu^k)\leq\sqrt n2^{1-k}\sum_{v\in\spt\mu}\tau(\mu(\{v\}))\leq\sqrt n2^{1-k}\tau(1)$.
\item As $G_{\mu_1,\mu_2}$ we chose the discrete \transportPath{} $G$ which transports each point mass in $\mu$ to the origin along a straight line and then moves mass from the origin to each point mass in $\nu$ along a straight line.
In detail, $V(G)=\{0\}\cup\spt\mu\cup\spt\nu$, $E(G)=\{[v,0]\,:\,v\in\spt\mu\}\cup\{[0,v]\,:\,v\in\spt\nu\}$, and $w([v,0])=\max\{0,\mu(\{v\})-\nu(\{v\})\}$, $w([0,v])=\max\{0,\nu(\{v\})-\mu(\{v\})\}$.
The cost can be calculated as
$$
\JEnXia(G)
\leq\sum_{v\in V(G)\setminus\{0\}}|v|\tau(|\mu(\{v\})-\nu(\{v\})|)
\leq2\sqrt n\sum_{x\in\spt\mu\cup\spt\nu}\tau(|\mu-\nu|(\{x\}))
\,.\qedhere
$$
\end{enumerate}
\end{proof}

\begin{corollary}[Upper Wasserstein bound]\label{thm:UpperWassersteinBound}
Let $\mu_i\weakstarto\mu$ as $i\to\infty$ for $\mu_i,\mu\in\prob$.
Then $\dtau(\mu_i,\mu)\to0$ for any admissible $\tau$.
\end{corollary}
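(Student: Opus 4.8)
The plan is to bound $\dtau(\mu_i,\mu)$ from above by chaining together the elementary discrete \transportPaths{} of \cref{thm:discreteTransportPaths}, routing the mass through the grid approximations $P^k(\cdot)$ and inserting a mollification so that $\mu_i$ and $\mu$ can be compared on one \emph{fixed} grid. Fix $\varepsilon>0$ and set $R_k=\sum_{j=k+1}^\infty S^\beta(n,j)$, so that $R_k\to0$ as $k\to\infty$ by \cref{thm:admTau}. First I would choose $k$ so large that $4\sqrt n\,R_k<\tfrac\varepsilon3$ and $2^{3-k}\sqrt n\,\tau(1)<\tfrac\varepsilon3$, then $\delta\in(0,1)$ so small that $2\delta\,\tau(1)<\tfrac\varepsilon3$, and finally a nonnegative smooth kernel $K_\delta$ with $\int K_\delta=1$ and $\spt K_\delta\subset B_{\delta/3}(0)$. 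Note that $K_\delta*\mu_i$ and $K_\delta*\mu$ are probability densities supported in the fixed compact set $[-2,2]^n$, so $P^k(K_\delta*\mu_i)$ and $P^k(K_\delta*\mu)$ are finite discrete probability measures sitting on the common grid underlying $P^k$.

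Next, for each $i$ and each $m>k$ I would form the discrete \transportPath{} $G_i^m$ obtained by superposing (adding weights on shared edges) the five graphs $-G_{\mu_i}^{k,m}$, $G_{\mu_i}^{k;\delta}$, $G_{P^k(K_\delta*\mu_i),\,P^k(K_\delta*\mu)}$, $-G_\mu^{k;\delta}$ and $G_\mu^{k,m}$ from \cref{thm:discreteTransportPaths} (reversing all edges of a graph changes neither its length nor its cost). The divergences telescope through $P^k(\mu_i)$, $P^k(K_\delta*\mu_i)$, $P^k(K_\delta*\mu)$, $P^k(\mu)$, so $G_i^m$ is a discrete \transportPath{} between $P^m(\mu_i)$ and $P^m(\mu)$, and by subadditivity of $\tau$ on the overlapping edges,
\[
\JEnXia(G_i^m)\le \JEnXia(G_{\mu_i}^{k,m})+\JEnXia(G_\mu^{k,m})+2(\delta+2^{2-k}\sqrt n)\tau(1)+2\sqrt n\,\tau(d_i)\le 4\sqrt n\,R_k+2(\delta+2^{2-k}\sqrt n)\tau(1)+2\sqrt n\,\tau(d_i),
\]
where $d_i=|P^k(K_\delta*\mu_i)-P^k(K_\delta*\mu)|(\R^n)$ and I used $\JEnXia(G_{\mu_i}^{k,m}),\JEnXia(G_\mu^{k,m})\le 2\sqrt n\sum_{j=k+1}^m S^\beta(n,j)\le 2\sqrt n\,R_k$. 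The fluxes $\flux_{G_i^m}$ are uniformly bounded in total variation and supported in $[-2,2]^n$, so along a subsequence $\flux_{G_i^m}\weakstarto\flux^i$ for some $\flux^i\in\rca(\R^n;\R^n)$, while $P^m(\mu_i)\weakstarto\mu_i$ and $P^m(\mu)\weakstarto\mu$ by \cref{rem:measApprox}. Hence $\flux^i$ is a \transportPath{} from $\mu_i$ to $\mu$, and since the bound on $\JEnXia(G_i^m)$ is uniform in $m$,
\[
\dtau(\mu_i,\mu)\le\JEnXia(\flux^i)\le\liminf_{m\to\infty}\JEnXia(G_i^m)\le 4\sqrt n\,R_k+2(\delta+2^{2-k}\sqrt n)\tau(1)+2\sqrt n\,\tau(d_i).
\]

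It then remains to send $i\to\infty$ with $k$ and $\delta$ frozen. The density of $K_\delta*\mu_i$ is $g_i(x)=\int K_\delta(x-y)\,\de\mu_i(y)$, which converges pointwise to the density of $K_\delta*\mu$ because $K_\delta(x-\cdot)$ is bounded and continuous and $\mu_i\weakstarto\mu$; moreover $0\le g_i\le\|K_\delta\|_{\cont^0}$ and all $g_i$ are supported in the fixed compact set $[-2,2]^n$. Dominated convergence then yields $(K_\delta*\mu_i)(Q)\to(K_\delta*\mu)(Q)$ for each of the finitely many grid cells $Q=v+(-2^{1-k},2^{1-k}]^n$, hence $d_i\to0$; since an admissible $\tau$ is continuous with $\tau(0)=0$, we get $\tau(d_i)\to0$. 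Therefore $\limsup_{i\to\infty}\dtau(\mu_i,\mu)\le 4\sqrt n\,R_k+2(\delta+2^{2-k}\sqrt n)\tau(1)<\varepsilon$ by the choice of $k$ and $\delta$, and letting $\varepsilon\downarrow0$ completes the argument.

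The delicate point — and the sole reason for the mollification — is exactly this last limit: comparing $P^k(\mu_i)$ directly with $P^k(\mu)$ would require the grid cells to be $\mu$-continuity sets, which cannot be guaranteed, so $\mu_i\weakstarto\mu$ alone need not force convergence of the cell masses. Convolving with $K_\delta$ replaces the measures by densities on a fixed compact set, for which the cell masses do converge by dominated convergence; the cost paid for this is the two short extra legs $\pm G^{k;\delta}$ in the chain, which are harmless since $P^k$ and the convolution displace mass by only $O(2^{-k})$ and $\delta$, respectively. Everything else is bookkeeping: checking that the superposition of the five graphs is a valid discrete \transportPath{} with the telescoped boundary, and that reversal of edges preserves cost.
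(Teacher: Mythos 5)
Your proof is correct and follows essentially the same route as the paper: the same five-graph chain $(-G_{\mu_i}^{k,m})\cup G_{\mu_i}^{k;\delta}\cup G_{P^k(K_\delta*\mu_i),P^k(K_\delta*\mu)}\cup(-G_{\mu}^{k;\delta})\cup G_{\mu}^{k,m}$, the same cost bound $4\sqrt n\sum_{j>k}S^\beta(n,j)+2(\delta+2^{2-k}\sqrt n)\tau(1)+2\sqrt n\,\tau(d_i)$, and the same use of mollification to turn weak-$*$ into strong convergence of the grid-cell masses. The only differences are presentational: you spell out the dominated-convergence argument for $d_i\to0$ and extract an explicit weak-$*$ limit $\flux^i$, where the paper simply invokes the relaxed definition of $\JEnXia$ on the approximating graph sequence.
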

\begin{proof}
Let $k\geq1$, $\delta>0$, and
consider the discrete \transportPath{} $G_i^m=(-G_{\mu_i}^{k,m})\cup G_{\mu_i}^{k;\delta}\cup G_{P^k(K_\delta*\mu_i),P^k(K_\delta*\mu)}\cup(-G_{\mu}^{k;\delta})\cup G_{\mu}^{k,m}$ between $P^m(\mu_i)$ and $P^m(\mu)$,
then $(P^m(\mu_i),P^m(\mu),\flux_{G_i^m})$ for $m=k+1,k+2,\ldots$ is an approximating graph sequence so that
\begin{align*}
\dtau(\mu_i,\mu)
&\leq\liminf_{m\to\infty}\JEnXia(G_i^m)\\
&\leq\liminf_{m\to\infty}\JEnXia(G_{\mu_i}^{k,m})+\JEnXia(G_{\mu_i}^{k;\delta})+\JEnXia(G_{P^k(K_\delta*\mu_i),P^k(K_\delta*\mu)})+\JEnXia(G_{\mu}^{k;\delta})+\JEnXia(G_{\mu}^{k,m})\\
&\leq4\sqrt n\sum_{j=k+1}^\infty S^\beta(n,j)+2\cdot2^{(n+1)l_\delta+1}S^\beta(n,k+l_\delta)+2\sqrt n\sum_{x\in\spt P^k(K_\delta*(\mu_i+\mu))}\tau(|P^k(K_\delta*(\mu_i-\mu))|(\{x\}))\,.
%&=2\delta\tau(1)+2\sqrt n\tau(|P^k(K_\delta*\mu_i)-P^k(K_\delta*\mu)|(\R^n))\,.
\end{align*}
Since $K_\delta*\mu_i\to K_\delta*\mu$ strongly in $\fbm(\R^n)$ and thus $P^k(K_\delta*\mu_i)-P^k(K_\delta*\mu)\to0$ strongly as $i\to\infty$, we obtain
$\limsup_{i\to\infty}\dtau(\mu_i,\mu)\leq4\sqrt n\sum_{j=k+1}^\infty S^\beta(n,j)+2\cdot2^{(n+1)l_\delta+1}S^\beta(n,k+l_\delta)$.
The result now follows by letting $\delta=2^{-k}$ so that $l_\delta\leq2$ and $k\to\infty$.
\end{proof}

Note that above the convolution was used since it turns weak into strong convergence and thus allows a simple cost estimate from above.

\begin{corollary}[Metrization property]\label{thm:metrization}
The cost distance $\dtau$ for an admissible transportation cost $\tau$ metrizes weak-$*$ convergence on $\prob$.
\end{corollary}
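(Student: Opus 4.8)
The plan is to establish two implications: (i) weak-$*$ convergence $\mu_i\weakstarto\mu$ in $\prob$ implies $\dtau(\mu_i,\mu)\to0$, and (ii) $\dtau(\mu_i,\mu)\to0$ implies $\mu_i\weakstarto\mu$; together with the verification that $\dtau$ is indeed a metric on $\prob$. Implication (i) is exactly \cref{thm:UpperWassersteinBound}, so nothing remains to be done there. Implication (ii) follows from \cref{lem:D}: since $\dtau(\mu_i,\mu)\geq\lambda\Wdone(\mu_i,\mu)$ for some $\lambda>0$, the convergence $\dtau(\mu_i,\mu)\to0$ forces $\Wdone(\mu_i,\mu)\to0$, and the Wasserstein-$1$ distance metrizes weak-$*$ convergence on probability measures with uniformly bounded support (all measures in $\prob$ are supported in $[-1,1]^n$), as recalled in the Notation section.

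It then remains to check that $\dtau$ is a genuine metric on $\prob$, i.e.\ nonnegativity, symmetry, the triangle inequality, and that $\dtau(\mu_+,\mu_-)=0$ iff $\mu_+=\mu_-$. Nonnegativity is immediate since $\JEnXia\geq0$. Finiteness on all of $\prob$ is guaranteed by \cref{thm:finiteCostGraph} (admissibility of $\tau$). Symmetry follows by reversing the orientation of every edge in an approximating graph sequence: this interchanges the roles of source and sink while leaving edge weights and lengths — hence the cost $\JEnXia$ — unchanged. For the triangle inequality $\dtau(\mu_+,\mu_-)\leq\dtau(\mu_+,\nu)+\dtau(\nu,\mu_-)$, one concatenates approximating graph sequences: given graph sequences realizing costs close to $\dtau(\mu_+,\nu)$ and to $\dtau(\nu,\mu_-)$, one first passes (using e.g.\ the discrete \transportPaths{} $G_\mu^{k,m}$ and $G_{\mu,\nu}$ from \cref{thm:discreteTransportPaths}) to a common intermediate discretization of $\nu$ so the two graph sequences can be glued at matching vertices; the flux of the union is the sum of the fluxes, and $\JEnXia$ of the union is bounded by the sum of the two costs by subadditivity of $\tau$ (when an edge is shared, $\tau(w_1+w_2)\leq\tau(w_1)+\tau(w_2)$; otherwise costs simply add). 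Finally, $\dtau(\mu_+,\mu_+)=0$ by taking the trivial approximating sequence with empty graphs, while $\dtau(\mu_+,\mu_-)=0$ with $\mu_+\neq\mu_-$ is ruled out by \cref{lem:D}, since then $\Wdone(\mu_+,\mu_-)>0$.

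The main obstacle is the triangle inequality, specifically the bookkeeping required to concatenate two approximating graph sequences whose intermediate measure $\nu$ is approached through \emph{different} discrete measures from the two sides: one must interpose the connecting graphs of \cref{thm:discreteTransportPaths} (at a scale $k$ tending to infinity and convolution parameter $\delta\to0$) so that the gluing is along coinciding vertex sets, and then check that the extra connecting cost vanishes in the limit — this is essentially the same estimate already carried out in the proof of \cref{thm:UpperWassersteinBound}. Everything else is routine once implications (i) and (ii) are in hand.
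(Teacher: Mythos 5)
Your proposal is correct and follows essentially the same route as the paper: one direction via \cref{thm:UpperWassersteinBound}, the other via \cref{lem:D} together with the metrization of weak-$*$ convergence by $\Wdone$, and the triangle inequality by gluing the two approximating graph sequences through the connecting graphs of \cref{thm:discreteTransportPaths} (projection to scale $k$, convolution at scale $\delta$, and the cone graph between the two discretizations of the intermediate measure), with the extra cost vanishing as $k\to\infty$ and $\delta\to0$ exactly as in the proof of \cref{thm:UpperWassersteinBound}. No gaps.
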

\begin{proof}
We have already shown that $\mu_i\weakstarto\mu$ is equivalent to $\dtau(\mu_i,\mu)=0$ (where one direction follows from \cref{thm:UpperWassersteinBound}, the other from \cref{lem:D} and the metrization of weak-$*$ convergence by $\Wdone$).
It only remains to show that $\dtau$ satisfies the triangle inequality (the symmetry is straightforward).
To this end, consider $\mu,\nu,\xi\in\prob$ as well as approximating graph sequences
\begin{align*}
(\mu_i,\xi_i^l,\flux_{G_i^l})&\quad\text{with }\dtau(\mu,\xi)=\lim_{i\to\infty}\JEnXia(G_i^l)\,,\\
(\xi_i^r,\nu_i,\flux_{G_i^r})&\quad\text{with }\dtau(\xi,\nu)=\lim_{i\to\infty}\JEnXia(G_i^r)\,.
\end{align*}
Now by \cite[Thm.\,18]{MaWi19} (which only requires \cref{thm:UpperWassersteinBound} in its proof rather than the full metrization property, even though the proof refers to the metrization property),
we may even choose $\xi_i^r=\xi_i^l$ so that $(\mu_i,\nu_i,\flux_{G_i^l\cup G_i^r})$ is an approximating graph sequence for transport from $\mu$ to $\nu$ with
\begin{equation*}
\dtau(\mu,\nu)\leq\liminf_{i\to\infty}\JEnXia(G_i^l\cup G_i^r)\leq\liminf_{i\to\infty}\JEnXia(G_i^l)+\JEnXia(G_i^r)=\dtau(\mu,\xi)+\dtau(\xi,\nu)\,.\qedhere
\end{equation*}
\end{proof}

\notinclude{
\begin{lemma}\label{lem:convolution}
Let $G_\delta$ be a convolution with compact support kernel in $B_{\frac{\delta}{3}}(0)$. Then,
\begin{displaymath}
 \dtau(\mu,A_{\overline{k}}(G_{\frac{d}{2^k}}\ast\mu)) \leq C \sum_{k = \overline{k}}^{+\infty} 2^{(n-1)k}\tau(2^{-nk}).
\end{displaymath}
\end{lemma}

\begin{remark}
Notice that if $\mu$ is a probability measure and $G_\delta$ a convolution kernel, then $\mu\ast G_\delta$ is a probability measure.
\end{remark}

\begin{proof}
Let
\begin{displaymath}
 S_{\overline{k}} = \sum_{k = \overline{k}}^{+\infty} 2^{(n-1)k}\tau(2^{-nk}).
\end{displaymath}
We have:
% \begin{displaymath}
%  \dtau(\mu,A_{\overline{k}}(G_{\frac{d}{2^k}}\ast\mu)) \leq \dtau(\mu,A_{\overline{k}}(\mu)) + \dtau(A_{\overline{k}}(\mu),A_{\overline{k}}(G_{\frac{d}{2^{\overline{k}}}}\ast\mu)).
% \end{displaymath}
\begin{displaymath}
 \dtau(\mu,\mu_-) \leq \liminf_{n \to \infty} \JEnXia^{\tau,\mu_n,\mu_-}(G^n)
\end{displaymath}
where $\mu_-=A_{\overline{k}}(G_{\frac{d}{2^k}}\ast\mu)$ and $G^n$ and $\mu_n$ are defined below such that $(G^n,\mu_n,\mu_-) \weakstarto (G,\mu,\mu_-)$.
We have
\begin{displaymath}
\lim_{n \to \infty} \JEnXia^{\tau,\mu_n,\mu_-}(G^n) \leq \lim_{n \to \infty} \JEnXia^{\tau,\mu_n,A_{\overline{k}}(\mu)}(G_1^n)+\JEnXia^{\tau,A_{\overline{k}}(\mu),\mu_-}(G_2),
\end{displaymath}
where $G^n = G_1^n \cup G_2$ ($G_1^n$ is the approximating graph sequence from Lemma \ref{lem:existence_of_a_finite_cost_flux}, and $G_2$ is the graph moving each particle in $\mu$ from where it goes under $A_{\overline k}$ to where it goes under $A_{\overline k}(G_{\frac d{2^{\overline k}}}*\cdot)$).

First, by Lemma \ref{lem:existence_of_a_finite_cost_flux} we have
\begin{displaymath}
 \lim_{n \to \infty} \JEnXia^{\tau,\mu_n,A_{\overline{k}}(\mu)}(G_1^n) \leq S_{\overline{k}}.
\end{displaymath}
As fas as the second term is concerned, we have (the second step is because the concavity of $\tau$ and Lemma \ref{lem:nice_lemma}, the last because of the monotonicity of $\tau$):
\begin{multline*}
 \dtau(A_{\overline{k}}(\mu),A_{\overline{k}}(G_{\frac{d}{2^{\overline{k}}}}\ast\mu)) \leq \sum_{(\overline{x}_{i_1}^k,\overline{x}_{i_2}^k,\ldots,\overline{x}_{i_n}^k)} \frac{d\sqrt{n}}{2^{\overline{k}}} \sum_{x \in N((\overline{x}_{i_1}^k,\overline{x}_{i_2}^k,\ldots,\overline{x}_{i_n}^k))}\tau(m(x))\\
 \leq \sum_{(\overline{x}_{i_1}^k,\overline{x}_{i_2}^k,\ldots,\overline{x}_{i_n}^k)} \frac{d\sqrt{n}}{2^{\overline{k}}} \sum_{x \in N((\overline{x}_{i_1}^k,\overline{x}_{i_2}^k,\ldots,\overline{x}_{i_n}^k))} \tau(\frac{1}{2^{\overline{k}n}3^n})\\
 \leq 2^{\overline{k}(n-1)}d\sqrt{n}3^n\tau(\frac{1}{3^n2^{\overline{k}n}}) \leq 3d\sqrt{n}\left(\frac{3}{2}\right)^{n-1} 2^{(\overline{k}+1)(n-1)}\tau(\frac{1}{2^{(\overline{k}+1)n}}).
\end{multline*}
\end{proof}

This is the generalisation of \cite[Theorem 4.2]{Xia-Optimal-Paths}.

\begin{theorem}\label{thm:d_tau_and_the_weak_convergence}
$\dtau(\mu_+,\mu_-)$ satisfies the triangle inequality and metrizes the weak-$*$ convergence.
\end{theorem}

% Before proving Theorem \ref{thm:d_tau_and_the_weak_convergence} we need a lemma.
% \begin{lemma}\label{lem:it_works_with_discrete_measures}
% Let $a_i, b_i$ discrete finite measures such that $a_i, b_i \weakstarto \mu$. Then, $\dtau(a_i,b_i) \to 0$.
% \end{lemma}
% \begin{proof}
% \end{proof}
%
\begin{proof}%[Proof of Theorem \ref{thm:d_tau_and_the_weak_convergence}]
It only remains to show
\begin{displaymath}
 \mu_i \weakstarto \mu \Longrightarrow \dtau(\mu_i,\mu) \to 0.
\end{displaymath}
% Choosing $a_i = A_{n_i}(\mu_i)$ suitably we can have $\dtau(a_i,\mu_i) \leq 2^{-i}$. This implies that $a_i \weakstarto \mu$:
% \begin{itemize}
%  \item $\mu_i \weakstarto \mu \Longrightarrow \Wd{1}(\mu_i,\mu) \to 0$;
%  \item $\dtau(a_i,\mu_i) \leq 2^{-i} \Longrightarrow \Wd{1}(a_i,\mu_i) \leq 2^{-i}$;
%  \item $\Wd{1}(a_i,\mu_i) \leq \Wd{1}(a_i,\mu_i) + \Wd{1}(\mu_i,\mu) \to 0$.
% \end{itemize}
% Again, by Lemma \ref{lem:D}, choosing $b_i = A_i(\mu)$, we have $\dtau(b_i,\mu) \to 0$. Finally, by Lemma \ref{lem:it_works_with_discrete_measures}
% \begin{displaymath}
%  \dtau(\mu_i,\mu) \leq \dtau(\mu_i,a_i) + \dtau(a_i,b_i) + \dtau(b_i,\mu) \to 0.
% \end{displaymath}

Let $a_i = A_{\overline{k}}(G_{\frac{d}{2^{\overline{k}}}}\ast\mu_i)$ and $b = A_{\overline{k}}(G_{\frac{d}{2^{\overline{k}}}}\ast\mu)$.
Using the same graphs $G^n$ as in Lemma \ref{lem:convolution} ($G^n$ for $\mu$ and $G_i^n$ for $\mu_i$) and taking $\tilde G_i^n$ to be the graph connecting $a_i$ with $b$ via a cone with vertex the centre of the square, we have
\begin{displaymath}
 \dtau(\mu_i,\mu)
 \leq\liminf_{n\to\infty}\JEnXia(G_i^n)+\JEnXia(\tilde G_i^n)+\JEnXia(G^n)
 \leq C S_{\overline{k}} + \sum_{x \in \grid{\overline{k}}} \sqrt{n}d \tau(|a_i(\{x\})-b(\{x\})|) + C S_{\overline{k}}\,.
\end{displaymath}
For $\varepsilon > 0$ let us choose $\overline{k}$ such $2C S_{\overline{k}} < \frac{\varepsilon}{2}$. We then have:
\begin{displaymath}
 \dtau(\mu_i,\mu) \leq \frac{\varepsilon}{2} + \sum_{x \in \grid{\overline{k}}} \sqrt{n}d \tau(|a_i(\{x\})-b(\{x\})|).
\end{displaymath}
In order to estimate the last term, let us notice that $a_i \to b$ strongly and consequently $a_i(\{x\})-b(\{x\}) \to 0$ uniformly in $x$. As a result $\sum_{x \in \grid{k}} \tau(|a_i(\{x\})-b(\{x\})|) \to 0$ as $i \to +\infty$. Finally, choosing $i$ sufficiently big, we get $\dtau(\mu_i,\mu) \leq \varepsilon$.

The proof of the triangular inequality follows the same method.
\end{proof}

Further properties that $\dtau$ satisfies:

\begin{lemma}[Subadditivity]
Let $\mu_+,\mu_-\in\fbm$ with same mass and $\tilde\mu_+,\tilde\mu_-\in\fbm$ with same mass. Then $\dtau(\mu_++\tilde\mu_+,\mu_-+\tilde\mu_-)\leq\dtau(\mu_+,\mu_-)+\dtau(\tilde\mu_+,\tilde\mu_-)$.
\end{lemma}
\begin{proof}
Simply take as approximating graph sequence of the lhs the union of the approximating graph sequences of the rhs (well, somehow also take care that the positive and negative part of the divergence of the new graph sequence are the sum of the positive and negative part of the divergence of the other two sequences).
\end{proof}

\begin{lemma}[Singular initial and final measure]
Let $\mu_+,\mu_-\in\fbm$ with same mass, and let $\tilde\mu_+,\tilde\mu_-\in\fbm$ be the positive and negative part of $\mu_+-\mu_-$, respectively, according to the Hahn decomposition theorem.
Then $\dtau(\mu_+,\mu_-)=\dtau(\tilde\mu_+,\tilde\mu_-)$.
\end{lemma}
\begin{proof}
Trivial direction: $\dtau(\mu_+,\mu_-)\leq\dtau(\tilde\mu_+,\tilde\mu_-)+\dtau(\mu_+-\tilde\mu_+,\mu_--\tilde\mu_-)=\dtau(\tilde\mu_+,\tilde\mu_-)$.

Other direction: Let $G^k$ be an approximating graph sequence for $\dtau(\mu_+,\mu_-)$.
Let $\tilde\mu^k_-$ and $\tilde\mu^k_+$ (of same mass) be the part of the final/initial measure of $G^k$, respectively, such that $\tilde\mu^k_-,\tilde\mu^k_+\weakstarto\mu_+-\tilde\mu_+$.
Then by the same method as in Thm.\,\ref{thm:d_tau_and_the_weak_convergence} one can construct a graph connecting $\tilde\mu^k_-$ and $\tilde\mu^k_+$, whose cost converges to zero as $k\to\infty$.
\end{proof}

Hence we may always assume that $\mu_+,\mu_-$ are singular, $\mu_+\perp\mu_-$.

\begin{corollary}[Definition of transport cost]
We have $\JEnXia(\flux)=\inf\left\{\liminf_{k \to \infty} \JEnXia(G_k) \ : \ \flux_{G_k}\weakstarto\flux\right\}$
and $\dtau(\mu_+,\mu_-)=\inf\{\JEnXia(\flux)\ :\ \dv\flux=\mu_+-\mu_-\}$.
\end{corollary}

We close this section by showing that we can always approximate optimal \transportPaths{} by minimizers of approximations.

\begin{lemma}[Almost a $\Gamma$-convergence lemma]\label{lem:gamma_convergence_for_discrete_measures}
Suppose that
\begin{enumerate}
 \item $\mu_+^N$, $\mu_-^N$ are measures such that $\mu_+^N \weakstarto \mu_+$, $\mu_-^N \weakstarto \mu_-$ as $N\to\infty$;
 \item $\flux_N$ is a minimizer of $\JEn^{\tau,\mu_+^N,\mu_-^N}$;
 \item $\flux_N \weakstarto \flux$.
\end{enumerate}
Then, $\flux$ is a minimizer of $\JEn^{\tau,\mu_+,\mu_-}$.
\end{lemma}

\begin{proof}
Using the weak-$*$ lower semi-continuity of the functional, this directly follows from
\begin{multline*}
 \JEn^{\tau,\mu_+,\mu_-}(\flux)
 \leq \liminf_{N \to \infty} \JEn^{\tau,\mu_+^N,\mu_-^N}(\flux_N)
 = \liminf_{N \to \infty} \dtau(\mu_+^N,\mu_-^N)\\
 \leq\liminf_{N \to \infty} \dtau(\mu_+^N,\mu_+)+\dtau(\mu_+,\mu_-)+\dtau(\mu_-,\mu_-^N)
 =\dtau(\mu_+,\mu_-)\,.\qedhere
\end{multline*}
\end{proof}
}%\notinclude

\subsection{Length space property}

Here we show that for an admissible transportation cost $\tau$, the space $\prob$ with metric $\dtau$ is a length space, that is, the distance between any two elements $\mu_+,\mu_-\in\prob$ is induced by a shortest connecting path.
To this end we show that any $\mu_+,\mu_-\in\prob$ admit a point $\mu\in\prob$ in between, which means $\mu\neq\mu_+,\mu_-$ and $\dtau(\mu_+,\mu_-)=\dtau(\mu_+,\mu)+\dtau(\mu,\mu_-)$.
A simple construction of this point uses the following concept.

\begin{definition}[Graph paths]
Let $G$ be an acyclic discrete \transportPath{} between $\mu_+,\mu_-\in\fbm(\R^n)$.
\begin{enumerate}
\item A \emph{path} in $G$ is a sequence $\xi = (e_1,\ldots,e_k)$ of edges $e_1,\ldots,e_k\in E(G)$ such that $e_i^+ = e_{i-1}^-$ for $i = 2,\ldots,k$, where $e^+$ and $e^-$ denote the initial and final point of edge $e$.
\item A \emph{maximal path} in $G$ is a path that begins in a vertex $v_+\in\spt\mu_+$ and ends in a vertex $v_-\in\spt\mu_-$.
\notinclude{It is not necessarily a path which is not a subsequence of any other path, since a source point may also have an incoming edge.}
The set of maximal paths is denoted $\Xi(G)$.
\item The \emph{weight} $w(\xi)$ of all paths $\xi\in\Xi(G)$ is defined by the system of equations
\begin{displaymath}
 w(e) = \sum_{e \in \xi} w(\xi), \quad e \in E(G)\,,
\end{displaymath}
whose solvability follows from \cite[Lemma 7.1]{Xia-Optimal-Paths}. Notice that the solution may not be unique.
\end{enumerate}
\end{definition}

\notinclude{
\begin{example}[Measure in between]
Let $\mu_+,\mu_-\in\fbm$ with same mass and
let $A\subset\R^n$ with smooth boundary $\partial A$ and $\spt\mu_+\subset A$, $\spt\mu_-\subset\R^2\setminus A$.
Then there exists a measure $\mu$ in between $\mu_+$ and $\mu_-$, i.\,e.\ with $\mu\neq\mu_+,\mu_-$ and $\dtau(\mu_+,\mu_-)=\dtau(\mu_+,\mu)+\dtau(\mu,\mu_-)$, i.e. the equality is realised in the triangle inequality.
\end{example}
\begin{proof}
Let $G^k$ be an approximating graph sequence for $\dtau(\mu_+,\mu_-)$.
Then $G^k=G^k_1\cup G^k_2$ for graphs defined via $\flux_{G^k_1}=\flux_{G^k}\restr A$ and $\flux_{G^k_2}=\flux_{G^k}-\flux_{G^k_1}$.
Obviously, $\dv\flux_{G^k_1}=\mu_+^k-\mu^k$ and $\dv\flux_{G^k_1}=\mu^k-\mu_-^k$, where $\spt\mu^k\subset\partial A$ and $\mu_+^k\weakstarto\mu_+$, $\mu_-^k\weakstarto\mu_-$, $\mu^k\weakstarto\mu$ (the latter follows at least for a subsequence with $\mu\subset\partial A$).
Thus, $G^k_1$ and $G^k_2$ are approximating graph sequences for transport from $\mu_+$ to $\mu$ and from $\mu$ to $\mu_-$, respectively, so that $\dtau(\mu_+,\mu_-)\geq\dtau(\mu_+,\mu)+\dtau(\mu,\mu_-)$.
That this is actually an equality follows from the triangle inequality.
\end{proof}
}%\notinclude

\begin{proposition}[Measure in between]
Let $\mu_+,\mu_-\in\prob$ and $\tau$ be admissible, then there exists a measure $\mu$ in between $\mu_+$ and $\mu_-$ with respect to $\dtau$.
\end{proposition}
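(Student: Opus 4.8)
The plan is to take an approximating graph sequence $(\mu_+^i,\mu_-^i,\flux_{G_i})\weakstarto(\mu_+,\mu_-,\flux)$ for an optimal \transportPath{} $\flux$ realizing $\dtau(\mu_+,\mu_-)$, which exists by \cref{thm:existenceFluxAdmissible}, and to ``cut'' each $G_i$ roughly halfway along its mass flow. Concretely, by \cref{thm:no_cycles_lemma} we may assume each $G_i$ acyclic, so the maximal-path decomposition $\Xi(G_i)$ is available: $w(e)=\sum_{e\in\xi}w(\xi)$. I would then pick, for each maximal path $\xi=(e_1,\dots,e_k)$, the first vertex $v(\xi)$ along $\xi$ at which the accumulated arclength from the source endpoint reaches half the total length of $\xi$ (or alternatively one can cut at a fixed geometric locus, see below), split $G_i$ into the ``upstream'' subgraph $G_i^l$ carrying each path up to $v(\xi)$ and the ``downstream'' subgraph $G_i^r$ carrying it from $v(\xi)$ onward, and let $\mu^i$ be the intermediate discrete measure $\sum_\xi w(\xi)\,\delta_{v(\xi)}$ sitting at these cut points. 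By construction $\JEnXia(G_i^l)+\JEnXia(G_i^r)\le\JEnXia(G_i)$ (splitting a path into two pieces never increases cost, since $\tau$ is subadditive and the edge lengths simply add up, and edges shared by several paths are only split, not duplicated — here one has to be slightly careful to argue that the cut graphs are honest graphs whose weights come from the retained path-weights).

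Passing to a subsequence, $\mu^i\weakstarto\mu$ for some $\mu\in\prob$ (tightness and mass-$1$ are immediate since all supports lie in $[-1,1]^n$). Then $(\mu_+^i,\mu^i,\flux_{G_i^l})$ and $(\mu^i,\mu_-^i,\flux_{G_i^r})$ are approximating graph sequences for transport from $\mu_+$ to $\mu$ and from $\mu$ to $\mu_-$ respectively, so by the definition of $\dtau$ and of $\JEnXia$ via relaxation,
\begin{equation*}
\dtau(\mu_+,\mu)+\dtau(\mu,\mu_-)\le\liminf_{i\to\infty}\JEnXia(G_i^l)+\liminf_{i\to\infty}\JEnXia(G_i^r)\le\liminf_{i\to\infty}\JEnXia(G_i)=\dtau(\mu_+,\mu_-)\,.
\end{equation*}
The reverse inequality is just the triangle inequality from \cref{thm:metrization}, giving the desired equality $\dtau(\mu_+,\mu_-)=\dtau(\mu_+,\mu)+\dtau(\mu,\mu_-)$.

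The remaining issue is to ensure $\mu\neq\mu_+$ and $\mu\neq\mu_-$, i.e.\ that the cut is genuinely ``in between''. If $\mu_+=\mu_-$ there is nothing to prove (take $\mu$ anything, or note the statement is vacuous since $\dtau=0$ and any $\mu$ with $\dtau(\mu_+,\mu)=0$ equals $\mu_+$; more carefully one should exclude this degenerate case or interpret ``in between'' appropriately). When $\mu_+\neq\mu_-$, pick a point $x_0$ and a radius chosen so that a sphere $\partial B_r(x_0)$ separates part of $\spt\mu_+$ from part of $\spt\mu_-$ — possible since $\mu_+\neq\mu_-$ means there is mass that must physically move — and instead cut each $G_i$ along this fixed sphere: $\flux_{G_i^l}=\flux_{G_i}\restr B_r(x_0)$, with the induced boundary measure $\mu^i$ supported on $\partial B_r(x_0)$ (after subdividing the finitely many edges crossing the sphere). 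Then $\spt\mu\subset\partial B_r(x_0)$, which is disjoint from enough of $\spt\mu_\pm$ to force $\mu\neq\mu_\pm$, while the cost estimate above goes through verbatim since cutting edges at the sphere only subdivides them.

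I expect the main obstacle to be the bookkeeping showing $\JEnXia(G_i^l)+\JEnXia(G_i^r)\le\JEnXia(G_i)$ together with $\dv\flux_{G_i^l}=\mu_+^i-\mu^i$ and $\dv\flux_{G_i^r}=\mu^i-\mu_-^i$ holding simultaneously — in particular verifying that after cutting along the sphere (or along the half-length locus) the resulting objects are legitimate discrete \transportPaths{} whose weights satisfy the mass preservation condition \eqref{eqn:massPreservation}, and that the weak-$*$ limit $\mu$ of the cut measures indeed has unit mass and the right support. The cost monotonicity itself is a one-line consequence of subadditivity of $\tau$ once the combinatorics is set up correctly, and the non-degeneracy $\mu\neq\mu_\pm$ is handled cleanly by the fixed-sphere variant rather than the path-length variant.
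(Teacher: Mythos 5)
Your overall strategy coincides with the paper's: cut each graph of an optimal approximating sequence into an upstream and a downstream piece, pass to the weak-$*$ limit of the intermediate measures, and combine lower semi-continuity of the relaxed cost with the triangle inequality. However, both of your concrete cutting schemes have genuine problems, and your subadditivity argument for the key estimate $\JEnXia(G_i^l)+\JEnXia(G_i^r)\le\JEnXia(G_i)$ runs in the wrong direction. If two maximal paths share an edge $e$ but are cut at different points relative to $e$ (which happens for the half-arclength cut, since the paths have different total lengths), the weight $w(e)$ is split as $w_1+w_2$ between two subgraphs that both contain overlapping portions of $e$; subadditivity gives $\tau(w_1)+\tau(w_2)\ge\tau(w_1+w_2)$, so the cost \emph{increases}. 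The paper avoids this by first re-parameterizing all maximal paths synchronously (assigning to each vertex its latest arrival time and interpolating linearly along edges), so that every path crossing a given edge is severed at the same physical point; then each (sub)edge keeps its full weight in exactly one of the two pieces and $\JEnXia(G^k)=\JEnXia(G_+(t))+\JEnXia(G_-(t))$ holds with equality, with no appeal to subadditivity at all. The fixed-sphere variant fails for a different reason: for general $\mu_\pm$ the intermediate measure $\mu^i=\mu_+^i-\dv(\flux_{G_i}\restr B_r(x_0))$ is not supported on $\partial B_r(x_0)$ (it also contains $\mu_+^i\restr(\R^n\setminus B_r)$ and $\mu_-^i\restr B_r$), and at points where an edge \emph{enters} the ball it acquires a negative atom, so $G_i\restr B_r$ need not be a discrete \transportPath{} from $\mu_+^i$ to a nonnegative measure. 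This cut only works in the special case where $\spt\mu_+\subset B_r$, $\spt\mu_-\cap\overline{B_r}=\emptyset$ and no flux re-enters the ball.

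The second missing ingredient is the non-degeneracy $\mu\neq\mu_\pm$ in the limit. Cutting each path at half its arclength does separate $\mu^i$ from $\mu_\pm^i$ for each fixed $i$, but gives no uniform lower bound: acyclicity only forbids directed cycles, so a path may geometrically double back and its half-arclength point can be Euclidean-close to its starting point; hence $\Wdone(\mu_+^i,\mu^i)$ could tend to $0$ while $\Wdone(\mu_+^i,\mu_-^i)$ does not. The paper makes this quantitative: along the synchronized parameterization the map $t\mapsto\Wdone(\mu_+^k,\mu(t))$ is continuous, so one can choose $t^k$ with $\Wdone(\mu_+^k,\mu(t^k))=\tfrac12\Wdone(\mu_+^k,\mu_-^k)$ exactly, and weak-$*$ continuity of $\Wdone$ then yields $\Wdone(\mu_+,\mu)=\tfrac12\Wdone(\mu_+,\mu_-)>0$ and $\Wdone(\mu,\mu_-)\ge\tfrac12\Wdone(\mu_+,\mu_-)>0$ in the limit. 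You would need to incorporate both the synchronized cut and this Wasserstein bisection to close the argument.
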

\begin{proof}
Let $\flux$ be a minimizer of $\JEn^{\tau,\mu_+,\mu_-}$ so that $\dtau(\mu_+,\mu_-) = \JEn^{\tau,\mu_+,\mu_-}[\flux]=\JEnXia(\flux)$, and let $(\mu_+^k,\mu_-^k,\flux_{G^k})\weakstarto(\mu_+,\mu_-,\flux)$ be an approximating graph sequence with $\lim_{k\to\infty}\JEnXia(G^k)=\dtau(\mu_+,\mu_-)$.
By \cref{thm:no_cycles_lemma} we may assume that $G^k$ is acyclic.
We now split each $G^k$ into a discrete \transportPath{} $G_+^k$ between $\mu_+^k$ and some $\mu^k\in\fbm(\R^n)$ as well as a discrete \transportPath{} $G_-^k$ between $\mu^k$ and $\mu_+^k$ such that $\JEnXia(G^k)=\JEnXia(G_+^k)+\JEnXia(G_-^k)$.
Furthermore, the splitting will be performed such that $\Wdone(\mu_+^k,\mu^k),\Wdone(\mu_-^k,\mu^k)\geq\frac12\Wdone(\mu_+^k,\mu_-^k)$.

Consider the set $\Xi(G^k)$ of maximal paths in $G^k$ and parameterize each path $\xi\in\Xi(G^k)$ by $\theta_\xi:[0,1]\to\R^n$
such that for any two $\xi_1,\xi_2\in\Xi(G^k)$ and any $x\in\theta_{\xi_1}([0,1])\cap\theta_{\xi_2}([0,1])$ we have $\theta_{\xi_1}^{-1}(x)=\theta_{\xi_2}^{-1}(x)$.
(This can for instance be done by first parameterizing each path with constant speed
and then assigning to each vertex $v\in V(G^k)$ the latest arrival time $t_v=\max\{\theta_{\xi}^{-1}(v)\,:\,\xi\in\Xi(G^k),v\in\xi([0,1])\}\in[0,1]$.
Now each path is reparameterized such that the parameterization along a single edge with initial point $e^+$ and end point $e^-$ is linear with $t\mapsto e^++\frac{t-t_{e^+}}{t_{e^-}-t_{e^+}}(e^--e^+)$.)
Now define for $t\in(0,1)$ the graph $G_+(t)$ that contains all edges of $G^k$ whose preimage under the parameterization lies below $t$
(if $t$ lies in the interior of an edge preimage, we split the edge at the corresponding point, thereby introducing a new vertex).
The graph $G_-(t)$ is defined as the complement $G_+(t)=G^k\setminus G_-(t)$.
% 
% For each tree $T\subset G^k$ pick an arbitrary point $P_T$ on the treetrunk.
% We define $T_-(t)$ for $l\in\R$ as that part of the tree which lies at least distance $l$ downstream of $P_T$ (for negative $l$, by convention this shall include the points that lie at most distance $-l$ upstream of $P_T$).
% Here, down- and upstream refer to the orientation of the tree edges, and the distance refers to the intrinsic metric on the tree (which is measured along the edges).
% The union of all $T_-(t)$ is denoted $G_-(t)$ and its complement $G_+(t)=G^k\setminus G_-(t)$.
% Both $G_+(t)$ as well as $G_-(t)$ can be interpreted as weighted directed graphs in the obvious way (note that potentially some edges of $G^k$ are split between $G_+(t)$ and $G_-(t)$ so that new vertices are introduced).
We clearly have $\JEnXia(G^k)=\JEnXia(G_+(t))+\JEnXia(G_-(t))$.
Furthermore, defining $\mu(t)=\mu_+^k-\dv\flux_{G_+(t)}\in\fbm(\R^n)$ it follows by definition that $G_+(t)$ is a discrete \transportPath{} between $\mu_+^k$ and $\mu(t)$, while $G_-(t)$ is a discrete \transportPath{} between $\mu(t)$ and $\mu_-^k$.

Note that $\mu(0)=\mu_+^k$ and $\mu(1)=\mu_-^k$.
Since $t\mapsto\Wdone(\mu_+^k,\mu(t))$ is continuous, this implies the existence of $t^k\in\R$ with $\Wdone(\mu_+^k,\mu(t^k))=\frac12\Wdone(\mu_+^k,\mu_-^k)$.
Now set $G_\pm^k=G_\pm(t^k)$ and $\mu^k=\mu(t^k)$.
By \cref{thm:domainRescaling} we may assume $\flux_{G_\pm^k}$ and $\mu^k$ to have uniformly bounded support.
Furthermore, $|\flux_{G_-^k}|(\R^n)+|\flux_{G_+^k}|(\R^n)=|\flux_{G^k}|(\R^n)$ as well as $\mu^k(\R^n)=\mu_+^k(\R^n)$ are uniformly bounded
so that for a subsequence (again indexed by $k$) we have
\begin{equation*}
(\mu_+^k,\mu^k,\flux_{G_+^k})\weakstarto(\mu_+,\mu,\flux_+)
\quad\text{and}\quad
(\mu^k,\mu_-^k,\flux_{G_-^k})\weakstarto(\mu,\mu_-,\flux_-)\,.
\end{equation*}
Due to the weak-$*$ continuity of $\Wdone$ we have $\Wdone(\mu_+,\mu)=\frac12\Wdone(\mu_+,\mu_-)$ and $\Wdone(\mu,\mu_-)\geq\Wdone(\mu_+,\mu_-)-\Wdone(\mu_+,\mu)=\frac12\Wdone(\mu_+,\mu_-)$.
Finally,
\begin{equation*}
\dtau(\mu_+,\mu)+\dtau(\mu,\mu_-)\leq\liminf_{n\to\infty}\JEnXia(G_+^k)+\liminf_{n\to\infty}\JEnXia(G_-^k)\leq\liminf_{n\to\infty}\JEnXia(G^k)=\dtau(\mu_+,\mu_-)\,,
\end{equation*}
which together with the triangle inequality yields the desired result.
\end{proof}

\begin{corollary}[Length space property]\label{thm:lengthSpaceProperty}
The space $\prob$ with metric $\dtau$ for an admissible $\tau$ is a convex metric space.
Since it is also complete, it is a length space (see \cite[Def.\,14.1 \& Thm.\,14.1]{MR0268781} and Menger's theorem \cite[p.\,24]{MR1074005}).
\notinclude{\url{https://en.wikipedia.org/wiki/Convex_metric_space};\url{https://en.wikipedia.org/wiki/Intrinsic_metric} and examples therein}
\end{corollary}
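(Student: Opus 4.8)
The plan is to read off the corollary from the preceding proposition (\emph{Measure in between}), the metrization property \cref{thm:metrization}, a standard weak-$*$ compactness fact, and Menger's theorem, so that essentially no new argument is needed.

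Recall that a metric space is \emph{convex} in the sense of Menger precisely when between any two distinct points there lies a third point distinct from both. For $(\prob,\dtau)$ this is exactly what the \emph{Measure in between} proposition provides: given $\mu_+\neq\mu_-$ in $\prob$, the measure $\mu$ it produces satisfies $\mu\neq\mu_\pm$ and $\dtau(\mu_+,\mu_-)=\dtau(\mu_+,\mu)+\dtau(\mu,\mu_-)$. Hence $(\prob,\dtau)$ is a convex metric space with no further work, and it remains only to establish completeness, after which Menger's theorem (a complete convex metric space is geodesic, in particular a length space) concludes the proof via the references cited in the statement.

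For completeness I would argue as follows. Every element of $\prob$ is a probability measure supported in the compact cube $[-1,1]^n$, so by Prokhorov's theorem $\prob$ is weak-$*$ sequentially compact, and any weak-$*$ limit of a sequence in $\prob$ is again a probability measure supported in $[-1,1]^n$, hence lies in $\prob$. By \cref{thm:metrization} the metric $\dtau$ induces the weak-$*$ topology on $\prob$; therefore $(\prob,\dtau)$ is a (sequentially) compact metric space, and in particular complete.

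The only ingredient not obtained by a verbatim appeal to an earlier statement is this completeness argument, but its content is merely the routine weak-$*$ compactness of probability measures on a compact set combined with the already-established metrization property, so I do not anticipate any genuine obstacle.
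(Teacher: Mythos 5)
Your proposal is correct and follows exactly the route the paper intends: convexity is read off from the preceding \emph{Measure in between} proposition, and Menger's theorem does the rest. The paper merely asserts completeness, whereas you supply the (correct) routine argument via Prokhorov's theorem and the metrization property \cref{thm:metrization}; this fills in a detail the paper leaves implicit but is not a different approach.
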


% \begin{remark}[Shortest paths]
% By choosing the parameter $l^k$ in the above proof such that $\Wdone(\mu_+^k,\mu(l^k))=t\Wdone(\mu_+^k,\mu_-^k)$ for $t\in[0,1]$ one obtains a different measure $\mu$ in between $\mu_+$ and $\mu_-$.
% Up to reparameterization $\mu(t)$ is an isometry of $[0,\dtau(\mu_+,\mu_-)]$ into $\fbm$.
% However, this is not obvious, since $l^k$ and thus $\mu(t)$ may not be unique.
% \end{remark}

\subsection{Explicit formula for cost function}
In this section we provide a useful explicit representation of the cost function, which generalizes \cite[Prop.\,4.4]{Xia-Interior-Regularity}.
After introducing the necessary notions, a few parts of the argument roughly follow \cite{Xia-Interior-Regularity} and \cite{CoRoMa17}, though we refer to \cite[Thm.\,8.1]{Wh99} instead of using \cite[Thm.\,2.7]{Xia-Interior-Regularity}.

Below we restrict ourselves to flat chains over $\R$ endowed with the Euclidean norm $|\cdot|$, since these provide the necessary tools.
(Note flat chains can in principle also be considered over different groups than $\R$, for instance over $\R^n$ with the group norm $v\mapsto\tau(|v|)$.)
The below definitions follow \cite[Ch.\,V.1-3]{Wh57}, \cite[Sec.\,4.1-2]{Fe69}, and \cite[\S26-27]{Si83}.

\begin{definition}[Flat chains and currents]\label{def:flatChains}
\mbox{}
\begin{enumerate}
\item An \emph{$m$-dimensional polyhedron} in $\R^n$ is a bounded, oriented, polyhedral subset of an $m$-dimensional hyperplane $H\subset\R^n$, which has nonempty relative interior.
A \emph{polyhedral $m$-chain} in $\R^n$ is an expression of the form $A=\sum_{j=1}^Na_j\sigma_j$
with $a_1,\ldots,a_N\in\R$ and $\sigma_1,\ldots,\sigma_N$ disjoint $m$-dimensional polyhedra in $\R^n$.
A refinement of $A$ is a polyhedral $m$-chain of the form $\sum_{j=1}^N\sum_{k=1}^{K_j}a_j\sigma_j^k$, where $\sigma_j=\sigma_j^1\cup\ldots\cup\sigma_j^{K_j}$ represents a disjoint partition of $\sigma_j$.
Two polyhedral $m$-chains are \emph{equivalent} and identified with each other, if they have a joint refinement.
\item The \emph{boundary} of a polyhedral $m$-chain $A=\sum_{j=1}^Na_j\sigma_j$ is a polyhedral $(m-1)$-chain defined as $\partial A=\sum_{j=1}^Na_j\partial\sigma_j$,
where $\partial\sigma_j$ is the sum of the oriented faces in the relative boundary of $\sigma_j$.
\item The \emph{mass} of a polyhedral $m$-chain $A=\sum_{j=1}^Na_j\sigma_j$ is $\mass A=\sum_{j=1}^N|a_j|\hd^m(\sigma_j)$.
\item The \emph{flat norm} of a polyhedral $m$-chain is defined as $$\flatNorm A=\inf\{\mass{A-\partial D}+\mass D\,:\,D\text{ is polyhedral }(m+1)\text{-chain}\}\,.$$
Essentially, if two chains are close with respect to $\flatNorm\cdot$, they mainly differ by a small deformation.
\item The completion of the vector space of polyhedral $m$-chains under the flat norm is the Banach space $\flatChains{m}$ of \emph{flat $m$-chains}.
The boundary of flat $m$-chains is defined by extending the linear operator $\partial$ continuously with respect to the flat norm.
\item\label{enm:inducedFunctional} Any function $\rho:\R\to\R$ \emph{induces a functional} on polyhedral $m$-chains (still denoted $\rho$) via
$\rho(\sum_{j=1}^Na_j\sigma_j)=\sum_{j=1}^Na_j\rho(a_j)\hd^m(\sigma_j)$.
In turn, any such functional on polyhedral $m$-chains \emph{induces a functional} $\rho:\flatChains{m}\to[-\infty,\infty]$ via relaxation,
that is, $\rho(A)=\inf\{\liminf_{k\to\infty}\rho(A_k)\ :\ A_k\text{ is polyhedral $m$-chain, }\flatNorm{A_k-A}\to0\text{ as }k\to\infty\}$.
Also the notion of mass is extended to $\flatChains{m}$ in this way.
%Any functional (such as the mass) on polyhedral $m$-chains is extended to $\flatChains{m}$ by relaxation.
\item If a flat $m$-chain $A$ has finite mass, one can define its \emph{restriction} $A\restr S$ to Borel sets $S$.
For a polygonal chain $A=\sum_{j=1}^Na_j\sigma_j$ and a hypercube $S$ this is nothing else but $A\restr S=\sum_{j=1}^Na_j(\sigma_j\cap S)$;
for general flat chains and Borel sets it is defined via a limiting procedure \cite[Sec.\,4]{Fl66}.
\item $S\subset\R^n$ is \emph{countably $m$-rectifiable} if it is contained in the countable union of Lipschitz images of $\R^m$ up to a subset of $\hd^m$ measure zero.
\item A flat 1-chain $A$ is called \emph{rectifiable} if there exists a countably $m$-rectifiable Borel set $S$ with $A=A\restr S$.
\item Let $\diffForms{m}(\R^n)$ be the set of smooth compactly supported differential forms of degree $m$ with the Fr\'echet topology induced by the seminorms $\|\cdot\|_{\cont^k}$, $k=0,1,2,\ldots$.
An \emph{$m$-dimensional current} (or $m$-current) $A$ in $\R^n$ is a continuous linear functional on $\diffForms{m}(\R^n)$.
The space of $m$-dimensional currents in $\R^n$ is denoted $\currents m(\R^n)$.
\item The \emph{boundary} of an $m$-dimensional current $A$ is the $(m-1)$-dimensional current defined via $\partial A(\psi)=A(\de\psi)$ for all $\psi\in\diffForms{m-1}(\R^n)$, where $\de\psi$ denotes the exterior derivative.
\item The \emph{mass} of $A\in\currents m(\R^n)$ is $\mass A=\sup\{A(\phi)\,:\,\phi\in\diffForms m(\R^n),\|\phi(x)\|\leq1\forall x\in\R^n\}$.
\item $A\in\currents m(\R^n)$ is \emph{representable by integration}
if there exists a Radon measure $\mu_A\in\fbm(\R^n)$ and a $\mu_A$-measurable $m$-vectorfield $\vec A:\R^n\to\Lambda_m(\R^n)$ with $\|\vec A\|=1$ $\mu_A$-almost everywhere
such that $A(\phi)=\int_{\R^n}\langle\phi(x),\vec A(x)\rangle_{\Lambda^m(\R^n),\Lambda_m(\R^n)}\,\de\mu_A$ for all $\phi\in\diffForms m(\R^n)$.
$A$ is called \emph{locally normal}, if $A$ and $\partial A$ are representable by integration.
\item $A\in\currents m(\R^n)$ is called \emph{rectifiable} if it is representable by integration
with $\mu_A=\theta\hd^m\restr S_A$ for a countably $m$-rectifiable $S_A\subset\R^n$ and a nonnegative, $\hd^m\restr S_A$-measurable scalar function $\theta$
and $\vec A(x)=v_1\wedge\ldots\wedge v_m$ for an orthonormal basis $\{v_1,\ldots,v_m\}$ of the approximate tangent space of $S_A$ at $x$.
\end{enumerate}
\end{definition}

\begin{remark}[Chains, currents, and vector measures]\label{rem:chainsRCA}\mbox{}
\begin{enumerate}
\item By the Riesz Representation Theorem, currents of finite mass and with finite mass boundary are locally normal (see also \cite[26.7]{Si83}).
\item By \cite[4.1.23 \& 4.2.23]{Fe69}, locally normal $m$-dimensional currents (after taking the closure with respect to flat convergence of currents) and flat $m$-chains can be identified with each other.
In more detail, every polyhedral $m$-chain $\sum_{j=1}^Na_j\sigma_j$ is identified with the current $\phi\mapsto\sum_{j=1}^Na_j\int_{\sigma_j}\langle\phi(x),v_1^j\wedge\ldots\wedge v_m^j\rangle\,\de\hd^m(x)$
for an orthonormal frame $v_1^j,\ldots,v_m^j$ of the oriented tangent space to $\sigma_j$.
Furthermore, the boundaries of corresponding currents and flat chains correspond to each other, and their masses coincide.
\item Identifying $\R^n=\Lambda^1(\R^n)=\Lambda_1(\R^n)$, $1$-currents $A\in\currents1(\R^n)$ which are representable by integation coincide with vector-valued Radon measures $\tilde A\in\rca(\R^n;\R^n)$.
By $\partial A(\phi)=A(\de\phi)=\tilde A(\nabla\phi)=-\dv\tilde A(\phi)$ for any $\phi\in\contsmooth(\R^n)$ (where we identified $\R=\Lambda_0(\R^n)=\Lambda^0(\R^n)$),
locally normal $1$-currents (and thus flat $1$-chains) are identical to vector-valued Radon measures of compact support whose distributional divergence is a Radon measure (see also \cite[Sec.\,5]{Si07}).
Similarly, $0$-currents $A\in\currents0(\R^n)$ which are representable by integration coincide with Radon measures $\tilde A\in\rca(\R^n;\R)$.
\item If $|\flux_j|(\R^n)$ and $|\dv\flux_j|(\R^n)$ are uniformly bounded, the weak-$*$ convergence $\flux_j\weakstarto\flux$ in $\rca(\R^n;\R^n)$ is equivalent
to the convergence in the flat norm of the corresponding $1$-currents or equivalently the corresponding flat $1$-chains
(this is essentially a version of the compactness theorem for normal currents; see also \cite[Rem.\,2.2]{Xia-Interior-Regularity} or \cite[Thm.\,31.2]{Si83} for the analogous statement on integral currents;
note, though, that flat convergence alone does not imply boundedness of the mass but always implies flat convergence of the boundary,
while weak-$*$ convergence does not imply weak-$*$ convergence of the divergence but automatically implies boundedness of the mass by the uniform boundedness principle%
\notinclude{$A_\varepsilon=\varepsilon^{-1/2}(\delta_{-\varepsilon}+\delta_\varepsilon)\to0$ in flat norm, but mass diverges; also, $\sum_{j=1}^\infty A_{1/j^3}$ is a flat chain with infinite mass; flat convergence of boundary follows from \cite[4.1.12]{Fe69}}%
).
Indeed, identifying $\R^n=\Lambda_1(\R^n)=\Lambda^1(\R^n)$ as well as $\{B\in\R^{n\times n}\,:\,B\text{ is skew}\}=\Lambda_2(\R^n)=\Lambda^2(\R^n)$, by \cite[4.1.12]{Fe69} we have
\begin{multline*}
\flatNorm{\flux_j-\flux}\to0
\quad\Leftrightarrow\quad
\sup\{(\flux_j-\flux)(\phi)\,:\,\phi\in D(K)\}\to0\;\forall K\subset\R^n\\
\quad\text{with}\quad
D(K)=\{\phi\in\contsmooth(K;\R^n)\ :\ \|\phi\|_{\cont^0}\leq1,\,\|\text{skew}D\phi\|_{\cont^0}\leq1\}
\end{multline*}
for $\text{skew}B=\frac12(B-B^T)$.
Since $\contsmooth(\R^n;\R^n)$ is dense in $\cont_c(\R^n;\R^n)$ with respect to the supremum norm,
this implies $(\flux_j-\flux)(\phi)\to0$ for all $\phi\in\cont_c(\R^n;\R^n)$ and thus $\flux_j\weakstarto\flux$.

Now let $\flux_j\weakstarto\flux$ and $|\dv\flux_j|(\R^n)$ be uniformly bounded, then any subsequence contains another subsequence along which $\dv\flux_j\weakstarto\dv\flux$
so that actually $\dv\flux_j\weakstarto\dv\flux$ for the whole sequence as well as $|\dv\flux|(\R^n)<\infty$.
Defining on $\contsmooth(\R^n;\R^n)$ the seminorm
\begin{equation*}
\|\phi\|=\inf_{\substack{\psi\in\contsmooth(\R^n;\R^n),\,\xi\in\contsmooth(\R^n;\R),\\\phi=\psi+\nabla\xi}}\|\psi\|_{\cont^0}+\|\xi\|_{C^0}\,,
\end{equation*}
we obtain for all $\tilde\flux\in\rca(\R^n;\R^n)$ and $\phi\in\contsmooth(\R^n;\R^n)$
\begin{multline*}
\tilde\flux(\phi)
=\inf_{\substack{\psi\in\contsmooth(\R^n;\R^n),\,\xi\in\contsmooth(\R^n;\R),\\\phi=\psi+\nabla\xi}}\tilde\flux(\psi)-\dv\tilde\flux(\xi)\\
\leq\inf_{\substack{\psi\in\contsmooth(\R^n;\R^n),\,\xi\in\contsmooth(\R^n;\R),\\\phi=\psi+\nabla\xi}}|\tilde\flux|(\R^n)\|\psi\|_{\cont^0}+|\dv\tilde\flux|(\R^n)\|\xi\|_{\cont^0}
\leq\|\phi\|\max\{|\tilde\flux|(\R^n)+|\dv\tilde\flux|(\R^n)\}\,.
\end{multline*}
Furthermore, for any bounded open $K\subset\R^n$, $D(K)$ is compact with respect to $\|\cdot\|$, that is,
for any $\varepsilon>0$ there is a finite number of functions $\phi_1,\ldots,\phi_{N_\varepsilon}\in D(K)$
such that for any $\phi\in D(K)$ there is some $i\in\{1,\ldots,N_\varepsilon\}$ with $\|\phi-\phi_i\|<\varepsilon$
(the proof is identical to the proof of \cite[Thm.\,1(i)]{Ki89}, replacing $L^q$ and $L^p$ with $C^0$ as well as $W^{1,p}$ with $C^{0,\alpha}$
and extending $K$ to a compact domain with periodic boundary conditions\notinclude{for necessary elliptic regularity see e.\,g.\ Gilbarg--Trudinger Thm.\,8.33}).
Now let $\varepsilon>0$ and an arbitrary $\phi\in D(K)$ be given and choose $k$ large enough that $|(\flux_j-\flux)(\phi_i)|<\varepsilon$ for all $i\in\{1,\ldots,N_\varepsilon\}$ and all $j>k$.
Then, picking $i\in\{1,\ldots,N_\varepsilon\}$ with $\|\phi-\phi_i\|<\varepsilon$,
\begin{multline*}
|(\flux_j-\flux)(\phi)|
\leq|\flux_j(\phi-\phi_i)|+|(\flux_j-\flux)(\phi_i)|+|\flux(\phi_i-\phi)|\\
\leq\max\{|\flux_j|(\R^n),|\dv\flux_j|(\R^n)\}\|\phi-\phi_i\|+\varepsilon+\max\{|\flux|(\R^n),|\dv\flux|(\R^n)\}\|\phi-\phi_i\|
\leq C\varepsilon
\end{multline*}
for all $j>k$ and a fixed constant $C>0$ independent of $\phi$ and $\varepsilon$.
Since $\varepsilon>0$ and $\phi\in D(K)$ were arbitrary, we have $\sup\{(\flux_j-\flux)(\phi)\,:\,\phi\in D(K)\}\to0$ as $j\to\infty$
and thus $\flatNorm{\flux_j-\flux}\to0$.

Likewise, weak-$*$ convergence of scalar-valued Radon measures is equivalent to flat convergence of the corresponding $0$-currents (or flat $0$-chains).
\end{enumerate}
\end{remark}

The explicit characterization of the cost $\JEnXia$ with the help of $1$-currents or flat $1$-chains will make use of the theory of rectifiable flat chains as well as of slicing,
which is a technique to reduce the dimension of a flat chain (in our case from $1$ to $0$).
Therefore, we first prove a result on flat $0$-chains to be used later in a slicing argument.
Recall that a transportation cost $\tau$ induces a functional $\tau$ on $\flatChains m$ via \cref{def:flatChains}\eqref{enm:inducedFunctional}.

\begin{definition}[Diffuse flat $0$-chain]
We shall call $A\in\flatChains0$ \emph{diffuse} if $A\restr\{x\}=0$ for all $x\in\R^n$.
\end{definition}

\begin{lemma}[Lower semi-continuous envelope on diffuse $0$-chains]\label{thm:diffuse0chainCost}
Let $\tau$ be a transportation cost and $A\in\flatChains0$ be of finite mass (note that $A$ can be identified with a Radon measure by \cref{rem:chainsRCA}) and diffuse,
then $\tau(A)=\tau'(0)\mass{A}$. Here, $\tau'(0)=\lim_{w\searrow0}\frac{\tau(w)}w$ shall denote the right derivative in $0$, which exists, but may be infinite \cite[Thm.\,16.3.3]{Ku09}.
\end{lemma}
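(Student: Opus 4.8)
The plan is to prove the two inequalities separately, the lower bound being where diffuseness of $A$ enters. I would first record the Fekete-type consequence of subadditivity: $\tau'(0):=\lim_{w\downarrow0}\tau(w)/w$ exists in $(0,\infty]$ and equals $\sup_{w>0}\tau(w)/w$, so that in particular $\tau(w)\le\tau'(0)\,w$ for all $w\ge0$, and the quantity $\lambda^\tau(\varepsilon)$ from \cref{thm:averageCost} satisfies $\lambda^\tau(\varepsilon)\to\tau'(0)$ as $\varepsilon\downarrow0$. The upper bound $\tau(A)\le\tau'(0)\mass A$ is then trivial when $\tau'(0)=\infty$, and otherwise follows by relaxation along a discretization: identifying the finite-mass $0$-chain $A$ with a finite signed Radon measure (\cref{rem:chainsRCA}) and taking, for finer and finer cubical partitions $\{Q_i\}$ with centres $c_i$, the polyhedral chains $A_k:=\sum_iA(Q_i)\delta_{c_i}$, one has $\flatNorm{A_k-A}\to0$ (each block $A\restr Q_i-A(Q_i)\delta_{c_i}$ is balanced and supported in $Q_i$, hence of flat norm $\le\diam(Q_i)\,|A|(Q_i)$) while $\tau(A_k)=\sum_i\tau(|A(Q_i)|)\le\tau'(0)\sum_i|A(Q_i)|\le\tau'(0)\mass A$. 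This half does not use diffuseness.

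For the lower bound, let $A_k=\sum_ja_j^k\delta_{x_j^k}$ be arbitrary polyhedral chains with $\flatNorm{A_k-A}\to0$; I must show $\liminf_k\tau(A_k)\ge\tau'(0)\mass A$ and may pass to a subsequence with $\tau(A_k)\to c<\infty$ (else nothing to prove). Fix $\varepsilon>0$ and split $A_k=B_k+C_k$ into the atoms of size $\le\varepsilon$ and those of size $>\varepsilon$; since the induced functional is additive over disjoint atom sets, $\tau(A_k)=\tau(B_k)+\tau(C_k)$. Atomwise application of \cref{thm:averageCost} gives $\tau(B_k)\ge\lambda^\tau(\varepsilon)\mass{B_k}$, so $\mass{B_k}$ is bounded, and each atom of $C_k$ costs at least $\tau(\varepsilon)>0$, so $C_k$ has at most $N:=\lceil(c+1)/\tau(\varepsilon)\rceil$ atoms. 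Passing to a further subsequence (using flat precompactness of chains of bounded mass and, by a standard reduction, bounded support, which we may assume since the lemma is applied to compactly supported chains), $B_k\to B$ and $C_k\to D:=A-B$ in the flat norm, with $\mass B,\mass D<\infty$. The key structural claim is that $D$, being a finite-mass flat limit of polyhedral $0$-chains with uniformly bounded atom number $N$, is purely atomic with at most $N$ atoms. Granting it, since $A=B+D$ is diffuse the atoms of $D$ must be cancelled by atoms of $B$, whence $\mass B=\mass A+\mass D\ge\mass A$; by lower semicontinuity of mass, $\liminf_k\mass{B_k}\ge\mass B\ge\mass A$, and therefore $\liminf_k\tau(A_k)\ge\liminf_k\tau(B_k)\ge\lambda^\tau(\varepsilon)\mass A$. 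Letting $\varepsilon\downarrow0$ and using $\lambda^\tau(\varepsilon)\to\tau'(0)$ yields $\tau(A)\ge\tau'(0)\mass A$. Combining with the first half proves the claim; the same computation covers the case $\tau'(0)=\infty$ (giving $\tau(A)=\infty$ for diffuse $A\neq0$), and $A=0$ is trivial.

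I expect the main obstacle to be exactly this structural claim — that a finite-mass flat limit of polyhedral $0$-chains with uniformly bounded atom count is purely atomic — because a priori the atom masses $|a_j^k|$ in an approximating sequence need not be bounded and near-cancelling large atoms are essentially invisible to the flat norm. I would prove it by induction on $N$, peeling off one atom $b^k\delta_{y^k}$ at a time: if $|b^k|\to0$ the atom disappears and we induct on $N-1$; if $\liminf_k|b^k|>0$ then, using that a single atom has flat norm equal to its mass while $\flatNorm{\delta_y-\delta_{y'}}=\min(2,|y-y'|)$, flat-Cauchyness forces $y^k$ to converge, so the atom contributes exactly one atom to $D$; and the case $|b^k|\to\infty$ is excluded because under the flat-norm bound it would force a limit of the form $\partial(c\delta_p)$ (the distributional derivative of a Dirac), which has infinite mass, contradicting $\mass D<\infty$. (Alternatively one could try to quote a general description of the lower semicontinuous envelope of size-type functionals on flat chains, but the self-contained induction above seems the cleanest route.)
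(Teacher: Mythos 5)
Your architecture for the lower bound --- split each approximant $A_k$ into small atoms $B_k$ and at most $N(\varepsilon)$ large atoms $C_k$, pass to flat limits $B$ and $D=A-B$, and use diffuseness of $A$ to conclude $\mass{B}=\mass{A}+\mass{D}\geq\mass{A}$ --- is genuinely different from the paper's and would work \emph{if} the structural claim held, but the proof you sketch for that claim is where the argument breaks. The peeling induction treats atoms one at a time under a fixed labelling, and flat-Cauchyness of the sum $C_k$ gives no control over a single labelled atom: the atoms can permute (take $C_k=\delta_0+\delta_1$ with alternating labels), and, more seriously, your trichotomy on $|b^k|$ is not a valid case analysis. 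The case $|b^k|\to\infty$ is \emph{not} excluded: for instance $C_k=k\delta_{1/k^2}-k\delta_0$ converges in the flat norm to $0$ (which has finite mass), yet both atoms blow up. Large coefficients are only forbidden from producing a dipole in the limit, not from occurring at all, and showing that they must asymptotically cancel \emph{among themselves} (rather than against $B_k$, whose limit you have already extracted separately) requires an argument about clusters of atoms that the single-atom induction cannot deliver. So as written the pivotal step would fail.

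The structural claim itself is true and can be salvaged by localization: if $|D|$ charged $N+1$ pairwise disjoint open sets, one could (after perturbing them to be non-exceptional in the sense of \cite[Lem.\,(2.1) \& Sec.\,4]{Fl66}, exactly as the paper does) pass the restriction through the flat limit; since each $C_k$ has at most $N$ atoms it misses at least one of the sets, and the pigeonhole principle gives $D\restr U_i=0$ for some $i$, a contradiction, so $D$ has at most $N$ atoms. Note, however, that once you invoke Fleming's restriction lemma you are very close to the paper's own, shorter route, which avoids both the size-splitting and the structural claim: it argues by contradiction, covers all but $\varepsilon$ of the mass of $A$ by finitely many non-exceptional cubes each carrying $|A|$-mass at most $\delta$ (possible precisely because $A$ is diffuse), and then applies subadditivity of $\tau$ on each cube, $\tau(A_i\restr B_j)\geq\tau(\mass{A_i\restr B_j})$, together with monotonicity of $\tau$ and lower semicontinuity of the mass. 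Your upper bound and your preliminary facts about $\tau'(0)$ and $\lambda^\tau(\varepsilon)\to\tau'(0)$ are fine.
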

\begin{proof}
First let $A_i$ be a polyhedral chain approaching $A$ in the flat norm such that $\mass{A_i}\to\mass{A}$ as $i\to\infty$.
Due to $\tau(w)\leq\tau'(0)w$ we have $\tau(A)\leq\liminf_{i\to\infty}\tau(A_i)\leq\liminf_{i\to\infty}\tau'(0)\mass{A_i}=\tau'(0)\mass{A}$,
so it remains to show the opposite inequality.

Now let $A_i=\sum_{k=1}^{K_i}\theta_k^i\{x_k^i\}\notinclude{=\sum_{k=1}^{K_i}\theta_k^i\delta_{x_k^i}}$ be a polyhedral chain with $\flatNorm{A_i-A}\to0$ such that $\tau(A_i)\to\tau(A)$.
By restricting to a subsequence we may assume $\sum_{i=1}^\infty\flatNorm{A_i-A}<\infty$.
For a proof by contradiction, assume $\tau(A)<\lambda\mass{A}$ for $\lambda<\tau'(0)$.
There exists $\delta>0$ with $\tau(w)>\lambda w$ for all $w\in[0,\delta]$.
Now cover most of the support of $A$ with finitely many \notinclude{open, }pairwise disjoint hypercubes $B_1,\ldots,B_N$
such that $\sum_{i=1}^N\mass{A\restr B_j}=\mass{A}-\varepsilon$ and $\mass{A\restr B_j}\leq\delta$ for $j=1,\ldots,N$.
For instance, one can partition $\R^n$ into halfopen hypercubes of side length $1$
and then repeatedly subdivide all of the hypercubes into $2^n$ hypercubes of half the width until the mass of $A$ on each hypercube is less than $\delta$
(this condition must be met after a finite number of subdivisions,
since otherwise there would be a sequence of nested hypercubes whose side lengths decrease to zero, but whose mass stays above $\delta$, implying a non-allowed Dirac mass inside them).
\notinclude{Since $A$ is diffuse, the mass inside each hypercube does not change when only considering its interior.}\phantomsection%
Now pick $N$ of these open hypercubes such that $A$ is covered up to mass $\varepsilon$.

Since $\sum_{i=1}^\infty\flatNorm{A_{i+1}-A_i}<\infty$, by \cite[Lem.\,(2.1) \& Sec.\,4]{Fl66} the sets $B_j$ can be positioned such that they are ``non-exceptional''
\notinclude{(note that the grid of hypercubes can be shifted in any direction by any real number without violating the condition;
if not, then just repeat the grid refinement process, where the grid is shifted by half a side length)}
so that $\flatNorm{A_i\restr B_j-A\restr B_j}\to0$ as $i\to\infty$ (in fact, this is how $A\restr B_j$ is defined in \cite[Sec.\,4]{Fl66}; note that Fleming uses the notation $A\cap B_j$).
%Now let $J>0$ such that for all $i>J$ and $j=1,\ldots,N$ we have $\mass{A_i\restr B_j}\geq\beta\mass{A\restr B_j}$ for $\beta???$.
Then
\begin{multline*}
\textstyle
\tau(A_i\restr B_j)
=\sum_{x_k^i\in B_j}\tau(|\theta_k^i|)
\geq\tau\left(\sum_{x_k^i\in B_j}|\theta_k^i|\right)\\
=\tau(\mass{A_i\restr B_j})
\geq\tau(\min\{\mass{A_j\restr B_j},\mass{A\restr B_j}\})
>\lambda\min\{\mass{A_j\restr B_j},\mass{A\restr B_j}\}
\end{multline*}
so that (exploiting the lower semi-continuity of the mass)
\begin{multline*}
\textstyle
\tau(A)
=\liminf_{i\to\infty}\tau(A_i)
\geq\liminf_{i\to\infty}\tau\left(A_i\restr\bigcup_{j=1}^N B_j\right)
\geq\sum_{j=1}^N\liminf_{i\to\infty}\tau(A_i\restr B_j)\\
\textstyle
>\lambda\sum_{j=1}^N\liminf_{i\to\infty}\min\{\mass{A_j\restr B_j},\mass{A\restr B_j}\}
=\lambda\sum_{j=1}^N\mass{A\restr B_j}
\geq\lambda\mass{A}-\varepsilon\,.
\end{multline*}
Since $\varepsilon$ was arbitrary, we obtain $\tau(A)\geq\lambda\mass{A}$, the desired contradiction.
\end{proof}

The main result of this section may be seen as a variant of \cite{CoRoMa17}, here only stated for $1$-dimensional currents
(in contrast, \cite{CoRoMa17} consider functionals on $m$-dimensional currents, but they restrict their analysis to rectifiable currents).
It is the following characterization of the cost functional.

\begin{proposition}[Generalized Gilbert energy]\label{thm:GilbertFlux}
% If $\flux$ has bounded support, we have
% \begin{equation*}
% \JEn^{\tau,\mu_+,\mu_-}[\flux]=\begin{cases}
% \int_{S}\tau(|\theta(x)|)\,\de\hdone(x)+\tau'(0)|\flux^\perp|(\R^n)&\text{if }\flux=\theta\hdone\restr S+\flux^\perp,\,\dv\flux=\mu_+-\mu_-,\,|\flux|(\R^n)<\infty\\
% &\quad S\text{ is countably $1$-rectifiable,}\\
% &\quad \hdone\restr S\text{-measurable $\theta:S\to\R^n$ tangent to $S$ $\hdone$-a.e.,}\\
% &\quad\flux^\perp\text{ is singular with respect to }\hdone\restr R\\&\quad\text{for any countably $1$-rectifiable }R\subset\R^n,\\
% \infty&\text{else.}
% \end{cases}
% \end{equation*}
% 
Let $\flux$ be a mass flux of bounded support between $\mu_+,\mu_-\in\fbm(\R^n)$.
By \cite[Thm.\,5.5]{Si07} or \cite[Thm.\,4.2]{Sil08} we can write $\flux=\theta\hdone\restr S+\flux^\perp$ for some countably $1$-rectifiable $S\subset\R^n$, an $\hdone\restr S$-measurable map $\theta:S\to\R^n$ tangent to $S$ $\hdone$-almost everywhere,
and $\flux^\perp$ singular with respect to $\hdone\restr R$ for any countably $1$-rectifiable $R\subset\R^n$.
Then

\notinclude{If $\flux$ has bounded support, we have}%
\begin{equation*}
\JEn^{\tau,\mu_+,\mu_-}[\flux]
=\int_{S}\tau(|\theta(x)|)\,\de\hdone(x)+\tau'(0)|\flux^\perp|(\R^n)\,.
\end{equation*}
\notinclude{if $\dv\flux=\mu_+-\mu_-$, $|\flux|(\R^n)<\infty$, and $\flux=\theta\hdone\restr S+\flux^\perp$ for some countably $1$-rectifiable $S\subset\R^n$, an $\hdone\restr S$-measurable map $\theta:S\to\R^n$ tangent to $S$ $\hdone$-almost everywhere,
and $\flux^\perp$ singular with respect to $\hdone\restr R$ for any countably $1$-rectifiable $R\subset\R^n$.
Otherwise, $\JEn^{\tau,\mu_+,\mu_-}[\flux]=\infty$.}%
\end{proposition}
\begin{proof}
Any discrete \transportPath{} $G$ between discrete finite masses $\mu_\pm=\sum_{j=1}^{N^\pm}a_j^\pm\delta_{x_j^\pm}$ can obviously be identified
with a polyhedral chain $C_G=\sum_{e\in E(G)}w(e)e$ with boundary $\partial C_G=-\sum_{j=1}^{N^+}a_j^+\{x_j^+\}+\sum_{j=1}^{N^-}a_j^-\{x_j^-\}$.
Thus, by \cref{def:mass_fluxes}, \cref{def:costXia} and \cref{rem:chainsRCA}, a \transportPath{} $\flux$ between $\mu_\pm$ is a flat $1$-chain with boundary $\mu_--\mu_+$
(identifying $\flux$ and $\mu_\pm$ with their corresponding flat $1$- and $0$-chains, respectively).
Thus we have
\begin{equation*}
\JEn^{\tau,\mu_+,\mu_-}[\flux]=\begin{cases}\tau(\flux)&\text{if }\flux\in\flatChains1,\partial\flux=\mu_--\mu_+,\\\infty&\text{else,}\end{cases}
\end{equation*}
where $\tau$ is defined for polyhedral $1$-chains as $\tau(\sum_{j=1}^Na_j\sigma_j)=\sum_{j=1}^N\tau(|a_j|)\hdone(\sigma_j)$.
We now proceed in steps.
% First consider the case that $\mass\flux<\infty$ and $\tau'(0)=\infty$.
% By \cite[Thm.\,8.1]{Wh99}, since there is no nonconstant path in $\R$ that is both continuous and finite length with respect to the distance $d(a,b)=\tau(|a-b|)+|a-b|$,
% we either have $\tau(\flux)=\infty$ or that $\flux\in\flatChains1$ is rectifiable.
% 
% 
% 
% 
% this means $|\flux|(\R^n\setminus S)=0$ for a 1-rectifiable set $S$
% 
% 
% now use currents from Xia

\begin{itemize}
\item Denote the upper $m$-dimensional density of a Radon measure $\gamma$ on $\R^n$ in a point $x\in\R^n$ by $\Theta^{*m}(\gamma,x)=\limsup_{r\searrow0}\gamma(B(x,r))/V_r^m$
for $B(x,r)$ the closed $n$-dimensional ball of radius $r$ centred at $x$ and for $V_r^m$ the volume of the $m$-dimensional ball of radius $r$.
\item Set $S=\{x\in\R^n\,:\,\Theta^{*1}(|\flux|,x)>0\}$ and $\flux^\perp=\flux\restr(\R^n\setminus S)$ (note that $S$ is Borel \cite[Prop.\,(1.1)]{Ed94}\notinclude{http://math.stackexchange.com/questions/1884629/is-the-domain-of-the-symmetric-derivative-of-a-borel-measure-a-borel-set/1885155}).
\item By \cite[Thm.\,6.1(3)]{Wh99b}, the mapping $\mu_\flux:B\mapsto\tau(\flux\restr B)$ for any Borel set $B$ is a Radon measure.
Thus we have $\tau(\flux)=\mu_\flux(\R^n)=\mu_\flux(S\cup(\R^n\setminus S))=\mu_\flux(S)+\mu_\flux(\R^n\setminus S)=\tau(\flux\restr S)+\tau(\flux^\perp)$.
\item We have $\flux^\perp\perp\hdone\restr R$ for any countably $1$-rectifiable $R\subset\R^n$:
Let $R\subset\R^n$ be countably $1$-rectifiable with $\hdone(R)<\infty$, then by \cite[2.10.19]{Fe69} and the definition of $S$
\notinclude{easier to grasp: https://www.math.washington.edu/~morrow/334_15/geoMeasure.pdf p.294}%
we have $|\flux^\perp|(R)=|\flux|(R\setminus S)\leq2t\hdone(R\setminus S)\leq2t\hdone(R)$ for any $t>0$ so that $|\flux^\perp|(R)=0$.
If $R\subset\R^n$ is countably $1$-rectifiable, but $\hdone(R)=\infty$,
then we note that $R$ can be covered by a countable union $\bigcup_{k=1}^\infty R_k$ of $1$-rectifiable sets $R_k\subset\R^n$ with $\hdone(R_k)<\infty$
so that again $|\flux^\perp|(R)\leq\sum_{k=1}^\infty|\flux^\perp|(R_k)=0$.
\item $S$ is countably $1$-rectifiable: Indeed, $S=\bigcup_{j=1}^\infty S_j$ with $S_j=\{x\in S\,:\,\Theta^{*1}(|\flux|,x)\geq\frac1j\}$,
where every $S_j$ satisfies
$$|\flux|(\R^n)\geq|\flux|(S_j)\geq\int_{S_j}\Theta^{*1}(|\flux|,x)\,\de\mathcal C^1(x)\geq\frac1j\mathcal C^1(S_j)\geq\frac1j\hdone(S_j)$$ by \cite[(3.3)(a) \& (1.3)]{Ed94},
where $\mathcal C^1$ denotes the $1$-dimensional covering outer measure.
Thus, $\flux\restr S_j$ is a flat $1$-chain of finite mass (note that $B\mapsto\mass{\flux\restr B}$ is a Borel function \cite[Sec.\,4]{Fl66}) and finite Hausdorff size $\hdone(S_j)$
and therefore rectifiable by \cite[Thm.\,4.1]{Wh99b}.
\item Obviously, $\flux\restr S$ is a rectifiable flat $1$-chain or equivalently a rectifiable $1$-dimensional current.
Consequently, it can be written as $\flux\restr S=\theta\hdone\restr S$ for some $(\hdone\restr S)$-measurable $\theta:S\to\R^n$ with $\theta(x)$ in the approximate tangent space of $S$ at $x$.
(Note that a decomposition of the form $\flux=\theta\hdone\restr S+\flux^\perp$ also follows from the structure theorem of divergence measure fields \cite[Thm.\,5.5]{Si07}.)
\item We have $\tau(\flux^\perp)=\tau'(0)|\flux^\perp|(\R^n)$: Indeed, $\tau(\flux^\perp)\leq\tau'(0)|\flux^\perp|(\R^n)$ follows directly from $\tau(w)\leq\tau'(0)w$ for all $w$.
The opposite inequality is obtained by slicing.
Let $A_i$ be a sequence of polygonal chains converging in the flat norm to $\flux^\perp$ such that $\tau(A_i)\to\tau(\flux^\perp)$.
Now denote by $Gr(n,1)$ the Grassmannian of $1$-dimensional lines in $\R^n$, by $\gamma_{n,1}$ the Haar measure on $Gr(n,1)$,
by $p_V:\R^n\to\R^n$ the orthogonal projection onto the line spanned by $V\in Gr(n,1)$,
and by $A\cap p_V^{-1}(\{x\})$ the slicing of the flat $1$-chain through $p_V^{-1}(\{x\})$
(which can be understood as the flat $0$-chain as which $A$ is observed through $(n-1)$-dimensional eyes in the $(n-1)$-dimensional world $p_V^{-1}(\{x\})$; see e.\,g.\ \cite[Sec.\,4.3]{Fe69}).
There is a constant $c=1/\int_{Gr(n,1)}|u\cdot w_V|\,\de\gamma_{n,1}(V)>0$
with $w_V$ being the unit vector spanning $V$ (we assume that a consistent orientation of $w_V$ is chosen for all $V$) and $u\in\R^n$ being any fixed unit vector (the constant is independent of the particular choice) such that
\begin{align*}
\mass{\flux^\perp}
&=c\int_{Gr(n,1)}\int_\R\mass{\flux^\perp\cap p_V^{-1}(\{y\})}\,\de\hdone(y)\de\gamma_{n,1}(V)\,,\\
\tau(A_i)
&=c\int_{Gr(n,1)}\int_\R\tau(A_i\cap p_V^{-1}(\{y\}))\,\de\hdone(y)\de\gamma_{n,1}(V)\,.
\end{align*}
The second equality is \cite[(3.2)]{CoRoMa17}, the first is obtained as follows.
Taking $v=1$ and $f=p_V$ in \cite[4.3.2(2) and following statement]{Fe69} and identifying $V$ with $\R$ we obtain
\begin{equation*}
\int_\R\mass{\flux^\perp\cap p_V^{-1}(\{y\})}\,\de\hdone(y)=|\flux^\perp\cdot w_V|(\R^n)\,.
\end{equation*}
Thus, with Fubini we have
\begin{multline*}
\int_{Gr(n,1)}\int_\R\mass{\flux^\perp\cap p_V^{-1}(\{y\})}\,\de\hdone(y)\de\gamma_{n,1}(V)
=\int_{Gr(n,1)}|\flux^\perp\cdot w_V|(\R^n)\de\gamma_{n,1}(V)\\
=\int_{\R^n}\int_{Gr(n,1)}|\tfrac{\partial\flux^\perp}{\partial|\flux^\perp|}\cdot w_V|\,\de\gamma_{n,1}(V)\de|\flux^\perp|
=\int_{\R^n}\tfrac1c\,\de|\flux^\perp|
=\tfrac1c\mass{\flux^\perp}\,.
\end{multline*}
Now $\flux^\perp\cap p_V^{-1}(\{y\})$ is diffuse for almost every $y\in\R$ (that is, it is singular with respect to any Dirac mass).
% Indeed, for $x\in p_V^{-1}(\{y\})$ let $\psi_r\in\cont^0(B(x,r))$ with $|\psi_r|\leq1$ be such that $\mass{(\flux^\perp\cap p_V^{-1}(\{y\}))\restr B(x,r)}\leq(\flux^\perp\cap p_V^{-1}(\{y\}))(\psi_r)+r$.
% By \cite[4.3.1]{Fe69}, for some $C>0$ we have
% \begin{multline*}
% \mass{(\flux^\perp\cap p_V^{-1}(\{y\}))\restr B(x,r)}-r
% =(\flux^\perp\cap p_V^{-1}(\{y\}))(\psi_r)\\
% =\lim_{\rho\searrow0}((\flux^\perp\restr (B(y,\rho)\times B(x,r)))\cdot w_V)(\psi_r)/(C\rho)
% \leq\lim_{\rho\searrow0}|\flux^\perp|(B(y,\rho)\times B(x,r))/(C\rho)
% \end{multline*}
Furthermore note that (potentially after choosing a subsequence)
$\flatNorm{A_i\cap p_V^{-1}(\{y\})-\flux^\perp\cap p_V^{-1}(\{y\})}\to0$ as $i\to\infty$ for $\hdone\otimes\gamma_{n,1}$-almost every $(y,V)\in\R\times Gr(n,1)$ \cite[step\,2 in proof of Prop.\,2.5]{CoRoMa17}.
Thus, using \cref{thm:diffuse0chainCost} and Fatou's lemma we have
\begin{align*}
\tau'(0)\mass{\flux^\perp}
&=c\int_{Gr(n,1)}\int_\R\tau'(0)\mass{\flux^\perp\cap p_V^{-1}(\{y\})}\,\de\hdone(y)\de\gamma_{n,1}(V)\\
&=c\int_{Gr(n,1)}\int_\R\tau(\flux^\perp\cap p_V^{-1}(\{y\}))\,\de\hdone(y)\de\gamma_{n,1}(V)\\
&\leq c\int_{Gr(n,1)}\int_\R\liminf_{i\to\infty}\tau(A_i\cap p_V^{-1}(\{y\}))\,\de\hdone(y)\de\gamma_{n,1}(V)\\
&\leq\liminf_{i\to\infty}c\int_{Gr(n,1)}\int_\R\tau(A_i\cap p_V^{-1}(\{y\}))\,\de\hdone(y)\de\gamma_{n,1}(V)\\
&=\liminf_{i\to\infty}\tau(A_i)
=\tau(\flux^\perp)\,.
\end{align*}
% Ansatz: Partition $\R^n$ into disjoint half-open $\varepsilon$-cubes $B_\varepsilon$ and use
% \begin{multline*}
% \tau(\flux^\perp)
% =\sum_{B_\varepsilon}\tau(\flux^\perp\restr B_\varepsilon)
% \geq\sum_{B_\varepsilon}\inf_{\tilde\flux}\JEn^{\tau,(\dv(\flux^\perp\restr B_\varepsilon))_+,(\dv(\flux^\perp\restr B_\varepsilon))_-}[\tilde\flux]\\
% \geq\sum_{B_\varepsilon}\lambda^\tau(|\dv(\flux^\perp\restr B_\varepsilon)|)\Wdone((\dv(\flux^\perp\restr B_\varepsilon))_+,(\dv(\flux^\perp\restr B_\varepsilon))_-).
% \end{multline*}
% We know $\lambda^\tau(w)\to\tau'(0)$ as $w\to0$ and
% \begin{multline*}
% \Wdone((\dv(\flux^\perp\restr B_\varepsilon))_+,(\dv(\flux^\perp\restr B_\varepsilon))_-)
% =\sup_{\alpha\in W^{1,\infty}(\R^n),\|\nabla\alpha\|_\infty\leq1}\int_{\R^n}\alpha\,\de\dv(\flux^\perp\restr B_\varepsilon)\\
% =\sup_{\alpha\in W^{1,\infty}(\R^n),\|\nabla\alpha\|_\infty\leq1}\int_{B_\varepsilon}-\nabla\alpha\,\de\flux^\perp
% \geq|\flux^\perp(B_\varepsilon)|.
% \end{multline*}
\item By \cite[Sec.\,6]{Wh99b} there is an isometry between $\hdone\restr S$-measurable functions $f:S\to\R$ of compact support and flat chains of finite mass and support in $S$,
where $|f|$ denotes the mass density, that is, $|f|=|\theta|$ for the rectifiable flat $1$-chain $\flux\restr S$.
Thus, by \cite[Sec.\,6]{Wh99b} or \cite{CoRoMa17} we further have $\tau(\flux\restr S)=\int_S\tau(|\theta(x)|)\,\de\hdone(x)$.
\item Finally, the previous two points imply that if $\flux=\theta\hdone\restr S+\flux^\perp$ with the desired properties, then $\tau(\flux)=\int_S\tau(|\theta|)\,\de\hdone+\tau'(0)|\flux^\perp|(\R^n)$ as desired.
\qedhere
\end{itemize}
\end{proof}

\notinclude{https://en.wikipedia.org/wiki/Exterior_algebra
http://math.stackexchange.com/questions/240491/what-is-a-covector-and-what-is-it-used-for
https://en.wikipedia.org/wiki/Differential_form
https://en.wikipedia.org/wiki/Exterior_derivative
https://web.math.princeton.edu/~ochodosh/GMTnotes.pdf
helps}

\section{Lagrangian model for transportation networks}\label{sec:MMS}

In this section we recall the alternative formulation of branched transport models due to \cite{Maddalena-Morel-Solimini-Irrigation-Patterns}, which we directly extend to our more general transportation costs.
For the major part of the well-posedness analysis we only consider concave transportation costs $\tau$;
the well-posedness for non-concave $\tau$ will follow in \cref{sec:equivalence} from the equivalence to the Eulerian model.
Many of the involved arguments are adaptations of \cite{Maddalena-Morel-Solimini-Irrigation-Patterns,Bernot-Caselles-Morel-Traffic-Plans,Bernot-Caselles-Morel-Structure-Branched,Maddalena-Solimini-Transport-Distances,BrWi15-equivalent}.

\subsection{Model definition}

Here, transport networks are described from a Lagrangian viewpoint, tracking for each transported mass particle its path, where the collection of all paths is typically denoted as irrigation pattern.

\begin{definition}[Irrigation patterns and loop-free patterns]\label{def:reference_space}
\begin{enumerate}
\item A \emph{reference space} is a measure space $(\reSpace,\Bcal(\reSpace),\reMeasure)$,
where $\reSpace$ is a complete separable uncountable metric space, $\Bcal(\reSpace)$ the $\sigma$-algebra of its Borel sets, and $\reMeasure$ a positive finite Borel measure on $\reSpace$ without atoms.

The reference space can be interpreted as the collection of all particles to be transported.
\item Let $I = [0,1]$. An \emph{irrigation pattern} is a measurable function $\chi : \reSpace \times I \to \R^n$ such that for almost all $p\in\reSpace$ we have $\chi_p\equiv\chi(p,\cdot) \in \AC(I;\R^n)$.
We say that a sequence of irrigation patterns $\chi_j$ \emph{converges uniformly} to $\chi$ if $\chi_j(p,\cdot)$ converges uniformly to $\chi(p,\cdot)$ for almost all $p\in\reSpace$.

%A pattern $\tilde\chi$ is \emph{equivalent} to $\chi$ if the images of $\reMeasure$ through the maps $p \mapsto \chi_p, p \mapsto \tilde\chi_p$ are the same.
%The image of $\chi_p$, that is $\chi_p(I)$, is called a \emph{fibre} and will frequently be identified with the particle $p$.
The function $\chi_p$ can be viewed as the travel path of mass particle $p$.
\item Let $i_0^\chi,i_1^\chi:\reSpace \to \R^n$ be defined as $i_0^\chi(p) = \chi(p,0)$ and $i_1^\chi(p) = \chi(p,1)$.
The \emph{irrigating measure} and the \emph{irrigated measure} are defined as the pushforward of $\reMeasure$ via $i_0^\chi$ and $i_1^\chi$, respectively,
\begin{displaymath}
 \mu_+^\chi = \pushforward{(i_0^\chi)}{\reMeasure}\,, \quad \mu_-^\chi = \pushforward{(i_1^\chi)}{\reMeasure}\,.
\end{displaymath}
\item An irrigation pattern $\chi$ is \emph{loop-free} if $\chi_p$ is injective for $\reMeasure$-almost all $p\in\reSpace$.
\end{enumerate}
\end{definition}

\begin{remark}[Standard space]%\label{rem:isomorphism_of_measure_spaces}
Any complete separable metric measure space $(\reSpace,\Bcal(\reSpace),\reMeasure)$ in which the Borel measure $\reMeasure$ has no atoms
is known to be isomorphic as a measure space to the standard space $([0,1],\Bcal([0,1]),m\lebesgue^1 \restr [0,1])$ with $m=\reMeasure(\reSpace)$
(for a proof see \cite[Prop.\,12 or Thm.\,16 in Sec.\,5 of Chap.\,15]{Royden-Real-Analysis} or \cite[Chap.\,1]{Villani-Transport-Old-New}).
We may thus always assume our reference space to be the standard space without loss of generality.
\end{remark}

\begin{definition}[Cost functional]\label{def:patternCost}
\begin{enumerate}
\item For an irrigation pattern $\chi$ and every $x\in\R^n$ consider the set
\begin{equation*}
[x]_\chi = \{q \in \reSpace \ : \ x \in \chi_q(I)\}\label{eq:solidarity_classes}
\end{equation*}
of all particles flowing through $x$.
The total \emph{mass flux} through $x$ is denoted
\begin{equation*}
m_\chi(x) = \reMeasure([x]_\chi)\,.
\end{equation*}
\item For a transportation cost $\tau$ we define the \emph{marginal cost} per particle,
\begin{displaymath}
 r^\tau(w) = \begin{cases}
              \frac{\tau(w)}{w} & w > 0\,,\\
              \lim_{w \to 0^+} \frac{\tau(w)}{w} = \tau'(0) & w = 0\,.
             \end{cases}
\end{displaymath}
Note that the limit exists, but may be infinite \cite[Thm.\,16.3.3]{Ku09}.
\item The \emph{cost function} of an irrigation pattern $\chi$ is
\begin{displaymath}
 \JEnMMS(\chi) = \int_{\reSpace \times I} r^\tau(m_\chi(\chi(p,t)))|\dot\chi(p,t)|\,\de\reMeasure(p)\de t
\end{displaymath}
(the dot indicates differentiation with respect to the second, time-like argument).
We furthermore abbreviate
\begin{equation*}
\JEn^{\tau,\mu_+,\mu_-}[\chi]
=\begin{cases}
\JEnMMS(\chi)&\text{if $\mu_+^\chi = \mu_+$ and $\mu_-^\chi = \mu_-$},\\
\infty&\text{else.}
\end{cases}\end{equation*}
\item Given $\mu_+,\mu_- \in \fbm(\R^n)$, the \emph{transport problem} is
\begin{equation*}
 \min_{\chi:\reSpace\times I\to\R^n}\JEn^{\tau,\mu_+,\mu_-}[\chi]\,.
\end{equation*}
\end{enumerate}
\end{definition}

Just like in the Eulerian setting, without loss of generality we may restrict to measures in $\prob$ and irrigation patterns on $[-1,1]^n$.

\begin{lemma}[Mass and domain rescaling]\label{thm:patternRescaling}
Let $\tau$ be a transportation cost.
Given $\mu_+,\mu_-\in\fbm(\R^n)$ with $\mu_+(\R^n)=\mu_-(\R^n)=\reMeasure(\reSpace)=m$ and $\spt\mu_+,\spt\mu_-\subset[-s,s]^n$, for any irrigation pattern $\chi:\reSpace\times I\to\R^n$ we have
\begin{align*}
\JEn^{\tau,\mu_+,\mu_-}[\chi]&=m\JEn^{\overline\tau,\overline\mu_+,\overline\mu_-}[\chi]
&&\text{for }\overline\mu_\pm=\mu_\pm/m,\,\overline\tau(w)=\tau(mw),\,\overline\reMeasure=\reMeasure/m,\\
\JEn^{\tau,\mu_+,\mu_-}[\chi]&=s\JEn^{\tau,\underline\mu_+,\underline\mu_-}[\underline\chi]
&&\text{for }\underline\mu_\pm=\pushforward{(\tfrac1s\id)}{\mu_\pm},\,\underline\chi=\chi/s.
\end{align*}
Furthermore, if $\tau$ is concave, then for any $\chi$ with $\mu_\pm^\chi=\mu_\pm$ we can find a $\tilde\chi:\reSpace\times I\to[-s,s]^n$ with $\mu_\pm^{\tilde\chi}=\mu_\pm$ and non-greater cost.
\end{lemma}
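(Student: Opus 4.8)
For the last claim, the plan is to post-compose $\chi$ with the nearest-point projection onto the target cube. Let $\pi\colon\R^n\to[-s,s]^n$ be the orthogonal projection; since $[-s,s]^n$ is convex and closed, $\pi$ is well defined, $1$-Lipschitz, and fixes every point of $[-s,s]^n$. Set $\tilde\chi=\pi\circ\chi$. As a composition of a continuous map with a measurable one, $\tilde\chi$ is measurable, and for $\reMeasure$-almost every $p$ the curve $\tilde\chi_p=\pi\circ\chi_p$ lies in $\AC(I;\R^n)$ (a Lipschitz map composed with an $\AC$ curve is $\AC$) and satisfies $|\dot{\tilde\chi}(p,t)|\le|\dot\chi(p,t)|$ for almost every $t$, since $\tfrac{\de}{\de t}\pi(\chi_p(t))$ has norm at most $\mathrm{Lip}(\pi)\,|\dot\chi(p,t)|=|\dot\chi(p,t)|$. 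In particular $\tilde\chi$ is again an irrigation pattern, so $\JEnMMS(\tilde\chi)$ is defined.

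First I would check that the endpoints, hence the irrigating and irrigated measures, are unchanged. Since $\mu_+^\chi=\mu_+$ and $\spt\mu_+\subset[-s,s]^n$, we have $\reMeasure(\{p:i_0^\chi(p)\notin[-s,s]^n\})=\mu_+(\R^n\setminus[-s,s]^n)=0$, and likewise for $i_1^\chi$. Thus $\pi$ fixes $i_0^\chi(p)$ and $i_1^\chi(p)$ for $\reMeasure$-almost every $p$, so $i_0^{\tilde\chi}=i_0^\chi$ and $i_1^{\tilde\chi}=i_1^\chi$ up to a $\reMeasure$-null set, whence $\mu_+^{\tilde\chi}=\pushforward{(i_0^{\tilde\chi})}{\reMeasure}=\pushforward{(i_0^\chi)}{\reMeasure}=\mu_+$ and similarly $\mu_-^{\tilde\chi}=\mu_-$.

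Next I would bound the cost. For any $x\in\R^n$ and any $q\in[x]_\chi$ there is a $t$ with $\chi_q(t)=x$, hence $\tilde\chi_q(t)=\pi(x)$ and $q\in[\pi(x)]_{\tilde\chi}$; therefore $[x]_\chi\subseteq[\pi(x)]_{\tilde\chi}$ and $m_\chi(x)\le m_{\tilde\chi}(\pi(x))$ for every $x$. Choosing $x=\chi(p,t)$ gives $m_\chi(\chi(p,t))\le m_{\tilde\chi}(\tilde\chi(p,t))$. This is the one place concavity of $\tau$ enters: for concave $\tau$ with $\tau(0)=0$ the marginal cost $w\mapsto r^\tau(w)$ is non-increasing on $[0,\infty)$ (for $0<a<b$ one has $\tau(a)\ge\tfrac{a}{b}\tau(b)$ by concavity, and the value at $0$ is the supremum of $\tau(w)/w$), so $r^\tau(m_{\tilde\chi}(\tilde\chi(p,t)))\le r^\tau(m_\chi(\chi(p,t)))$. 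Multiplying by $0\le|\dot{\tilde\chi}(p,t)|\le|\dot\chi(p,t)|$ (using $r^\tau\ge0$, with the convention $0\cdot\infty=0$, which is only relevant where $\JEn^{\tau,\mu_+,\mu_-}[\chi]=\infty$ anyway) yields the pointwise almost-everywhere inequality between the integrands, and integrating over $\reSpace\times I$ gives $\JEnMMS(\tilde\chi)\le\JEnMMS(\chi)$, as desired.

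I do not expect a genuine obstacle here; the points to be slightly careful about are that concavity is used \emph{only} through monotonicity of $r^\tau$ (so the confinement reduction is indeed restricted to concave costs, in line with the rest of \cref{sec:MMS}), and that $m_{\tilde\chi}$ should be verified to be measurable so that $\JEnMMS(\tilde\chi)$ makes sense — but this is exactly the measurability already built into \cref{def:patternCost} and needs no new argument for $\tilde\chi$.
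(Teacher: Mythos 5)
Your argument for the confinement claim is exactly the paper's: post-compose with the orthogonal projection onto $[-s,s]^n$, use that the projection is $1$-Lipschitz so $|\dot{\tilde\chi}(p,t)|\le|\dot\chi(p,t)|$, and combine $m_{\tilde\chi}(\tilde\chi(p,t))\ge m_\chi(\chi(p,t))$ with the concavity-induced monotonicity of $r^\tau$; your extra checks (endpoints are fixed by the projection, measurability of $\tilde\chi$) are correct and welcome. The two rescaling identities, which you do not treat, are dismissed in the paper as a direct calculation, so nothing essential is missing.
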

\begin{proof}
The rescaling of the energies follows by direct calculation.
For the last statement we just consider $\tilde\chi(p,t)=\mathrm{proj}_{[-s,s]^n}\chi(p,t)$ to be the orthogonal projection onto $[-s,s]^n$.
The result then follows from the observation that $|\dot{\tilde\chi}(p,t)|\leq|\dot\chi(p,t)|$
and $r^\tau(m_{\tilde\chi}(\tilde\chi(p,t)))\leq r^\tau(m_{\chi}(\chi(p,t)))$ (due to $m_{\tilde\chi}(\tilde\chi(p,t))\geq m_{\chi}(\chi(p,t))$ and the concavity of $\tau$) for almost all $(p,t)\in\reSpace\times I$.
\end{proof}

\begin{remark}[Bounded patterns for non-concave $\tau$]
Note that the condition of concavity in the last statement of the previous lemma may in fact be dropped;
this can be proved using the more explicit cost characterization derived in \cref{sec:patternsGilbert}.
\end{remark}

\subsection{Existence of minimizers and their properties}

The following summary of the existence theory essentially follows the approach by Maddalena and Solimini \cite{Maddalena-Solimini-Transport-Distances,Maddalena-Solimini-Synchronic}.
% The subsequent description of irrigation patterns via graphs will be used to show the equivalence to the flux-based model formulation.
In several results we restrict ourselves to concave transportation costs $\tau$, since otherwise the cost functional will not be lower semi-continuous (an example will be given below).
That the optimization problem admits minimizers even for non-concave $\tau$ will follow in the next section.
% We expect that actually only lower semi-continuity of $\tau$ is required for the well-posedness of the model,
% however, the direct method of the calculus of variations (which is employed below) fails due to the above-mentioned deficiency.
The following two statements deal with the lower semi-continuity of the cost; afterwards we shall consider its coercivity.

\begin{lemma}[Continuity properties of $m_\chi$]\label{prop:mass_is_upper_semi-continuous}
Let $\chi_k$ be a sequence of irrigation patterns converging uniformly to $\chi$, and let $t_k \in I$ such that $t_k \to t$. Then, for almost all $p \in \reSpace$,
\begin{align*}
 &m_\chi(\chi(p,t)) \geq \limsup_{k \to \infty} m_{\chi_k}(\chi_k(p,t_k))\,.\label{eq:mass_is_upper_semi-continuous}%\\
% &r^\tau(\chi(p,t)) \leq \liminf_{n \to \infty} r_{\varepsilon,a}^{\chi_k}(\chi_k(p,t_k))\,.\label{eq:r_is_lower_semi-continuous}
\end{align*}
\end{lemma}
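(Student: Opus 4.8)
The plan is to combine the reverse Fatou lemma on the reference space $\reSpace$ with a compactness argument on the time interval $I$. First, fix a $\reMeasure$-full-measure set $G\subset\reSpace$ such that for every $q\in G$ the path $\chi(q,\cdot)$ is continuous and $\chi_k(q,\cdot)\to\chi(q,\cdot)$ uniformly on $I$; such $G$ exists by \cref{def:reference_space}, which both requires $\chi_q\in\AC(I;\R^n)$ (hence continuity) for a.e.\ $q$ and defines uniform pattern convergence as uniform convergence of $\chi_k(q,\cdot)$ for a.e.\ $q$. We prove the asserted inequality for every $p\in G$, which suffices.

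So fix $p\in G$. By \cref{def:patternCost},
$$m_{\chi_k}(\chi_k(p,t_k))=\reMeasure\bigl([\chi_k(p,t_k)]_{\chi_k}\bigr)=\int_\reSpace\setchar{[\chi_k(p,t_k)]_{\chi_k}}(q)\,\de\reMeasure(q)\,,$$
and since the integrands take values in $[0,1]$ and $\reMeasure$ is a finite measure, the constant $1$ is an integrable majorant, so the reverse Fatou lemma yields
$$\limsup_{k\to\infty}m_{\chi_k}(\chi_k(p,t_k))\leq\int_\reSpace\limsup_{k\to\infty}\setchar{[\chi_k(p,t_k)]_{\chi_k}}(q)\,\de\reMeasure(q)\,.$$
It therefore suffices to show the pointwise estimate $\limsup_{k\to\infty}\setchar{[\chi_k(p,t_k)]_{\chi_k}}(q)\leq\setchar{[\chi(p,t)]_\chi}(q)$ for every $q\in G$; integrating this over $\reSpace$ then bounds the right-hand side above by $\reMeasure([\chi(p,t)]_\chi)=m_\chi(\chi(p,t))$.

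To prove the pointwise estimate, fix $q\in G$ and assume there is a subsequence $(k_j)_j$ with $q\in[\chi_{k_j}(p,t_{k_j})]_{\chi_{k_j}}$ for all $j$ (otherwise the left-hand side is zero and there is nothing to prove). Then for each $j$ there is $s_{k_j}\in I$ with $\chi_{k_j}(q,s_{k_j})=\chi_{k_j}(p,t_{k_j})$, and by compactness of $I$ we may pass to a further subsequence along which $s_{k_j}\to s$ for some $s\in I$. Using the uniform convergence on $I$ together with continuity of $\chi(p,\cdot)$ and of $\chi(q,\cdot)$ we get, as $j\to\infty$,
$$|\chi_{k_j}(p,t_{k_j})-\chi(p,t)|\leq\sup_{r\in I}|\chi_{k_j}(p,r)-\chi(p,r)|+|\chi(p,t_{k_j})-\chi(p,t)|\longrightarrow0$$
and, by the same reasoning, $\chi_{k_j}(q,s_{k_j})\to\chi(q,s)$. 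Passing to the limit in $\chi_{k_j}(q,s_{k_j})=\chi_{k_j}(p,t_{k_j})$ gives $\chi(q,s)=\chi(p,t)$, hence $\chi(p,t)\in\chi(q,I)$, i.e.\ $q\in[\chi(p,t)]_\chi$, so $\setchar{[\chi(p,t)]_\chi}(q)=1$. This establishes the pointwise estimate and completes the argument.

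The only genuinely delicate point is this last step: extracting a convergent subsequence of the hitting times $s_{k_j}$ and then exploiting \emph{uniform} — not merely pointwise — convergence of the patterns, combined with continuity of the limit paths $\chi(p,\cdot)$ and $\chi(q,\cdot)$, to pass to the limit simultaneously in both arguments. The remaining ingredients — measurability of the sets $[\,\cdot\,]_{\chi_k}$ in the particle variable (implicit already in \cref{def:patternCost}, via continuity of the paths), the reduction to the full-measure set $G$, and the applicability of reverse Fatou — are routine, using only that $\reMeasure$ is finite.
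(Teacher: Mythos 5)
Your proof is correct and follows essentially the same route as the paper's: your application of the reverse Fatou lemma to the indicator functions is precisely the paper's inequality $\reMeasure(\limsup_k[\chi_k(p,t_k)]_{\chi_k})\geq\limsup_k m_{\chi_k}(\chi_k(p,t_k))$, and your inclusion $\limsup_k[\chi_k(p,t_k)]_{\chi_k}\subseteq[\chi(p,t)]_\chi$ (up to a nullset) is proved directly via convergent hitting times where the paper argues by contraposition using the positive distance $\dist(\chi(p,t),\chi(q,I))>0$ — two equivalent ways of exploiting uniform convergence plus continuity of the limit paths.
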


\begin{proof}
Fix $p \in \reSpace$ such that $\chi_p\in \AC(I;\R^n)$ and $\chi_k(p,\cdot)\to\chi(p,\cdot)$ uniformly, %(we only discard a null set of fibres)
and define the sets
\begin{displaymath}
 A = \bigcap_{n=1}^\infty A_n\,, \quad A_n = \bigcup_{k \geq n} [\chi_k(p,t_k)]_{\chi_k}\,.
\end{displaymath}
Recall that $A = \limsup_{k\to\infty} [\chi_k(p,t_k)]_{\chi_k}$ and $\reMeasure([\chi_k(p,t_k)]_{\chi_k}) = m_{\chi_k}(\chi_k(p,t_k))$ so that
\begin{displaymath}
 \reMeasure(A) = \lim_{n\to\infty}\reMeasure(A_n) \geq \limsup_{k\to\infty} m_{\chi_k}(\chi_k(p,t_k))\,.
\end{displaymath}
We now show $A \subseteq [\chi(p,t)]_{\chi}$ up to a $\reMeasure$-nullset so that $m_\chi(\chi(p,t)) \geq \reMeasure(A) \geq \limsup_k m_{\chi_k}(\chi_k(p,t_k))$ as desired.
Indeed, let $q \notin[\chi(p,t)]_{\chi}$, then by continuity of $\chi$ we have $d=\dist(\chi(p,t),\chi(q,I)) > 0$.
Assuming $\chi_k(q,\cdot) \to \chi(q,\cdot)$ uniformly, this implies $\dist(\chi_k(p,t_k),\chi_k(q,I)) > \frac d2$ for all $k$ large enough
so that $q\notin A_k$ for any $k$ large enough and thus $q\notin A$.
\end{proof}

\begin{proposition}[Lower semi-continuity of $\JEnMMS$]\label{prop:urban_planning_energy_is_lower_semi-continuous}
For a concave transportation cost $\tau$, the functional $\JEnMMS$ is lower semi-continuous with respect to uniform convergence of patterns.
\end{proposition}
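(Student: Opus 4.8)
The plan is to pass, via Fatou's lemma, to a statement about a single transported particle, and then to prove a lower semicontinuity result for weighted curve lengths in which both the curve and the weight may vary along the sequence. I would begin by recording two elementary properties of the marginal cost $r^\tau$. Since $\tau$ is concave with $\tau(0)=0$, the secant slope $w\mapsto r^\tau(w)=\tau(w)/w$ is non-increasing on $(0,\infty)$, and by \cref{def:patternCost} it extends to a continuous map $[0,\infty)\to[0,\infty]$. Hence, if $a\geq\limsup_k b_k$ for $a,b_k\in[0,\infty)$, then $r^\tau(a)\leq\liminf_k r^\tau(b_k)$: monotonicity gives $r^\tau(b_k)\geq r^\tau(\sup_{l\geq k}b_l)$, and $\sup_{l\geq k}b_l\downarrow\limsup_k b_k\leq a$, so continuity of $r^\tau$ lets us take the limit.

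Now let $\chi_k\to\chi$ uniformly, so that off a $\reMeasure$-null set $\Ncal$ the curves $\gamma:=\chi(p,\cdot)$ and $\gamma_k:=\chi_k(p,\cdot)$ lie in $\AC(I;\R^n)$ with $\gamma_k\to\gamma$ uniformly. Fix such a $p$ and set $z(t)=r^\tau(m_\chi(\chi(p,t)))$, $z_k(t)=r^\tau(m_{\chi_k}(\chi_k(p,t)))$. Combining \cref{prop:mass_is_upper_semi-continuous} (whose exceptional null set can be taken independent of the time sequences involved) with the remark on $r^\tau$ gives: $t_k\to t$ in $I$ implies $z(t)\leq\liminf_k z_k(t_k)$, and in particular (constant sequence $\chi_k\equiv\chi$) that $z$ is lower semicontinuous on $I$. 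The per-particle claim I would establish is
\begin{equation*}
\int_I z(t)\,|\dot\gamma(t)|\,\de t\leq\liminf_{k\to\infty}\int_I z_k(t)\,|\dot\gamma_k(t)|\,\de t\,.
\end{equation*}
To prove it, partition $I$ into the $2^m$ dyadic subintervals $I_1,\dots,I_{2^m}$ of length $2^{-m}$. On each one, $\int_{I_j}z_k|\dot\gamma_k|\geq(\inf_{\overline{I_j}}z_k)\int_{I_j}|\dot\gamma_k|$; the semicontinuity bound on $z$ forces $\liminf_k\inf_{\overline{I_j}}z_k\geq\inf_{\overline{I_j}}z$ (via a subsequence argument on nearly minimizing points), while the classical lower semicontinuity of length under uniform convergence gives $\liminf_k\int_{I_j}|\dot\gamma_k|\geq\int_{I_j}|\dot\gamma|$; since both factors are non-negative, $\liminf_k\int_{I_j}z_k|\dot\gamma_k|\geq(\inf_{\overline{I_j}}z)\int_{I_j}|\dot\gamma|$. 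Summing over $j$ and using superadditivity of $\liminf$, $\liminf_k\int_I z_k|\dot\gamma_k|\geq\int_I g_m|\dot\gamma|$ with $g_m:=\sum_j(\inf_{\overline{I_j}}z)\setchar{I_j}$. The partitions being nested, $g_m$ increases in $m$, and lower semicontinuity of $z$ gives $g_m(t)\uparrow z(t)$ for every non-dyadic $t$; monotone convergence then yields $\int_I g_m|\dot\gamma|\to\int_I z|\dot\gamma|$, proving the claim.

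Finally, since the integrand defining $\JEnMMS$ is non-negative, Fatou's lemma together with the per-particle inequality gives $\JEnMMS(\chi)=\int_\reSpace\int_I z|\dot\gamma|\,\de t\,\de\reMeasure(p)\leq\liminf_k\int_\reSpace\int_I z_k|\dot\gamma_k|\,\de t\,\de\reMeasure(p)=\liminf_k\JEnMMS(\chi_k)$, as desired. I expect the main obstacle to be the weighted-length semicontinuity inside the per-particle claim: neither $|\dot\gamma_k|$ nor the weight $z_k$ converges in any strong sense, so one must carry out two one-sided passages — lower semicontinuity of length, and the mass upper semicontinuity of \cref{prop:mass_is_upper_semi-continuous} — simultaneously on each small subinterval and only afterwards send the mesh to zero. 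Making that last mesh limit recover all of $\int_I z|\dot\gamma|$ rather than something smaller is exactly where lower semicontinuity of the limit weight $z$ — hence the concavity of $\tau$, entering through monotonicity of $r^\tau$ — is indispensable, and is also the place where a non-concave $\tau$ would break the argument.
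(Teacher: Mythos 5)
Your proof is correct and shares the paper's skeleton: both reduce via Fatou's lemma to a per-particle statement — lower semicontinuity of the weighted-length integral $\int_I r^\tau(m_{\chi_k}(\chi_k(p,t)))\,|\dot\chi_k(p,t)|\,\de t$ along a single fibre — and both drive that statement with the mass upper semicontinuity of \cref{prop:mass_is_upper_semi-continuous} combined with the monotonicity and continuity of $r^\tau$. Where you genuinely differ is in how the per-particle step is closed. The paper treats $|\dot\chi(p,\cdot)|\,\de t$ as a measure $\mu^p$ satisfying $\mu^p(A)\le\liminf_k\mu_k^p(A)$ on open sets and then invokes the abstract joint lower semicontinuity theorem \cite[Def.\,C.1 \& Thm.\,C.1]{Maddalena-Solimini-Synchronic} for integrals $\int f_k\,\de\mu_k$; you instead prove exactly that statement from scratch, freezing the weight at its infimum on each dyadic subinterval, applying classical lower semicontinuity of length piecewise, and recovering the full integral by monotone convergence as the mesh is refined. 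Your route is longer but self-contained, and it makes explicit two points the paper leaves implicit: that the cited theorem really requires the liminf inequality along varying times $t_k\to t$ (the paper states it only at fixed $t$), and that the exceptional null set in \cref{prop:mass_is_upper_semi-continuous} can be chosen independently of the time sequence. (Incidentally, the paper's phrase ``$r^\tau$ is non-decreasing'' is a slip; as you correctly use throughout, the secant slope of a concave $\tau$ with $\tau(0)=0$ is non-increasing.)
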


\begin{proof}
Let the sequence $\chi_k$ of irrigation patterns converge uniformly to $\chi$
and define the measures $\mu_k^p = |\dot\chi_k(p,\cdot)|\,\de t$ and $\mu^p = |\dot\chi(p,\cdot)|\de t$ for $p\in\reSpace$, $k=1,2,\ldots$.
By the uniform convergence of $\chi_k$ we have $\dot\chi_k(p,\cdot)\to\dot\chi(p,\cdot)$ in the distributional sense and thus
\begin{displaymath}
\mu^p(A)\leq\liminf_{k\to\infty}\mu_k^p(A)
\end{displaymath}
for any open $A\subset I$ and almost every $p\in\reSpace$.
Furthermore, since $r^\tau$ is non-increasing for concave $\tau$, \cref{prop:mass_is_upper_semi-continuous} implies $r^\tau(\chi(p,t)) \leq \liminf_{k \to \infty} r^\tau(m_{\chi_k}(\chi_k(p,t)))$ for almost all $p\in\reSpace$ and $t\in I$.
Thanks to \cite[Def.\,C.1 \& Thm.\,C.1]{Maddalena-Solimini-Synchronic} we thus have
\begin{displaymath}
\int_I r^\tau(m_\chi(\chi(p,t))) \,\de\mu^p(t) \leq \liminf_{k\to\infty} \int_I r^\tau(m_{\chi_k}(\chi_k(p,t))) \,\de\mu_k^p(t)\,.
\end{displaymath}
Integrating with respect to $\reMeasure$ and applying Fatou's Lemma ends the proof.
\end{proof}

\begin{remark}[Failure for non-concave $\tau$]\label{rem:nonconcaveTau}
The cost functional $\JEnMMS$ fails to be lower semi-continuous without requiring concavity of $\tau$.
As a counterexample consider $\tau(w)=\lceil\frac wa\rceil$ for some $a\in(0,1)$ (where $\lceil\cdot\rceil$ denotes rounding to the next largest integer) and the sequence of irrigation patterns $\chi_k:\reSpace\times I\to\R^2$, $\reSpace=[0,1]$,
\begin{equation*}
\chi_k(p,t)=\begin{cases}(3t,\frac tk)&\text{if }t\in[0,\frac13]\\(2-3t,\frac{1-2t}k)&\text{if }t\in[\frac13,\frac23]\\(3t-2,\frac{t-1}k)&\text{if }t\in[\frac23,1]\end{cases}\text{ for }p\in[0,a]\,,\qquad
\chi_k(p,t)=(t,0)\text{ for }p\in(a,1]\,,
\end{equation*}
which is illustrated in \cref{fig:nonconcaveTau} and converges uniformly to the obvious $\chi$.
One readily calculates
\begin{equation*}
\lim_{k\to\infty}\JEnMMS(\chi_k)=3\tau(a)+\tau(1-a)\,,\qquad
\JEnMMS(\chi)=(1+2a)\tau(1)\,.
\end{equation*}
Thus, for $a$ slightly smaller than $\frac12$ we have $\tau(a)=1$, $\tau(1-a)=2$, $\tau(1)=3$ and thus $\JEnMMS(\chi)\geq\lim_{k\to\infty}\JEnMMS(\chi_k)$.
\end{remark}

\begin{figure}
\centering
\setlength{\unitlength}{.6\linewidth}
\begin{picture}(1,.3)
\put(0,0){\includegraphics[width=\unitlength]{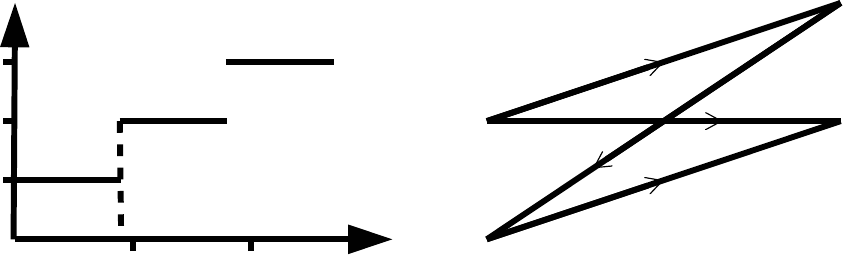}}
\put(.13,-.01){\small$a$}
\put(.15,-.03){\small$\frac12$}
\put(.29,-.03){\small$1$}
\put(.44,-.01){\small$w$}
\put(-.02,.08){\small$1$}
\put(-.02,.15){\small$2$}
\put(-.02,.22){\small$3$}
\put(-.06,.28){\small$\tau(w)$}
\put(.5,.15){\small$(0,0)$}
\put(1,.15){\small$(1,0)$}
\put(1,.27){\small$(1,\frac1{3k})$}
\put(.53,-.02){\small$(0,-\frac1{3k})$}
\end{picture}
\caption{Illustration of the transportation cost $\tau$ and irrigation pattern $\chi_k$ from \cref{rem:nonconcaveTau}.}
\label{fig:nonconcaveTau}
\end{figure}

To examine the coercivity of the cost functional, we first collect some properties of patterns and \transportPath{} measures, which we will also need later on.
First note that the cost function of an irrigation pattern bounds its average path length and that any irrigation pattern can be assumed to be Lipschitz in its second argument.

\begin{lemma}[Bound on average path length]\label{thm:pathLength}
For any irrigation pattern $\chi$ and $\lambda^\tau$ from \cref{thm:averageCost} we have
\begin{equation*}
\JEnMMS(\chi)\geq\lambda^\tau(\reMeasure(\reSpace))\int_\reSpace\hdone(\chi_p(I))\,\de\reMeasure(p)\,.
\end{equation*}
\end{lemma}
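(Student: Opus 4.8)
The plan is to bound the integrand of $\JEnMMS$ below by $\lambda^\tau(\reMeasure(\reSpace))$ times the speed $|\dot\chi|$, and then to compare the length of each travel path $\chi_p$ with the one-dimensional Hausdorff measure of its image.

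First I would observe that $m_\chi(x)=\reMeasure([x]_\chi)\le\reMeasure(\reSpace)$ for every $x\in\R^n$, so that for almost every $(p,t)\in\reSpace\times I$ the value $w:=m_\chi(\chi(p,t))$ lies in $[0,\reMeasure(\reSpace)]$. By \cref{thm:averageCost} we have $\tau(w)\ge\lambda^\tau(\reMeasure(\reSpace))\,w$ for all such $w$; dividing by $w$ when $w>0$, and letting $w\to0^+$ in the remaining case, gives $r^\tau(w)\ge\lambda^\tau(\reMeasure(\reSpace))$ in every case. Hence
\[
\JEnMMS(\chi)
=\int_{\reSpace\times I}r^\tau(m_\chi(\chi(p,t)))\,|\dot\chi(p,t)|\,\de\reMeasure(p)\de t
\ge\lambda^\tau(\reMeasure(\reSpace))\int_{\reSpace}\int_I|\dot\chi(p,t)|\,\de t\,\de\reMeasure(p),
\]
where Tonelli's theorem justifies rewriting the (nonnegative, measurable) double integral as an iterated one.

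It then remains to establish, for almost every fixed $p$, the classical estimate $\int_I|\dot\chi(p,t)|\,\de t\ge\hdone(\chi_p(I))$, i.e.\ that the length of the absolutely continuous curve $\chi_p$ dominates the one-dimensional Hausdorff measure of its image. I would reparametrize by arc length: setting $L_p=\int_I|\dot\chi_p(t)|\,\de t$ and $s(t)=\int_0^t|\dot\chi_p(\sigma)|\,\de\sigma$, the curve $\chi_p$ is constant on every interval on which the continuous nondecreasing function $s$ is constant, so there is a well-defined map $\tilde\gamma:[0,L_p]\to\R^n$ with $\tilde\gamma(s(t))=\chi_p(t)$ for all $t$ and with image $\chi_p(I)$; it is $1$-Lipschitz because $|\tilde\gamma(s(t_1))-\tilde\gamma(s(t_2))|=|\int_{t_1}^{t_2}\dot\chi_p|\le|s(t_1)-s(t_2)|$. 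Since a $1$-Lipschitz image does not increase $\hdone$, we get $\hdone(\chi_p(I))=\hdone(\tilde\gamma([0,L_p]))\le\hdone([0,L_p])=L_p$. (Equivalently, one could invoke the area formula $\int_I|\dot\chi_p|\,\de t=\int_{\R^n}\#\chi_p^{-1}(\{x\})\,\de\hdone(x)\ge\hdone(\chi_p(I))$.) Inserting this bound into the previous display and integrating in $p$ gives the assertion; note that $\hdone(\chi_p(I))$ is dominated by the measurable function $p\mapsto\int_I|\dot\chi_p|\,\de t$, so no separate measurability discussion for $p\mapsto\hdone(\chi_p(I))$ is needed. The one mildly delicate ingredient is this curve-length versus Hausdorff-measure comparison, but as indicated it is entirely standard; everything else is an immediate consequence of \cref{thm:averageCost} and Tonelli's theorem.
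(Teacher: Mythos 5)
Your proof is correct and follows essentially the same route as the paper's one-line argument: bound $r^\tau(m_\chi(\chi(p,t)))$ below by $\lambda^\tau(\reMeasure(\reSpace))$ via \cref{thm:averageCost}, then compare $\int_I|\dot\chi_p|\,\de t$ with $\hdone(\chi_p(I))$. Your careful derivation of the inequality $\int_I|\dot\chi_p|\,\de t\geq\hdone(\chi_p(I))$ is in fact slightly more precise than the paper, which asserts equality (valid only for injective curves) where only the inequality is needed.
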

\begin{proof}
This is a direct consequence of $\lambda^\tau(\reMeasure(\reSpace))\leq r^\tau(m_\chi(\chi(p,t)))$ and the fact $\int_0^1|\dot\chi_p(t)|\de t=\hdone(\chi_p(I))$.
\end{proof}

\begin{proposition}[Constant speed reparameterization of patterns]\label{thm:constSpeedPatternsUrbPl}
Irrigation patterns of finite cost $\JEnMMS$ can be reparameterized such that $\chi_p\in\Lip(I;\R^n)$ and $|\dot\chi_p|$ is constant for almost all $p\in\reSpace$ without changing the cost $\JEnMMS$.
\end{proposition}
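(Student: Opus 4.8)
The plan is to reparameterize every travel path $\chi_p$ by a suitable multiple of its own arc length and to check that this purely geometric operation changes neither the endpoints, nor the multiplicity $m_\chi$, nor the cost. First I would record that finite cost forces almost every path to be rectifiable: since $m_\chi(x)\le\reMeasure(\reSpace)$ for every $x$ and $r^\tau(w)\ge\lambda^\tau(\reMeasure(\reSpace))$ for all $w\in[0,\reMeasure(\reSpace)]$ (because $\tau(w)\ge\lambda^\tau(\reMeasure(\reSpace))\,w$ by \cref{thm:averageCost}, whence also $\tau'(0)\ge\lambda^\tau(\reMeasure(\reSpace))$), we obtain
\begin{displaymath}
\JEnMMS(\chi)\ \ge\ \lambda^\tau(\reMeasure(\reSpace))\int_\reSpace\int_0^1|\dot\chi_p(t)|\,\de t\,\de\reMeasure(p)\,,
\end{displaymath}
so that $L(p):=\int_0^1|\dot\chi_p(t)|\,\de t<\infty$ for $\reMeasure$-almost every $p$ (this is essentially \cref{thm:pathLength}). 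On the $\reMeasure$-nullset of $p$ for which $\chi_p\notin\AC(I;\R^n)$ or $L(p)=\infty$ we simply keep $\tilde\chi_p:=\chi_p$; this affects neither $\mu_\pm^\chi$, nor $m_\chi$, nor $\JEnMMS(\chi)$.

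For the remaining $p$, set $\ell_p(t)=\int_0^t|\dot\chi_p(r)|\,\de r$, an absolutely continuous non-decreasing surjection $[0,1]\to[0,L(p)]$, and define the reparameterized pattern by $\tilde\chi(p,s):=\chi(p,g_p(s))$ with the generalized inverse $g_p(s):=\inf\{t\in[0,1]:\ell_p(t)\ge sL(p)\}$ (and $\tilde\chi(p,s):=\chi_p(0)$ when $L(p)=0$). Continuity of $\ell_p$ in $t$ gives $\ell_p(g_p(s))=sL(p)$, and whenever $\ell_p(t_1)=\ell_p(t_2)$ the path $\chi_p$ is constant on $[t_1,t_2]$; hence there is a well-defined $1$-Lipschitz curve $\bar\chi_p$ on $[0,L(p)]$ with $\chi_p=\bar\chi_p\circ\ell_p$ and $\tilde\chi_p(s)=\bar\chi_p(sL(p))$. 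By the standard arc-length reparameterization argument this yields $\tilde\chi_p\in\Lip(I;\R^n)$ with $|\dot{\tilde\chi}_p(s)|=L(p)$ for almost every $s$. Moreover $\tilde\chi_p(0)=\chi_p(0)$ and $\tilde\chi_p(1)=\chi_p(1)$ (the latter since $\chi_p$ is constant on $\{\ell_p=L(p)\}$), so $i_0^{\tilde\chi}=i_0^\chi$ and $i_1^{\tilde\chi}=i_1^\chi$, hence $\mu_\pm^{\tilde\chi}=\mu_\pm^\chi$; and $\tilde\chi_p(I)=\bar\chi_p([0,L(p)])=\chi_p(I)$, so $[x]_{\tilde\chi}=[x]_\chi$ for every $x$ and therefore $m_{\tilde\chi}\equiv m_\chi$.

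The point that needs genuine care is the joint measurability of $\tilde\chi$ on $\reSpace\times I$. I would fix a jointly measurable version of $(p,t)\mapsto|\dot\chi(p,t)|$ (a pointwise a.e.\ limit of difference quotients), so that $(p,t)\mapsto\ell_p(t)=\int_0^1\setchar{[0,t]}(r)\,|\dot\chi(p,r)|\,\de r$ is jointly measurable by Tonelli and $L(p)=\ell_p(1)$ is measurable. Continuity of $\ell_p$ in $t$ gives $\{(p,s):g_p(s)\le t_0\}=\{(p,s):\ell_p(t_0)\ge sL(p)\}$ for each $t_0\in[0,1]$, which is measurable; hence $(p,s)\mapsto(p,g_p(s))$ is measurable and, composed with a jointly measurable version of $\chi$, so is $\tilde\chi$.

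Finally, for the cost, fix an admissible $p$ and write $F:=r^\tau\circ m_\chi$, a Borel function equal to $r^\tau\circ m_{\tilde\chi}$. Using $\chi_p=\bar\chi_p\circ\ell_p$ with $\ell_p$ absolutely continuous and $\ell_p(0)=0$, $\ell_p(1)=L(p)$, the change of variables for monotone absolutely continuous functions gives
\begin{displaymath}
\int_0^1 F(\chi_p(t))\,|\dot\chi_p(t)|\,\de t=\int_0^{L(p)} F(\bar\chi_p(u))\,\de u=\int_0^1 F(\bar\chi_p(sL(p)))\,L(p)\,\de s=\int_0^1 F(\tilde\chi_p(s))\,|\dot{\tilde\chi}_p(s)|\,\de s\,,
\end{displaymath}
all terms vanishing when $L(p)=0$. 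Integrating over $p$ against $\reMeasure$ yields $\JEnMMS(\tilde\chi)=\JEnMMS(\chi)$, and since $\mu_\pm^{\tilde\chi}=\mu_\pm^\chi$ also $\JEn^{\tau,\mu_+,\mu_-}[\tilde\chi]=\JEn^{\tau,\mu_+,\mu_-}[\chi]$. I expect the measurability step to be the main obstacle; the curvilinear change of variables also needs slight care when $\chi_p$ is not injective (a point of $\chi_p(I)$ may be traversed several times), but that is precisely why one reparameterizes by the ``unfolded'' length $\ell_p$ rather than by arc length along the image $\chi_p(I)$.
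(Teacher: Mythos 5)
Your proof is correct and takes essentially the same route as the paper, whose own proof simply cites the standard arc-length reparameterization argument (\cite[Prop.\,2.3.2--2.3.4]{BrWi15-equivalent}) with the key replacement estimate $1\le(\lambda^\tau(\reMeasure(\reSpace)))^{-1}r^\tau(m_\chi(\chi(p,t)))$ --- precisely the bound you derive from \cref{thm:averageCost} via \cref{thm:pathLength}. You merely spell out in full the measurability and change-of-variables details that the cited references handle.
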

\begin{proof}
The proof is analogous to \cite[Prop.\,2.3.2 to 2.3.4]{BrWi15-equivalent} (see also \cite[Lem.\,6.2]{Bernot-Caselles-Morel-Traffic-Plans} or \cite[Lem.\,4.1, Lem.\,4.2]{BeCaMo09}),
merely replacing $s_\alpha^\chi$ by $r^\tau(m_\chi(\cdot))$
and replacing the estimate $1 \leq \reMeasure(\reSpace)^{1-\alpha}s_\alpha^\chi(\chi(p,t))$ by $1 \leq (\lambda^\tau(\reMeasure(\reSpace)))^{-1} r^\tau(m_\chi(\chi(p,t)))$.
\end{proof}

The above reparameterization allows to identify irrigation patterns with \transportPath{} measures \cite[Def.\,2.5]{BuPrSoSt09}, for which compact sets have a simple characterization (as shown below in a simple variant of, for instance, \cite[Lem.\,4.3.5]{BrWi15-equivalent}).

\begin{definition}[\TransportPath{} measures]\label{def:transport_path_measures}
\begin{enumerate}
\item On the set $\Lip(I;\R^n)$ of Lipschitz curves we define the pseudometric
\begin{equation*}
d_\Theta(\theta_1,\theta_2)=\inf\left\{\max_{t\in I}|\theta_1(t)-\theta_2(\varphi(t))|\ :\ \varphi:I\to I\text{ increasing and bijective}\right\}
\end{equation*}
and let $\Theta=\Lip(I;\R^n)/\sim$ be the quotient space under the equivalence relation $\theta_1\sim\theta_2$ if and only if $d_\Theta(\theta_1,\theta_2)=0$.

Without explicit mention we will frequently identify an element $\theta\in\Theta$ with a representation in $\Lip(I;\R^n)$.
For instance, we will write $\theta(0)$, $\theta(1)$, and $\int_I |\dot\theta(t)|\de t$ for start point, end point, and length of $\theta\in\Theta$,
since those expressions are all independent of the particular representation.
Likewise we will write $\int_I\varphi(\theta(t))\cdot\dot\theta(t)\,\de t$ and $\int_I\psi(\theta(t))|\dot\theta(t)|\,\de t$ for the integral of any continuous $\varphi:\R^n\to\R^n$ and $\psi:\R^n\to\R$ along the curve $\theta\in\Theta$.

\notinclude{We define $\Theta=\Lip(I;\R^n)$ to be the set of Lipschitz curves $I \to \R^n$ with the metric
\begin{equation*}
d_\Theta(\theta_1,\theta_2)=\inf\left\{\max_{t\in I}|\theta_1(t)-\theta_2(\varphi(t))|\ :\ \varphi:I\to I\text{ increasing and bijective}\right\}.
\end{equation*}
}%\notinclude
\item Given $C>0$ and $\Omega\subset\R^n$ compact, we define $\Theta_C^\Omega\subset\Theta$ to be the set of curves starting in $\Omega$ with length bounded by $C$,
\begin{displaymath}
 \Theta_C^\Omega =\left\{\theta\in\Theta\,:\,\theta(0)\in\Omega\text{ and }\textstyle\int_I |\dot\theta(t)|\de t \leq C\right\}\,.
 %\Lip(I;\R^n) \cap \cont(I,\Omega) \cap \{\theta : \int_I |\dot\theta(t)|\de t \leq C\}.
\end{displaymath}
\item A \emph{\transportPath{} measure} is a nonnegative measure $\eta$ on $\Theta$ (endowed with the Borel $\sigma$-algebra).
If $\mu_+,\mu_- \in \fbm(\R^n)$, we say that $\eta$ moves $\mu_+$ onto $\mu_-$ if
\begin{displaymath}
 \pushforward{p_0}{\eta} = \mu_+,\quad \pushforward{p_1}{\eta} = \mu_-,
\end{displaymath}
where $p_t : \Theta \to \R^n$ is defined by $p_t(\theta) = \theta(t)$ for $t\in\{0,1\}$. We denote by $\TPM(\mu_+,\mu_-)$ the set of \transportPath{} measures moving $\mu_+$ onto $\mu_-$.
\item Given an irrigation pattern $\chi:\reSpace\times I\to\R^n$ with finite cost,
by \cref{thm:constSpeedPatternsUrbPl} we may identify each $p\in\reSpace$ with an element of $\Theta$ via the map $\iota:\reSpace\to\Theta$, $p\mapsto\chi(p,\cdot)$.
The \emph{induced \transportPath{} measure} is defined as $\eta=\pushforward{\iota}{\reMeasure}$.
Two irrigation patterns are called \emph{equivalent}, if their induced \transportPath{} measures coincide.
\end{enumerate}
\end{definition}

\begin{lemma}[Compactness for \transportPath{} measures]\label{lem:compactness_lemma_for_transport_path_measures}
Let $\Omega\subset\R^n$ be compact.
For $\mu_\pm^k\in\fbm(\Omega)$ with $\mu_\pm^k\weakstarto\mu_\pm$
let $\eta_k\in\TPM(\mu_+^k,\mu_-^k)$ be a sequence of \transportPath{} measures such that
\begin{equation*}
\eta_k(\Theta\setminus\Theta_C^\Omega) \to 0
\end{equation*}
uniformly in $k$ as $C\to\infty$.
Then, up to a subsequence, $\eta_k \weakstarto \eta$ for some $\eta\in\TPM(\mu_+,\mu_-)$ in the sense
\begin{displaymath}
 \int_{\Theta} \varphi(\theta)\,\de\eta_k(\theta) \to \int_{\Theta} \varphi(\theta)\,\de\eta(\theta) \quad\text{for all}\ \varphi\in\contbdd(\Theta)\,,
\end{displaymath}
where $\contbdd(\Theta)$ denotes the set of bounded continuous functions on $\Theta$.
\end{lemma}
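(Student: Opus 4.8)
The plan is to obtain the convergent subsequence from Prohorov's theorem, for which the only nontrivial ingredient is the precompactness of the sublevel sets $\Theta_C$ in $(\Theta,d_\Theta)$, and then to identify the limit using continuity of the endpoint evaluations.

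\emph{Step 1: $\Theta_C$ is compact in $(\Theta,d_\Theta)$.} Given $\theta\in\Theta_C$, reparameterize it to constant speed exactly as in \cref{thm:constSpeedPatternsUrbPl}; since $d_\Theta$ identifies a curve with all its increasing bijective reparameterizations, this does not change the point of $\Theta$ it represents, and the new representative is $C$-Lipschitz with $\theta(0)\in\Omega$. Such curves stay in the (compact) $C$-neighbourhood of $\Omega$ and are equicontinuous, so by the Arzelà--Ascoli theorem they form a precompact subset of $\cont(I;\R^n)$ with the uniform metric; as $d_\Theta(\theta_1,\theta_2)\le\max_{t\in I}|\theta_1(t)-\theta_2(t)|$ (take $\varphi=\id$), this set is also precompact in $(\Theta,d_\Theta)$. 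Finally, if $\theta_j\to\theta$ uniformly with $\theta_j\in\Theta_C$, then $\theta(0)=\lim_j\theta_j(0)\in\Omega$ (as $\Omega$ is closed) and, by lower semicontinuity of the length under uniform convergence, $\int_I|\dot\theta|\,\de t\le\liminf_{j\to\infty}\int_I|\dot\theta_j|\,\de t\le C$. Hence $\theta\in\Theta_C$, so $\Theta_C$ is compact.

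\emph{Step 2: tightness and extraction.} The total masses are bounded, $\eta_k(\Theta)=\mu_+^k(\R^n)\to\mu_+(\R^n)$. Given $\varepsilon>0$, the hypothesis supplies $C$ with $\sup_k\eta_k(\Theta\setminus\Theta_C)\le\varepsilon$, and $\Theta_C$ is compact by Step 1; thus $\{\eta_k\}$ is tight. Moreover, since $\Theta\setminus\Theta_j$ decreases as the integer $j$ grows, each $\eta_k$ is carried by the $\sigma$-compact, hence separable and completely metrizable, set $\bigcup_{j\in\N}\Theta_j$. Prohorov's theorem therefore yields a subsequence (not relabeled) and a finite nonnegative Borel measure $\eta$ on $\Theta$ with $\int_\Theta\varphi\,\de\eta_k\to\int_\Theta\varphi\,\de\eta$ for all $\varphi\in\contbdd(\Theta)$.

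\emph{Step 3: identification of $\eta$.} The endpoint maps $p_0,p_1:\Theta\to\R^n$ are $1$-Lipschitz with respect to $d_\Theta$, because every reparameterization $\varphi$ fixes $0$ and $1$, so $|\theta_1(i)-\theta_2(i)|\le d_\Theta(\theta_1,\theta_2)$ for $i\in\{0,1\}$. Hence, for any $\psi\in\contbdd(\R^n)$ and $i\in\{0,1\}$ we have $\psi\circ p_i\in\contbdd(\Theta)$, and therefore
\[
\int_{\R^n}\psi\,\de(\pushforward{p_i}{\eta_k})=\int_\Theta\psi\circ p_i\,\de\eta_k\longrightarrow\int_\Theta\psi\circ p_i\,\de\eta=\int_{\R^n}\psi\,\de(\pushforward{p_i}{\eta})\,.
\]
Since $\pushforward{p_0}{\eta_k}=\mu_+^k\weakstarto\mu_+$ and $\pushforward{p_1}{\eta_k}=\mu_-^k\weakstarto\mu_-$, uniqueness of weak-$*$ limits gives $\pushforward{p_0}{\eta}=\mu_+$ and $\pushforward{p_1}{\eta}=\mu_-$, i.e.\ $\eta\in\TPM(\mu_+,\mu_-)$.

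The main obstacle is Step 1: one cannot apply Arzelà--Ascoli directly to the given parameterizations, so passing to constant-speed representatives (legitimate precisely because $d_\Theta$ quotients out reparameterization) is essential, and the closedness of $\Theta_C$ then hinges on lower semicontinuity of curve length under uniform convergence. The remaining steps are routine applications of Prohorov's theorem and of the continuity of the endpoint evaluations.
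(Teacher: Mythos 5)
Your proof is correct and follows essentially the same route as the paper's: compactness of $\Theta_C$ via constant-speed reparameterization and Arzel\`a--Ascoli, tightness plus Prokhorov's theorem for the extraction, and identification of the marginals through the continuity of the endpoint maps $p_0,p_1$. The only differences are that you spell out a few details the paper leaves implicit (the comparison $d_\Theta\leq$ uniform distance, the closedness of $\Theta_C$ via lower semicontinuity of length, and the $1$-Lipschitz property of $p_0,p_1$ with respect to $d_\Theta$), which is fine.
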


\begin{proof}
We first show that $\Theta_C^\Omega\subset\Theta$ is (sequentially) compact.
To this end let $\theta_k$, $n=1,2,\ldots$, be a sequence in $\Theta_C^\Omega$.
With a slight abuse of notation, $\theta_1,\theta_2,\ldots$ shall also denote corresponding representations in $\Lip(I;\R^n)$.
Upon reparameterization of each element (which does not change the sequence), the $\theta_k$ are uniformly Lipschitz.
The Arzel\`a--Ascoli Theorem now implies the existence of some $\theta\in\cont^0(I;\Omega)$ such that $\theta_k\to\theta$ uniformly up to a subsequence.
Furthermore, the Lipschitz constant of $\theta$ is bounded by the Lipschitz constant of the $\theta_k$ so that
\begin{displaymath}
 \int_I |\dot\theta(t)|\,\de t \leq \liminf_k \int_I |\dot\theta_k(t)|\,\de t \leq C\,.
\end{displaymath}

As a consequence, the $\eta_k$ are tight, that is, for every $\varepsilon>0$ there is a compact set, namely $\Theta_C^\Omega$ with $C$ large enough, so that $\eta_k(\Theta\setminus\Theta_C^\Omega)<\varepsilon$.
In addition, $\Theta$ is separable (which follows from the separability of $\Lip(I;\R^n)$).
Hence, up to a subsequence we get $\eta_k \weakstarto \eta$ due Prokhorov's Theorem \cite[Thm.\,5.1]{Bil99}, which assures weak compactness for a tight set of measures.

It remains to prove $\pushforward{p_0}{\eta} = \mu_+$ (the proof of $\pushforward{p_1}{\eta} = \mu_-$ works analogously).
Because of $\pushforward{p_0}{\eta_k} = \mu_+^k$ for all $k$ we have
\begin{equation*}
 \int_\Theta \varphi(p_0(\theta))\,\de\eta_k(\theta)
 %= \int_\Omega \varphi(x) \,\de(\pushforward{p_0}{\eta_k})(x)
 = \int_\Omega \varphi(x) \,\de\mu_+^k(x)
 \quad\text{for all}\ \varphi \in \contbdd(\Omega)\,.
\end{equation*}
Due to $\eta_k \weakstarto \eta$ as well as $\varphi\circ p_0\in\contbdd(\Theta)$, letting $k\to\infty$ we finally arrive at
\begin{equation*}%\label{eq:marginal_with_test_function}
 \int_\Theta \varphi(p_0(\theta))\,\de\eta(\theta)
 %= \int_\Omega \varphi(x) \,\de(\pushforward{p_0}{\eta})(x)
 = \int_\Omega \varphi(x) \,\de\mu_+(x) \quad\text{for all}\ \varphi \in \contbdd(\Omega)\,,
\end{equation*}
that is, $\pushforward{p_0}{\eta} = \mu_+$.
\end{proof}

This compactness can now be employed to obtain coercivity of our cost functional.

\begin{proposition}[Coercivity of $\JEnMMS$]\label{thm:coercivity}
Let $\mu_+^k,\mu_-^k\in\fbm(\R^n)$ with bounded support such that $\mu_\pm^k\weakstarto\mu_\pm$ and $\chi_k$ be a sequence of irrigation patterns with $\mu_\pm^{\chi_k}=\mu_\pm^k$ and $\JEnMMS(\chi_k)<K$ for some $K<\infty$.
Then up to equivalence of irrigation patterns there exists a subsequence converging uniformly to some $\chi$ with $\mu_\pm^\chi=\mu_\pm$.
\end{proposition}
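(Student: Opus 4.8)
The plan is to pass to the language of \transportPath{} measures, extract a weak-$*$ limit there via \cref{lem:compactness_lemma_for_transport_path_measures}, and then carry that limit back to a uniformly convergent sequence of (equivalent) patterns by a Skorokhod-type representation. First I would normalise: we may assume the supports $\spt\mu_+^k$, $\spt\mu_-^k$ all lie in a fixed compact set $\Omega\subset\R^n$ (the latter being how the statement is meant and will be applied). By \cref{thm:constSpeedPatternsUrbPl} each $\chi_k$ may be reparameterised to constant speed without changing $\JEnMMS(\chi_k)$; since the metric $d_\Theta$ on $\Theta=\Lip(I;\R^n)$ quotients out reparameterisations, this only replaces $\chi_k$ by an equivalent pattern and is harmless. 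Writing $\iota_k(p)=\chi_k(p,\cdot)$, let $\eta_k=\pushforward{\iota_k}{\reMeasure}\in\TPM(\mu_+^k,\mu_-^k)$ be the induced \transportPath{} measures (\cref{def:transport_path_measures}).

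The key point is uniform tightness of $(\eta_k)$. Since $m_{\chi_k}(x)\le\reMeasure(\reSpace)$, \cref{thm:averageCost} gives $r^\tau(m_{\chi_k}(x))\ge\lambda^\tau(\reMeasure(\reSpace))$, so, arguing as in \cref{thm:pathLength},
\[
\int_\reSpace\int_I|\dot\chi_k(p,t)|\,\de t\,\de\reMeasure(p)\;\le\;\frac{\JEnMMS(\chi_k)}{\lambda^\tau(\reMeasure(\reSpace))}\;<\;\frac{K}{\lambda^\tau(\reMeasure(\reSpace))}\;=:\;L
\]
uniformly in $k$. As $\chi_k(p,0)\in\spt\mu_+^k\subset\Omega$ for $\reMeasure$-a.e.\ $p$, Markov's inequality yields, with $\Theta_C$ as in \cref{lem:compactness_lemma_for_transport_path_measures},
\[
\eta_k(\Theta\setminus\Theta_C)\;=\;\reMeasure\Big(\Big\{\,p\in\reSpace:\int_I|\dot\chi_k(p,t)|\,\de t>C\,\Big\}\Big)\;\le\;\frac{L}{C}\,,
\]
which tends to $0$ uniformly in $k$ as $C\to\infty$. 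Hence \cref{lem:compactness_lemma_for_transport_path_measures} applies and, along a subsequence (not relabelled), $\eta_k\weakstarto\eta$ for some $\eta\in\TPM(\mu_+,\mu_-)$.

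It remains to realise $\eta$ by a pattern, together with the convergence — this is where the real work lies. Note first that $\eta$ is carried by $\bigcup_{m\in\N}\Theta_m$, i.e.\ by curves of finite length, since $\eta(\Theta\setminus\Theta_C)\le\liminf_k\eta_k(\Theta\setminus\Theta_C)\le L/C$. As $\Theta$ is separable, Skorokhod's representation theorem \cite[Thm.\,6.7]{Bil99} (applied to the probability measures $\eta_k/\reMeasure(\reSpace)\weakstarto\eta/\reMeasure(\reSpace)$) provides measurable maps $\hat\iota_k,\hat\iota:\reSpace\to\Theta$ with $\pushforward{\hat\iota_k}{\reMeasure}=\eta_k$, $\pushforward{\hat\iota}{\reMeasure}=\eta$, and $d_\Theta(\hat\iota_k(p),\hat\iota(p))\to0$ for $\reMeasure$-a.e.\ $p$. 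Choosing in each $d_\Theta$-class its constant-speed representative (a Borel operation on $\Theta$) and setting $\chi_k'(p,t)=[\hat\iota_k(p)](t)$, $\chi(p,t)=[\hat\iota(p)](t)$, each $\chi_k'$ is equivalent to the reparameterised $\chi_k$ (both induce $\eta_k$), $\chi$ is an irrigation pattern with $\mu_+^\chi=\pushforward{p_0}{\eta}=\mu_+$ and $\mu_-^\chi=\pushforward{p_1}{\eta}=\mu_-$, and — because $d_\Theta$-convergence of uniformly Lipschitz curves upgrades to uniform convergence of their constant-speed representatives (cf.\ \cite[Lem.\,6.2]{Bernot-Caselles-Morel-Traffic-Plans}, \cite[Ch.\,4]{BeCaMo09}) — we get $\chi_k'(p,\cdot)\to\chi(p,\cdot)$ uniformly for $\reMeasure$-a.e.\ $p$, as claimed.

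The hard part will be exactly this last step, passing from weak-$*$ convergence of \transportPath{} measures to honest uniform convergence of equivalent patterns. Three technical points need care: (i) joint measurability in $(p,t)$ and a.e.\ absolute continuity of the limit $\chi$, which follow from measurability of $\hat\iota$ and from $\eta$ being carried by finite-length curves; (ii) the Borel-measurable selection of constant-speed parameterisations, delicate on curves that are locally constant (resolved by parameterising proportionally to arc length with affine interpolation on the flat pieces); and (iii) the upgrade from $d_\Theta$-a.e.\ convergence to genuine uniform a.e.\ convergence of the chosen representatives. All three are essentially available in the cited literature, so the argument here is largely a transcription into the present, more general, setting; the remaining new ingredient — the tightness estimate — is immediate from \cref{thm:averageCost} and the cost bound.
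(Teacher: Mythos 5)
Your proof follows essentially the same route as the paper's: induce the \transportPath{} measures $\eta_k$, obtain uniform tightness from the cost bound via $r^\tau\geq\lambda^\tau(\reMeasure(\reSpace))$ and Markov's inequality, apply \cref{lem:compactness_lemma_for_transport_path_measures}, and recover a uniformly convergent sequence of equivalent patterns by Skorohod's theorem. Your treatment is if anything more explicit than the paper's about the final step (constant-speed representatives and the upgrade from $d_\Theta$- to uniform convergence), which the paper dispatches with a single citation; the argument is correct.
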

\begin{proof}
Let $\eta_k\in\TPM(\mu_+,\mu_-)$ be the sequence of \transportPath{} measures induced by $\chi_k$.
% By \cref{thm:constSpeedPatternsUrbPl} we may identify each $p\in\reSpace$ with an element of $\Theta$ via the map $\iota_k:\reSpace\to\Theta$, $p\mapsto\chi_k(p,\cdot)$.
% Thus, we can define a sequence of \transportPath{} measures by $\eta_k=\pushforward{\iota_k}{\reMeasure}\in\TPM(\mu_+,\mu_-)$.
This sequence satisfies
\begin{multline*}
C\eta_k(\Theta\setminus\Theta_C^\Omega)
\leq\int_\Theta\int_I|\dot\theta(t)|\,\de t\de\eta_k(\theta)\\
=\int_\reSpace\int_I|\dot\chi_k(p,t)|\,\de t\de\reMeasure(p)
\leq\tfrac1{\lambda^\tau(\reMeasure(\reSpace))}\int_\reSpace\int_I r^\tau(m_{\chi_k}(\chi_k(p,t)))|\dot\chi_k(p,t)|\,\de t\de\reMeasure(p)
<K/\lambda^\tau(\reMeasure(\reSpace))\,.
\end{multline*}
Furthermore, for $\eta_k$-almost all $\theta$ we have $\theta(0)\in\Omega=\spt\mu_+$.
Thus, by \cref{lem:compactness_lemma_for_transport_path_measures} there exists a subsequence converging to some $\eta\in\TPM(\mu_+,\mu_-)$.
Now by Skorohod's theorem \cite[Thm.\,6.7]{Bil99} there exist irrigation patterns $\tilde\chi_k$, which also induce the $\eta_k$, that converge uniformly to some irrigation pattern $\chi$, which induces $\eta$.
\end{proof}

The existence of optimal irrigation patterns now is a direct consequence of the coercivity and lower semi-continuity.

\begin{theorem}[Existence]\label{thm:existence_of_minimizers_patterns}
Given $\mu_+,\mu_-\in\fbm(\R^n)$ with bounded support and a concave transportation cost $\tau$, the minimization problem
\begin{displaymath}
 \min_\chi \JEn^{\tau,\mu_+,\mu_-}[\chi]
\end{displaymath}
either has a solution, or $\JEn^{\tau,\mu_+,\mu_-}$ is infinite.
\end{theorem}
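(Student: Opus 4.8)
The plan is to apply the direct method of the calculus of variations, using the coercivity (\cref{thm:coercivity}) and lower semi-continuity (\cref{prop:urban_planning_energy_is_lower_semi-continuous}) established above. First, if $\JEn^{\tau,\mu_+,\mu_-}$ is identically infinite there is nothing to prove, so assume the infimum $J_0=\inf_\chi\JEn^{\tau,\mu_+,\mu_-}[\chi]$ is finite. Any admissible competitor must satisfy $\mu_+(\R^n)=\mu_-(\R^n)=\reMeasure(\reSpace)$, since $\mu_\pm^\chi=\pushforward{(i_0^\chi)}{\reMeasure}$ respectively $\pushforward{(i_1^\chi)}{\reMeasure}$ both have mass $\reMeasure(\reSpace)$; thus if the masses of $\mu_+$ and $\mu_-$ disagree the functional is identically $\infty$ and again there is nothing to show, and otherwise we fix our reference space to have $\reMeasure(\reSpace)=\mu_+(\R^n)$.

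Next I would pick a minimizing sequence $\chi_k$ with $\mu_\pm^{\chi_k}=\mu_\pm$ and $\JEnMMS(\chi_k)\to J_0$; in particular $\JEnMMS(\chi_k)<K$ for some $K<\infty$ and all $k$. By \cref{thm:constSpeedPatternsUrbPl} we may reparameterize each $\chi_k$ so that $\chi_k(p,\cdot)\in\Lip(I;\R^n)$ with $|\dot\chi_k(p,\cdot)|$ constant for almost all $p$, without changing the cost $\JEnMMS$ or the marginals $\mu_\pm^{\chi_k}$. Then \cref{thm:coercivity} applies with $K$ as above: up to passing to a subsequence and replacing the $\chi_k$ by equivalent irrigation patterns (which alters neither $\JEnMMS$ nor $\mu_\pm^{\chi_k}$), there is an irrigation pattern $\chi$ with $\mu_\pm^\chi=\mu_\pm$ to which a subsequence converges uniformly in the sense of \cref{def:reference_space}.

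Finally, \cref{prop:urban_planning_energy_is_lower_semi-continuous} yields $\JEnMMS(\chi)\leq\liminf_{k\to\infty}\JEnMMS(\chi_k)=J_0$. Since $\mu_\pm^\chi=\mu_\pm$, the pattern $\chi$ is admissible, so $\JEn^{\tau,\mu_+,\mu_-}[\chi]=\JEnMMS(\chi)\leq J_0$, which forces equality and shows that $\chi$ is a minimizer.

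The substantive work has all been done in the preceding propositions, so the only points to be careful about are bookkeeping ones: verifying that the hypothesis $\eta_k(\Theta\setminus\Theta_C)\to0$ uniformly in $k$ of \cref{lem:compactness_lemma_for_transport_path_measures} follows from the uniform cost bound via the lower bound $r^\tau\geq\lambda^\tau(\reMeasure(\reSpace))$ of \cref{thm:averageCost} (this is exactly the computation inside the proof of \cref{thm:coercivity}), and that the passage to equivalent patterns — unavoidable since the uniform limit is produced through the induced \transportPath{} measures and Skorohod's theorem — leaves both $\JEnMMS$ and the marginals untouched. The concavity of $\tau$ is used only in \cref{prop:urban_planning_energy_is_lower_semi-continuous}, through the monotonicity of $r^\tau$; for non-concave $\tau$ lower semi-continuity genuinely fails (cf.\ \cref{rem:nonconcaveTau}), which is precisely why the existence statement for general transportation costs is postponed to \cref{sec:equivalence} and obtained there from the equivalence with the Eulerian model.
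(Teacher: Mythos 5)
Your proposal is correct and follows essentially the same route as the paper's proof: the direct method, combining the coercivity of \cref{thm:coercivity} with the lower semi-continuity of \cref{prop:urban_planning_energy_is_lower_semi-continuous} along a minimizing sequence. The additional bookkeeping you supply (equal masses, constant-speed reparameterization, invariance of cost and marginals under equivalence of patterns) is accurate and merely makes explicit what the paper leaves implicit.
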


\begin{proof}
Consider a minimizing sequence of irrigation patterns $\chi_k$ such that $\lim_{k\to\infty}\JEn^{\tau,\mu_+,\mu_-}[\chi_k]=\inf_\chi \JEn^{\tau,\mu_+,\mu_-}[\chi]$ and assume this to be finite.
By \cref{thm:coercivity}, a subsequence of irrigation patterns converges uniformly to some pattern $\chi$ up to equivalence.
\Cref{prop:urban_planning_energy_is_lower_semi-continuous} now implies $\lim_{k\to\infty}\JEn^{\tau,\mu_+,\mu_-}[\chi_k]\geq\JEn^{\tau,\mu_+,\mu_-}[\chi]$, which ends the proof.
\end{proof}

\begin{theorem}[Existence of finite cost pattern]
If $\tau$ is an admissible transportation cost and $\mu_+,\mu_-\in\fbm(\R^n)$ with $\mu_+(\R^n)=\mu_-(\R^n)$ and bounded support,
then there exists an irrigation pattern $\chi$ with $\JEn^{\tau,\mu_+,\mu_-}[\chi]<\infty$.
\end{theorem}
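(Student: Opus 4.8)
The plan is to exhibit a finite-cost pattern explicitly, in the spirit of the dyadic construction behind \cref{thm:finiteCostGraph}, rather than to deduce the statement from the (not yet established) equivalence \cref{thm:modelEquivalence} with the Eulerian model. First I would use \cref{thm:patternRescaling} to reduce to $\mu_+,\mu_-\in\prob$ (admissibility is preserved by the mass and domain rescalings) and fix a concave majorant $\beta\geq\tau$ with $S^\beta(n)<\infty$, as supplied by \cref{def:admissible} together with \cref{thm:admTau}. Recall $\beta$ is automatically non-decreasing with $\beta(0)=0$, so $r^\beta(w)=\beta(w)/w$ is non-increasing, $r^\beta(w)\,w=\beta(w)$ for $w>0$, and $r^\tau\leq r^\beta$ everywhere.

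Next I would build $\chi$ as a ``double dyadic cone'' through the origin. Fix any coupling $\pi$ of $\mu_+$ and $\mu_-$ (for instance $\pi=\mu_+\otimes\mu_-$) and take as reference space $\reSpace=\R^n\times\R^n\times I$ with $\reMeasure=\pi\otimes(\lebesgue^1\restr I)$, which is a legitimate reference space (a Polish space with a finite atomless Borel measure of total mass $1$). For $x\in[-1,1]^n$ and $j\geq0$ let $C_j(x)$ be the unique level-$j$ dyadic cell of the construction in \cref{def:nadicGraph} containing $x$, i.e.\ the half-open cube $c_j(x)+(-2^{1-j},2^{1-j}]^n$ about its centre $c_j(x)$, so that $c_0(x)=0$, $|c_j(x)-c_{j-1}(x)|=\sqrt n\,2^{1-j}$, and $c_j(x)\to x$ (the $C_j(x)$ are nested and shrink to $\{x\}$). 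Let $\gamma_x\in\Lip(I;\R^n)$ be the constant-speed parameterization of the concatenation of the segments $[c_{j-1}(x),c_j(x)]$, $j\geq1$, so that $\gamma_x(0)=0$, $\gamma_x(1)=x$, and the part of $\gamma_x$ lying on the $j$-th segment is contained in $C_{j-1}(x)$ and has length $\sqrt n\,2^{1-j}$; since $x\mapsto c_j(x)$ is piecewise constant, $(x,t)\mapsto\gamma_x(t)$ is jointly Borel. Writing $p=(x_+,x_-,s)$, set
\begin{displaymath}
\chi(p,t)=\begin{cases}\gamma_{x_+}(1-2t)&\text{if }t\in[0,\tfrac12],\\\gamma_{x_-}(2t-1)&\text{if }t\in[\tfrac12,1].\end{cases}
\end{displaymath}
Each $\chi_p$ is Lipschitz (the two halves agree at the origin for $t=\tfrac12$), $\chi$ is measurable, and $\chi(p,0)=x_+$, $\chi(p,1)=x_-$, so $\mu_+^\chi=\mu_+$ and $\mu_-^\chi=\mu_-$ because $\pi$ has marginals $\mu_+$ and $\mu_-$. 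Thus $\JEn^{\tau,\mu_+,\mu_-}[\chi]=\JEnMMS(\chi)$, and it remains to bound the latter.

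For the cost estimate I would use $\JEnMMS(\chi)\leq\int_{\reSpace\times I}r^\beta(m_\chi(\chi_p(t)))\,|\dot\chi_p(t)|\,\de t\,\de\reMeasure(p)$, split the time integral at $t=\tfrac12$, and in the first half substitute $u=1-2t$. The key pointwise observation is that every point of the $j$-th segment of $\gamma_{x_+}$ lies on $\gamma_{x_+'}$ for \emph{every} $x_+'\in C_j(x_+)$ (such $x_+'$ share their first $j$ dyadic cells, hence their first $j$ segments, with $x_+$), whence $m_\chi\geq\mu_+(C_j(x_+))$ there; using also the monotonicity of $r^\beta$ and then integrating over $\reSpace$ (the integrand depending only on $x_+$, whose law under the first projection is $\mu_+$),
\begin{displaymath}
\int_{\reSpace\times[0,\tfrac12]}r^\beta(m_\chi(\chi_p(t)))\,|\dot\chi_p(t)|\,\de t\,\de\reMeasure(p)\leq\sum_{j\geq1}\sqrt n\,2^{1-j}\int_{\R^n}r^\beta\big(\mu_+(C_j(x))\big)\,\de\mu_+(x)\,.
\end{displaymath}
Since the level-$j$ cells form a partition into $2^{nj}$ cubes whose $\mu_+$-masses sum to $1$, the inner integral equals $\sum_{\text{cells }C}\beta(\mu_+(C))\leq2^{nj}\beta(2^{-nj})$ by $r^\beta(w)w=\beta(w)$ (with cells of zero mass contributing nothing) and Jensen's inequality — exactly the computation in the proof of \cref{thm:nadicGraphCost} — so this half is at most $\sum_{j\geq1}2\sqrt n\,2^{(n-1)j}\beta(2^{-nj})=2\sqrt n\,S^\beta(n)$. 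The second half of the time integral is estimated identically with $\mu_-$ in place of $\mu_+$, and altogether $\JEnMMS(\chi)\leq4\sqrt n\,S^\beta(n)<\infty$.

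I expect the main obstacle to be not any single estimate but the care needed to make the construction rigorous: the joint Borel measurability of $\chi$, the verification that under the half-open cell convention $c_j(x)$ is well defined for every $x\in[-1,1]^n$ with $\gamma_x(1)=\lim_j c_j(x)=x$, and the pointwise flux lower bound $m_\chi\geq\mu_+(C_j(x_+))$ used above together with the Tonelli/substitution bookkeeping. The cost computation itself is a direct transcription of \cref{thm:nadicGraphCost}.
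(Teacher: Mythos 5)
Your proposal is correct and follows essentially the same route as the paper's proof: reduce to $\prob$ via \cref{thm:patternRescaling}, build the pattern as a double dyadic cone through the origin using the product coupling $\mu_+\otimes\mu_-$ (the paper's paths $\theta_{\alpha\gamma}$ are exactly your $\gamma_{x_+}$ reversed and concatenated with $\gamma_{x_-}$), and bound the cost by $r^\tau\leq r^\beta$ plus the Jensen computation from \cref{thm:nadicGraphCost}, yielding a multiple of $S^\beta(n)<\infty$ by \cref{thm:admTau}. The only cosmetic difference is that you organize the estimate by dyadic segments while the paper slices by time intervals, and you spell out the measurability and marginal checks the paper leaves implicit.
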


\begin{proof}
The proof is analogous to \cref{thm:finiteCostGraph} and needs a completely analogous construction to the one of \cref{def:nadicGraph}.
In brief, by \cref{thm:patternRescaling} we may assume $\mu_+,\mu_-\in\prob$.
Now, for a sequence $\alpha_k$ of $n$-tuples in $\{-1,1\}^n$ recursively define the (Lipschitz, constant speed) particle path $\theta_\alpha$ via $\theta_\alpha(0)=0$ and
\begin{equation*}
\theta_\alpha(t)=\theta_\alpha(1-2^{-k})+\alpha_k(t-1+2^{-k})\text{ for }t\in[1-2^{-k},1-2^{-k-1}],\,k=0,1,\ldots.
\end{equation*}
Note that a single $\theta_\alpha$ describes a path on the $n$-adic graph from \cref{def:nadicGraph} and that one can connect the origin to any point in $[-1,1]^n$ via such a path.
Furthermore, for two sequences $\alpha_k,\gamma_k$ in $\{-1,1\}^n$ we set
\begin{equation*}
\theta_{\alpha\gamma}(t)=\begin{cases}\theta_\gamma(1-\frac t2)&\text{if }t\in[0,\frac12]\,,\\\theta_\alpha(t)&\text{if }t\in[\frac12,1]\,.\end{cases}
\end{equation*}
Now consider a measurable map $\iota:\reSpace\to[-1,1]^n\times[-1,1]^n$ such that $\pushforward{\iota}{\reMeasure}=\mu_+\otimes\mu_-$ (which shall denote the completion of the product measure)
\notinclude{(for instance the mapping induced by the optimal transport or a construction, where on each level the hypercubes are ordered, and the mass in each hypercube is the interval of $\reSpace$ that will be assigned to it)}
and set $\chi(p,t)=\theta_{\alpha\gamma}(t)$, where $\theta_\alpha$ and $\theta_\gamma$ connect the origin with both components of $\iota(p)$, respectively.
Obviously, $\chi$ has irrigating and irrigated measure $\mu_+$ and $\mu_-$, respectively. Furthermore, by the concavity of the upper bound $\beta$ of $\tau$,
\begin{multline*}
\JEnMMS(\chi)
=\int_\reSpace\int_Ir^\tau(m_\chi(\chi(p,t)))\sqrt n\,\de t\,\de\reMeasure(p)
=\sqrt n\sum_{k=1}^\infty\int_\reSpace\int_{1-2^{1-k}}^{1-2^{-k}}r^\tau(m_\chi(\chi(p,t)))\,\de t\,\de\reMeasure(p)\\
=\sqrt n\sum_{k=1}^\infty\int_{1-2^{1-k}}^{1-2^{-k}}\int_\reSpace r^\tau(m_\chi(\chi(p,t)))\,\de\reMeasure(p)\,\de t
\leq\sqrt n\sum_{k=1}^\infty\int_{1-2^{1-k}}^{1-2^{-k}}\int_\reSpace r^\beta(m_\chi(\chi(p,t)))\,\de\reMeasure(p)\,\de t\\
\leq\sqrt n\sum_{k=1}^\infty\int_{1-2^{1-k}}^{1-2^{-k}}2^{nk}\beta(2^{-nk})\,\de t
=\sqrt n\sum_{k=1}^\infty2^{(n-1)k}\beta(2^{-nk})\,,
\end{multline*}
which is finite by \cref{thm:admTau}.
\end{proof}

\begin{corollary}[Existence]
Given $\mu_+,\mu_-\in\fbm(\R^n)$ with bounded support and an admissible concave $\tau$, the minimization problem
$\min_\chi \JEn^{\tau,\mu_+,\mu_-}[\chi]$
has a solution.
\end{corollary}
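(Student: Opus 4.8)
The statement is an immediate combination of the two preceding theorems, so the plan is simply to chain them together. First I would recall that by \cref{thm:patternRescaling} we may assume $\mu_+,\mu_-\in\prob$ (and irrigation patterns supported in $[-1,1]^n$), since the rescaling there preserves both the class of admissible concave transportation costs and the property of being a minimizer. This reduction is harmless because the theorem already assumes bounded support.

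Next, since $\tau$ is admissible, the Theorem on existence of a finite cost pattern produces an irrigation pattern $\chi_0$ with $\JEn^{\tau,\mu_+,\mu_-}[\chi_0]<\infty$; in particular
\begin{equation*}
\inf_\chi \JEn^{\tau,\mu_+,\mu_-}[\chi]<\infty\,.
\end{equation*}
Here I would use that the construction in that theorem only needs the concave upper bound $\beta$ of $\tau$ together with $\sum_k S^\beta(n,k)<\infty$, both of which hold by admissibility (\cref{thm:admTau}).

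Finally, $\tau$ being concave, \cref{thm:existence_of_minimizers_patterns} applies and states that the minimization problem either admits a solution or has infinite value. Since we have just excluded the second alternative, a minimizer exists, which is the claim. There is no real obstacle here: the only points requiring a word of justification are that admissibility is exactly what feeds the finite-cost construction and that concavity is exactly what is needed for the lower semi-continuity of $\JEnMMS$ (\cref{prop:urban_planning_energy_is_lower_semi-continuous}) underlying \cref{thm:existence_of_minimizers_patterns}; removing concavity would require instead the equivalence with the Eulerian model proved later, which is not needed for this corollary.
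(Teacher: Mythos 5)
Your argument is correct and is exactly the proof the paper intends (the corollary is stated without proof precisely because it is the immediate combination of the finite-cost construction for admissible $\tau$ with the dichotomy in \cref{thm:existence_of_minimizers_patterns} for concave $\tau$). The initial reduction to $\prob$ via \cref{thm:patternRescaling} is harmless but not even needed, since both cited results are already stated for measures with bounded support.
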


\begin{remark}[Existence for general transportation cost]
The existence of an optimal irrigation pattern for general admissible transportation costs $\tau$ will follow from the model equivalence to the Eulerian model in \cref{thm:modelEquivalence}.
\end{remark}

\subsection{Explicit formula for cost function}\label{sec:patternsGilbert}
We will show that optimal irrigation patterns are loop free.
To this end we follow \cite[Ch.\,4]{Bernot-Caselles-Morel-Traffic-Plans} and first show a reformulation of the cost function as a generalized Gilbert energy.
This requires to show a rectifiability property of sets with positive flux.
Here we can follow the argument in \cite[Lem.\,4.6.4]{Be05} or \cite[Lem.\,6.3]{Bernot-Caselles-Morel-Traffic-Plans} and note that it can be rephrased not to make use of a bound on the cost functional.

\begin{lemma}[Rectifiability of positive flux set]\label{thm:rectifiabilityFlux}
Given an irrigation pattern $\chi$, the set
\begin{equation*}
S_\chi=\{x\in\R^n\,:\,m_\chi(x)>0\}
\end{equation*}
is countably $1$-rectifiable.
\end{lemma}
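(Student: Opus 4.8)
The plan is to reduce the statement to two facts: that the image $\chi_q(I)$ of a single particle path is already countably $1$-rectifiable, and that $S_\chi$, although a priori the union of uncountably many such curves, is in fact contained in countably many of them. First I would split
\begin{equation*}
S_\chi=\bigcup_{k\in\N}S_\chi^k\,,\qquad S_\chi^k=\left\{x\in\R^n\,:\,m_\chi(x)\geq\tfrac1k\right\}\,,
\end{equation*}
and, since a countable union of countably $1$-rectifiable sets is countably $1$-rectifiable, it suffices to show that each $S_\chi^k$ is countably $1$-rectifiable.

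For the first fact I would fix $p$ in the full $\reMeasure$-measure set on which $\chi_p\in\AC(I;\R^n)$. Then $s(t)=\int_0^t|\dot\chi_p(r)|\,\de r$ is continuous and non-decreasing, $\chi_p$ is constant on every interval on which $s$ is constant, and hence $\chi_p$ factors through $s$ to a $1$-Lipschitz curve $\gamma_p\colon[0,s(1)]\to\R^n$ with $\gamma_p([0,s(1)])=\chi_p(I)$; thus $\chi_p(I)$ is a Lipschitz image of a subset of $\R$, i.e.\ countably $1$-rectifiable. The only place where this argument departs from \cite[Lem.\,4.6.4]{Be05} or \cite[Lem.\,6.3]{Bernot-Caselles-Morel-Traffic-Plans} is here: we use merely that $\chi_p$ is absolutely continuous, rather than any finiteness bound on the cost $\JEnMMS(\chi)$, which is what the remark preceding the lemma alludes to.

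The core of the proof is to produce a countable set $Q\subset\reSpace$ with $S_\chi^k\subseteq\bigcup_{q\in Q}\chi_q(I)$; by the first fact this immediately finishes the argument. For this I would follow the construction of the cited references: since every $x\in S_\chi^k$ has $\reMeasure([x]_\chi)\geq\tfrac1k$ while $\reMeasure(\reSpace)<\infty$, only finitely many points can be chosen whose solidarity classes $[x]_\chi$ are essentially pairwise disjoint, so a greedy exhaustion over such classes terminates; combining it with a measurable selection of a first hitting time $q\mapsto t_q$ with $\chi_q(t_q)=x$ for the particles through each chosen point, together with a density/Egorov-type argument, should yield the desired countable family $Q$. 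I expect the main obstacle to lie precisely in this last step — upgrading the covering from $\hdone$-almost all of $S_\chi^k$ to all of $S_\chi^k$ — because the naive exhaustion over solidarity classes only delivers $S_\chi^k\subseteq\bigcup_i\bigcup_{q\in[x_i]_\chi}\chi_q(I)$, an uncountable union which in general need not even be $1$-rectifiable, so the selection of particles has to be arranged much more carefully; this is the technical heart imported from \cite{Be05,Bernot-Caselles-Morel-Traffic-Plans}.
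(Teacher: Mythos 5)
Your overall strategy (cover $S_\chi$ by countably many particle trajectories, each of which is a Lipschitz image of an interval after arc-length reparameterization) is the right one, and your observation about absolute continuity replacing a cost bound matches the remark before the lemma. But the proof has a genuine gap exactly where you flag it: you never actually produce the countable family $Q$, and the route you sketch cannot produce it. A greedy exhaustion over points $x_i$ with essentially disjoint solidarity classes terminates after finitely many steps on $S_\chi^k$, but, as you note yourself, it only yields $S_\chi^k\subseteq\bigcup_i\bigcup_{q\in[x_i]_\chi}\chi_q(I)$, an uncountable union; no measurable selection of hitting times or Egorov argument turns this into a countable one, because the particles in a single class $[x_i]_\chi$ may fan out into a genuinely $2$-dimensional set away from $x_i$. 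Moreover you are aiming at too strong a target: one should not expect to cover \emph{all} of $S_\chi^k$ by countably many paths, only $\hdone$-almost all of it, which is what rectifiability requires and what the paper's argument delivers.

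The missing idea is to run the greedy selection over \emph{particles} rather than over points, measured in the parameter space $\reSpace\times I$. Set $D=\{(q,t):m_\chi(\chi_q(t))>0\}$ and, having chosen $p_1,\dots,p_{k-1}$, let $D_p^k$ be the set of $(q,t)\in D$ with $\chi_q(t)\in\chi_p(I)\setminus\bigcup_{j<k}\chi_{p_j}(I)$; choose $p_k$ with $\reMeasure\otimes\lebesgue^1(D_{p_k}^k)\geq\frac12\sup_p\reMeasure\otimes\lebesgue^1(D_p^k)=\frac12 d^k$. Since the $D_{p_k}^k$ are pairwise disjoint subsets of $D$, $\sum_k d^k<\infty$ and $d^k\to0$. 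A Fubini--Tonelli computation (integrating $\setchar{\chi_p(I)}(\chi_q(t))$ over $p$ to recover $m_\chi(\chi_q(t))$) gives $\reMeasure(\reSpace)\,d^k\geq\int_{D\setminus\bigcup_j D_{p_j}^j}m_\chi(\chi_q(t))\,\de t\,\de\reMeasure(q)$, and since $m_\chi>0$ on $D$, letting $k\to\infty$ forces $D\setminus\bigcup_j D_{p_j}^j$ to be a null set. Finally, the one-dimensional area formula converts this statement back to $\R^n$: if $C\subset S_\chi\setminus\bigcup_j\chi_{p_j}(I)$ had positive $\hdone$-measure, then $0<\int_C m_\chi\,\de\hdone=\int_{\tilde D}|\dot\chi_p(t)|\,\de t\,\de\reMeasure(p)$ with $\tilde D\subset D\setminus\bigcup_j D_{p_j}^j$ null, a contradiction. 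This is the quantitative mechanism that replaces your ``density/Egorov-type argument'' and closes the proof; without it (or an equivalent), the proposal does not establish the lemma. Incidentally, with this argument the preliminary splitting into the sets $S_\chi^k$ is not needed.
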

\begin{proof}
% By \cref{thm:constSpeedPatternsUrbPl} we may assume $\chi$ to have constant speed parameterization.% would only be allowed for finite cost
We shall first cover the preimage of $S_\chi$ under $\chi$,
\begin{equation*}
D=\chi^{-1}(S_\chi)=\{(p,t)\in\reSpace\times I\,:\,m_\chi(\chi_p(t))>0\}\,,
\end{equation*}
by a countable union of preimages of particle paths $\chi_{p_k}(I)$, $p_k\in\reSpace$,
from which we then derive the rectifiability of $S_\chi$.
In detail, for $k=1,2,\ldots$ we inductively define the particle $p_k\in\reSpace$ and the set $D_{p_k}^k$ as follows.
Given $p_1,\ldots,p_{k-1}\in\reSpace$ we set
\begin{equation*}
\textstyle
D_p^k=\left\{(q,t)\in D\,:\,\chi_q(t)\in\chi_p(I)\setminus\bigcup_{j=1}^{k-1}\chi_{p_j}(I)\right\}\subset\chi^{-1}(\chi_p(I))\,,
\end{equation*}
and we choose $p_k\in\reSpace$ such that $\reMeasure\otimes\lebesgue^1(D_{p_k}^k)\geq d^k/2$ for $d^k=\sup_{p\in\reSpace}\reMeasure\otimes\lebesgue^1(D_{p}^k)$,
where $\reMeasure\otimes\lebesgue^1$ denotes the completion of the product measure on $\reSpace\times I$.
Note that
\begin{equation*}
\sum_{k=1}^\infty d^k\leq2\sum_{k=1}^\infty\reMeasure\otimes\lebesgue^1(D_{p_k}^k)\leq2\reMeasure\otimes\lebesgue^1(D)\leq2\reMeasure(\reSpace)
\end{equation*}
so that necessarily $d^k\to0$ as $k\to\infty$.
Now by the Fubini--Tonelli theorem we have
\begin{align*}
\reMeasure(\reSpace)d^k
\geq\int_\reSpace\reMeasure\otimes\lebesgue^1(D_p^k)\,\de\reMeasure(p)
&=\int_\reSpace\int_\reSpace\int_0^1\setchar{\chi_p(I)\setminus\bigcup_{j=1}^{k-1}\chi_{p_j}(I)}(\chi_q(t))\setchar{D}(q,t)\,\de t\de\reMeasure(q)\de\reMeasure(p)\\
&=\int_\reSpace\int_0^1\int_\reSpace\setchar{\chi_p(I)}(\chi_q(t))\setchar{\R^n\setminus\bigcup_{j=1}^{k-1}\chi_{p_j}(I)}(\chi_q(t))\setchar{D}(q,t)\,\de\reMeasure(p)\de t\de\reMeasure(q)\\
&=\int_\reSpace\int_0^1m_\chi(\chi_q(t))\setchar{\R^n\setminus\bigcup_{j=1}^{k-1}\chi_{p_j}(I)}(\chi_q(t))\setchar{D}(q,t)\,\de t\de\reMeasure(q)\\
&=\int_{D\setminus\bigcup_{j=1}^{k-1}D_{p_j}^j}m_\chi(\chi_q(t))\,\de t\de\reMeasure(q)
\geq\int_{D\setminus\bigcup_{j=1}^\infty D_{p_j}^j}m_\chi(\chi_q(t))\,\de t\de\reMeasure(q)\,.
\end{align*}
Letting $k\to\infty$, the left-hand side tends to zero, which due to $m_\chi(\chi_q(t))>0$ for all $(q,t)\in D$ implies that $D\setminus\bigcup_{j=1}^\infty D_{p_j}^j$ is a nullset.

It remains to show that $S_\chi\setminus\bigcup_{j=1}^\infty\chi_{p_j}(I)$ is a $\hdone$-nullset.
Assume to the contrary the existence of a non-$\hdone$-negligible $C\subset S_\chi\setminus\bigcup_{j=1}^\infty\chi_{p_j}(I)$, then
\begin{multline*}
0<\int_Cm_\chi(x)\,\de\hdone(x)
=\int_C\int_\reSpace\setchar{\chi_p(I)}(x)\,\de\reMeasure(p)\de\hdone(x)\\
=\int_\reSpace\int_C\setchar{\chi_p(I)}(x)\,\de\hdone(x)\de\reMeasure(p)
=\int_\reSpace\int_0^1\setchar{C}(\chi_p(t))|\dot\chi_p(t)|\,\de t\de\reMeasure(p)
=\int_{\tilde D}|\dot\chi_p(t)|\,\de t\de\reMeasure(p)
%=\int_\reSpace\hdone(C\cap\chi_p(I))\,\de\reMeasure(p)
\end{multline*}
for $\tilde D=\{(p,t)\in\reSpace\times I\,:\,\chi_p(t)\in C\}$.
By definition of $C$ we have $\tilde D\subset D\setminus\bigcup_{j=1}^\infty D_{p_j}^j$ so that $\tilde D$ is negligible,
which contradicts $\int_{\tilde D}|\dot\chi_p(t)|\,\de t\de\reMeasure(p)>0$.
\end{proof}

We can now reformulate our cost function as a generalized Gilbert energy, following \cite[Prop.\,4.8-4.9]{Bernot-Caselles-Morel-Traffic-Plans}.
Note that \cite[Prop.\,4.8]{Bernot-Caselles-Morel-Traffic-Plans} uses the Fubini--Tonelli theorem without explicitly verifying the $\sigma$-additivity of the involved measures.
For this reason we first followed \cite{Be05} in showing rectifiability, which will then justify the use of the Fubini--Tonelli theorem.

\begin{proposition}[Loop-free irrigation patterns and generalized Gilbert energy]\label{thm:GilbertPatterns}
For any irrigation pattern $\chi$ there exists a loop-free irrigation pattern $\tilde\chi$ with $\mu_\pm^{\tilde\chi}=\mu_\pm^\chi$, $m_\chi\geq m_{\tilde\chi}$, and
\begin{equation*}
\JEnMMS(\chi)
\geq\JEnMMS(\tilde\chi)
=\int_{S_{\tilde\chi}}\tau(m_{\tilde\chi}(x))\,\de\hdone(x)+\tau'(0)\int_\reSpace\hdone(\tilde\chi_p(I)\setminus S_{\tilde\chi})\,\de\reMeasure(p)\,.
\end{equation*}
\end{proposition}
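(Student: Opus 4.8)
The plan is to produce $\tilde\chi$ from $\chi$ by cutting out loops particle by particle, to evaluate $\JEnMMS(\tilde\chi)$ for the resulting loop-free pattern by the area formula, and to bound $\JEnMMS(\chi)$ from below by the same quantity. I may assume $\JEnMMS(\chi)<\infty$ (otherwise the inequality is vacuous and a loop-free $\tilde\chi$ with the remaining properties is still obtained by the construction) and, by \cref{thm:constSpeedPatternsUrbPl}, that $\chi_p\in\Lip(I;\R^n)$ with $|\dot\chi_p|$ constant for almost every $p$. Following \cite[Ch.\,4]{Bernot-Caselles-Morel-Traffic-Plans} (see also \cite{Be05}) I would then replace, for almost every $p$, the curve $\chi_p$ by the injective Lipschitz arc $\tilde\chi_p$ obtained by removing its loops, so that $\tilde\chi_p(0)=\chi_p(0)$, $\tilde\chi_p(1)=\chi_p(1)$ and $\tilde\chi_p(I)\subseteq\chi_p(I)$. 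Granting that this can be carried out so that $(p,t)\mapsto\tilde\chi(p,t)$ is jointly measurable, hence still an irrigation pattern, one reads off $\mu_\pm^{\tilde\chi}=\mu_\pm^\chi$ from the coincidence of endpoints, $m_{\tilde\chi}\le m_\chi$ from $[x]_{\tilde\chi}\subseteq[x]_\chi$, and loop-freeness by construction. Moreover $S_{\tilde\chi}\subseteq S_\chi$, and by \cref{thm:rectifiabilityFlux} both sets are countably $1$-rectifiable, so $\hdone\restr S_\chi$ and $\hdone\restr S_{\tilde\chi}$ are $\sigma$-finite; this is exactly the property that legitimises the Fubini--Tonelli arguments below.

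For the loop-free pattern $\tilde\chi$ the area formula gives $\int_I g(\tilde\chi_p(t))|\dot{\tilde\chi}_p(t)|\,\de t=\int_{\tilde\chi_p(I)}g\,\de\hdone$ for every nonnegative Borel $g$; applying it with $g=r^\tau(m_{\tilde\chi}(\cdot))$ and integrating in $p$ yields $\JEnMMS(\tilde\chi)=\int_\reSpace\int_{\tilde\chi_p(I)}r^\tau(m_{\tilde\chi})\,\de\hdone\,\de\reMeasure(p)$. The part of $\tilde\chi_p(I)$ lying outside $S_{\tilde\chi}$ contributes $\tau'(0)\int_\reSpace\hdone(\tilde\chi_p(I)\setminus S_{\tilde\chi})\,\de\reMeasure(p)$ immediately, since $r^\tau(0)=\tau'(0)$; on the part lying inside $S_{\tilde\chi}$ I would apply Fubini--Tonelli — licit by the $\sigma$-finiteness noted above and because $\{(p,x):x\in\tilde\chi_p(I)\}$, being the projection of a measurable set, is analytic and thus measurable — together with $\int_\reSpace\setchar{\tilde\chi_p(I)}(x)\,\de\reMeasure(p)=m_{\tilde\chi}(x)$ and $r^\tau(w)\,w=\tau(w)$, to get $\int_{S_{\tilde\chi}}\tau(m_{\tilde\chi})\,\de\hdone$. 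Altogether $\JEnMMS(\tilde\chi)=\int_{S_{\tilde\chi}}\tau(m_{\tilde\chi})\,\de\hdone+\tau'(0)\int_\reSpace\hdone(\tilde\chi_p(I)\setminus S_{\tilde\chi})\,\de\reMeasure(p)$; write $\mathcal{G}(\nu)$ for this expression attached to a pattern $\nu$, so that the identity reads $\JEnMMS(\tilde\chi)=\mathcal{G}(\tilde\chi)$.

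It remains to prove $\JEnMMS(\chi)\ge\mathcal{G}(\chi)\ge\mathcal{G}(\tilde\chi)$. The first inequality is the computation just made, now applied to $\chi$: the multiplicity $N_p(x)=\#\chi_p^{-1}(\{x\})$ occurring in the area formula only satisfies $N_p(x)\ge\setchar{\chi_p(I)}(x)$, and discarding this surplus and proceeding exactly as before gives $\mathcal{G}(\chi)$ as a lower bound for $\JEnMMS(\chi)$. For the second inequality I would use only monotonicity of $\tau$: since $S_{\tilde\chi}\subseteq S_\chi$ and $m_{\tilde\chi}\le m_\chi$, the \emph{network} terms obey $\int_{S_{\tilde\chi}}\tau(m_{\tilde\chi})\,\de\hdone\le\int_{S_\chi}\tau(m_\chi)\,\de\hdone$; and for the \emph{tail} terms I split $\tilde\chi_p(I)\setminus S_{\tilde\chi}$ into its intersection with $S_\chi\setminus S_{\tilde\chi}$ and with $\R^n\setminus S_\chi$. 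Integrating the first part over $\reSpace$ and using Fubini--Tonelli on the rectifiable set $S_\chi\setminus S_{\tilde\chi}$ yields $\int_{S_\chi\setminus S_{\tilde\chi}}m_{\tilde\chi}\,\de\hdone=0$ because $m_{\tilde\chi}\equiv0$ there, while the second part is contained in $\chi_p(I)\setminus S_\chi$; hence $\tau'(0)\int_\reSpace\hdone(\tilde\chi_p(I)\setminus S_{\tilde\chi})\,\de\reMeasure(p)\le\tau'(0)\int_\reSpace\hdone(\chi_p(I)\setminus S_\chi)\,\de\reMeasure(p)$. Adding the two estimates gives $\mathcal{G}(\tilde\chi)\le\mathcal{G}(\chi)$, and the chain $\JEnMMS(\chi)\ge\mathcal{G}(\chi)\ge\mathcal{G}(\tilde\chi)=\JEnMMS(\tilde\chi)$ completes the argument.

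The genuinely delicate point is not in these estimates but in the loop-removal step: producing the arcs $\tilde\chi_p$ so that the family depends measurably on $p$ requires care, and I would take this over essentially verbatim from \cite[Ch.\,4]{Bernot-Caselles-Morel-Traffic-Plans}. The other technical subtlety, used repeatedly above, is the appeal to Fubini--Tonelli, whose $\sigma$-finiteness hypothesis on $\hdone\restr S_\chi$ and $\hdone\restr S_{\tilde\chi}$ is precisely what \cref{thm:rectifiabilityFlux} was established for.
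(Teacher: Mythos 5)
Your proposal is correct and follows essentially the same route as the paper: loop removal via \cite[Prop.\,4.6]{Bernot-Caselles-Morel-Traffic-Plans}, the area formula (with multiplicity $\geq$ indicator giving the inequality and equality in the injective case), and Fubini--Tonelli legitimised by the rectifiability of $S_\chi$ from \cref{thm:rectifiabilityFlux}. If anything you are slightly more careful than the paper on the comparison of the tail terms, where $\tilde\chi_p(I)\setminus S_{\tilde\chi}$ is not literally contained in $\chi_p(I)\setminus S_\chi$ and one needs, as you note, that $\int_{S_\chi\setminus S_{\tilde\chi}}m_{\tilde\chi}\,\de\hdone=0$.
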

\begin{proof}
Using the one-dimensional area formula, we can calculate the cost function as
\begin{align*}
\JEnMMS(\chi)
&=\int_\reSpace\int_0^1r^\tau(m_\chi(\chi_p(t)))|\dot\chi_p(t)|\,\de t\de \reMeasure(p)\\
&\geq\int_\reSpace\int_{\chi_p(I)}r^\tau(m_\chi(x))\,\de\hdone(x)\de \reMeasure(p)\\
&=\int_\reSpace\int_{S_\chi}\setchar{\chi_p(I)}(x)r^\tau(m_\chi(x))\,\de\hdone(x)\de \reMeasure(p)
+\int_\reSpace\int_{\chi_p(I)\setminus S_\chi}r^\tau(0)\,\de\hdone(x)\de \reMeasure(p)\\
&=\int_{S_\chi}\int_\reSpace\setchar{\chi_p(I)}(x)r^\tau(m_\chi(x))\,\de \reMeasure(p)\de\hdone(x)
+\tau'(0)\int_\reSpace\hdone(\chi_p(I)\setminus S_\chi)\de \reMeasure(p)\\
&=\int_{S_\chi}\tau(m_\chi(x))\,\de\hdone(x)
+\tau'(0)\int_\reSpace\hdone(\chi_p(I)\setminus S_\chi)\de \reMeasure(p)\,,
\end{align*}
where the inequality becomes an equality for loop-free irrigation patterns.
Now \cite[Prop.\,4.6]{Bernot-Caselles-Morel-Traffic-Plans} says that one can remove all loops from an irrigation pattern $\chi$,
resulting in a loop-free irrigation pattern $\tilde\chi$ with same irrigating and irrigated measure and with $m_{\tilde\chi}(x)\leq m_{\chi}(x)$ for all $x\in\R^n$.
Thus we have
\begin{multline*}
\JEnMMS(\chi)
\geq\int_{S_\chi}\tau(m_\chi(x))\,\de\hdone(x)
+\tau'(0)\int_\reSpace\hdone(\chi_p(I)\setminus S_\chi)\de \reMeasure(p)\\
\geq\int_{S_{\tilde\chi}}\tau(m_{\tilde\chi}(x))\,\de\hdone(x)
+\tau'(0)\int_\reSpace\hdone(\tilde\chi_p(I)\setminus S_{\tilde\chi})\de \reMeasure(p)
=\JEnMMS(\tilde\chi)\,.\qedhere
\end{multline*}
\end{proof}

\notinclude{
We use this formulation now to imply a lower semi-continuity type property of the irrigation-based model.

\begin{proposition}[Lower semi-continuity]
Let $\tau$ be a lower semi-continuous transportation cost,
let the sequence $\chi_j$ of patterns converge uniformly to some pattern $\chi_\infty$,
and denote its loop-free version as constructed in \cref{thm:GilbertPatterns} by $\chi$.
Then $\liminf_{j\to\infty}\JEnMMS(\chi_j)\geq\JEnMMS(\chi)$.
\end{proposition}
\begin{proof}
Without loss of generality we may assume the $\chi_j$ to be loop-free and to be parameterized with constant speed by \cref{thm:constSpeedPatternsUrbPl}.
By the previous result we have
\begin{align*}
\JEnMMS(\chi)
&=\int_{S_{\chi}}\tau(m_{\chi}(x))\,\de\hdone(x)+\tau'(0)\int_\reSpace\hdone(\chi_p(I)\setminus S_{\chi})\,\de\reMeasure(p)\,.
\end{align*}
Now for all $p\in\reSpace$ we define
\begin{equation*}
I_p=\tilde\chi_p^{-1}(S_{\tilde\chi})\,.
\end{equation*}
By \cref{prop:mass_is_upper_semi-continuous} we have $0=m_{\tilde\chi}(\tilde\chi(p,t))=\lim_{j\to\infty}m_{\chi_j}(\chi_j(p,t))$ and thus $\tau'(0)=\lim_{j\to\infty}r^\tau(m_{\chi_j}(\chi_j(p,t)))$ for allmost all $p\in\reSpace$ and $t\in I\setminus I_p$.
Furthermore, by the uniform convergence of $\chi_j$ we have $\dot\chi_j(p,\cdot)\to\dot{\tilde\chi}(p,\cdot)$ in the distributional sense and thus
\begin{displaymath}
\mu^p(A)\leq\liminf_{j\to\infty}\mu_j^p(A)
\end{displaymath}
for any open $A\subset I$ and almost every $p\in\reSpace$ with $\mu^p=|\dot{\tilde\chi}(p,\cdot)|\de t$ and $\mu_j^p=|\dot\chi_j(p,\cdot)|\de t$.
Thanks to \cite[Def.\,C.1 \& Thm.\,C.1]{Maddalena-Solimini-Synchronic} we thus have
\begin{displaymath}
\int_{I\setminus I_p} \tau'(0) \,\de\mu^p(t)
\leq \liminf_{j\to\infty} \int_{I\setminus I_p}r^\tau(m_{\chi_j}(\chi_j(p,t))) \,\de\mu_j(t)
\end{displaymath}
so that Fatou's lemma yields
\begin{multline*}
\liminf_{j\to\infty}\int_\reSpace\int_{I\setminus I_p}r^\tau(m_{\chi_j}(\chi_j(p,t)))|\dot\chi_j(p,t)|\,\de t\de\reMeasure(p)\\
\geq\tau'(0)\int_\reSpace\hdone(\tilde\chi(I)\setminus S_{\tilde\chi})\,\de\reMeasure(p)
\geq\tau'(0)\int_\reSpace\hdone(\chi(I)\setminus S_{\chi})\,\de\reMeasure(p)\,.
\end{multline*}

Now define for any $p\in\reSpace$ and $x\in\tilde\chi(p,I_p)$ the mappings
\begin{equation*}
t_{p,x}=\min I_p\,,\qquad
x\mapsto\eta_{j,p}(x)=\chi_j(p,t_{p,x})
\end{equation*}
(note that $I_p$ is compact as the preimage of a point under a continuous map) and note
\begin{align*}
J_j
&=\int_\reSpace\int_{I_p}r^\tau(m_{\chi_j}(\chi_j(p,t)))|\dot\chi_j(p,t)|\,\de t\de\reMeasure(p)\\
&=\int_\reSpace\int_{\chi_j(p,I_p)}r^\tau(m_{\chi_j}(y))\,\de\hdone(y)\de\reMeasure(p)\\
&\geq\int_\reSpace\int_{\tilde\chi(p,I_p)}r^\tau(m_{\chi_j}(\eta_{j,p}(x)))|\tfrac{\dot\chi_j(p,t_{p,x})}{\dot{\tilde\chi}(p,t_{p,x})}|\,\de\hdone(x)\de\reMeasure(p)\\
&=\int_\reSpace\int_{S_{\tilde\chi}}\setchar{\tilde\chi(p,I)}(x)r^\tau(m_{\chi_j}(\eta_{j,p}(x)))|\tfrac{\dot\chi_j(p,t_{p,x})}{\dot{\tilde\chi}(p,t_{p,x})}|\,\de\hdone(x)\de\reMeasure(p)\\
&=\int_{S_{\tilde\chi}}\int_{[x]_{\tilde\chi}}r^\tau(m_{\chi_j}(\eta_{j,p}(x)))|\tfrac{\dot\chi_j(p,t_{p,x})}{\dot{\tilde\chi}(p,t_{p,x})}|\,\de\reMeasure(p)\de\hdone(x)\,.
\end{align*}
Consider the inner integral.
Since $|\tfrac{\dot\chi_j(p,t_{p,x})}{\dot{\tilde\chi}(p,t_{p,x})}|=|\tfrac{\hdone(\chi_j(p,I))}{\hdone(\tilde\chi(p,I))}|\to1$ for almost all $p\in\reSpace$ as $j\to\infty$,
by Egorov's theorem we can find for every $\varepsilon>0$ a set $\reSpace_{x,\varepsilon}\subset\reSpace$ with $\reMeasure(\reSpace\setminus\reSpace_{x,\varepsilon})<\varepsilon$
and $|\tfrac{\dot\chi_j(p,t_{p,x})}{\dot{\tilde\chi}(p,t_{p,x})}|\to1$ uniformly on $\reSpace_{x,\varepsilon}$.
We thus have
\begin{multline*}
\liminf_{j\to\infty}\int_{[x]_{\tilde\chi}}r^\tau(m_{\chi_j}(\eta_{j,p}(x)))|\tfrac{\dot\chi_j(p,t_{p,x})}{\dot{\tilde\chi}(p,t_{p,x})}|\,\de\reMeasure(p)\\
\geq\liminf_{j\to\infty}\int_{[x]_{\tilde\chi}\cap\reSpace_{x,\varepsilon}}r^\tau(m_{\chi_j}(\eta_{j,p}(x)))|\tfrac{\dot\chi_j(p,t_{p,x})}{\dot{\tilde\chi}(p,t_{p,x})}|\,\de\reMeasure(p)
\geq\liminf_{j\to\infty}\int_{[x]_{\tilde\chi}\cap\reSpace_{x,\varepsilon}}r^\tau(m_{\chi_j}(\eta_{j,p}(x)))\,\de\reMeasure(p)\,.
\end{multline*}
If we can show the latter to be bounded below by $\tau(m_{\tilde\chi}(x)-\varepsilon)$, then by the arbitrariness of $\varepsilon$ and the lower semi-continuity of $\tau$ we obtain
\begin{equation*}
\liminf_{j\to\infty}\int_{[x]_{\tilde\chi}}r^\tau(m_{\chi_j}(\eta_{j,p}(x)))|\tfrac{\dot\chi_j(p,t_{p,x})}{\dot{\tilde\chi}(p,t_{p,x})}|\,\de\reMeasure(p)
\geq\tau(m_{\tilde\chi}(x))
\end{equation*}
so that Fatou's lemma yields
\begin{equation*}
\liminf_{j\to\infty}J_j
\geq\int_{S_{\tilde\chi}}\tau(m_{\tilde\chi}(x))\,\de\hdone(x)
\geq\int_{S_{\chi}}\tau(m_{\chi}(x))\,\de\hdone(x)\,.
\end{equation*}
Altogether we then obtain
\begin{multline*}
\liminf_{j\to\infty}\JEnMMS(\chi_j)
=\liminf_{j\to\infty}J_j+\int_\reSpace\int_{I\setminus I_p}r^\tau(m_{\chi_j}(\chi_j(p,t)))|\dot\chi_j(p,t)|\,\de t\de\reMeasure(p)\\
\geq\int_{S_{\chi}}\tau(m_{\chi}(x))\,\de\hdone(x)+\tau'(0)\int_\reSpace\hdone(\chi(I)\setminus S_{\chi})\,\de\reMeasure(p)
=\JEnMMS(\chi)\,,
\end{multline*}
the desired result.

It remains to show
\begin{equation*}
\liminf_{j\to\infty}\int_{[x]_{\tilde\chi}\cap\reSpace_{x,\varepsilon}}r^\tau(m_{\chi_j}(\eta_{j,p}(x)))\,\de\reMeasure(p)
\geq\tau(m_{\tilde\chi}(x)-\varepsilon)\,.
\end{equation*}

% After reparameterization analogous to \cref{thm:constSpeedPatternsUrbPl} we may assume all patterns to be parameterized with unit speed
% (in that case the domain of a pattern path $\chi(p,\cdot)$ changes from $I$ to $[0,L(p)]$, where $L(p)$ is the length of particle path $p$).

\TODO{fill in argument; part could be:
Let $\chi_k$ be a sequence of irrigation patterns converging uniformly to $\chi$, and let $t \in I$. Then, for almost all $p \in \reSpace$,
\begin{equation*}
 m_\chi(\chi(p,t))
 =\lim_{k\to\infty}\reMeasure\left(\bigcup_{q\in[\chi(p,t)]_\chi}\bigcup_{s\in\chi_q^{-1}(\chi(p,t))}[\chi_k(q,s)]_{\chi_k}\right)
 \geq \limsup_{k \to \infty} m_{\chi_k}(\chi_k(p,t))\,.\label{eq:mass_is_upper_semi-continuous}%\\
% &r^\tau(\chi(p,t)) \leq \liminf_{n \to \infty} r_{\varepsilon,a}^{\chi_k}(\chi_k(p,t_k))\,.\label{eq:r_is_lower_semi-continuous}
\end{equation*}

Proof:
Fix $p \in \reSpace$ such that $\chi_p \in \AC(I;\R^n)$ and $\chi_k(p,\cdot)\to\chi(p,\cdot)$ uniformly, %(we only discard a null set of fibres)
and define the sets
\begin{displaymath}
B_k=\bigcup_{q\in[\chi(p,t)]_\chi}\bigcup_{s\in\chi_q^{-1}(\chi(p,t))}[\chi_k(q,s)]_{\chi_k}\,,\quad
A_n=\bigcup_{k \geq n}B_k\,,\quad
A=\bigcap_{n=1}^\infty A_n\,.
\end{displaymath}
Recall that $A = \limsup_{k\to\infty}B_k$ and $\reMeasure(B_k)\geq\reMeasure([\chi(p,t)]_\chi)=m_{\chi}(\chi(p,t))$ so that
\begin{displaymath}
 \reMeasure(A) = \lim_{n\to\infty}\reMeasure(A_n) \geq \limsup_{k\to\infty}\reMeasure(B_k) \geq \liminf_{k\to\infty}\reMeasure(B_k) \geq m_{\chi}(\chi(p,t))\,.
\end{displaymath}
We now show $A \subseteq [\chi(p,t)]_{\chi}$ up to a $\reMeasure$-nullset so that $m_\chi(\chi(p,t)) \geq \reMeasure(A)$ and thus $m_\chi(\chi(p,t))=\lim_{k\to\infty}\reMeasure(B_k)$ as desired.
Indeed, let $q \notin[\chi(p,t)]_{\chi}$, then by continuity of $\chi$ we have $d=\dist(\chi(p,t),\chi(q,I)) > 0$.
Assuming $\chi_k(q,\cdot) \to \chi(q,\cdot)$ uniformly, this implies $\dist(\chi_k(p,t),\chi_k(q,I)) > \frac d2$ for all $k$ large enough
so that $q\notin A_k$ for any $k$ large enough and thus $q\notin A$.
}
\end{proof}

The same proof as for \cref{thm:existence_of_minimizers_patterns} now yields existence of optimal patterns.

\begin{corollary}[Existence]
Given $\mu_+,\mu_-\in\fbm(\R^n)$ with bounded support and a lower semi-continuous transportation cost $\tau$, the minimization problem
\begin{displaymath}
 \min_\chi \JEn^{\tau,\mu_+,\mu_-}[\chi]
\end{displaymath}
either has a solution (which is always the case for an admissible $\tau$), or $\JEn^{\tau,\mu_+,\mu_-}$ is infinite.
\end{corollary}

\subsection{Properties of optimal irrigation patterns between discrete finite masses}
\TODO{Is this section needed?}
In this section we would like to identify optimal irrigation patterns between discrete finite masses with discrete \transportPaths{}.
This will obviously be an essential step in identifying the relation between flux-based and irrigation-based model formulations.
The identification is only possible if the patterns are loop-free.
The follwing definitions follow \cite[Def.\,4.5]{Maddalena-Solimini-Synchronic} as well as \cite[Def.\,3.3]{Bernot-Caselles-Morel-Structure-Branched} and \cite[Def.\,7.3]{BeCaMo09}.

\begin{definition}[Path and fibre properties]\label{def:sigle_path_property}
Let $\chi$ be an irrigation pattern.
\begin{enumerate}
\item A path $\theta\in\cont^0(I;\R^n)$ is said to have a \emph{loop} if there exist $t_1<t_2<t_3$ such that
\begin{equation*}
\theta(t_1) = \theta(t_3) = x\,,\quad\theta(t_2) \neq x\,.
\end{equation*}
Else $\theta$ is called \emph{loop-free}.
$\chi$ is said to be \emph{loop-free} if $\chi_p$ is loop-free for almost all $p\in\reSpace$.
\item Let $\chi$ be loop-free and set
\begin{displaymath}
 \reSpace_{\overrightarrow{xy}}^\chi = \{p \in \reSpace \ : \ \chi_p^{-1}(x) < \chi_p^{-1}(y)\}\,.
\end{displaymath}
$\chi$ has the \emph{single path property} if for every $x,y$ with $\reMeasure(\reSpace_{\overrightarrow{xy}}^\chi) > 0$ the sets $\chi(p,[\chi_p^{-1}(x),\chi_p^{-1}(y)])$ coincide for almost all $p \in \reSpace_{\overrightarrow{xy}}^\chi$.
Note that only the trajectories from $x$ to $y$ coincide, but not necessarily their parameterizations under $\chi$.
\end{enumerate}
\end{definition}

\TODO{
? BCM: Lem.7.2 - 7.5 (single path property; perhaps only for concave differentiable $\tau$)

below, infinite slope and strict subadditivity and differentiability are no longer required by the Gamma-convergence for concave $\tau$

alternatively: 1) show that Xia's energy can be written as Gilbert-type energy (Santambrogio's book https://www.math.u-psud.fr/~filippo/OTAM-cvgmt.pdf p.149 bottom; put it into Sec.2.2)

2) define flux measure induced by irrigation pattern (BCM lemma 9.4)

3) show that it is composed of a rectifiable current part and the rest and that its magnitude/density is smaller than the flux through a point

4) imply that the Gilbert energy of the flux measure is smaller than that of the inducing irrigation pattern

5) this means that there is a better flux F than the irrigation pattern
}

\begin{theorem}[Single path property]\label{thm:single_path_property}
at least for strictly concave $\tau$\TODO
\end{theorem}
\begin{proof}
copy proof for branched transport\TODO
\end{proof}

Finally, optimal patterns between discrete measures can be identified with a graph.

\begin{theorem}[Patterns as graphs]\label{thm:patternsAsGraphs}
Optimal patterns $\chi^h$ between discrete measures $\mu_+^h$ and $\mu_-^h$ can be identified with graphs.
\end{theorem}
\begin{proof}
At first, let us assume that $\tau$ is strictly concave so that we have the single path property.
Let irrigating and irrigated measure be finite linear combinations of Dirac measures, indexed by $i,j$, respectively.
For $i,j\in\Z$ let $\reSpace_{ij} = \{p \in \reSpace \ : \ \chi_p^h(0)=hi, \chi_p^h(1)=hj\}$.
We have (potentially changing $\chi^h$ on a $\reMeasure$-null set, which does not alter its cost)
\begin{displaymath}
\reSpace=\bigcup_{i,j\in\Z}\reSpace_{ij}\,,
\end{displaymath}
where only finitely many, say $N$, terms of this union are nonempty,
since $\mu_+^h$ and $\mu_-^h$ consist of only finitely many weighted Dirac measures.
Since $\chi^h$ may be assumed to have the single path property (see Proposition\,\ref{thm:single_path_property}),
$\chi^h:\reSpace\to \Lip(I;\R^n)$ may be taken constant on each nonempty $\reSpace_{ij}$, i.\,e.\ $\chi^h(\reSpace_{ij})=\chi_{ij}$ for some $\chi_{ij}\in \Lip(I;\R^n)$.
Furthermore, due to the single path property, the intersection of any two fibres $\chi_{ij}(I)$ and $\chi_{kl}(I)$
must be connected and can be assigned an orientation according to the irrigation direction.
Now define for any subset $S\subset\Z\times\Z$ the fibre intersection
\begin{displaymath}
f_S=\bigcap_{(i,j)\in S}\chi_{ij}(I)\setminus\bigcup_{(i,j)\notin S}\chi_{ij}(I)\,,
\end{displaymath}
where for simplicity we set $\chi_{ij}(I)=\emptyset$ for $\reSpace_{ij}=\emptyset$.
There are at most $2^N$ nonempty such intersections $f_S$, and each of them can have at most $N$ connected components $f_S^1,\ldots,f_S^N$
(again setting some of the $f_S^l$ to the empty set if necessary).
We have
\begin{displaymath}
\chi(\reSpace,I)=\bigcup_{S\subset\Z\times\Z}\bigcup_{0\leq l\leq N}f_S^l
\end{displaymath}
with at most $N2^N$ terms being nonempty.
Each of the $f_S^l$ can be assigned an orientation and a weight $w_S^l=\reMeasure\left(\bigcup_{(i,j)\subset S}\reSpace_{ij}\right)$,
the amount of particles travelling on $f_S^l$ (which is constant all along $f_S^l$).
Furthermore, $f_S^l$ must be a straight line segment, since otherwise, by straightening the fibres the cost of the irrigation pattern is reduced.
Hence, we can define a finite graph $G^h$ whose oriented edges are the $f_S^l$, whose vertices are the edge end points, and whose edge weights are the $w_S^l$.
It is now straightforward to check $\JEn^{\tau,\mu_+^h,\mu_-^h}[\chi^h]=\JEnXia(G^h)$ as required.

Now if $\tau$ is not strictly concave, and we have not shown the single path property, then we simply approximate $\tau$ by a monotonically decreasing sequence of strictly concave functions $\tau^n$ with $\tau\leq\tau^n\leq2\tau$ with $\tau^n\to\tau$.
By the next lemma, $\JEn^{\tau^n,\mu_+^h,\mu_-^h}$ $\Gamma$-converges to $\JEn^{\tau,\mu_+^h,\mu_-^h}$, and minimizers converge against minimizers.
Now the minimizers for the $\tau^n$ are finite graphs with an a priori bounded number of vertices and edges so that their limit is a graph as well.
\end{proof}

\begin{lemma}[Approximation with strictly concave $\tau$]
For given $\mu_+,\mu_-$ of equal mass and $\tau^n\to\tau$ monotonically with $\tau\leq\tau^n\leq2\tau$ we have
$$\Gamma-\lim_{n\to\infty}\JEn^{\tau^n,\mu_+,\mu_-}=\JEn^{\tau,\mu_+,\mu_-}$$
with respect to pointwise convergence of patterns.
Furthermore, any subsequence of the minimizers of $\JEn^{\tau^n,\mu_+,\mu_-}$ contains a subsequence converging against a minimizer of $\JEn^{\tau,\mu_+,\mu_-}$.
\end{lemma}
\begin{proof}
For the recovery sequence take the constant sequence and use the monotone convergence theorem.
The liminf-inequality follows immediately from $\JEn^{\tau^n,\mu_+,\mu_-}\geq\JEn^{\tau,\mu_+,\mu_-}$ and the lower semi-continuity of $\JEn^{\tau,\mu_+,\mu_-}$.
The convergence of minimizers follows from the mild equi-coercivity of the functionals, i.\,e.\ there is a compact set (shown as in the proof of existence of minimizers) in which the minimizers for all $n$ must lie.
\end{proof}
}%\notinclude

\section{Model equivalence}\label{sec:equivalence}

Here we show that the Eulerian and the Lagrangian model formulations are actually equivalent,
that is, they produce the same infimal value of their cost function, and their minimizers, so they exist, can be related to each other.
The well-posedness of the Lagrangian model is a simple consequence of this result.
Using the explicit characterizations \cref{thm:GilbertFlux} and \cref{thm:GilbertPatterns} of the cost functionals as well as classical characterizations of $1$-currents,
this new proof becomes rather straightforward.
We first show that the Lagrangian model achieves lower cost function values than the Eulerian version and then the opposite inequality.

\subsection{Irrigation patterns have lower cost function than \transportPaths{}}
The proof directly constructs an irrigation pattern from a \transportPath{}, making use of a decomposition result for $1$-currents by Smirnov \cite{Sm93}.

\notinclude{
\begin{proposition}[Irrigation patterns have lower cost]\label{prop:constructPatternFromFlux}
\TODO{If for every uniformly converging sequence $\chi_j\to\tilde\chi$ of irrigation patterns there exists an irrigation pattern $\chi$ with $\JEnMMS[\chi] \leq \liminf_{j\to\infty}\JEnMMS[\chi_j]$},
then
\begin{equation}\label{eq:urban_chi_leq_urban_flux}
 \inf_\chi\JEn^{\tau,\mu_+,\mu_-}[\chi] \leq \inf_\flux\JEn^{\tau,\mu_+,\mu_-}[\flux]\,.
\end{equation}
% Furthermore, for any optimal mass flux  $\flux$ there is an optimal irrigation pattern $\chi$ so that both are related via
% \begin{equation}\label{eq:constructPatternFromFlux}
% \int_{\R^n}\varphi\cdot\de\flux=\int_\reSpace\int_I\varphi(\chi_p(t))\cdot\dot\chi_p(t)\,\de t\,\de \reMeasure(p) \;\text{for all}\; \varphi\in \cont_c(\R^n;\R^n).
% \end{equation}
\end{proposition}
\begin{proof}
By \cref{thm:patternRescaling} we may assume $\mu_+,\mu_-\in\prob$.
It suffices to construct for a given \transportPath{} $\flux$ an irrigation pattern $\chi$ with non-greater cost function.
So let $\JEn^{\tau,\mu_+,\mu_-}[\flux] < \infty$ since otherwise there is nothing to show.
Let $\tilde G_N$ a sequence of discrete \transportPaths{} such that
\begin{itemize}
  \item $\tilde G_N$ is a discrete mass flux between some $\mu_+^N$ and $\mu_-^N$,
  \item $(\mu_+^N,\mu_-^N,\flux_{\tilde G_N}) \weakstarto (\mu_+,\mu_-,\flux)$,
  \item $\JEnXia(\tilde G_N) \to \JEn^{\tau,\mu_+,\mu_-}[\flux]$.
\end{itemize}
Let $G_N$ denote the cycle-reduced \transportPath{} $\tilde G_N$.
By \cref{thm:no_cycles_lemma}, $\JEnXia(G_N)\leq\JEnXia(\tilde G_N)$,
so it suffices to construct an irrigation pattern $\chi$ with $\JEn^{\tau,\mu_+,\mu_-}[\chi]\leq\liminf_{N\to\infty}\JEnXia(G_N)$.
By \cite[Thm.\,3.5 and Prop.\,3.6]{Ahuja-Magnanti-Orlin} or \cite[Thm.\,1]{Gauthier-Desrosiers-Luebbecke} there exist \transportPath{} measures $\eta_N\in\TPM(\mu_+,\mu_-)$ corresponding to $G_N$.
We now know that there exists $K>0$ such that for any $C>0$ we have
\begin{equation*}
K\geq\JEnXia(G_N)\geq\eta_N(\Theta\setminus\Theta_C^\Omega)C\lambda^\tau(1)
\end{equation*}
for $\lambda^\tau$ from \cref{thm:averageCost}.
Thus, $\eta_N(\Theta\setminus\Theta_C^\Omega)\leq\frac K{\lambda^\tau(1)C}$ so that by \cref{lem:compactness_lemma_for_transport_path_measures} we have (up to a subsequence) $\eta_N \weakstarto \eta$ for some $\eta\in\TPM(\mu_+,\mu_-)$.
By Skorohod's theorem \cite[Thm.\,6.7]{Bil99}, there exist a sequence of irrigation patterns $\chi_N$ parameterizing $\eta_N$ and an irrigation pattern $\tilde\chi$ parameterizing $\eta$ such that
\begin{displaymath}
 \chi_N(p,\cdot) \stackrel{C^0(I)}{\longrightarrow} \tilde\chi(p,\cdot) \quad \text{for all }p \in \reSpace\,.
\end{displaymath}
Now by the conditions on $\JEn^{\tau,\mu_+,\mu_-}$ there exists an irrigation pattern $\chi$ with
\begin{equation*}
 \JEn^{\tau,\mu_+,\mu_-}[\chi] = \JEnMMS(\chi)
 \leq \liminf_{N \to \infty} \JEnMMS(\chi_N) = \liminf_{N \to \infty} \JEnXia(G_N) \leq \JEn^{\tau,\mu_+,\mu_-}[\flux]\,.
\end{equation*}

\TODO{produce a $\chi_\flux$ from $\flux$ as in \cite{BrWi15-equivalent}}
\end{proof}
}%\notinclude

\begin{proposition}[Irrigation patterns have lower cost]\label{prop:constructPatternFromFlux}
Let $\tau$ be a transportation cost.
For every \transportPath{} $\flux$ with finite cost $\JEn^{\tau,\mu_+,\mu_-}[\flux]$ there exists an irrigation pattern $\chi_\flux$ with
\begin{equation}\label{eq:urban_chi_leq_urban_flux}
\JEn^{\tau,\mu_+,\mu_-}[\chi_\flux] \leq \JEn^{\tau,\mu_+,\mu_-}[\flux]\,.
\end{equation}
Furthermore, if $\flux$ is optimal, then $\chi_\flux$ and $\flux$ are related via
\begin{equation}\label{eq:constructPatternFromFlux}
\int_{\R^n}\varphi\cdot\de\flux=\int_\reSpace\int_I\varphi(\chi_\flux(p,t))\cdot\dot\chi_\flux(p,t)\,\de t\,\de \reMeasure(p) \;\text{for all}\; \varphi\in \cont_c(\R^n;\R^n).
\end{equation}
\end{proposition}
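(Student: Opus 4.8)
The plan is to decompose the \transportPath{} $\flux$ into a family of curves by Smirnov's theorem and to read the irrigation pattern $\chi_\flux$ off this decomposition, then match the two cost functionals by means of the explicit formulas in \cref{thm:GilbertFlux} and \cref{thm:GilbertPatterns}. First I would reduce to $\mu_+,\mu_-\in\prob$ (hence $\flux$ of bounded support) via \cref{thm:patternRescaling} and \cref{thm:domainRescaling}, and note that a \transportPath{} of finite cost is a flat $1$-chain of finite mass with $\partial\flux=\mu_--\mu_+$, so by \cref{thm:GilbertFlux} it decomposes as $\flux=\theta\hdone\restr S+\flux^\perp$ with $S$ countably $1$-rectifiable, $|\theta|>0$ $\hdone$-a.e.\ on $S$ (after discarding the $\hdone$-null part of $S$), $\flux^\perp$ singular with respect to $\hdone\restr R$ for every countably $1$-rectifiable $R$, and $\JEn^{\tau,\mu_+,\mu_-}[\flux]=\int_S\tau(|\theta(x)|)\,\de\hdone(x)+\tau'(0)|\flux^\perp|(\R^n)$. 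Identifying $\flux$ with the corresponding normal $1$-current (\cref{rem:chainsRCA}), Smirnov's decomposition \cite{Sm93} yields a finite nonnegative measure $\eta$ on $\Theta$, concentrated on simple (injective) finite-length curves $\gamma$ running from $\spt\mu_+$ to $\spt\mu_-$, with $\flux=\int_\Theta\langle\gamma\rangle\,\de\eta(\gamma)$, $\pushforward{p_0}\eta=\mu_+$, $\pushforward{p_1}\eta=\mu_-$, and without cancellation, i.e.\ $|\flux|=\int_\Theta|\langle\gamma\rangle|\,\de\eta(\gamma)$ as measures (here $\langle\gamma\rangle$ is the $1$-current carried by $\gamma$, so $|\langle\gamma\rangle|=\hdone\restr\gamma(I)$ for simple $\gamma$). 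A possible cyclic component of $\flux$ may be subtracted first: by no cancellation this only decreases $|\flux|$, hence $|\theta|$ on $S$ and $|\flux^\perp|$, hence the cost, so it suffices to treat the acyclic case.

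Next I would build the pattern. Taking a reference space $(\reSpace,\Bcal(\reSpace),\reMeasure)$ of total mass $\eta(\Theta)$ and a measurable map $\iota:\reSpace\to\Theta$ with $\pushforward\iota\reMeasure=\eta$ (splitting atoms of $\eta$ if necessary), and reparameterizing each curve with constant speed, I set $\chi_\flux(p,\cdot)=\iota(p)$. Then $\chi_\flux$ is an irrigation pattern with $\mu_+^{\chi_\flux}=\pushforward{p_0}{\pushforward\iota\reMeasure}=\pushforward{p_0}\eta=\mu_+$ and likewise $\mu_-^{\chi_\flux}=\mu_-$, and since $\eta$-a.e.\ $\gamma$ is injective, $\chi_\flux$ is loop-free.

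Then I would compute the cost. By \cref{thm:GilbertPatterns}, $\JEnMMS(\chi_\flux)=\int_{S_{\chi_\flux}}\tau(m_{\chi_\flux}(x))\,\de\hdone(x)+\tau'(0)\int_\reSpace\hdone(\chi_{\flux,p}(I)\setminus S_{\chi_\flux})\,\de\reMeasure(p)$. Here $m_{\chi_\flux}(x)=\eta(\{\gamma:x\in\gamma(I)\})$, the set $S_{\chi_\flux}$ is countably $1$-rectifiable by \cref{thm:rectifiabilityFlux} (which is exactly what legitimizes the Fubini--Tonelli step), and evaluating $|\flux|=\int_\Theta|\langle\gamma\rangle|\,\de\eta$ on Borel subsets of $S_{\chi_\flux}$ gives $|\flux|\restr S_{\chi_\flux}=m_{\chi_\flux}\,\hdone\restr S_{\chi_\flux}$. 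Comparing with $|\flux|\restr S_{\chi_\flux}=|\theta|\,\hdone\restr(S\cap S_{\chi_\flux})$ (valid since $\flux^\perp$ charges no rectifiable set) forces $S=S_{\chi_\flux}$ and $m_{\chi_\flux}=|\theta|$ up to $\hdone$-null sets; similarly $\int_\reSpace\hdone(\chi_{\flux,p}(I)\setminus S_{\chi_\flux})\,\de\reMeasure=\int_\Theta\hdone(\gamma(I)\setminus S)\,\de\eta=|\flux|(\R^n\setminus S)=|\flux^\perp|(\R^n)$. Hence $\JEn^{\tau,\mu_+,\mu_-}[\chi_\flux]=\JEnMMS(\chi_\flux)=\int_S\tau(|\theta(x)|)\,\de\hdone(x)+\tau'(0)|\flux^\perp|(\R^n)=\JEn^{\tau,\mu_+,\mu_-}[\flux]$, which in particular yields \eqref{eq:urban_chi_leq_urban_flux}. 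Finally, when $\flux$ is optimal (so that no cyclic part had to be discarded) the relation \eqref{eq:constructPatternFromFlux} is read off from $\flux=\int_\Theta\langle\gamma\rangle\,\de\eta(\gamma)$ by testing against $\varphi\in\cont_c(\R^n;\R^n)$: since $\langle\gamma\rangle(\varphi)=\int_I\varphi(\gamma(t))\cdot\dot\gamma(t)\,\de t$, the change of variables $\pushforward\iota\reMeasure=\eta$ turns the identity into the claimed formula.

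The hard part will be the correct invocation of Smirnov's theorem: one needs the decomposition into \emph{simple} curves together with the no-cancellation identity $|\flux|=\int_\Theta|\langle\gamma\rangle|\,\de\eta$ at the level of measures (not merely of total masses), and one must carefully track the possible cyclic component of $\flux$ — in particular, for the representation \eqref{eq:constructPatternFromFlux} of an optimal $\flux$ one should verify that the optimum admits a curve decomposition running genuinely from $\mu_+$ to $\mu_-$ — as well as the measure-theoretic identifications $S_{\chi_\flux}=S$ and $m_{\chi_\flux}=|\theta|$, where \cref{thm:rectifiabilityFlux} is what makes the Fubini--Tonelli manipulations admissible. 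Everything else is bookkeeping once \cref{thm:GilbertFlux} and \cref{thm:GilbertPatterns} are in hand.
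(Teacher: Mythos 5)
Your proposal is correct and follows essentially the same route as the paper: decompose $\flux$ via \cref{thm:GilbertFlux}, invoke Smirnov's theorem both to discard the divergence-free (cyclic) part without increasing the cost and to obtain a cancellation-free curve decomposition $\eta\in\TPM(\mu_+,\mu_-)$ supported on loop-free curves, parameterize $\eta$ by an irrigation pattern, and match $m_{\chi_\flux}=|\theta|$ on $S$ and the diffuse remainder so that \cref{thm:GilbertPatterns} gives equality of the two Gilbert energies. The only cosmetic difference is that the paper realizes the pattern from $\eta$ via Skorohod's theorem rather than a direct measurable parameterization, and it, too, leaves the subtlety you flag about the optimal $\flux$ coinciding with its acyclic part implicit.
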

\begin{proof}
Let $\flux$ be a mass flux between $\mu_+$ and $\mu_-$. By \cite[Thm.\,4.2]{Sil08}
\notinclude{Let $\flux$ have finite cost. By \cref{thm:GilbertFlux}}%
we have $\flux=\theta\hdone\restr S+\flux^\perp$
for a countably $1$-rectifiable set $S$, an $\hdone\restr S$-measurable function $\theta:S\to\R^n$ which is tangent to $S$ $\hdone$-almost everywhere, and $\flux^\perp$ a diffuse part\,.
Furthermore, by \cref{thm:GilbertFlux}
\begin{equation*}
\JEn^{\tau,\mu_+,\mu_-}[\flux]
=\int_{S}\tau(|\theta(x)|)\,\de\hdone(x)+\tau'(0)|\flux^\perp|(\R^n)\,.
\end{equation*}
Now by \cite[Thm.\,C]{Sm93}, $\flux$ can be decomposed into the sum of two vector-valued measures $P,Q\in\rca(\R^n;\R^n)$ with $\dv P=0$ and $\dv Q=\mu_+-\mu_-$
such that the total variation measure satisfies $|\flux|=|P+Q|=|P|+|Q|$.
This implies $|Q|\leq|\flux|=|\theta|\hdone\restr S+|\flux^\perp|$
so that $Q$ must have the form $Q=\theta_Q\hdone\restr S+\flux_Q^\perp$ for some $|\theta_Q|\leq|\theta|$ and $|\flux_Q^\perp|\leq|\flux^\perp|$.
Consequently, $\JEn^{\tau,\mu_+,\mu_-}[\flux]\geq\JEn^{\tau,\mu_+,\mu_-}[Q]$ so that we may actually assume $Q=\flux$ and $P=0$.
Again by \cite[Thm.\,C]{Sm93} there exists a complete decomposition of $\flux$ into curves of finite length, that is, there exists a \transportPath{} measure $\eta$ on $\Theta$ such that
\begin{align*}
\int_{\R^n}\varphi\cdot\de\flux
&=\int_\Theta\int_I\varphi(\vartheta(t))\cdot\dot\vartheta(t)\,\de t\de\eta(\vartheta)
\qquad\text{for all }\varphi\in\cont_c(\R^n;\R^n)\,,\\
\int_{\R^n}\psi\,\de|\flux|
&=\int_\Theta\int_I\psi(\vartheta(t))|\dot\vartheta(t)|\,\de t\de\eta(\vartheta)
\qquad\text{for all }\psi\in\cont_c(\R^n)
\end{align*}
as well as $\eta\in\TPM(\mu_+,\mu_-)$.
By the construction in \cite{Sm93} we may assume $\eta$ to be supported on loop-free paths.
Now by Skorohod's theorem \cite[Thm.\,6.7]{Bil99} there exists an irrigation pattern $\chi_\flux$ between $\mu_+$ and $\mu_-$ which induces $\eta$ so that
\begin{align*}
\int_{\R^n}\varphi\cdot\de\flux
&=\int_\reSpace\int_I\varphi(\chi_\flux(p,t))\cdot\dot\chi_\flux(p,t)\,\de t\de\reMeasure(p)
\qquad\text{for all }\varphi\in\cont_c(\R^n;\R^n)\,,\\
\int_{\R^n}\psi\,\de|\flux|
&=\int_\reSpace\int_I\psi(\chi_\flux(p,t))|\dot\chi_\flux(p,t)|\,\de t\de\reMeasure(p)
\qquad\text{for all }\psi\in\cont_c(\R^n)\,.
\end{align*}
Note that for $\hdone$-almost all $x\in S$ we have $m_{\chi_\flux}(x)=|\theta(x)|$ and $m_{\chi_\flux}=0$ $\hdone\restr R$-almost everywhere for any countably $1$-rectifiable $R\subset\R^n\setminus S$.
Indeed, for a contradiction assume first $m_{\chi_\flux}(x)>0$ on a non-$\hdone$-negligible set $R\subset\R^n\setminus S$, then
\begin{multline*}
0
<\int_Rm_{\chi_\flux}(x)\,\de\hdone(x)
=\int_R\int_\Gamma\setchar{\chi_\flux(p,I)}(x)\,\de\reMeasure(p)\de\hdone(x)\\
=\int_\Gamma\int_{\chi_\flux(p,I)}\setchar{R}(x)\,\de\hdone(x)\de\reMeasure(p)
\leq|\flux|(R)
=|\flux^\perp|(R)
=0\,,
\end{multline*}
the desired contradiction. Likewise, the same calculation yields
\begin{equation*}
\int_Rm_{\chi_\flux}(x)\,\de\hdone(x)
=|\flux|(R)
=\int_R|\theta(x)|\,\de\hdone(x)
\end{equation*}
for any $\hdone$-measurable $R\subset S$ so that $m_{\chi_\flux}=|\theta|$ on $S$.
Therefore, by \cref{thm:GilbertPatterns} and using
\begin{equation*}
|\flux^\perp|(A)
=|\flux|(A\setminus S)
=\int_\reSpace\int_I\setchar{A\setminus S}(\chi_\flux(p,t))|\dot\chi_\flux(p,t)|\,\de t\de\reMeasure(p)
=\int_\reSpace\hdone(\chi_\flux(p,I)\cap A\setminus S)\,\de\reMeasure(p)
\end{equation*}
for any Borel set $A$ we have
\begin{multline*}
\JEn^{\tau,\mu_+,\mu_-}[\chi_\flux]
=\int_S\tau(m_{\chi_\flux}(x))\,\de\hdone(x)+\tau'(0)\int_\reSpace\hdone(\chi_\flux(p,I)\setminus S)\,\de\reMeasure(p)\\
=\int_{S}\tau(|\theta(x)|)\,\de\hdone(x)+\tau'(0)|\flux^\perp|(\R^n)
=\JEn^{\tau,\mu_+,\mu_-}[\flux]\,.\qedhere
\end{multline*}
\end{proof}

\subsection{\TransportPath{}s have lower cost function than irrigation patterns}
Unlike in \cite{BrWi15-equivalent} or \cite{Maddalena-Solimini-Transport-Distances,Maddalena-Solimini-Synchronic} we here again use the generalized Gilbert energy to establish the remaining inequality.

\begin{proposition}[\TransportPath{} of a pattern]
For a given irrigation pattern $\chi$ define the \transportPath{} $\flux_\chi$ via
\begin{equation*}
\int_{\R^n}\varphi\cdot\de\flux_\chi=\int_\reSpace\int_I\varphi(\chi_p(t))\cdot\dot\chi_p(t)\,\de t\,\de \reMeasure(p) \;\text{for all}\; \varphi\in \cont_c(\R^n;\R^n).
\end{equation*}
Then we have $\dv\flux_\chi=\mu_+^\chi-\mu_-^\chi$ as well as
$\flux_\chi=\theta\hdone\restr S_\chi+\flux_\chi^\perp$ for $S_\chi$ from \cref{thm:rectifiabilityFlux}, a measurable $\theta:S_\chi\to\R^n$ with $|\theta(x)|\leq m_\chi(x)$ for all $x\in S_\chi$ and $\theta(x)$ tangent to $S_\chi$ for $\hdone$-almost all $x\in S_\chi$, and a measure
\begin{equation*}
\int_{\R^n}\varphi\cdot\de\flux_\chi^\perp=\int_\reSpace\int_I\setchar{\R^n\setminus S_\chi}(\chi_p(t))\varphi(\chi_p(t))\cdot\dot\chi_p(t)\,\de t\,\de \reMeasure(p) \;\text{for all}\; \varphi\in \cont_c(\R^n;\R^n),
\end{equation*}
singular with respect to $\hdone\restr R$ for any countably $1$-rectifiable $R\subset\R^n$.
\end{proposition}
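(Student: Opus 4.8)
The plan is to read off every assertion from the generalized Gilbert representation (\cref{thm:GilbertFlux}, \cref{thm:GilbertPatterns}) and from the one-dimensional area formula, following the bookkeeping already used in the proof of \cref{thm:rectifiabilityFlux}. First I would invoke \cref{thm:constSpeedPatternsUrbPl} to reparameterize so that $\chi_p\in\Lip(I;\R^n)$ with $|\dot\chi_p|\equiv\hdone(\chi_p(I))=:L_p$ for a.e.\ $p$, and I would restrict to the only interesting case, in which the average path length $\int_\reSpace L_p\,\de\reMeasure(p)$ is finite (in particular whenever $\JEnMMS(\chi)<\infty$, by \cref{thm:pathLength}). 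Then the linear functional $\varphi\mapsto\int_\reSpace\int_I\varphi(\chi_p(t))\cdot\dot\chi_p(t)\,\de t\,\de\reMeasure(p)$ is bounded by $\|\varphi\|_{\cont^0}\int_\reSpace L_p\,\de\reMeasure(p)$, so $\flux_\chi\in\rca(\R^n;\R^n)$ is well defined with finite total variation, and the stated formula for $\flux_\chi^\perp$ is nothing but the restriction $\flux_\chi\restr(\R^n\setminus S_\chi)$, so that $\flux_\chi=\flux_\chi\restr S_\chi+\flux_\chi^\perp$ holds trivially once the structure of the two summands is known. The divergence identity is then immediate from the fundamental theorem of calculus: for $\phi\in\contsmooth(\R^n)$ one has $\int_I\nabla\phi(\chi_p(t))\cdot\dot\chi_p(t)\,\de t=\phi(\chi_p(1))-\phi(\chi_p(0))$, and integrating in $p$ and recalling $\mu_+^\chi=\pushforward{(i_0^\chi)}{\reMeasure}$, $\mu_-^\chi=\pushforward{(i_1^\chi)}{\reMeasure}$ gives $\int_{\R^n}\nabla\phi\cdot\de\flux_\chi=\int_{\R^n}\phi\,\de\mu_-^\chi-\int_{\R^n}\phi\,\de\mu_+^\chi$, i.e.\ $\dv\flux_\chi=\mu_+^\chi-\mu_-^\chi$.

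For the rectifiable/diffuse splitting I would use that $S_\chi=\{m_\chi>0\}$ is countably $1$-rectifiable by \cref{thm:rectifiabilityFlux}. Writing $S_\chi=\bigcup_{j\ge1}S_\chi^j$ with $S_\chi^j=\{x:m_\chi(x)\ge\tfrac1j\}$, the area formula together with the Fubini--Tonelli computation from the proof of \cref{thm:rectifiabilityFlux} yields $\tfrac1j\hdone(S_\chi^j)\le\int_{S_\chi^j}m_\chi\,\de\hdone\le\int_\reSpace L_p\,\de\reMeasure(p)<\infty$, so each $S_\chi^j$ has finite Hausdorff size. Hence $\flux_\chi\restr S_\chi^j$ is a flat $1$-chain of finite mass supported in a $1$-rectifiable Borel set of finite $\hdone$-measure, and therefore rectifiable by \cite[Thm.\,4.1]{Wh99b}; writing it as $\theta_j\hdone\restr S_\chi^j$ with $\theta_j(x)$ in the approximate tangent space of $S_\chi^j$ at $x$ and passing to the limit $j\to\infty$, I obtain $\flux_\chi\restr S_\chi=\theta\hdone\restr S_\chi$ for a measurable $\theta$ tangent to $S_\chi$ $\hdone$-almost everywhere.

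The two remaining claims would both follow from the pointwise comparison $|\flux_\chi|\le\nu_\chi$, where $\nu_\chi\in\fbm(\R^n)$ is defined by $\int\psi\,\de\nu_\chi=\int_\reSpace\int_I\psi(\chi_p(t))|\dot\chi_p(t)|\,\de t\,\de\reMeasure(p)$ (which is clear from the defining functionals), combined with the identity $\nu_\chi(A)=\int_A m_\chi\,\de\hdone$ for every $\hdone$-measurable $A\subset S_\chi$. Granting this, $\int_A|\theta|\,\de\hdone=|\flux_\chi\restr S_\chi|(A)=|\flux_\chi|(A)\le\nu_\chi(A)=\int_A m_\chi\,\de\hdone$, hence $|\theta|\le m_\chi$ $\hdone$-a.e.\ on $S_\chi$; and for any countably $1$-rectifiable $R=\bigcup_k R_k$ with $\hdone(R_k)<\infty$, since $m_\chi\equiv0$ off $S_\chi$ one gets $|\flux_\chi^\perp|(R_k)=|\flux_\chi|(R_k\setminus S_\chi)\le\nu_\chi(R_k\setminus S_\chi)=\int_{R_k\setminus S_\chi}m_\chi\,\de\hdone=0$, so $\flux_\chi^\perp$ is singular with respect to $\hdone\restr R$. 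The hard part is exactly this identity $\nu_\chi=m_\chi\hdone$ on rectifiable sets, and more precisely the sharp bound $|\theta|\le m_\chi$: both require that for $\hdone$-almost every point $x$ of each particle path the crossing multiplicity is one, i.e.\ that $\chi$ be loop-free — which, by \cref{thm:GilbertPatterns} and the remark following it, is the case relevant for the model-equivalence argument in which this proposition is used — and the measurability and $\sigma$-additivity needed to apply Fubini--Tonelli here are precisely what the rectifiability of $S_\chi$ in \cref{thm:rectifiabilityFlux} was established to license.
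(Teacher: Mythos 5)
Your proposal is correct in substance, but it assembles the structure of $\flux_\chi$ by a partly different route than the paper. The paper obtains the representation $\flux_\chi-\flux_\chi^\perp=\theta\hdone\restr S_\chi$, the bound $|\theta|\leq m_\chi$, the tangency of $\theta$, and the singularity of $\flux_\chi^\perp$ essentially in one stroke: it rewrites each inner integral $\int_I\setchar{S_\chi}(\chi_p(t))\varphi(\chi_p(t))\cdot\dot\chi_p(t)\,\de t$ via the one-dimensional area formula as an integral of $\varphi$ against the unit tangent over $\chi_p(I)\cap S_\chi$, swaps the order of integration (the Fubini--Tonelli step licensed by \cref{thm:rectifiabilityFlux}), and reads off $\bigl|\int_{\R^n}\varphi\cdot\de(\flux_\chi-\flux_\chi^\perp)\bigr|\leq\int_{S_\chi}m_\chi|\varphi|\,\de\hdone$ for all $\varphi\in\cont_c(\R^n;\R^n)$, which by duality yields the representation together with the bound; tangency follows because the derivatives of two Lipschitz curves coincide $\hdone$-a.e.\ on the intersection of their images. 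You instead route the representation and tangency through White's rectifiability theorem applied to the flat chains $\flux_\chi\restr S_\chi^j$ --- i.e.\ you re-run the machinery of \cref{thm:GilbertFlux} --- and then recover $|\theta|\leq m_\chi$ separately from the comparison $|\flux_\chi|\leq\nu_\chi$ and the identity $\nu_\chi=m_\chi\hdone$ on rectifiable subsets of $S_\chi$. This works, but it is heavier: you must additionally check that $\flux_\chi\restr S_\chi^j$ is a legitimate finite-mass flat chain, that the carrier produced by White's theorem lies in $S_\chi$ with the same approximate tangents (locality of tangent spaces for nested rectifiable sets), and that the $\theta_j$ are consistent as $j\to\infty$ --- all of which the paper's direct duality argument sidesteps. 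Your treatment of the divergence identity and of the singularity of $\flux_\chi^\perp$ coincides with the paper's.

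The caveat you raise at the end deserves emphasis rather than apology: both the identity $\nu_\chi\restr A=m_\chi\hdone\restr A$ and the sharp bound $|\theta|\leq m_\chi$ genuinely require that $\hdone$-a.e.\ point of a.e.\ fibre is crossed only once, since $m_\chi(x)=\reMeasure([x]_\chi)$ counts particles without multiplicity while the area formula produces the crossing number $\#\chi_p^{-1}(x)$; a particle family traversing the same segment twice in the same direction gives a net $|\theta|$ equal to twice $m_\chi$ there. The paper's own proof glosses over this (its first displayed equality already presupposes unit multiplicity), whereas you state the restriction explicitly and correctly observe that the loop-free reduction of \cref{thm:GilbertPatterns} makes it harmless where the proposition is applied. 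So this is not a defect of your argument but a sharpening of a hypothesis that both proofs in fact use.
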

\begin{proof}
For $\varphi\in \cont_c(\R^n;\R^n)$ and $\tau_{\chi_p(I)}(x)$ the unit tangent vector to $\chi_p(I)$, which is defined for almost all $x\in\chi_p(I)$, we have
\begin{multline*}
\int_{\R^n}\varphi\cdot(\flux_\chi-\flux_\chi^\perp)
=\int_\reSpace\int_{\chi_p(I)\cap S_\chi}\varphi(x)\cdot\tau_{\chi_p(I)}(x)\,\de\hdone(x)\,\de \reMeasure(p)
\leq\int_\reSpace\int_{S_\chi}\setchar{\chi_p(I)}(x)|\varphi(x)|\,\de\hdone(x)\,\de \reMeasure(p)\\
=\int_{S_\chi}\int_\reSpace\setchar{\chi_p(I)}(x)|\varphi(x)|\,\de \reMeasure(p)\,\de\hdone(x)
=\int_{S_\chi}m_\chi(x)|\varphi(x)|\,\de\hdone(x)\,.
\end{multline*}
Since this holds for all $\varphi\in \cont_c(\R^n;\R^n)$, we must have $\flux_\chi-\flux_\chi^\perp=\theta\hdone\restr S_\chi$ for some $|\theta|\leq m_\chi$.
Since $\tau_{\chi_p(I)}(x)$ is tangent to $S_\chi$ for almost all $x\in\chi_p(I)\cap S_\chi$
(indeed, the derivatives of two Lipschitz curves coincide almost everywhere on their intersection%
\notinclude{: Let $D$ be the intersection points at which both curves are differentiable; only countably many points $x\in D$ can be isolated; the rest has coinciding derivative, which can be computed by the limit of the finite difference taken in $D$.}),
we even obtain that $\theta(x)$ is tangent to $S_\chi$ for $\hdone$-almost all $x\in S_\chi$.
Furthermore, letting $R\subset\R^n$ be countably $1$-rectifiable,
\begin{multline*}
\int_{\R^n}\varphi\cdot\de\flux_\chi^\perp\restr R
=\int_\reSpace\int_{R\setminus S_\chi}\setchar{\chi_p(I)}(x)\varphi(x)\cdot\tau_{\chi_p(I)}(x)\,\de\hdone(x)\,\de\reMeasure(p)\\
=\int_{R\setminus S_\chi}\int_\reSpace\setchar{\chi_p(I)}(x)\varphi(x)\cdot\tau_{\chi_p(I)}(x)\,\de\reMeasure(p)\,\de\hdone(x)
=0\,.
\end{multline*}
Finally, for any $\psi\in\cont_c(\R^n)$ we have
\begin{multline*}
\int_{\R^n}\psi\,\de\dv\flux_\chi
=-\int_{\R^n}\nabla\psi\cdot\de\flux_\chi
=\int_\reSpace\int_I\nabla\psi(\chi_p(t))\cdot\dot\chi_p(t)\,\de t\,\de \reMeasure(p)\\
=\int_\reSpace\psi(\chi_p(1))-\psi(\chi_p(0))\,\de \reMeasure(p)
=\int_{\R^n}\psi\,\de(\mu_+^\chi-\mu_-^\chi)\,.\qedhere
\end{multline*}
\end{proof}

\begin{corollary}[\TransportPath{}s have lower cost]\label{thm:PatternsLowerCost}
Let $\tau$ be a transportation cost.
For every irrigation pattern $\chi$ with finite cost $\JEn^{\tau,\mu_+,\mu_-}[\chi]$, the \transportPath{} $\flux_\chi$ satisfies
\begin{displaymath}
\JEn^{\tau,\mu_+,\mu_-}[\flux_\chi]\leq\JEn^{\tau,\mu_+,\mu_-}[\chi]\,.
\end{displaymath}
\end{corollary}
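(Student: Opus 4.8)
The plan is to sandwich $\JEn^{\tau,\mu_+,\mu_-}[\flux_\chi]$ between the explicit ``Gilbert-type'' formula of \cref{thm:GilbertFlux} and the pattern cost, using that the preceding proposition already provides a decomposition of $\flux_\chi$ compatible with \cref{thm:GilbertFlux}. We may assume $\JEn^{\tau,\mu_+,\mu_-}[\chi]<\infty$, so in particular $\mu_\pm^\chi=\mu_\pm$ and hence $\dv\flux_\chi=\mu_+-\mu_-$ by the preceding proposition, which also gives $\flux_\chi=\theta\hdone\restr S_\chi+\flux_\chi^\perp$ with $S_\chi$ countably $1$-rectifiable (\cref{thm:rectifiabilityFlux}), $\theta(x)$ tangent to $S_\chi$ for $\hdone$-almost every $x$ and $|\theta(x)|\le m_\chi(x)$, and $\flux_\chi^\perp$ singular with respect to $\hdone\restr R$ for every countably $1$-rectifiable $R$. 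One also needs $|\flux_\chi|(\R^n)<\infty$: since $\tau(w)\ge\lambda^\tau(\reMeasure(\reSpace))\,w$ on $[0,\reMeasure(\reSpace)]$ by \cref{thm:averageCost}, we have $r^\tau(m_\chi(\cdot))\ge\lambda^\tau(\reMeasure(\reSpace))>0$, so (exactly as in the proof of \cref{thm:coercivity}) $\int_\reSpace\int_I|\dot\chi_p(t)|\,\de t\,\de\reMeasure(p)\le\JEn^{\tau,\mu_+,\mu_-}[\chi]/\lambda^\tau(\reMeasure(\reSpace))<\infty$, and this bounds $|\flux_\chi|(\R^n)$. The bounded-support hypothesis of \cref{thm:GilbertFlux} can be secured via \cref{thm:patternRescaling} and the remark following it (allowing us to assume $\chi$ maps into $[-1,1]^n$); alternatively, an inspection of the proof of \cref{thm:GilbertFlux} shows boundedness of the support is not actually used.

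With these hypotheses in hand, \cref{thm:GilbertFlux} applies to $\flux_\chi$ with the above decomposition, and, using that $\tau$ is non-decreasing and $|\theta|\le m_\chi$,
\begin{multline*}
\JEn^{\tau,\mu_+,\mu_-}[\flux_\chi]=\int_{S_\chi}\tau(|\theta(x)|)\,\de\hdone(x)+\tau'(0)|\flux_\chi^\perp|(\R^n)\\
\le\int_{S_\chi}\tau(m_\chi(x))\,\de\hdone(x)+\tau'(0)|\flux_\chi^\perp|(\R^n)\,.
\end{multline*}
It therefore remains to bound the right-hand side by $\JEn^{\tau,\mu_+,\mu_-}[\chi]$.

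For this I would split $\JEn^{\tau,\mu_+,\mu_-}[\chi]=\int_\reSpace\int_I r^\tau(m_\chi(\chi_p(t)))|\dot\chi_p(t)|\,\de t\,\de\reMeasure(p)$ according to whether $\chi_p(t)\in S_\chi$. On $\{\chi_p(t)\notin S_\chi\}$ one has $m_\chi(\chi_p(t))=0$, hence $r^\tau(m_\chi(\chi_p(t)))=\tau'(0)$; and from the explicit formula for $\flux_\chi^\perp$ in the preceding proposition together with $\|\varphi\|_{\cont^0}\le1$,
\begin{equation*}
|\flux_\chi^\perp|(\R^n)\le\int_\reSpace\int_I\setchar{\R^n\setminus S_\chi}(\chi_p(t))\,|\dot\chi_p(t)|\,\de t\,\de\reMeasure(p)\,,
\end{equation*}
so that $\tau'(0)|\flux_\chi^\perp|(\R^n)$ is dominated by the $\{\chi_p(t)\notin S_\chi\}$-part of $\JEn^{\tau,\mu_+,\mu_-}[\chi]$. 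On $\{\chi_p(t)\in S_\chi\}$ I would apply the one-dimensional area formula to each $\chi_p$ and then Fubini--Tonelli (legitimate, as in \cref{thm:GilbertPatterns}, thanks to the rectifiability of $S_\chi$), using that the multiplicity of $\chi_p$ is at least $1$ on $\chi_p(I)$ and that $\int_\reSpace\setchar{\chi_p(I)}(x)\,\de\reMeasure(p)=m_\chi(x)$:
\begin{multline*}
\int_\reSpace\int_I\setchar{S_\chi}(\chi_p(t))\,r^\tau(m_\chi(\chi_p(t)))\,|\dot\chi_p(t)|\,\de t\,\de\reMeasure(p)\\
\ge\int_{S_\chi}r^\tau(m_\chi(x))\,m_\chi(x)\,\de\hdone(x)=\int_{S_\chi}\tau(m_\chi(x))\,\de\hdone(x)\,.
\end{multline*}
Adding the two parts yields $\int_{S_\chi}\tau(m_\chi)\,\de\hdone+\tau'(0)|\flux_\chi^\perp|(\R^n)\le\JEn^{\tau,\mu_+,\mu_-}[\chi]$, which closes the chain of inequalities. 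When $\tau'(0)=\infty$, one first notes that finiteness of $\JEn^{\tau,\mu_+,\mu_-}[\chi]$ forces the integral in the displayed estimate for $|\flux_\chi^\perp|(\R^n)$ to vanish, hence $\flux_\chi^\perp=0$, and the argument then runs with the convention $\infty\cdot 0=0$.

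The genuinely delicate points are (i) verifying that the hypotheses of \cref{thm:GilbertFlux} (finite mass, bounded support, the divergence condition, and the stated form of the decomposition) all hold for $\flux_\chi$ — in particular disposing of the bounded-support assumption, which is where the careful argument of the first paragraph is needed — and (ii) ensuring in the last step that $|\flux_\chi^\perp|(\R^n)$ is compared against the multiplicity-weighted integral $\int_\reSpace\int_I\setchar{\R^n\setminus S_\chi}(\chi_p(t))|\dot\chi_p(t)|\,\de t\,\de\reMeasure(p)$ rather than against $\int_\reSpace\hdone(\chi_p(I)\setminus S_\chi)\,\de\reMeasure(p)$, since $\chi$ need not be loop-free. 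Everything else is routine.
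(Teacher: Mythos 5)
Your proof is correct and follows essentially the same route as the paper's: apply the Gilbert formula of \cref{thm:GilbertFlux} to the decomposition of $\flux_\chi$ supplied by the preceding proposition, use $|\theta|\leq m_\chi$ together with the monotonicity of $\tau$, and dominate the result by the pattern cost via the area formula as in \cref{thm:GilbertPatterns}. Your point (ii) --- bounding $\tau'(0)|\flux_\chi^\perp|(\R^n)$ by the multiplicity-weighted integral $\tau'(0)\int_\reSpace\int_I\setchar{\R^n\setminus S_\chi}(\chi_p(t))|\dot\chi_p(t)|\,\de t\,\de\reMeasure(p)$ rather than by $\tau'(0)\int_\reSpace\hdone(\chi_p(I)\setminus S_\chi)\,\de\reMeasure(p)$ --- is in fact a sound refinement of the paper's one-line chain, whose intermediate term is only cleanly justified for loop-free patterns.
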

\begin{proof}
This directly follows from the fact that $\flux_\chi$ is a \transportPath{} between $\mu_+$ and $\mu_-$ with
\begin{multline*}
\JEn^{\tau,\mu_+,\mu_-}[\flux_\chi]
=\int_{S_\chi}\tau(|\theta(x)|)\,\de\hdone(x)+\tau'(0)|\flux_\chi^\perp|(\R^n)\\
\leq\int_{S_\chi}\tau(m_\chi(x))\,\de\hdone(x)+\tau'(0)\int_\reSpace\hdone(\chi_p(I)\setminus S_\chi)\,\de\reMeasure(p)
\leq\JEn^{\tau,\mu_+,\mu_-}[\chi]\,,
\end{multline*}
where we have used \cref{thm:GilbertPatterns} and \cref{thm:GilbertFlux}.
\end{proof}

\begin{corollary}[Model equivalence]\label{thm:modelEquivalence}
We have
\begin{displaymath}
\inf_\flux\JEn^{\tau,\mu_+,\mu_-}[\flux]=\inf_{\chi}\JEn^{\tau,\mu_+,\mu_-}[\chi]\,.
\end{displaymath}
If one model admits a minimizer (for instance, if $\tau$ is admissible and $\mu_+,\mu_-$ have bounded support), then so does the other, and for any optimal irrigation pattern $\chi$, the \transportPath{} $\flux_\chi$ is optimal,
while for any optimal \transportPath{} $\flux$, the irrigation pattern $\chi_\flux$ is optimal.
\end{corollary}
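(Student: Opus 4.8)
The plan is to read off the corollary essentially for free from the two preceding propositions, \cref{prop:constructPatternFromFlux} and \cref{thm:PatternsLowerCost}. First I would prove the equality of infima. Write $c_\flux=\inf_\flux\JEn^{\tau,\mu_+,\mu_-}[\flux]$ and $c_\chi=\inf_\chi\JEn^{\tau,\mu_+,\mu_-}[\chi]$. For any \transportPath{} $\flux$ of finite cost, \cref{prop:constructPatternFromFlux} yields an irrigation pattern $\chi_\flux$ with $\JEn^{\tau,\mu_+,\mu_-}[\chi_\flux]\leq\JEn^{\tau,\mu_+,\mu_-}[\flux]$; taking the infimum over $\flux$ gives $c_\chi\leq c_\flux$. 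Symmetrically, for any irrigation pattern $\chi$ of finite cost, \cref{thm:PatternsLowerCost} produces a \transportPath{} $\flux_\chi$ with $\JEn^{\tau,\mu_+,\mu_-}[\flux_\chi]\leq\JEn^{\tau,\mu_+,\mu_-}[\chi]$, so $c_\flux\leq c_\chi$. These two inequalities already force $c_\flux=c_\chi$ whenever one of the infima is finite; and if, say, $c_\chi=+\infty$ while $c_\flux<+\infty$, then applying \cref{prop:constructPatternFromFlux} to a finite-cost \transportPath{} would contradict $c_\chi=+\infty$, so in fact $c_\flux=c_\chi$ always (both finite or both $+\infty$). Set $c=c_\flux=c_\chi$.

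Next I would transfer existence and optimality. Assume $c<\infty$ (the case $c=\infty$ being vacuous, since then neither problem has a finite-cost minimizer). If $\flux$ is an optimal \transportPath{}, then $\JEn^{\tau,\mu_+,\mu_-}[\flux]=c<\infty$, so \cref{prop:constructPatternFromFlux} applies and the pattern $\chi_\flux$ satisfies $\JEn^{\tau,\mu_+,\mu_-}[\chi_\flux]\leq\JEn^{\tau,\mu_+,\mu_-}[\flux]=c=c_\chi$, i.e.\ $\chi_\flux$ is an optimal irrigation pattern; moreover \eqref{eq:constructPatternFromFlux} is exactly the asserted identity relating $\flux$ and $\chi_\flux$. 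Conversely, if $\chi$ is an optimal irrigation pattern, then $\JEn^{\tau,\mu_+,\mu_-}[\chi]=c<\infty$, and \cref{thm:PatternsLowerCost} gives $\JEn^{\tau,\mu_+,\mu_-}[\flux_\chi]\leq\JEn^{\tau,\mu_+,\mu_-}[\chi]=c=c_\flux$, so $\flux_\chi$ is an optimal \transportPath{}. In particular, as soon as one of the two minimization problems has a solution, the corresponding construction provides a solution of the other.

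Finally, for the concrete sufficient condition I would invoke \cref{thm:existenceFluxAdmissible}: for admissible $\tau$ and $\mu_+,\mu_-$ of bounded support the Eulerian problem has a minimizer (and $c<\infty$ by \cref{thm:finiteCostGraph}), hence by the previous paragraph so does the Lagrangian problem — which in particular establishes existence of optimal irrigation patterns for arbitrary admissible $\tau$, including the non-concave cases not covered by \cref{thm:existence_of_minimizers_patterns}. I do not anticipate a genuine obstacle: all the substance sits in \cref{prop:constructPatternFromFlux} (via Smirnov's decomposition and \cref{thm:GilbertFlux}) and in \cref{thm:PatternsLowerCost} (via \cref{thm:GilbertPatterns}), and the only mild care needed is the bookkeeping for the value $+\infty$ and the observation that an optimal object automatically has finite cost once $c<\infty$, so that the finite-cost hypotheses of the two cited results are met.
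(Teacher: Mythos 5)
Your proposal is correct and follows exactly the paper's own argument: both directions of the inequality come from \cref{prop:constructPatternFromFlux} and \cref{thm:PatternsLowerCost}, and optimality transfers via the constructions $\chi_\flux$ and $\flux_\chi$. The extra bookkeeping you add for the value $+\infty$ and the citation of \cref{thm:existenceFluxAdmissible} for the admissible case are sound and merely make explicit what the paper leaves implicit.
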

\begin{proof}
The equality of the infima follows from \cref{prop:constructPatternFromFlux} and \cref{thm:PatternsLowerCost}.
The statement about optimal patterns and \transportPaths{} follows from $\JEn^{\tau,\mu_+,\mu_-}[\flux_\chi]\leq\JEn^{\tau,\mu_+,\mu_-}[\chi]$
and $\JEn^{\tau,\mu_+,\mu_-}[\chi_\flux]\leq\JEn^{\tau,\mu_+,\mu_-}[\flux]$ as derived in the proof of \cref{thm:PatternsLowerCost} and \cref{prop:constructPatternFromFlux}.
\end{proof}

\section{Final remarks and open problems}
We have introduced a Eulerian and a Lagrangian model for ramified transportation networks.
The models generalize the well-known branched transport model in that the most general class of reasonable transportation costs is considered instead of merely the branched transport choice $\tau(w)=w^\alpha$.
The corresponding cost functionals are quite explicitly characterized with the help of the concepts of $1$-currents and flat $1$-chains.
This then makes it rather straightforward to establish the equivalence between both models.

There are multiple directions for further investigation.

In an ongoing study the authors establish yet a further equivalent formulation
which generalizes the original formulation of the urban planning model and which has a flavour to it more like classical optimal transport.

Another direction consists in modifying the cost functional to obtain different models.
For instance, if instead of scalar measures $\mu_+,\mu_-$ one uses vector-valued measures in $\rca(\R^n;\R^n)$
and if one correspondingly replaces the mass preservation conditions \eqref{eqn:massPreservation} by
\begin{equation*}
\mu_+(\{v\}) + \sum_{\substack{e \in E(G)\\e^- = v}} \hat ew(e) = \mu_-(\{v\}) + \sum_{\substack{e \in E(G)\\e^+ = v}} \hat ew(e)
\end{equation*}
for all graph vertices (where $\hat e$ denotes the edge orientation), one arrives at a model for a structure transporting a vector-valued quantity, for instance an elastic structure bearing a mechanical load.
The corresponding flux then has the interpretation of a divergence-free stress field and becomes
\begin{equation*}
\flux_G=\sum_{e\in E(G)}w(e)\hat e\otimes\hat e\hdone\restr e\,.
\end{equation*}
As an example, the Wasserstein transportation cost $\tau(w)=w$ then would yield the well-known Michell trusses as optimal structures \cite{Mi94}.
An alternative formulation of the same model would be based on directed graphs with vector-valued weight function $w$,
in which case an edge $e$ would be assigned the cost $\tau(|w(e)|)$ if $w(e)$ is parallel to $\hat e$ and $\infty$ else;
a corresponding description via flat chains would then use flat $1$-chains with $\R^n$ as coefficient group.

Further model variants are obtained by letting each edge length $l(e)$ enter nonlinearly into the cost functional (for instance in the form $\sqrt{l(e)}$, modelling an efficiency gain on long distances),
by adding vertex costs on top (for instance as in \cite{BrPoWi12}, modelling a cost for direction changes),
and by allowing but penalizing mass loss or gain during the transport (as in unbalanced optimal transport, for instance \cite{DNSTransportDistances09}).

Finally, it is now possible to introduce the landscape function in this general setting.
Originally introduced by Santambrogio in \cite{Santambrogio-Landscape} and also studied by Brancolini and Solimini in \cite{Brancolini-Solimini-Hoelder} and Xia in \cite{Xia-Landscape},
the landscape function is related to erosion problems in geophysics \cite{Banavar2001,FractalBasins}.
Its value in $x$ is the cost to transport a mass particle from its initial position to the point $x$,
and its regularity properties are important in order to investigate the regularity of the minimizers of the branched transport functional \cite{Brancolini-Solimini-Fractal}.
A general regularity theory of the landscape function could eventually give an insight into the regularity of the minimizers of the functionals studied in this paper.

\section*{Acknowledgements}
This work was supported by the Deutsche Forschungsgemeinschaft (DFG), Cells-in-Motion Cluster of Excellence (EXC 1003-CiM), University of M\"unster, Germany.
B.W.'s research was supported by the Alfried Krupp Prize for Young University Teachers awarded by the Alfried Krupp von Bohlen und Halbach-Stiftung.

\bibliographystyle{alpha}
\bibliography{BrWi14}

\newcommand{\etalchar}[1]{$^{#1}$}
\begin{thebibliography}{CDRMS17}

\bibitem[BB05]{Brancolini-Buttazzo}
Alessio Brancolini and Giuseppe Buttazzo.
\newblock Optimal networks for mass transportation problems.
\newblock {\em ESAIM Control Optim. Calc. Var.}, 11(1):88--101 (electronic),
  2005.

\bibitem[BCF{\etalchar{+}}01]{Banavar2001}
Jayanth~R. Banavar, Francesca Colaiori, Alessandro Flammini, Amos Maritan, and
  Andrea Rinaldo.
\newblock Scaling, optimality, and landscape evolution.
\newblock {\em Journal of Statistical Physics}, 104(1):1--48, 2001.

\bibitem[BCM05]{Bernot-Caselles-Morel-Traffic-Plans}
Marc Bernot, Vicent Caselles, and Jean-Michel Morel.
\newblock Traffic plans.
\newblock {\em Publ. Mat.}, 49(2):417--451, 2005.

\bibitem[BCM08]{Bernot-Caselles-Morel-Structure-Branched}
Marc Bernot, Vicent Caselles, and Jean-Michel Morel.
\newblock The structure of branched transportation networks.
\newblock {\em Calc. Var. Partial Differential Equations}, 32(3):279--317,
  2008.

\bibitem[BCM09]{BeCaMo09}
Marc Bernot, Vicent Caselles, and Jean-Michel Morel.
\newblock {\em Optimal transportation networks}, volume 1955 of {\em Lecture
  Notes in Mathematics}.
\newblock Springer-Verlag, Berlin, 2009.
\newblock Models and theory.

\bibitem[Ber05]{Be05}
Marc Bernot.
\newblock {\em Optimal transport and irrigation}.
\newblock PhD thesis, \'Ecole normale sup\'erieure de Cachan, 2005.
\newblock https://tel.archives-ouvertes.fr/tel-00132078/.

\bibitem[Bil99]{Bil99}
Patrick Billingsley.
\newblock {\em Convergence of probability measures}.
\newblock Wiley Series in Probability and Statistics: Probability and
  Statistics. John Wiley \& Sons, Inc., New York, second edition, 1999.
\newblock A Wiley-Interscience Publication.

\bibitem[Blu70]{MR0268781}
Leonard~M. Blumenthal.
\newblock {\em Theory and applications of distance geometry}.
\newblock Second edition. Chelsea Publishing Co., New York, 1970.

\bibitem[BPSS09]{BuPrSoSt09}
Giuseppe Buttazzo, Aldo Pratelli, Sergio Solimini, and Eugene Stepanov.
\newblock {\em Optimal urban networks via mass transportation}, volume 1961 of
  {\em Lecture Notes in Mathematics}.
\newblock Springer-Verlag, Berlin, 2009.

\bibitem[BPW13]{BrPoWi12}
Kristian Bredies, Thomas Pock, and Benedikt Wirth.
\newblock Convex relaxation of a class of vertex penalizing functionals.
\newblock {\em Journal of Mathematical Imaging and Vision}, 47(3):278--302,
  2013.

\bibitem[BRW18]{BrRoWi16}
Alessio Brancolini, Carolin Rossmanith, and Benedikt Wirth.
\newblock Optimal micropatterns in 2{D} transport networks and their relation
  to image inpainting.
\newblock {\em Archive for Rational Mechanics and Analysis}, 228(1):279--308,
  2018.

\bibitem[BS11]{Brancolini-Solimini-Hoelder}
Alessio Brancolini and Sergio Solimini.
\newblock On the {H}\"older regularity of the landscape function.
\newblock {\em Interfaces Free Bound.}, 13(2):191--222, 2011.

\bibitem[BS14]{Brancolini-Solimini-Fractal}
Alessio Brancolini and Sergio Solimini.
\newblock Fractal regularity results on optimal irrigation patterns.
\newblock {\em J. Math. Pures Appl. (9)}, 102(5):854--890, 2014.

\bibitem[BW16]{BrWi15-equivalent}
Alessio Brancolini and Benedikt Wirth.
\newblock Equivalent formulations for the branched transport and urban planning
  problems.
\newblock {\em J. Math. Pures Appl.}, 106(4):695--724, 2016.

\bibitem[BW17]{BrWi15-micropatterns}
Alessio Brancolini and Benedikt Wirth.
\newblock Optimal energy scaling for micropatterns in transport networks.
\newblock {\em SIAM J. Math. Anal.}, 49(1):311--359, 2017.

\bibitem[CDRMS17]{CoRoMa17}
M.~Colombo, A.~De~Rosa, A.~Marchese, and S.~Stuvard.
\newblock On the lower semicontinuous envelope of functionals defined on
  polyhedral chains.
\newblock preprint on http://cvgmt.sns.it/paper/3347/, 2017.

\bibitem[DNS09]{DNSTransportDistances09}
Jean Dolbeault, Bruno Nazaret, and Giuseppe Savar\'e.
\newblock A new class of transport distances between measures.
\newblock {\em Calc. Var. Partial Differential Equations}, 34(2):193--231,
  2009.

\bibitem[Edg95]{Ed94}
G.~A. Edgar.
\newblock Fine variation and fractal measures.
\newblock {\em Real Anal. Exchange}, 20(1):256--280, 1994/95.

\bibitem[Fed69]{Fe69}
Herbert Federer.
\newblock {\em Geometric measure theory}.
\newblock Die Grundlehren der mathematischen Wissenschaften, Band 153.
  Springer-Verlag New York Inc., New York, 1969.

\bibitem[Fle66]{Fl66}
Wendell~H. Fleming.
\newblock Flat chains over a finite coefficient group.
\newblock {\em Trans. Amer. Math. Soc.}, 121:160--186, 1966.

\bibitem[GK90]{MR1074005}
Kazimierz Goebel and W.~A. Kirk.
\newblock {\em Topics in metric fixed point theory}, volume~28 of {\em
  Cambridge Studies in Advanced Mathematics}.
\newblock Cambridge University Press, Cambridge, 1990.

\bibitem[Kic89]{Ki89}
Satyanad Kichenassamy.
\newblock Compactness theorems for differential forms.
\newblock {\em Communications on Pure and Applied Mathematics}, 42(1):47--53,
  1989.

\bibitem[Kuc09]{Ku09}
Marek Kuczma.
\newblock {\em An introduction to the theory of functional equations and
  inequalities}.
\newblock Birkh\"auser Verlag, Basel, second edition, 2009.
\newblock Cauchy's equation and Jensen's inequality, Edited and with a preface
  by Attila Gil\'anyi.

\bibitem[Laa62]{La62}
Richard~George Laatsch.
\newblock {\em Subadditive Functions of One Real Variable}.
\newblock PhD thesis, Oklahoma State University, 1962.
\newblock https://shareok.org/handle/11244/30626.

\bibitem[Mic04]{Mi94}
{A.G.M.} Michell.
\newblock {LVIII.} the limits of economy of material in frame-structures.
\newblock {\em Philosophical Magazine Series 6}, 8(47):589--597, 1904.

\bibitem[MS09]{Maddalena-Solimini-Transport-Distances}
Francesco Maddalena and Sergio Solimini.
\newblock Transport distances and irrigation models.
\newblock {\em J. Convex Anal.}, 16(1):121--152, 2009.

\bibitem[MS13]{Maddalena-Solimini-Synchronic}
Francesco Maddalena and Sergio Solimini.
\newblock Synchronic and asynchronic descriptions of irrigation problems.
\newblock {\em Adv. Nonlinear Stud.}, 13(3):583--623, 2013.

\bibitem[MSM03]{Maddalena-Morel-Solimini-Irrigation-Patterns}
Francesco Maddalena, Sergio Solimini, and Jean-Michel Morel.
\newblock A variational model of irrigation patterns.
\newblock {\em Interfaces Free Bound.}, 5(4):391--415, 2003.

\bibitem[MW19]{MaWi19}
Andrea Marchese and Benedikt Wirth.
\newblock Approximation of rectifiable 1-currents and weak-{$\ast$} relaxation
  of the {$h$}-mass.
\newblock {\em J. Math. Anal. Appl.}, 479(2):2268--2283, 2019.

\bibitem[RIR01]{FractalBasins}
Ignacio Rodríguez-Iturbe and Andrea Rinaldo.
\newblock {\em Fractal River Basins. Chance and Self-Organization}.
\newblock Cambridge University Press, 2001.

\bibitem[Roy88]{Royden-Real-Analysis}
Halsey~L. Royden.
\newblock {\em Real analysis}.
\newblock Macmillan Publishing Company, New York, third edition, 1988.

\bibitem[Rud87]{Ru87}
Walter Rudin.
\newblock {\em Real and complex analysis}.
\newblock McGraw-Hill Book Co., New York, third edition, 1987.

\bibitem[San07]{Santambrogio-Landscape}
Filippo Santambrogio.
\newblock Optimal channel networks, landscape function and branched transport.
\newblock {\em Interfaces Free Bound.}, 9(1):149--169, 2007.

\bibitem[{\v{S}}il08]{Sil08}
Miroslav {\v{S}}ilhav{\'y}.
\newblock Normal currents: Structure, duality pairings and div--curl lemmas.
\newblock {\em Milan Journal of Mathematics}, 76(1):275--306, 2008.

\bibitem[Sim83]{Si83}
Leon Simon.
\newblock {\em Lectures on geometric measure theory}, volume~3 of {\em
  Proceedings of the Centre for Mathematical Analysis, Australian National
  University}.
\newblock Australian National University, Centre for Mathematical Analysis,
  Canberra, 1983.

\bibitem[Smi93]{Sm93}
S.~K. Smirnov.
\newblock Decomposition of solenoidal vector charges into elementary solenoids,
  and the structure of normal one-dimensional flows.
\newblock {\em Algebra i Analiz}, 5(4):206--238, 1993.

\bibitem[Vil09]{Villani-Transport-Old-New}
C{\'e}dric Villani.
\newblock {\em Optimal transport}, volume 338 of {\em Grundlehren der
  Mathematischen Wissenschaften [Fundamental Principles of Mathematical
  Sciences]}.
\newblock Springer-Verlag, Berlin, 2009.
\newblock Old and new.

\bibitem[\v{S}07]{Si07}
Miroslav \v{S}ilhav\'y.
\newblock Divergence measure vectorfields: their structure and the divergence
  theorem.
\newblock In {\em Mathematical modelling of bodies with complicated bulk and
  boundary behavior}, volume~20 of {\em Quad. Mat.}, pages 217--237. Dept.
  Math., Seconda Univ. Napoli, Caserta, 2007.

\bibitem[Whi57]{Wh57}
Hassler Whitney.
\newblock {\em Geometric integration theory}.
\newblock Princeton University Press, Princeton, N. J., 1957.

\bibitem[Whi99a]{Wh99b}
Brian White.
\newblock The deformation theorem for flat chains.
\newblock {\em Acta Math.}, 183(2):255--271, 1999.

\bibitem[Whi99b]{Wh99}
Brian White.
\newblock Rectifiability of flat chains.
\newblock {\em Annals of Mathematics}, 150(1):165--184, 1999.

\bibitem[Xia03]{Xia-Optimal-Paths}
Qinglan Xia.
\newblock Optimal paths related to transport problems.
\newblock {\em Commun. Contemp. Math.}, 5(2):251--279, 2003.

\bibitem[Xia04]{Xia-Interior-Regularity}
Qinglan Xia.
\newblock Interior regularity of optimal transport paths.
\newblock {\em Calc. Var. Partial Differential Equations}, 20(3):283--299,
  2004.

\bibitem[Xia14]{Xia-Landscape}
Qinglan Xia.
\newblock On landscape functions associated with transport paths.
\newblock {\em Discrete Contin. Dyn. Syst.}, 34(4):1683--1700, 2014.

\end{thebibliography}

\end{document}